\newcommand{\bbN}{{\mathbb{N}}}
\newcommand{\bbR}{{\mathbb{R}}}
\newcommand{\bbC}{{\mathbb{C}}}
\newcommand{\cA}{{\mathcal A}}
\newcommand{\cB}{{\mathcal B}}
\newcommand{\cD}{{\mathcal D}}
\newcommand{\cE}{{\mathcal E}}
\newcommand{\cG}{{\mathcal G}}
\newcommand{\cH}{{\mathcal H}}
\newcommand{\cK}{{\mathcal K}}
\newcommand{\cN}{{\mathcal N}}
\newcommand{\cO}{{\mathcal O}}
\newcommand{\cS}{{\mathcal S}}
\newcommand{\cV}{{\mathcal V}}
\newcommand{\cW}{{\mathcal W}}
\newcommand{\cX}{{\mathcal X}}
\newcommand{\no}{\notag}
\newcommand{\lb}{\label}
\newcommand{\ol}{\overline}
\newcommand{\wti}{\widetilde}
\newcommand{\Oh}{O}
\newcommand{\oh}{o}
\newcommand{\f}{\frac}
\newcommand{\loc}{\text{\rm{loc}}}
\newcommand{\bi}{\bibitem}
\newcommand{\hatt}{\widehat}
\renewcommand{\Re}{\mathop\mathrm{Re}}
\renewcommand{\Im}{\mathop\mathrm{Im}}
\renewcommand{\ge}{\geqslant}
\renewcommand{\le}{\leqslant}
\DeclareMathOperator{\dom}{dom}
\DeclareMathOperator{\ran}{ran}
\DeclareMathOperator*{\slim}{s-lim}
\newcommand{\dott}{\,\cdot\,}
\newcommand{\abs}[1]{\lvert#1\rvert}
\newcommand{\norm}[1]{\left\Vert#1\right\Vert}
\newcommand{\Om}{\Omega}
\newcommand{\dOm}{{\partial\Omega}}
\newcommand{\si}{\sigma}
\newcommand{\ga}{\gamma}
\newcommand{\LdOm}{L^2(\dOm;d^{n-1} \omega)}
\allowdisplaybreaks \numberwithin{equation}{section}
\newtheorem{theorem}{Theorem}[section]
\newtheorem{proposition}[theorem]{Proposition}
\newtheorem{lemma}[theorem]{Lemma}
\newtheorem{corollary}[theorem]{Corollary}
\newtheorem{definition}[theorem]{Definition}
\newtheorem{hypothesis}[theorem]{Hypothesis}
\theoremstyle{remark}
\newtheorem{remark}[theorem]{Remark}
\begin{document}

\title[Spectral Theory for Perturbed Krein Laplacians]{Spectral
Theory for Perturbed Krein Laplacians \\ in Nonsmooth Domains}

\author[M.\ S.\ Ashbaugh]{Mark S.\ Ashbaugh}
\address{Department of Mathematics,
University of Missouri, Columbia, MO 65211, USA}
\email{ashbaughm@missouri.edu}
\urladdr{http://www.math.missouri.edu/personnel/faculty/ashbaughm.html}

\author[F.\ Gesztesy]{Fritz Gesztesy}
\address{Department of Mathematics,
University of Missouri, Columbia, MO 65211, USA}
\email{gesztesyf@missouri.edu}
\urladdr{http://www.math.missouri.edu/personnel/faculty/gesztesyf.html}

\author[M.\ Mitrea]{Marius Mitrea}
\address{Department of Mathematics, University of
Missouri, Columbia, MO 65211, USA}
\email{mitream@missouri.edu}
\urladdr{http://www.math.missouri.edu/personnel/faculty/mitream.html}

\author[G.\ Teschl]{Gerald Teschl}
\address{Faculty of Mathematics\\ University of Vienna\\
Nordbergstrasse 15\\ 1090 Wien\\ Austria\\ and International Erwin Schr\"odinger
Institute for Mathematical Physics\\ Boltzmanngasse 9\\ 1090 Wien\\ Austria} 
\email{Gerald.Teschl@univie.ac.at}
\urladdr{http://www.mat.univie.ac.at/\string~gerald/}

\thanks{Based upon work partially supported by the US National Science
Foundation under Grant Nos.\ DMS-0400639 and
FRG-0456306 and the Austrian Science Fund (FWF) under Grant No.\ Y330.}
\thanks{Adv. Math. {\bf 223}, 1372--1467 (2010).}
\date{\today}
\subjclass[2000]{Primary 35J25, 35J40, 35P15; Secondary 35P05, 46E35,
47A10, 47F05.}
\keywords{Lipschitz domains, Krein Laplacian, eigenvalues, spectral
analysis, Weyl asymptotics, buckling problem}

\begin{abstract}
We study spectral properties for $H_{K,\Omega}$, the Krein--von Neumann 
extension of the perturbed Laplacian $-\Delta+V$ defined on 
$C^\infty_0(\Omega)$, where $V$ is measurable, bounded and nonnegative, in 
a bounded open set $\Omega\subset\mathbb{R}^n$ belonging to a class of 
nonsmooth domains which contains all convex domains, along with all domains 
of class $C^{1,r}$, $r>1/2$. In particular, in the aforementioned context we 
establish the Weyl asymptotic formula
\[
\#\{j\in\mathbb{N}\,|\,\lambda_{K,\Omega,j}\leq\lambda\}
= (2\pi)^{-n} v_n |\Omega|\,\lambda^{n/2}+O\big(\lambda^{(n-(1/2))/2}\big)
\, \mbox{ as }\, \lambda\to\infty,
\]
where $v_n=\pi^{n/2}/ \Gamma((n/2)+1)$ denotes the volume of the unit ball
in $\mathbb{R}^n$, and $\lambda_{K,\Omega,j}$, $j\in\mathbb{N}$, are the
non-zero eigenvalues of $H_{K,\Omega}$, listed in increasing order
according to their multiplicities. We prove this formula by showing
that the perturbed Krein Laplacian (i.e., the Krein--von Neumann extension of
$-\Delta+V$ defined on $C^\infty_0(\Omega)$) is spectrally equivalent to the 
buckling of a clamped plate problem, and using an abstract result of Kozlov
from the mid 1980's. Our work builds on that of Grubb in the early 1980's,
who has considered similar issues for elliptic operators in smooth domains,
and shows that the question posed by Alonso and Simon in 1980
pertaining to the validity of the above Weyl asymptotic formula
continues to have an affirmative answer in this nonsmooth setting. 

We also study certain exterior-type domains $\Omega = \mathbb{R}^n\backslash K$, 
$n\geq 3$, with $K\subset \mathbb{R}^n$ compact and vanishing Bessel capacity  
$B_{2,2} (K) = 0$, to prove equality of Friedrichs and Krein Laplacians in 
$L^2(\Omega; d^n x)$, that is, $-\Delta|_{C_0^\infty(\Omega)}$ has a unique 
nonnegative self-adjoint extension in $L^2(\Omega; d^n x)$. 
\end{abstract}

\maketitle

{\scriptsize \tableofcontents}

\section{Introduction}
\label{s1}

Let $-\Delta_{D,\Om}$ be the Dirichlet Laplacian associated with an open
set $\Omega\subset\bbR^n$, and denote by $N_{D,\Om}(\lambda)$
the corresponding spectral distribution function (i.e., the number of
eigenvalues of $-\Delta_{D,\Om}$ not exceeding $\lambda$).
The study of the asymptotic behavior of $N_{D,\Om}(\lambda)$ as $\lambda\to\infty$
has been initiated by Weyl in 1911--1913 (cf.\ \cite{We12a}, \cite{We12}, and the 
references in \cite{We50}), in response to a question
posed in 1908 by the physicist Lorentz, pertaining to the equipartition
of energy in statistical mechanics. When $n=2$ and $\Omega$ is a bounded
domain with a piecewise smooth boundary, Weyl has shown that
\begin{equation} \label{WA-1}
N_{D,\Om}(\lambda)=\frac{{\rm area}\,(\Om)}{4\pi}\lambda+o(\lambda)
\, \mbox{ as }\, \lambda\to\infty,
\end{equation}
along with the three-dimensional analogue of \eqref{WA-1}. In particular,
this allowed him to complete a partial proof of Rayleigh, going back to 1903.
This ground-breaking work has stimulated a great deal of activity in the
intervening years, in which a large number of authors have provided sharper
estimates for the remainder, and considered more general elliptic operators
equipped with a variety of boundary conditions. For a general elliptic
differential operator $\cA$ of order $2m$ ($m\in\bbN$), with smooth coefficients,
acting on a smooth subdomain $\Om$ of an $n$-dimensional smooth manifold,
spectral asymptotics of the form
\begin{equation} \label{WA-2}
N_{D,\Om}(\cA;\lambda)=(2\pi)^{-n}\biggl(\int_{\Omega}dx\int_{a^0(x,\xi)<1}d\xi\biggr)
\lambda^{n/(2m)}+O\big(\lambda^{(n-1)/(2m)}\big)
\, \mbox{ as }\, \lambda\to\infty,
\end{equation}
where $a^0(x,\xi)$ denotes the principal symbol of $\cA$, have then been
subsequently established in increasing generality
(a nice exposition can be found in \cite{Ag97}).
At the same time, it has been realized that, as the smoothness of the
domain $\Om$ and the coefficients of $\cA$ deteriorate, the degree of detail
with which the remainder can be described decreases accordingly.
Indeed, the smoothness of the boundary of the underlying domain
$\Omega$ affects both the nature of the remainder in \eqref{WA-2}, as well
as the types of differential operators and boundary conditions for which
such an asymptotic formula holds. Understanding this correlation then
became a central theme of research. For example, in the case of the Laplacian
in an arbitrary bounded, open subset $\Om$ of $\bbR^n$,
Birman and Solomyak have shown in \cite{BS70} (see also \cite{BS71}, \cite{BS72}, 
\cite{BS73}, \cite{BS79}) that
the following Weyl asymptotic formula holds
\begin{equation} \label{Wey-1}
N_{D,\Om}(\lambda)=(2\pi)^{-n}v_n|\Omega|\,\lambda^{n/2}+o\big(\lambda^{n/2}\big)
\, \mbox{ as }\, \lambda\to\infty,
\end{equation} 
where $v_n$ denotes the volume of the unit ball in $\bbR^n$,
and $|\Omega|$ stands for the $n$-dimensional Euclidean volume of $\Omega$.
On the other hand, it is known that \eqref{Wey-1} may fail for
the Neumann Laplacian $-\Delta_{N,\Om}$. Furthermore, if $\alpha\in(0,1)$ then
Netrusov and Safarov have proved that
\begin{equation} \label{Wey-2}
\Omega\in{\rm Lip}_{\alpha}\,\text{ implies }\,
N_{D,\Om}(\lambda)=(2\pi)^{-n}v_n|\Omega|\,\lambda^{n/2} 
+ O\big(\lambda^{(n-\alpha)/2}\big) \, \mbox{ as }\, \lambda\to\infty,
\end{equation} 
where ${\rm Lip}_{\alpha}$ is the class of bounded domains whose
boundaries can be locally described by means of graphs of functions
satisfying a H\"older condition of order $\alpha$; this result is sharp.
See \cite{NS05} where this intriguing result (along with others, similar
in spirit) has been obtained. Surprising connections between Weyl's asymptotic
formula and geometric measure theory have been explored in \cite{Cae95},
\cite{HL97}, \cite{LF06} for fractal domains. Collectively, this body of work shows
that the nature of the Weyl asymptotic formula is intimately related not
only to the geometrical properties of the domain (as well as the type of
boundary conditions), but also to the smoothness properties of its boundary (the monograph by Safarov and Vassiliev \cite{SV97} contains a wealth of information on this circle of ideas).

These considerations are by no means limited to the Laplacian; see
\cite{Cae98} for the case of the Stokes operator, and \cite{BF07}, \cite{BS87} 
for the case the Maxwell system in nonsmooth domains.
However, even in the case of the Laplace operator, besides $-\Delta_{D,\Om}$
and $-\Delta_{N,\Om}$ there is a multitude of other concrete extensions of
the Laplacian $-\Delta$ on $C^\infty_0(\Om)$ as a nonnegative,
self-adjoint operator in $L^2(\Om;d^nx)$.
The smallest (in the operator theoretic order sense) such realization has been
introduced, in an abstract setting, by M.\ Krein \cite{Kr47}. Later it was realized that in the case 
where the symmetric operator, whose self-adjoint extensions are sought, has a strictly positive 
lower bound, Krein's construction coincides with one that von Neumann had discussed in 
his seminal paper \cite{Ne29}  in 1929.   

For the purpose of this introduction we now 
briefly recall the construction of the Krein--von Neumann extension
of appropriate $L^2(\Om; d^n x)$-realizations of the  differential operator $\cA$ 
of order $2m$, $m\in\bbN$, 
\begin{align}
& \cA = \sum_{0 \leq |\alpha| \leq 2m} a_{\alpha}(\cdot) D^{\alpha},   \lb{Wey-3} \\
& D^{\alpha} = (-i \partial/\partial x_1)^{\alpha_1} \cdots  
(-i\partial/\partial x_n)^{\alpha_n}, 
\quad \alpha =(\alpha_1,\dots,\alpha_n) \in \bbN_0^n,    \lb{Wey-3A} \\
& a_{\alpha} (\cdot) \in C^\infty(\ol \Om),  \quad 
C^\infty(\ol \Om) = \bigcap_{k\in\bbN_0} C^k(\ol \Om),   \lb{Weyl-3B}
\end{align}
where
$\Omega\subset\bbR^n$ is a bounded $C^\infty$ domain. Introducing the particular 
$L^2(\Om; d^n x)$-realization $A_{c,\Om}$ of $\cA$ defined by 
\begin{equation}
A_{c,\Om} u = \cA u, \quad u \in \dom (A_{c,\Om}):=C^\infty_0(\Om),  \lb{Wey-3a}
\end{equation}
we assume the coefficients $a_\alpha$ in $\cA$ are chosen such that 
$A_{c,\Om}$ is symmetric, 
\begin{equation} \label{Wey-4}
(u, A_{c,\Om} v)_{L^2(\Om;d^nx)}=(A_{c,\Om} u, v)_{L^2(\Om;d^nx)},
\quad u,v\in C^\infty_0(\Om),
\end{equation} 
has a (strictly) positive lower bound, that is, there exists $\kappa_0>0$ such that
\begin{equation} \label{Wey-4bis}
(u, A_{c,\Om} u)_{L^2(\Om;d^nx)}\geq\kappa_0\,\|u\|^2_{L^2(\Om;d^nx)},
\quad u\in C^\infty_0(\Om),
\end{equation} 
and is strongly elliptic, that is, there exists $\kappa_1>0$ such that 
\begin{equation} \label{Wey-5}
a^0(x,\xi):= \Re\bigg(\sum_{|\alpha|=2m}
a_{\alpha}(x) \xi^{\alpha}\bigg) \geq \kappa_1\,|\xi|^{2m},
\quad x\in\ol{\Om}, \; \xi \in\bbR^n.
\end{equation} 
Next, let $A_{min,\Om}$ and $A_{max,\Om}$ be the $L^2(\Om;d^nx)$-realizations
of $\cA$ with domains (cf.\ \cite{Ag97}, \cite{Gr09})
\begin{align}\label{Wey-6} 
\dom (A_{min,\Om})&:=H^{2m}_0(\Om),    \\
\dom (A_{max,\Om})&:=\big\{u\in L^2(\Omega;d^nx)\,\big|\, \cA u\in L^2(\Omega;d^nx)\big\}. 
\end{align}
Throughout this manuscript, 
$H^s(\Om)$ denotes the $L^2$-based Sobolev space of order $s\in\bbR$ in $\Om$,
and $H_0^{s}(\Omega)$ is the subspace of $H^{s}(\bbR^n)$ consisting
of distributions supported in $\ol{\Om}$ (for $s>\frac{1}{2}$,
$\big(s-\frac{1}{2}\big)\notin\bbN$, the space $H_0^{s}(\Omega)$ can be alternatively
described as the closure of $C^\infty_0(\Om)$ in $H^s(\Om)$).
Given that the domain $\Om$ is smooth, elliptic regularity implies
\begin{equation} \label{Kre-DefY}
(A_{min,\Om})^*=A_{max,\Om}\, \mbox{ and }\, \ol{A_{c,\Om}}=A_{min,\Om}.
\end{equation} 
Functional analytic considerations (cf.\ the discussion in Section \ref{s2})
dictate that the Krein--von Neumann (sometimes also called the  ``soft'') extension 
$A_{K,\Om}$ of $A_{c,\Om}$ on
$C^\infty_0(\Om)$ is the $L^2(\Om;d^nx)$-realization of $A_{c,\Om}$ with domain 
(cf.\ \eqref{SK} derived abstractly by Krein)
\begin{equation} \label{Kre-DefX}
\dom(A_{K,\Om})=\dom\big(\ol{A_{c,\Om}}\big)\,
\dot{+}\ker\big((A_{c,\Om})^*\big).
\end{equation} 
Above and elsewhere, $X\dot{+}Y$ denotes the direct sum of two
subspaces, $X$ and $Y$, of a larger space $Z$, with the property that $X\cap Y=\{0\}$.
Thus, granted \eqref{Kre-DefY}, we have
\begin{align}\label{Gr-r2}
\begin{split}
\dom(A_{K,\Om}) &= \dom(A_{min,\Om})\,\dot{+}\ker(A_{max,\Om})   \\
&= H^{2m}_0(\Om)\,\dot{+}\,
\big\{u\in L^2(\Om;d^nx)\,\big|\,Au=0\mbox{ in }\Omega\big\}.
\end{split}
\end{align}
In summary, for domains with smooth boundaries, $A_{K,\Om}$ is the self-adjoint 
realization of $A_{c,\Om}$ with domain given by \eqref{Gr-r2}. 

Denote by $\gamma^{m}_D u:=
\bigl(\gamma_N^ju\bigr)_{0\leq j\leq m-1}$ the Dirichlet trace operator
of order $m\in\bbN$ (where $\nu$ denotes the outward unit normal
to $\Om$ and $\gamma_N u:=\partial_{\nu}u$ stands for the normal derivative,
or Neumann trace), and let $A_{D,\Om}$ be the Dirichlet (sometimes also called the ``hard'')
realization of $A_{c,\Om}$ in $L^2(\Omega;d^nx)$ with domain
\begin{equation} \label{Wey-8}
\dom(A_{D,\Om}):=\big\{u\in H^{2m}(\Omega)\,\big|\,\gamma^{m}_D u=0\big\}.
\end{equation} 
Then $A_{K,\Om}$, $A_{D,\Om}$ are ``extremal'' in the following sense:
Any nonnegative self-adjoint extension $\widetilde{A}$ in $L^2(\Om;d^nx)$ of
$A_{c,\Om}$ (cf.\ \eqref{Wey-3a}), necessarily satisfies 
\begin{equation} \label{Wey-10}
A_{K,\Om} \leq \widetilde{A} \leq A_{D,\Om}
\end{equation} 
in the sense of quadratic forms (cf.\ the discussion surrounding \eqref{AleqB}). 

Returning to the case where $A_{c,\Om}=-\Delta|_{C^\infty_0(\Om)}$, for a bounded
domain $\Om$ with a $C^\infty$-smooth boundary, $\partial\Om$, the corresponding Krein--von Neumann extension admits the following description 
\begin{align}
\begin{split}
& -\Delta_{K,\Om} u:= -\Delta u,   \\ 
& \; u\in \dom(-\Delta_{K,\Om}):=\{v\in\dom(-\Delta_{max,\Om})\,|\,
\gamma_N v +M_{D,N,\Om}(\gamma_D v)=0\},    \label{Wey-11}
\end{split}
\end{align}
where $M_{D,N,\Om}$ is (up to a minus sign) an energy-dependent Dirichlet-to-Neumann map, or Weyl--Titchmarsh operator for the Laplacian. Compared with 
\eqref{Gr-r2}, the description
\eqref{Wey-11} has the advantage of making explicit the boundary condition implicit
in the definition of membership to $\dom(-\Delta_{K,\Om})$.
Nonetheless, as opposed to the classical Dirichlet and Neumann boundary
condition, this turns out to be {\it nonlocal} in nature, as it involves
$M_{D,N,\Om}$ which, when $\Om$ is smooth, is a boundary
pseudodifferential operator of order $1$. Thus, informally speaking, 
\eqref{Wey-11} is the realization of  the Laplacian with the boundary condition
\begin{equation} \label{B.A-1}
\partial_\nu u=\partial_{\nu}H(u)\, \mbox{ on }\, \partial\Omega,
\end{equation} 
where, given a reasonable function $w$ in $\Om$, $H(w)$ is the harmonic
extension of the Dirichlet boundary trace $\gamma^0_D w$ to $\Omega$ 
(cf.\ \eqref{2.5}).

While at first sight the nonlocal boundary condition 
$\gamma_N v +M_{D,N,\Om}(\gamma_D v)=0$ in \eqref{Wey-11} for the Krein Laplacian 
$-\Delta_{K,\Om}$ may seem familiar from the abstract 
approach to self-adjoint extensions of semibounded symmetric operators within the theory 
of boundary value spaces, there are some crucial distinctions in the concrete case of 
Laplacians on (nonsmooth) domains which will be delineated at the end of Section \ref{s8}. 

For rough domains, matters are more delicate as the nature of the
boundary trace operators and the standard elliptic regularity theory
are both fundamentally affected. Following work in \cite{GM10}, here we
shall consider the class of {\it quasi-convex domains}. The latter is
the subclass of bounded, Lipschitz domains in $\bbR^n$ characterized by
the demand that
\begin{enumerate}
\item[$(i)$] there exists a sequence of relatively compact, $C^2$-subdomains 
exhausting the original domain, and whose second fundamental forms are bounded
from below in a uniform fashion (for a precise formulation see Definition \ref{Def-AC}),
\end{enumerate}
or
\begin{enumerate}
\item[$(ii)$] near every boundary point 
there exists a suitably small $\delta>0$, such that
the boundary is given by the graph of a function
$\varphi:\bbR^{n-1}\to\bbR$ (suitably rotated and translated) which
is Lipschitz and whose derivative satisfy the pointwise $H^{1/2}$-multiplier
condition
\begin{equation} \label{MaS-T4}
\sum_{k=1}^{n-1}\|f_k\,\partial_k\varphi_j\|_{H^{1/2}(\bbR^{n-1})}\leq\delta
\sum_{k=1}^{n-1}\|f_k\|_{H^{1/2}(\bbR^{n-1})},
\quad f_1,...f_{n-1}\in H^{1/2}(\bbR^{n-1}).
\end{equation} 
\end{enumerate}
See Hypothesis \ref{h.Conv} for a precise formulation.
In particular, \eqref{MaS-T4} is automatically satisfied when
$\omega(\nabla\varphi,t)$, the modulus of continuity of $\nabla\varphi$
at scale $t$, satisfies the square-Dini condition
(compare to \cite{MS85}, \cite{MS05}, where this type of domain was 
introduced and studied), 
\begin{equation} \label{MaS-T7}
\int_0^1\Bigl(\frac{\omega(\nabla\varphi;t)}{t^{1/2}}\Bigr)^2\,\frac{dt}{t}
<\infty.
\end{equation} 
In turn, \eqref{MaS-T7} is automatically satisfied if the Lipschitz
function $\varphi$ is of class $C^{1,r}$ for some $r>1/2$.
As a result, examples of quasi-convex domains include: 
\begin{enumerate}
\item[$(i)$] All bounded (geometrically) convex domains.  
\item[$(ii)$] All bounded Lipschitz domains satisfying 
a uniform exterior ball condition (which, informally speaking, 
means that a ball of fixed radius can be ``rolled'' along the
boundary). 
\item[$(iii)$] All open sets which are the image 
of a domain as in $(i),(ii)$ above under a $C^{1,1}$-diffeomorphism.  
\item[$(iv)$] All bounded domains of class $C^{1,r}$ for some $r>1/2$. 
\end{enumerate}
We note that being quasi-convex is a local property of the
boundary. The philosophy behind this concept
is that Lipschitz-type singularities are allowed in the boundary as long
as they are directed outwardly (see Figure\ 1 on p.\ \pageref{Pic}).
The key feature of this class of domains is the fact that the classical
elliptic regularity property
\begin{equation} \label{Df-H1}
\dom(-\Delta_{D,\Om})\subset H^2(\Om),\quad 
\dom(-\Delta_{N,\Om})\subset H^2(\Om)
\end{equation} 
remains valid. In this vein, it is worth recalling that the presence of a
single re-entrant corner for the domain $\Omega$ invalidates \eqref{Df-H1}.
All our results in this paper are actually valid for the class of
bounded Lipschitz domains for which \eqref{Df-H1} holds.
Condition \eqref{Df-H1} is, however, a regularity assumption on the
boundary of the Lipschitz domain $\Om$ and the class of quasi-convex domains
is the largest one for which we know \eqref{Df-H1} to hold.
Under the hypothesis of quasi-convexity, it has been shown
in \cite{GM10} that the Krein Laplacian $-\Delta_{K,\Om}$ (i.e., the Krein--von 
Neumann extension of the Laplacian $-\Delta$ defined on $C^\infty_0(\Omega)$)
in \eqref{Wey-11} is a well-defined self-adjoint operator which agrees
with the operator constructed using the recipe in \eqref{Gr-r2}.

The main issue of the current paper is the study of the
spectral properties of $H_{K,\Om}$, the Krein--von Neumann extension of the
perturbed Laplacian
\begin{equation} \label{Per-D}
-\Delta+V\, \mbox{ on }\,  C^\infty_0(\Omega),
\end{equation} 
in the case where both the potential $V$ and the domain $\Om$ are
nonsmooth. As regards the former, we shall assume that
$0\leq V\in L^\infty(\Omega;d^nx)$, and we shall assume that
$\Omega\subset\bbR^n$ is a quasi-convex domain (more on this shortly).
In particular, we wish to clarify the extent to which a Weyl asymptotic
formula continues to hold for this operator. For us, this undertaking was
originally inspired by the discussion by Alonso and Simon in \cite{AS80}.
At the end of that paper, the authors comment to the effect that
{\it ``It seems to us that the Krein extension of $-\Delta$, i.e.,
$-\Delta$ with the boundary condition $\eqref{B.A-1}$,  
is a natural object and therefore worthy of further study. For example: Are the 
asymptotics of its nonzero eigenvalues given by Weyl's formula?''}
Subsequently we have learned that when $\Omega$ is $C^\infty$-smooth
this has been shown to be the case by Grubb in \cite{Gr83}.
More specifically, in that paper Grubb has proved that if 
$N_{K,\Om}(\cA;\lambda)$ denotes the number of nonzero eigenvalues of 
$A_{K,\Om}$ (defined as in \eqref{Gr-r2}) not exceeding $\lambda$, then
\begin{equation} \label{Df-H2}
\Omega\in C^\infty\,\text{ implies }\,
N_{K,\Om}(\cA;\lambda)=C_{A,n}\lambda^{n/(2m)}+O\big(\lambda^{(n-\theta)/(2m)}\big)
\, \mbox{ as }\, \lambda\rightarrow\infty,
\end{equation} 
where, with $a^0(x,\xi)$ as in \eqref{Wey-5},
\begin{equation} \label{Df-H3}
C_{A,n}:=(2\pi)^{-n}\int_\Omega d^nx \int_{a^0(x,\xi)<1} d^n \xi
\end{equation} 
and
\begin{equation} \label{Df-H4}
\theta:=\max\,\Bigl\{\frac{1}{2}-\varepsilon\,,\,\frac{2m}{2m+n-1}\Bigl\},
\, \mbox{ with $\varepsilon>0$ arbitrary}.
\end{equation} 
See also \cite{Mik94} where the author announces a sharpening of the
remainder in \eqref{Df-H2} to any $\theta<1$ (but no proof is provided).
To show \eqref{Df-H2}--\eqref{Df-H4}, Grubb has reduced the eigenvalue problem
\begin{equation} \label{Df-H5}
\cA u=\lambda\,u,\quad u\in\dom(A_{K,\Om}),\; \lambda>0,
\end{equation} 
to the higher-order, elliptic system
\begin{equation}
\begin{cases}
\cA^2 v=\lambda\, \cA v\,\mbox{ in }\,\Om,
\\
\gamma^{2m}_D v =0\,\mbox{ on }\,\dOm,
\\
v\in C^\infty(\ol{\Omega}). 
\end{cases}   \label{Df-H6}
\end{equation}
Then the strategy is to use known asymptotics for the spectral distribution
function of regular elliptic boundary problems, along with perturbation
results due to Birman, Solomyak, and Grubb (see the
literature cited in \cite{Gr83} for precise references). It should be noted
that the fact that the boundary of $\Omega$ and the coefficients of
$\cA$ are smooth plays an important role in Grubb's proof. First, 
this is used to ensure that \eqref{Kre-DefY} holds which, in turn, allows for
the concrete representation \eqref{Gr-r2} (a formula which in effect
lies at the start of the entire theory, as Grubb adopts this as the {\it definition}
of the domains of the Krein--von Neumann extension). In addition, at a more technical level,
Lemma 3 in \cite{Gr83} is justified by making appeal to the
theory of pseudo-differential operators on $\partial\Omega$, assumed
to be an $(n-1)$-dimensional $C^\infty$ manifold.
In our case, that is, when dealing with the Krein--von Neumann extension
of the perturbed Laplacian \eqref{Per-D}, we establish the following theorem: 

\begin{theorem}\label{Th-InM}
Let $\Om\subset\bbR^n$ be a quasi-convex domain, assume that 
$0 \leq V\in L^\infty(\Om; d^nx)$, and denote by
$H_{K,\Om}$ the Krein--von Neumann extension of the perturbed Laplacian
\eqref{Per-D}. Then there exists a sequence of numbers
\begin{equation} \label{Xmam-1}
0<\lambda_{K,\Om,1} \leq \lambda_{K,\Om,2}\leq\cdots\leq\lambda_{K,\Om,j}
\leq\lambda_{K,\Om,j+1} \leq\cdots
\end{equation} 
converging to infinity, with the following properties.
\begin{enumerate}
\item[$(i)$] The spectrum of $H_{K,\Om}$ is given by
\begin{equation} \label{XMi-7}
\sigma(H_{K,\Om})=\{0\}\cup\{\lambda_{K,\Om,j}\}_{j\in\bbN},
\end{equation} 
and each number $\lambda_{K,\Om,j}$, $j\in\bbN$, is an eigenvalue for
$H_{K,\Om}$ of finite multiplicity.
\item[$(ii)$] There exists a countable family of orthonormal
eigenfunctions for $H_{K,\Om}$ which span the orthogonal
complement of the kernel of this operator. More precisely, there exists
a collection of functions $\{w_j\}_{j\in\bbN}$ with the following properties:
\begin{align} \label{Xmam-21}
& w_j\in\dom(H_{K,\Om})\, \mbox{ and }\, 
H_{K,\Om}w_j=\lambda_{K,\Om,j}w_j, \; j\in\bbN,
\\ 
& (w_j,w_k)_{L^2(\Om;d^nx)}=\delta_{j,k}, \; j,k\in\bbN,
\label{Xmam-22}\\
& L^2(\Omega;d^nx)=\ker(H_{K,\Om})\,\oplus\, 
\ol{{\rm lin. \, span} \{w_j\}_{j\in\bbN}},\, \mbox{ $($orthogonal direct sum$)$.}
\label{Xmam-23}
\end{align} 
If $V$ is Lipschitz then $w_j\in H^{1/2}(\Omega)$ for every $j$ and, in fact, 
$w_j\in C^\infty(\overline{\Omega})$ for every $j$ if $\Omega$ is $C^\infty$ and 
$V\in C^\infty(\overline{\Omega})$.  
\item[$(iii)$] The following min-max principle holds:
\begin{align} \label{Xmam-26}
\hspace*{6mm} 
& \lambda_{K,\Om,j}
=\min_{\stackrel{W_j\text{ subspace of }H^2_0(\Om)}{\dim(W_j)=j}}
\bigg(\max_{0\not=u\in W_j} \bigg(\frac{\|(-\Delta+V)u\|^2_{L^2(\Om;d^nx)}}
{\|\nabla u\|^2_{(L^2(\Om;d^nx))^n}+\|V^{1/2}u\|^2_{L^2(\Om;d^nx)}}\bigg)\bigg),  \no \\
& \hspace*{10.85cm} j\in\bbN.
\end{align} 
\item[$(iv)$] If
\begin{equation} \label{Ymam-1}
0<\lambda_{D,\Om,1} \leq\lambda_{D,\Om,2} \leq\cdots\leq\lambda_{D,\Om,j}
\leq\lambda_{D,\Om,j+1} \leq\cdots
\end{equation} 
are the eigenvalues of the perturbed Dirichlet Laplacian $-\Delta_{D,\Om}$
$($i.e., the Friedrichs extension of \eqref{Per-D} in $L^2(\Om;d^nx)$$)$, listed according to their multiplicities, then
\begin{equation} \label{Xmam-39}
0< \lambda_{D,\Om,j} \leq\lambda_{K,\Om,j}, \quad j\in\bbN,
\end{equation} 
Consequently introducing the spectral distribution functions
\begin{equation} \label{Xmam-44}
N_{X,\Om}(\lambda):=\#\{j\in\bbN\,|\,\lambda_{X,\Om,j} \leq\lambda\},
\quad X\in\{D,K\},
\end{equation} 
one has
\begin{equation} \label{Xmam-45}
N_{K,\Om}(\lambda)\leq N_{D,\Om}(\lambda).  
\end{equation} 
\item[$(v)$] Corresponding to the case $V\equiv 0$,
the first nonzero eigenvalue $\lambda^{(0)}_{K,\Om,1}$ of $-\Delta_{K,\Om}$ satisfies
\begin{equation} \label{Xmx-4}
\lambda^{(0)}_{D,\Om,2} \leq \lambda^{(0)}_{K,\Om,1}\, \mbox{ and }\, 
\lambda^{(0)}_{K,\Om,2} \leq\frac{n^2+8n+20}{(n+2)^2}\lambda^{(0)}_{K,\Om,1}.
\end{equation} 
In addition,
\begin{equation} \label{Xmx-1}
\sum_{j=1}^n\lambda^{(0)}_{K,\Om,j+1}
<(n+4)\lambda^{(0)}_{K,\Om,1}
-\frac{4}{n+4}(\lambda^{(0)}_{K,\Om,2}-\lambda^{(0)}_{K,\Om,1})
\le (n+4)\lambda^{(0)}_{K,\Om,1},
\end{equation} 
and
\begin{equation} \label{Xmx-3}
\sum_{j=1}^k \big(\lambda^{(0)}_{K,\Om,k+1}-\lambda^{(0)}_{K,\Om,j}\big)^2 
\leq\frac{4(n+2)}{n^2}
\sum_{j=1}^k \big(\lambda^{(0)}_{K,\Om,k+1}-\lambda^{(0)}_{K,\Om,j}\big)
\lambda^{(0)}_{K,\Om,j} 
\quad k\in\bbN.
\end{equation} 
Moreover, if $\Om$ is a bounded, convex domain in $\bbR^n$,
then the first two Dirichlet eigenvalues and the first nonzero eigenvalue
of the Krein Laplacian in $\Om$ satisfy
\begin{equation} \label{A-P.1U}
\lambda^{(0)}_{D,\Om,2} \leq \lambda^{(0)}_{K,\Om,1} \leq 4\,\lambda^{(0)}_{D,\Om,1}.
\end{equation} 
\item[$(vi)$] The following Weyl asymptotic formula holds:
\begin{equation} \label{Xkko-12}
N_{K,\Om}(\lambda)
=(2\pi)^{-n}v_n|\Omega|\,\lambda^{n/2}+O\big(\lambda^{(n-(1/2))/2}\big)
\, \mbox{ as }\, \lambda\to\infty,
\end{equation} 
where, as before, $v_n$ denotes the volume of the unit ball in $\bbR^n$,
and $|\Omega|$ stands for the $n$-dimensional Euclidean volume of $\Omega$.
\end{enumerate}
\end{theorem}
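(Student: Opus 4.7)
The plan is to reformulate the nonzero spectral problem for $H_{K,\Om}$ as a \emph{buckling-type} generalized eigenvalue problem on $H^2_0(\Om)$. Starting from the Krein decomposition
$\dom(H_{K,\Om})=\dom(\ol{H_{\min,\Om}})\,\dot{+}\,\ker(H_{\max,\Om})$, and using the identification $\dom(\ol{H_{\min,\Om}})=H^2_0(\Om)$ (which persists in the nonsmooth regime thanks to the $H^2$-regularity \eqref{Df-H1} available on quasi-convex domains \cite{GM10}), I would show that $\la>0$ is an eigenvalue of $H_{K,\Om}$ with eigenfunction $w$ if and only if $w=(-\De+V)u$ for a nonzero $u\in H^2_0(\Om)$ satisfying the weak buckling equation
\begin{equation*}
((-\De+V)u,(-\De+V)v)_{L^2}=\la\,\bigl[(\nabla u,\nabla v)_{L^2}
+(V^{1/2}u,V^{1/2}v)_{L^2}\bigr],\quad v\in H^2_0(\Om).
\end{equation*}
This equivalence makes the Rayleigh quotient in \eqref{Xmam-26} intrinsic. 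Next I would note that compactness of $H^2_0(\Om)\hookrightarrow H^1_0(\Om)$ (Rellich--Kondrachov), together with the injectivity and continuity of $-\De+V:H^2_0(\Om)\to L^2(\Om;d^nx)$, converts this generalized eigenvalue problem into an ordinary eigenvalue problem for a compact self-adjoint operator; standard spectral theory then yields parts $(i)$, $(ii)$, and $(iii)$ at once. The regularity statements in $(ii)$ follow from the pointwise equation $(-\De+V)w_j=\la_{K,\Om,j}w_j$ via interior and, when $\partial\Om$ is smooth enough, boundary elliptic regularity.

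For part $(iv)$, for every $u\in H^2_0(\Om)$ integration by parts gives $\|\nabla u\|^2_{L^2}+\|V^{1/2}u\|^2_{L^2}=((-\De+V)u,u)_{L^2}$, so Cauchy--Schwarz implies
\begin{equation*}
\frac{\|(-\De+V)u\|^2_{L^2}}{\|\nabla u\|^2_{L^2}+\|V^{1/2}u\|^2_{L^2}}\ge
\frac{\|\nabla u\|^2_{L^2}+\|V^{1/2}u\|^2_{L^2}}{\|u\|^2_{L^2}}.
\end{equation*}
The right-hand side is the Rayleigh quotient of the perturbed Dirichlet Laplacian, and $H^2_0(\Om)$ is dense in its form domain $H^1_0(\Om)$. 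The min-max principle applied over $j$-dimensional subspaces of $H^2_0(\Om)$ therefore produces $\la_{D,\Om,j}\le\la_{K,\Om,j}$, from which \eqref{Xmam-45} is immediate.

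Part $(v)$, in which $V\equiv 0$, I would handle by importing classical buckling-problem estimates through the spectral equivalence. The bound $\la^{(0)}_{D,\Om,2}\le\la^{(0)}_{K,\Om,1}$ is Payne's argument: one inserts directional derivatives $\partial_k u_1$ of the first buckling eigenfunction into the second Dirichlet Rayleigh quotient, orthogonality being arranged by rotating the coordinate axes. The upper bound $\la^{(0)}_{K,\Om,1}\le 4\la^{(0)}_{D,\Om,1}$ on convex $\Om$ comes from testing the buckling Rayleigh quotient against a trial function built from the first Dirichlet eigenfunction modified by a suitable cutoff of the distance-to-boundary function. The universal inequalities \eqref{Xmx-4}, \eqref{Xmx-1}, and \eqref{Xmx-3} are consequences of the Hile--Protter and Payne--P\'olya--Weinberger / Yang commutator identities applied to the buckling operator, which transfer verbatim.

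The hard part will be $(vi)$, the Weyl asymptotic with power-saving remainder in a nonsmooth setting. The plan is to invoke Kozlov's abstract theorem from the mid 1980's, which provides sharp Weyl asymptotics for generalized eigenvalue problems $a(u,v)=\la b(u,v)$ of our buckling form given sufficient spectral information on the denominator form $b$. Here $b$ is exactly the quadratic form of $H_{D,\Om}=-\De_{D,\Om}+V$, whose Weyl asymptotics
\begin{equation*}
N_{D,\Om}(\la)=(2\pi)^{-n}v_n|\Om|\,\la^{n/2}+O\bigl(\la^{(n-1/2)/2}\bigr)
\end{equation*}
on quasi-convex $\Om$ I would establish beforehand by perturbative reduction to the unperturbed case (using boundedness of $V$) combined with the sharp remainders available on domains satisfying \eqref{Df-H1}. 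The most delicate step is to verify Kozlov's operator-theoretic hypotheses---principally certain Schatten-class bounds on a ratio of resolvents---directly from the variational structure, because the pseudodifferential boundary calculus on $\partial\Om$ that Grubb \cite{Gr83} uses in the $C^\infty$ setting is not at our disposal. To get around this, I would combine the $H^2$-a priori estimates on $\dom(H_{D,\Om})$ guaranteed by quasi-convexity with the refined trace theory on Lipschitz boundaries developed in \cite{GM10}, which together should yield enough regularity at $\partial\Om$ to verify Kozlov's hypotheses without any recourse to symbol calculus.
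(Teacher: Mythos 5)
Your proposal follows essentially the same route as the paper: the nonzero spectrum of $H_{K,\Om}$ is identified with the generalized buckling problem on $H^2_0(\Om)$, parts $(i)$--$(iv)$ are obtained from the compact self-adjoint operator associated with the pair of forms $a,b$ on the Hilbert space $\bigl(H^2_0(\Om),\,\|(-\Delta+V)\cdot\|_{L^2}\bigr)$ together with Cauchy--Schwarz for the Dirichlet comparison, part $(v)$ is imported from the classical buckling literature and Payne's convex-domain argument, and part $(vi)$ comes from Kozlov's theorem. Two points in your plan are off the mark, though neither is fatal.

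First, concerning $(vi)$: Kozlov's theorem, as used here, is a statement about the pencil of \emph{forms} $a(u,v)=\lambda b(u,v)$ on a closed subspace $W\supseteq H^m_0(\Om)$ of $H^m(\Om)$ over a bounded Lipschitz domain; its hypotheses are coercivity of $a$ on $W$, nonnegativity of $b$, nondegeneracy of the leading symbol of $b$, and Lipschitz leading (bounded measurable lower-order) coefficients. There are no Schatten-class or resolvent-ratio conditions to verify, no boundary symbol calculus is involved, and no separate sharp Weyl law for $N_{D,\Om}$ is needed as input. With $W=H^2_0(\Om)$, $a(u,v)=((-\Delta+V)u,(-\Delta+V)v)_{L^2}$ and $b(u,v)=(\nabla u,\nabla v)_{L^2}+(V^{1/2}u,V^{1/2}v)_{L^2}$, the only nontrivial hypothesis is the coercivity $a(u,u)\geq C\|u\|^2_{H^2(\Om)}$, which follows from Poincar\'e's inequality plus the closed-range/injectivity of $-\Delta+V:H^2_0(\Om)\to L^2(\Om;d^nx)$ (here $0\notin\sigma(H_{D,\Om})$ is used); the constant $\omega_{a,b,\Om}=(2\pi)^{-n}v_n|\Om|$ then drops out of \eqref{kko-10} with $m=2$, $r=1$. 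The ``hard part'' you anticipate does not arise.

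Second, the regularity claims in $(ii)$ do not follow by applying elliptic regularity to the second-order equation $(-\Delta+V)w_j=\lambda_{K,\Om,j}w_j$ directly: a Krein eigenfunction is a priori only in $L^2$, and the boundary condition $\tau_{N,V,0}w_j=0$ is nonlocal, so there is no boundary regularity theory to invoke at that level. The correct route is through the associated buckling eigenfunction $u_j\in H^2_0(\Om)$: when $V$ is Lipschitz one has $(-\Delta+V)^2u_j\in H^{s-2}(\Om)$, the Pipher--Verchota regularity theory for $\Delta^2$ on Lipschitz domains gives $u_j\in H^{5/2}(\Om)$, and then $w_j=\lambda_{K,\Om,j}^{-1}(-\Delta+V)u_j\in H^{1/2}(\Om)$. (Also, in Payne's convex-domain bound the trial function is $u_1^2$, the square of the first Dirichlet eigenfunction, not a cutoff of the distance function, and passing from smooth domains of nonnegative boundary mean curvature to general convex domains requires an exhaustion argument together with domain continuity of the Dirichlet eigenvalues and the domain monotonicity \eqref{mam-2S}.)
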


This theorem answers the question posed by Alonso and Simon in \cite{AS80}
(which corresponds to $V\equiv 0$), and further extends the work by Grubb
in \cite{Gr83} in the sense that we allow nonsmooth domains and coefficients. 
To prove this result, we adopt Grubb's strategy and show that the eigenvalue problem
\begin{equation} \label{Df-H7}
(-\Delta+V)u=\lambda\,u,\quad u\in\dom(H_{K,\Om}),\; \lambda>0,
\end{equation} 
is equivalent to the following fourth-order problem
\begin{equation}\label{Df-H8} 
\begin{cases}
(-\Delta+V)^2w=\lambda\,(-\Delta+V)w\, \mbox{ in }\, \Om,
\\\
\gamma_D w=\gamma_N w=0\, \mbox{ on } \,\dOm,
\\
w\in\dom(-\Delta_{max}).
\end{cases}
\end{equation}
This is closely related to the so-called problem of the {\it buckling of a 
clamped plate},
\begin{equation}\label{Df-H8F}
\begin{cases}
-\Delta^2 w=\lambda\,\Delta w \,\mbox{ in }\, \Om,
\\
\gamma_D w =\gamma_N w =0\, \mbox{ on }\, \dOm,
\\
w\in\dom(-\Delta_{max}), 
\end{cases}
\end{equation}
to which \eqref{Df-H8} reduces when $V\equiv 0$.  From a physical point of 
view, the nature of the later boundary value problem can be described as 
follows. In the two-dimensional setting, the bifurcation problem for a clamped,
homogeneous plate in the shape of $\Omega$, with uniform lateral compression
on its edges has the eigenvalues $\lambda$ of the problem \eqref{Df-H8}
as its critical points. In particular, the first eigenvalue of \eqref{Df-H8}
is proportional to the load compression at which the plate buckles.

One of the upshots of our work in this paper is establishing a definite
connection between the Krein--von Neumann extension of the Laplacian and the 
buckling problem \eqref{Df-H8F}. In contrast to the smooth case, since in our 
setting the solution $w$ of \eqref{Df-H8} does not exhibit any extra 
regularity on the Sobolev scale $H^s(\Om)$, $s\geq 0$, other than membership 
to $L^2(\Om;d^nx)$, a suitable interpretation of the boundary
conditions in \eqref{Df-H8} should be adopted. (Here we shall rely on the
recent progress from \cite{GM10} where this issue has been resolved
by introducing certain novel boundary Sobolev spaces, well-adapted to
the class of Lipschitz domains.) We nonetheless find this trade-off,
between the $2$nd-order boundary problem \eqref{Df-H7} which has
nonlocal boundary conditions, and the boundary problem \eqref{Df-H8}
which has local boundary conditions, but is of fourth-order, very useful.
The reason is that \eqref{Df-H8} can be rephrased, in view of
\eqref{Df-H1} and related regularity results developed in \cite{GM10},
in the form of
\begin{equation} \label{XMM-2}
(-\Delta+V)^2 u=\lambda\,(-\Delta+V)u \,\mbox{ in } \,\Omega,\quad
u\in H^2_0(\Omega).
\end{equation} 
In principle, this opens the door to bringing onto the stage the theory of
generalized eigenvalue problems, that is, operator pencil problems of the form
\begin{equation} \label{XMM-3}
Tu=\lambda\,Su,
\end{equation} 
where $T$ and $S$ are certain linear operators in a Hilbert space.
Abstract results of this nature can be found for instance, in \cite{LM08}, \cite{Pe68}, 
\cite{Tr00} (see also \cite{Le83}, \cite{Le85}, where this is applied to 
the asymptotic distribution of eigenvalues).  We, however, find it more 
convenient to appeal to a version of \eqref{XMM-3} which emphasizes the 
role of the symmetric forms
\begin{align} \label{Xkko-13}
& a(u,v):=\int_{\Om}d^nx\,\ol{(-\Delta+V)u}\,(-\Delta+V)v,
\quad u,v\in H^2_0(\Om),
\\
& b(u,v):=\int_{\Om}d^nx\,\ol{\nabla u}\cdot\nabla v
+\int_{\Om}d^nx\,\ol{V^{1/2}u}\,V^{1/2}v,\quad u,v\in H^2_0(\Om),
\label{Xkko-13F}
\end{align} 
and reformulate \eqref{XMM-2} as the problem of finding $u\in H^2_0(\Om)$
which satisfies
\begin{equation} \label{Xkko-14}
a(u,v)=\lambda\,b(u,v) \quad v\in H^2_0(\Om).
\end{equation} 
This type of eigenvalue problem, in the language of bilinear forms associated
with differential operators, has been studied by Kozlov in a series
of papers \cite{Ko79}, \cite{Ko83}, \cite{Ko84}. In particular, in \cite{Ko84},
Kozlov has obtained Weyl asymptotic formulas in the case where the underlying
domain $\Omega$ in \eqref{Xkko-13} is merely Lipschitz, and the lower-order
coefficients of the quadratic forms \eqref{Xkko-13}--\eqref{Xkko-13F}
are only measurable and bounded (see Theorem \ref{T-Koz}
for a precise formulation). Our demand that the potential $V$ is in
$L^\infty(\Om;d^nx)$ is therefore inherited from Kozlov's theorem.
Based on this result and the fact that the
problems \eqref{Xkko-13}--\eqref{Xkko-14} and \eqref{Df-H7} are
spectral-equivalent, we can then conclude that \eqref{Xkko-12} holds.
Formulas \eqref{Xmx-4}--\eqref{Xmx-3} are also a byproduct of the connection
between \eqref{Df-H7} and \eqref{Df-H8} and known spectral estimates for 
the buckling plate problem from \cite{As04}, \cite{As09}, \cite{AL96}, 
\cite{CY06}, \cite{HY84}, \cite{Pa55}, \cite{Pa67}, \cite{Pa91}. Similarly, 
\eqref{A-P.1U} for convex domains is based on the connection between 
\eqref{Df-H7} and \eqref{Df-H8} and the eigenvalue inequality relating 
the first eigenvalue of a fixed membrane and that of the buckling problem 
for the clamped plate as proven in \cite{Pa60} (see also \cite{Pa67}, 
\cite{Pa91}).  

In closing, we wish to point out that in the $C^\infty$-smooth setting,
Grubb's remainder in \eqref{Df-H2} could, in principle, be sharper than that
in \eqref{Xkko-12}. However, the main novel feature of our Theorem \ref{Th-InM}
is the low regularity assumptions on the underlying domain $\Omega$,
and the fact that we allow a nonsmooth potential $V$.
As was the case with the Weyl asymptotic formula for the classical
Dirichlet and Neumann Laplacians (briefly reviewed at the beginning of this
section), the issue of regularity (or lack thereof) has always been of
considerable importance in this line of work (as early as
1970, Birman and Solomyak noted in \cite{BS70} that
``{\it there has been recently some interest in obtaining the classical
asymptotic spectral formulas under the weakest possible hypotheses}.''). 
The interested reader may consult
the paper \cite{BS79} by Birman and Solomyak (see also \cite{BS72}, 
\cite{BS73}), as well as
the article \cite{Da97} by Davies for some very readable, highly
informative surveys underscoring this point (collectively, these papers
also contain more than 500 references concerning this circle of ideas).

Finally, a notational comment: For obvious reasons in connection 
with quantum mechanical applications, we will, with a slight abuse of 
notation, dub $-\Delta$ (rather than $\Delta$) as the ``Laplacian'' in 
this paper.

\section{The Abstract Krein--von Neumann Extension}
\label{s2}

To get started, we briefly elaborate on the notational conventions used
throughout this paper and especially throughout this section which collects abstract material on the Krein--von Neumann extension. Let $\cH$ be a separable complex Hilbert space, 
$(\dott,\dott)_{\cH}$ the scalar product in $\cH$ (linear in
the second factor), and $I_{\cH}$ the identity operator in $\cH$.
Next, let $T$ be a linear operator mapping (a subspace of) a
Banach space into another, with $\dom(T)$ and $\ran(T)$ denoting the
domain and range of $T$. The closure of a closable operator $S$ is
denoted by $\ol S$. The kernel (null space) of $T$ is denoted by
$\ker(T)$. The spectrum, essential spectrum, and resolvent set of a closed linear operator in $\cH$ will be denoted by $\sigma(\cdot)$, $\sigma_{\rm ess}(\cdot)$, 
and $\rho(\cdot)$, respectively. The
Banach spaces of bounded and compact linear operators on $\cH$ are
denoted by $\cB(\cH)$ and $\cB_\infty(\cH)$, respectively. Similarly,
the Schatten--von Neumann (trace) ideals will subsequently be denoted
by $\cB_p(\cH)$, $p\in (0,\infty)$. The analogous notation $\cB(\cX_1,\cX_2)$,
$\cB_\infty (\cX_1,\cX_2)$, etc., will be used for bounded, compact,
etc., operators between two Banach spaces $\cX_1$ and $\cX_2$. 
Moreover, $\cX_1\hookrightarrow \cX_2$ denotes the continuous embedding
of the Banach space $\cX_1$ into the Banach space $\cX_2$. 
In addition, $U_1 \dotplus U_2$ denotes the direct sum of the subspaces $U_1$ 
and $U_2$ of a Banach space $\cX$; and $V_1 \oplus V_2$ represents the orthogonal direct 
sum of the subspaces $V_j$, $j=1,2$, of a Hilbert space $\cH$. 

Throughout this manuscript, if $X$ denotes a Banach space, $X^*$ 
denotes the {\it adjoint space} of continuous conjugate linear functionals 
on $X$, that is, the {\it conjugate dual space} of $X$ (rather than the usual 
dual space of continuous linear functionals 
on $X$). This avoids the well-known awkward distinction between adjoint 
operators in Banach and Hilbert spaces (cf., e.g., the pertinent discussion 
in \cite[p.\ 3, 4]{EE89}). 

Given a reflexive Banach space $\cV$ and $T \in\cB(\cV,\cV^*)$,
the fact that $T$ is self-adjoint is defined by the requirement that
\begin{equation}\label{B.5}
{}_{\cV}\langle u,T v \rangle_{\cV^*}
= {}_{\cV^*}\langle T u, v \rangle_{\cV}
= \ol{{}_{\cV}\langle v, T u \rangle_{\cV^*}}, \quad u, v \in \cV,
\end{equation}
where in this context bar denotes complex conjugation, $\cV^*$ is the conjugate dual of $\cV$, and
${}_{\cV}\langle\dott,\dott\rangle_{\cV^*}$ stands for the $\cV, \cV^*$ pairing.

A linear operator $S:\dom(S)\subseteq\cH\to\cH$, is called {\it symmetric}, if
\begin{equation}\label{Pos-2}
(u,Sv)_\cH=(Su,v)_\cH, \quad u,v\in \dom (S).
\end{equation}
If $\dom(S)=\cH$, the classical Hellinger--Toeplitz theorem guarantees that $S\in\cB(\cH)$, in which situation $S$ is readily seen to be self-adjoint. In general, however, symmetry is a considerably weaker property than self-adjointness and a classical problem in functional analysis is that of determining all self-adjoint extensions in $\cH$ of a given unbounded symmetric operator of equal and nonzero deficiency indices. (Here self-adjointness of an operator $\wti S$ in $\cH$, is 
of course defined as usual by $\big(\wti S\big)^* = \wti S$.) In this manuscript we will be particularly interested in this question within the class of 
densely defined (i.e., $\ol{\dom(S)}=\cH$), nonnegative operators (in fact, in most instances $S$ will even turn out to be strictly positive) and we focus almost exclusively on self-adjoint extensions that are nonnegative operators. In the latter scenario, there are two distinguished constructions which we will briefly review next. 

To set the stage, we recall that a linear operator $S:\dom(S)\subseteq\cH\to \cH$
is called {\it nonnegative} provided
\begin{equation}\label{Pos-1}
(u,Su)_\cH\geq 0, \quad u\in \dom(S).
\end{equation}
(In particular, $S$ is symmetric in this case.) $S$ is called {\it strictly positive}, if for some 
$\varepsilon >0$, $(u,Su)_\cH\geq \varepsilon \|u\|_{\cH}^2$, $u\in \dom(S)$. 
Next, we recall that $A \leq B$ for two self-adjoint operators in $\cH$ if 
\begin{align}
\begin{split}
& \dom\big(|A|^{1/2}\big) \supseteq \dom\big(|B|^{1/2}\big) \, \text{ and } \\ 
& \big(|A|^{1/2}u, U_A |A|^{1/2}u\big)_{\cH} \leq \big(|B|^{1/2}u, U_B |B|^{1/2}u\big)_{\cH}, \quad  
u \in \dom\big(|B|^{1/2}\big),      \lb{AleqB} 
\end{split}
\end{align}
where $U_C$ denotes the partial isometry in $\cH$ in the polar decomposition of 
a densely defined closed operator $C$ in $\cH$, $C=U_C |C|$, $|C|=(C^* C)^{1/2}$. (If 
in addition, $C$ is self-adjoint, then $U_C$ and $|C|$ commute.) 
We also recall (\cite[Part II]{Fa75}, \cite[Theorem VI.2.21]{Ka80}) that if $A$ and $B$ are both self-adjoint and nonnegative in $\cH$, then 
\begin{align}
\begin{split}
& 0 \leq A\leq B  \, \text{ if and only if } \, 0 \leq A^{1/2 }\leq B^{1/2},     \label{PPa-1} \\
& \quad \text{equivalently, if and only if } \,  
(B + a I_\cH)^{-1} \leq (A + a I_\cH)^{-1} \, \text{ for all $a>0$,} 
\end{split}
\end{align}
and
\begin{equation}
\ker(A) =\ker\big(A^{1/2}\big)
\end{equation}
(with $C^{1/2}$ the unique nonnegative square root of a nonnegative self-adjoint operator $C$ in $\cH$).

For simplicity we will always adhere to the conventions that $S$ is a linear, unbounded, densely defined, nonnegative (i.e., $S\geq 0$) operator in $\cH$, and that $S$ has nonzero deficiency indices.  In particular,
\begin{equation}
{\rm def} (S) = \dim (\ker(S^*-z I_{\cH})) \in \bbN\cup\{\infty\}, 
\quad z\in \bbC\backslash [0,\infty), 
\lb{DEF}
\end{equation}
is well-known to be independent of $z$. 
Moreover, since $S$ and its closure $\ol{S}$ have the same self-adjoint extensions in $\cH$, we will without loss of generality assume that $S$ is closed in the remainder of this section.

The following is a fundamental result to be found in M.\ Krein's celebrated 1947 paper
 \cite{Kr47} (cf.\ also Theorems\ 2 and 5--7 in the English summary on page 492): 
 
\begin{theorem}\label{T-kkrr}
Assume that $S$ is a densely defined, closed, nonnegative operator in $\cH$. Then, among all 
nonnegative self-adjoint extensions of $S$, there exist two distinguished ones, $S_K$ and $S_F$, which are, respectively, the smallest and largest
$($in the sense of order between self-adjoint operators, cf.\ \eqref{AleqB}$)$ such extension. Furthermore, a nonnegative self-adjoint operator $\widetilde{S}$ is a self-adjoint extension of $S$ if and only if $\widetilde{S}$ satisfies 
\begin{equation}\label{Fr-Sa}
S_K\leq\widetilde{S}\leq S_F.
\end{equation}
In particular, \eqref{Fr-Sa} determines $S_K$ and $S_F$ uniquely. \\

In addition,  if $S\geq \varepsilon I_{\cH}$ for some $\varepsilon >0$, one has 
$S_F \geq \varepsilon I_{\cH}$, and 
\begin{align}
\dom (S_F) &= \dom (S) \dotplus (S_F)^{-1} \ker (S^*),     \lb{SF}  \\
\dom (S_K) & = \dom (S) \dotplus \ker (S^*),    \lb{SK}   \\
\dom (S^*) & = \dom (S) \dotplus (S_F)^{-1} \ker (S^*) \dotplus \ker (S^*)  \no \\
& = \dom (S_F) \dotplus \ker (S^*),    \lb{S*} 
\end{align}
in particular, 
\begin{equation} \label{Fr-4Tf}
\ker(S_K)= \ker\big((S_K)^{1/2}\big)= \ker(S^*) = \ran(S)^{\bot}.
\end{equation} 
\end{theorem}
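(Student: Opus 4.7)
The plan is to construct the two distinguished extensions separately and then to establish the extremal ordering. Throughout, I would first handle the case $S\ge \varepsilon I_\cH$ with $\varepsilon>0$, since the explicit formulas \eqref{SF}--\eqref{Fr-4Tf} assume this; the general nonnegative case is recovered by shifting $S\mapsto S+\eta I_\cH$ and letting $\eta\downarrow 0$. For the Friedrichs extension $S_F$ I would use the classical form-closure: the symmetric form $s(u,v):=(u,Sv)_\cH$ on $\dom(S)$ satisfies $s(u,u)\ge\varepsilon\|u\|_\cH^2$, hence is closable, and its closure $s_F$ has a form domain $\cV$ continuously embedded in $\cH$. The form representation theorem produces a unique self-adjoint operator $S_F\ge\varepsilon I_\cH$ associated with $s_F$, and $S\subseteq S_F$ by construction.

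For the Krein--von Neumann extension $S_K$ I would use \eqref{SK} as the explicit definition. A key preliminary is that $\ran(S)$ is closed: from $(u,Su)_\cH\ge\varepsilon\|u\|_\cH^2$ and Cauchy--Schwarz one obtains $\|Su\|_\cH\ge\varepsilon\|u\|_\cH$, and closedness of $S$ then gives closedness of $\ran(S)$. Thus $\cH=\ran(S)\oplus\ker(S^*)$, and moreover $\dom(S)\cap\ker(S^*)=\{0\}$ (any $u$ in the intersection satisfies $\varepsilon\|u\|^2\le(u,Su)_\cH=(u,S^*u)_\cH=0$). So $\dom(S_K):=\dom(S)\dotplus\ker(S^*)$ is well-defined, and I set $S_K(u_0+k):=Su_0$. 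Symmetry follows from $(k,Su_0)_\cH=(S^*k,u_0)_\cH=0$. For self-adjointness, given $v\in\dom((S_K)^*)$ with $(S_K)^*v=w$, testing against $u\in\dom(S)$ yields $v\in\dom(S^*)$ and $S^*v=w$, while testing against $k\in\ker(S^*)$ gives $(S^*v,k)_\cH=0$, so $S^*v\in\ker(S^*)^\perp=\ran(S)$. Writing $S^*v=Su_0$ for a unique $u_0\in\dom(S)$, the element $v-u_0$ lies in $\ker(S^*)$, whence $v\in\dom(S_K)$ and $S_Kv=Su_0=w$.

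The ordering relations come next. For $\widetilde S\le S_F$: every nonnegative self-adjoint extension $\widetilde S$ of $S$ carries a closed nonnegative form $\widetilde s$ which extends $s$. Since $s_F$ is by construction the smallest closed extension of $s$, $\dom(\widetilde s)\supseteq\dom(s_F)$ and the two forms coincide on $\dom(s_F)$; combining this with the criterion \eqref{AleqB} (together with \eqref{PPa-1}) yields $\widetilde S\le S_F$. The reverse bound $S_K\le\widetilde S$, which I expect to be the main obstacle, I would approach by showing that $\dom(S_K^{1/2})=\dom(S_F^{1/2})\dotplus\ker(S^*)$ is the form domain of the \emph{largest} closed nonnegative extension of $s$, so that $\dom(\widetilde S^{1/2})\subseteq\dom(S_K^{1/2})$ with $\widetilde s\ge s_K$ there. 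Equivalently, I would pass to resolvents via \eqref{PPa-1}: for $a>0$, $(\widetilde S+aI_\cH)^{-1}$ is a bounded nonnegative self-adjoint extension to $\cH$ of $(S+aI_\cH)^{-1}$ defined on the closed subspace $\ran(S+aI_\cH)$, and $(S_K+aI_\cH)^{-1}$ can be shown to be the maximal such extension. In either formulation one exploits the fact that $S_K$ vanishes on all of $\ker(S^*)$, which is the largest possible kernel for any nonnegative self-adjoint extension (indeed, $\widetilde S\supseteq S$ immediately forces $\ker(\widetilde S)\subseteq\ker(S^*)$).

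Finally, the explicit domain formulas follow quickly. Surjectivity of $S_F$ on $\cH$ (since $S_F\ge\varepsilon I_\cH$) allows, for $u\in\dom(S_F)$, the decomposition $S_Fu=Sw+k\in\ran(S)\oplus\ker(S^*)$, whence $u=w+S_F^{-1}k$ and \eqref{SF} follows; directness uses $\ran(S)\cap\ker(S^*)=\{0\}$. Formula \eqref{S*} is analogous: for $v\in\dom(S^*)$, choose the unique $u\in\dom(S_F)$ with $S_Fu=S^*v$; then $v-u\in\ker(S^*)$. The kernel identity \eqref{Fr-4Tf} combines the elementary $\ker(S^*)=\ran(S)^\perp$, the direct computation $\ker(S_K)=\ker(S^*)$ (from the definition of $S_K$ together with injectivity of $S$), and the standard fact $\ker(T)=\ker(T^{1/2})$ for nonnegative self-adjoint $T$.
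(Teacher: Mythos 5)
You should first note that the paper does not prove this theorem at all: it is quoted verbatim as Krein's classical 1947 result (with references to \cite{Kr47}, \cite{AS80}, \cite{AN70}), so your attempt has to be measured against the standard arguments rather than against anything in the text. Much of what you write is sound: the construction of $S_K$ on $\dom(S)\dotplus\ker(S^*)$, the verification of its symmetry and self-adjointness, the bound $\widetilde S\le S_F$ via closed forms, and the domain identities \eqref{SF}, \eqref{S*}, \eqref{Fr-4Tf} are all correctly argued. The genuine gap sits exactly where you anticipate it: the inequality $S_K\le\widetilde S$. You name two possible routes but carry out neither, and the one heuristic you do offer --- that $S_K$ vanishes on $\ker(S^*)$, ``the largest possible kernel'' --- does not imply minimality in the order \eqref{AleqB}: maximality of the kernel says nothing about how the quadratic form behaves on the complement of the kernel. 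To close the argument you would need either (i) the Ando--Nishio identification of $\dom\big((S_K)^{1/2}\big)$ with the set of $u$ for which $\sup_{v\in\dom(S)}|(u,Sv)_\cH|^2/(v,Sv)_\cH<\infty$ (cf.\ \eqref{an-T1}--\eqref{an-T2}); once that is available, Cauchy--Schwarz applied to $(u,Sv)_\cH=\big(\widetilde S^{1/2}u,\widetilde S^{1/2}v\big)_\cH$ gives $\big\|(S_K)^{1/2}u\big\|^2_\cH\le\big\|\widetilde S^{1/2}u\big\|^2_\cH$ immediately --- but proving that this sup-characterization describes the operator defined by \eqref{SK} is precisely the nontrivial content; or (ii) Krein's lemma that a nonnegative symmetric contraction given on a closed subspace admits a maximal nonnegative self-adjoint contractive extension, which is the technical heart of your resolvent route and is not a one-line assertion.

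Two further points. First, the theorem is an equivalence: you prove that every nonnegative self-adjoint extension satisfies \eqref{Fr-Sa}, but not the converse, namely that any nonnegative self-adjoint $\widetilde S$ with $S_K\le\widetilde S\le S_F$ is automatically an extension of $S$. That direction needs its own argument (squeeze the form of $\widetilde S$ between those of $S_K$ and $S_F$, which agree on $\dom\big((S_F)^{1/2}\big)$ by \eqref{SKform2}, and then upgrade form equality on $\dom(S)$ to operator containment). Second, your opening reduction of the general case $S\ge0$ to $S\ge\varepsilon I_\cH$ by shifting and letting $\eta\downarrow0$ is not innocent for the Krein extension: as the paper itself remarks after \eqref{Fr-2X}, $(S+cI_{\cH})_K\ne S_K+cI_{\cH}$, so the family $(S+\eta I_{\cH})_K-\eta I_{\cH}$ must be shown to converge (monotonically, in strong resolvent sense) and its limit must be identified as the minimal nonnegative self-adjoint extension. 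For the Friedrichs extension the shift is harmless by \eqref{Fr-c}, but for $S_K$ this step conceals real work and, as written, would fail as a proof of the first half of the theorem for merely nonnegative $S$.
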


Here the operator inequalities in \eqref{Fr-Sa} are understood in the sense of 
\eqref{AleqB} and hence they can  equivalently be written as
\begin{equation}
(S_F + a I_{\cH})^{-1} \le \big(\wti S + a I_{\cH}\big)^{-1} \le (S_K + a I_{\cH})^{-1} 
\, \text{ for some (and hence for all\,) $a > 0$.}    \lb{Res}
\end{equation}

We will call the operator $S_K$ the {\it Krein--von Neumann extension}
of $S$. See \cite{Kr47} and also the discussion in \cite{AS80}, \cite{AT03}, \cite{AT05}. It should be
noted that the Krein--von Neumann extension was first considered by von Neumann 
\cite{Ne29} in 1929 in the case where $S$ is strictly positive, that is, if 
$S \geq \varepsilon I_{\cH}$ for some $\varepsilon >0$. (His construction appears in the proof of Theorem 42 on pages 102--103.) However, von Neumann did not isolate the extremal property of this extension as described in \eqref{Fr-Sa} and \eqref{Res}. M.\ Krein \cite{Kr47}, \cite{Kr47a} was the first to systematically treat the general case $S\geq 0$ and to study all nonnegative self-adjoint extensions of $S$, illustrating the special role of the {\it Friedrichs extension} (i.e., the ``hard'' extension) $S_F$ of $S$ and the Krein--von Neumann (i.e., the ``soft'') extension $S_K$ of $S$ as extremal cases when considering all nonnegative extensions of $S$. For a recent exhaustive treatment of 
self-adjoint extensions of semibounded operators we refer to \cite{AT02}--\cite{AT09}. 

For classical references on the subject of self-adjoint extensions of semibounded operators (not necessarily restricted to the Krein--von Neumann extension) we refer to Birman \cite{Bi56}, \cite{Bi08}, Friedrichs \cite{Fr34}, Freudenthal \cite{Fr36}, Grubb \cite{Gr68}, \cite{Gr70},  
Krein \cite{Kr47a}, {\u S}traus \cite{St73}, and Vi{\u s}ik \cite{Vi63} (see also the monographs by Akhiezer and Glazman \cite[Sect. 109]{AG81a}, 
Faris \cite[Part III]{Fa75}, and the recent book by Grubb \cite[Sect.\ 13.2]{Gr09}).  

An intrinsic description of the Friedrichs extension $S_F$ of $S\geq 0$ due to 
Freudenthal \cite{Fr36} in 1936 describes $S_F$ as the operator 
$S_F:\dom(S_F)\subset\cH\to\cH$ given by   
\begin{align}
& S_F u:=S^*u,   \no \\
& u \in \dom(S_F):=\big\{v\in\dom(S^*)\,\big|\,  \mbox{there exists} \, 
\{v_j\}_{j\in\bbN}\subset \dom(S),    \label{Fr-2} \\
& \quad \mbox{with} \, \lim_{j\to\infty}\|v_j-v\|_{\cH}=0  
\mbox{ and } ((v_j-v_k),S(v_j-v_k))_\cH\to 0 \mbox{ as }  j,k\to\infty\big\}.     \no 
\end{align}
Then, as is well-known,  
\begin{align}
& S_F \geq 0,  \label{Fr-4}  \\
& \dom\big((S_F)^{1/2}\big)=\big\{v\in\cH\,\big|\, \mbox{there exists} \, 
\{v_j\}_{j\in\bbN}\subset \dom(S),   \label{Fr-4J} \\
& \quad \mbox{with} \lim_{j\to\infty}\|v_j-v\|_{\cH}=0  
\mbox{ and } ((v_j-v_k),S(v_j-v_k))_\cH\to 0\mbox{ as }
j,k\to\infty\big\},  \no 
\end{align}
and
\begin{equation}\label{Fr-4H}
S_F=S^*|_{\dom(S^*)\cap\dom((S_{F})^{1/2})}.
\end{equation}

Equations \eqref{Fr-4J} and \eqref{Fr-4H} are intimately related to the definition of $S_F$ via (the closure of) the sesquilinear form generated by $S$ as follows: One introduces the sesquilinear form
\begin{equation}
q_S(f,g)=(f,Sg)_{\cH}, \quad f, g \in \dom(q_S)=\dom(S). 
\end{equation}
Since $S\geq 0$, the form $q_S$ is closable and we denote by $Q_S$ the closure of 
$q_S$. Then $Q_S\geq 0$ is densely defined and closed. By the first and second representation theorem for forms (cf., e.g., \cite[Sect.\ 6.2]{Ka80}), $Q_S$ is uniquely associated with a nonnegative, self-adjoint operator in $\cH$. This operator is precisely the Friedrichs extension, $S_F \geq 0$, of $S$, and hence,
\begin{align}
\begin{split} 
& Q_S(f,g)=(f,S_F g)_{\cH}, \quad f \in \dom(Q_S), \, g \in \dom(S_F),  \lb{Fr-Q} \\ 
& \dom(Q_S) = \dom\big((S_F)^{1/2}\big).     
\end{split} 
\end{align}

An intrinsic description of the Krein--von Neumann extension $S_K$ of $S\geq 0$ has been given by Ando and Nishio \cite{AN70} in 1970, where $S_K$ has been characterized as the operator 
$S_K:\dom(S_K)\subset\cH\to\cH$ given by
\begin{align} 
& S_Ku:=S^*u,   \no \\
& u \in \dom(S_K):=\big\{v\in\dom(S^*)\,\big|\,\mbox{there exists} \, 
\{v_j\}_{j\in\bbN}\subset \dom(S),    \label{Fr-2X}  \\ 
& \quad \mbox{with} \, \lim_{j\to\infty} \|Sv_j-S^*v\|_{\cH}=0  
\mbox{ and } ((v_j-v_k),S(v_j-v_k))_\cH\to 0 \mbox{ as } j,k\to\infty\big\}.  \no
\end{align}

By \eqref{Fr-2} one observes that shifting $S$ by a constant commutes with the operation of taking the Friedrichs extension of $S$, that is, for any 
$c\in\bbR$,
\begin{equation}
(S + c I_{\cH})_{F} = S_F + c I_{\cH},    \lb{Fr-c}
\end{equation}
but by \eqref{Fr-2X}, the analog of \eqref{Fr-c} for the Krein--von Neumann extension 
$S_K$ fails.

At this point we recall a result due to Makarov and Tsekanovskii \cite{MT07}, concerning symmetries (e.g., the rotational symmetry exploited in Section \ref{s1vi}), and more generally, a scale invariance,  shared by $S$, $S^*$, $S_F$, and $S_K$ (see also \cite{HK09}). Actually, we will prove a slight extension of the principal result in \cite{MT07}:

\begin{proposition}  \lb{p2.2a}
Let $\mu > 0$, suppose that $V, V^{-1} \in \cB(\cH)$, and assume $S$ to be a densely defined, closed, nonnegative operator in $\cH$ satisfying  
\begin{equation}
V S V^{-1} = \mu S,    \lb{VS}
\end{equation}
and
\begin{equation}
V S V^{-1} = (V^*)^{-1} S V^* \, \text{ $($or equivalently, $(V^* V)^{-1} S (V^* V) = S$\,$)$.}  
\end{equation}
Then also $S^*$, $S_F$, and $S_K$ satisfy 
\begin{align}
(V^* V)^{-1} S^* (V^* V) &= S^*,  \,\,\,\quad  V S^* V^{-1} = \mu S^*,    \lb{VS*} \\
(V^* V)^{-1} S_F (V^* V) &= S_F,  \, \quad  V S_F V^{-1} = \mu S_F,  \lb{VSF}\\
(V^* V)^{-1} S_K (V^* V) &= S_K,  \quad V S_K V^{-1} = \mu S_K.  \lb{VSK} 
\end{align}
\end{proposition}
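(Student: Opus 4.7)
The plan is to deduce \eqref{VS*}, \eqref{VSF}, and \eqref{VSK} in order, by (a) converting the hypotheses into intertwining identities on $\dom(S)$, (b) establishing a form bound for bounded, strictly positive, self-adjoint operators that commute with $S$, and (c) lifting the scaling to $S_F$ and $S_K$ via the Freudenthal and Ando--Nishio characterizations \eqref{Fr-2}, \eqref{Fr-2X}. For (a), the two hypotheses are equivalent to the intertwinings $VS=\mu SV$, $V^*S=\mu^{-1}SV^*$, and $(V^*V)S=S(V^*V)$ on $\dom(S)$, with each of $V^{\pm 1}$ and $V^{\pm *}$ bijecting $\dom(S)$ onto itself; composing the first two also yields $(VV^*)S=S(VV^*)$. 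Taking Hilbert-space adjoints of $VSV^{-1}=\mu S$ and of $(V^*V)^{-1}S(V^*V)=S$ gives $(V^*)^{-1}S^*V^*=\mu S^*$ and $(V^*V)^{-1}S^*(V^*V)=S^*$, respectively, and combining these produces $VS^*V^{-1}=\mu S^*$, which is \eqref{VS*}; one likewise obtains $VS^*=\mu S^*V$ and $V^*S^*=\mu^{-1}S^*V^*$ on $\dom(S^*)$.

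For (b), fix any bounded, self-adjoint, strictly positive operator $A$ such that $A$ and $A^{-1}$ both preserve $\dom(S)$ and commute with $S$ there; in particular any $A\in\{V^*V, VV^*\}$ qualifies. By functional calculus on the compact set $\sigma(A)\subset(0,\infty)$ (polynomials in $A, A^{-1}$ approximate bounded Borel functions uniformly, and $S$ is closed), every spectral projection $E(B)$ of $A$ also preserves $\dom(S)$ and commutes with $S$. Hence $(E(B)w, Sw)_\cH = (E(B)w, SE(B)w)_\cH\geq 0$ for $w\in\dom(S)$, so $d\nu_w(\lambda):=d(w, E(\lambda)Sw)_\cH$ is a positive finite measure on $\sigma(A)$, and
\[
q_S(Aw, Aw) = (A^2 w, Sw)_\cH = \int\lambda^2\,d\nu_w(\lambda) \leq \|A\|^2\,q_S(w, w), \quad w\in\dom(S).
\]

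For (c) and \eqref{VSF}, pick $v\in\dom(S_F)$ and invoke \eqref{Fr-2} to choose $\{v_j\}\subset\dom(S)$ with $v_j\to v$ in $\cH$ and $q_S(v_j-v_k, v_j-v_k)\to 0$. Then $\{Vv_j\}\subset\dom(S)$ and $Vv_j\to Vv$; using $SV=\mu^{-1}VS$ on $\dom(S)$, Cauchy--Schwarz for the positive form $q_S$, and the bound in (b),
\[
q_S(V(v_j-v_k), V(v_j-v_k)) = \mu^{-1} q_S(V^*V(v_j-v_k), v_j-v_k) \leq \mu^{-1}\|V^*V\|\,q_S(v_j-v_k, v_j-v_k) \to 0.
\]
So $Vv\in\dom(S_F)$, and \eqref{Fr-4H} together with \eqref{VS*} yields $S_FVv = S^*Vv = \mu^{-1}VS^*v = \mu^{-1}VS_Fv$, i.e., $VS_F=\mu S_FV$ on $\dom(S_F)$. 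The symmetric argument with $V^{-1}$ (which satisfies $V^{-1}SV=\mu^{-1}S$, and for which $(V^{-1})^*V^{-1}=(VV^*)^{-1}$ commutes with $S$) then forces $V\dom(S_F)=\dom(S_F)$, giving $VS_FV^{-1}=\mu S_F$. Repeating the same sequence argument with $V$ replaced by $A\in\{V^*V, VV^*\}$ (with $\mu=1$, since $AS=SA$) shows $A\dom(S_F)\subseteq\dom(S_F)$ and $AS_F=S_FA$, completing \eqref{VSF}. Analogous reasoning using \eqref{Fr-2X} in place of \eqref{Fr-2}, where the extra convergence $\|SVv_j - S^*Vv\|_\cH = \mu^{-1}\|V(Sv_j-S^*v)\|_\cH\to 0$ follows immediately from $S^*V=\mu^{-1}VS^*$, establishes \eqref{VSK}. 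The main obstacle is the form estimate in (b): translating the algebraic commutativity $AS=SA$ on $\dom(S)$ into the $L^\infty$-style bound $q_S(Aw, Aw)\leq\|A\|^2 q_S(w, w)$ rests on the positivity of $d\nu_w$; once this is available, the lifting arguments reduce to routine applications of the intrinsic characterizations.
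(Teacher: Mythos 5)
Your proof is correct and follows the same basic route as the paper: establish \eqref{VS*} by taking adjoints, and then lift the intertwining relations to $S_F$ and $S_K$ through the Freudenthal and Ando--Nishio characterizations \eqref{Fr-2}, \eqref{Fr-2X}. The paper's own proof compresses this lifting into the phrase ``replacing $S$ by $VSV^{-1}$ in \eqref{Fr-2}''; what that replacement actually requires is exactly your step (b), namely that a sequence $\{v_j\}\subset\dom(S)$ is Cauchy for the form $(w,Sw)_{\cH}$ if and only if $\{Vv_j\}$ is, which reduces to the two-sided comparability of $(w,Sw)_{\cH}$ and $(w,V^*VSw)_{\cH}$. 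So your spectral-measure argument supplies a step the paper leaves implicit, and it is the genuinely nontrivial part of the proof. One small inaccuracy: polynomials in $A,A^{-1}$ do not approximate bounded Borel functions \emph{uniformly} on $\sigma(A)$ (only continuous ones); you should instead approximate $\chi_B$ pointwise boundedly by continuous functions, obtain strong convergence of the corresponding operators, and then use closedness of $S$ (which you already invoke) to transfer the commutation to $E(B)$ -- or, more simply, avoid spectral projections altogether by writing $\|A\|^2 I_{\cH}-A^2=C^2$ with $C=(\|A\|^2 I_{\cH}-A^2)^{1/2}$ a norm-limit of polynomials in $A$, whence $\|A\|^2(w,Sw)_{\cH}-(A^2w,Sw)_{\cH}=(Cw,SCw)_{\cH}\geq 0$.
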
 
\begin{proof}
Applying \cite[p.\ 73, 74]{We80}, \eqref{VS} yields $V S V^{-1} = (V^*)^{-1} S V^*$. The latter 
relation is equivalent to $(V^* V)^{-1} S (V^* V) = S$ and hence also equivalent to 
$(V^* V) S (V^* V)^{-1} = S$. Taking adjoints (and applying \cite[p.\ 73, 74]{We80} again) 
then yields $(V^*)^{-1} S^* V^* = V S^* V^{-1}$; the latter is equivalent to 
$(V^* V)^{-1} S^* (V^* V) = S^*$ and hence also equivalent to $(V^* V) S^* (V^* V)^{-1} = S$. 
Replacing $S$ and $S^*$ by $(V^* V)^{-1} S (V^* V)$ and $(V^* V)^{-1} S^* (V^* V)$, respectively, 
in \eqref{Fr-2}, and subsequently, in \eqref{Fr-2X}, then yields that 
\begin{equation}
(V^* V)^{-1} S_F (V^* V) = S_F  \, \text{ and } \,  
(V^* V)^{-1} S_K (V^* V) = S_K. 
\end{equation}
The latter are of course equivalent to 
\begin{equation}
(V^* V) S_F (V^* V)^{-1} = S_F  \, \text{ and } \,  
(V^* V) S_K (V^* V)^{-1} = S_K. 
\end{equation}
Finally, replacing $S$ by $V S V^{-1}$ and $S^*$ by $V S^* V^{-1}$ in  \eqref{Fr-2}  
then proves $V S_F V^{-1} = \mu S_F$. Performing the same replacement in  \eqref{Fr-2X} 
then yields $V S_K V^{-1} = \mu S_K$.  
\end{proof}

If in addition, $V$ is unitary (implying $V^* V = I_{\cH}$), Proposition \ref{p2.2a} immediately reduces to \cite[Theorem\ 2.2]{MT07}. In this special case one can also provide a quick alternative proof by directly invoking the inequalities \eqref{Res} and the fact that they are preserved under unitary equivalence. 

Similarly to Proposition \ref{p2.2a}, the following results also immediately follows from the characterizations \eqref{Fr-2} and \eqref{Fr-2X} of $S_F$ and $S_K$, respectively:

\begin{proposition}  \lb{p2.3}  
Let $U\colon\cH_1\to\cH_2$ be unitary from $\cH_1$ onto $\cH_2$ and assume $S$ to be a densely defined, closed, nonnegative operator in $\cH_1$ with adjoint 
$S^*$, Friedrichs extension $S_F$, and Krein--von Neumann extension $S_K$ in 
$\cH_1$, respectively. Then the adjoint, Friedrichs extension, and Krein--von Neumann extension of the nonnegative, closed, densely defined, symmetric operator $USU^{-1}$ in $\cH_2$ are given by 
\begin{align}
[USU^{-1}]^* = US^*U^{-1},  \quad 
[USU^{-1}]_F = US_F U^{-1},  \quad 
[USU^{-1}]_K = US_K U^{-1}
\end{align}
in $\cH_2$, respectively.
\end{proposition}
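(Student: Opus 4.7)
The strategy is to verify each of the three identities by invoking the appropriate intrinsic characterization and exploiting the fact that the unitary $U\colon\cH_1\to\cH_2$ preserves norms, inner products, and closedness under its natural action on operator graphs. First I would record the easy observation that if $S$ is densely defined, closed, and nonnegative in $\cH_1$, then $USU^{-1}$ with domain $U\dom(S)$ enjoys the same three properties in $\cH_2$: density follows from $U$ being a bijective isometry, closedness from the isometric transport of the graph of $S$ by the unitary $U\oplus U\colon\cH_1\oplus\cH_1\to\cH_2\oplus\cH_2$, and nonnegativity from $(Uv, USU^{-1}Uv)_{\cH_2} = (v, Sv)_{\cH_1} \geq 0$ for $v\in\dom(S)$.

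For the adjoint identity $[USU^{-1}]^* = US^*U^{-1}$ I would proceed by a direct computation: for $\tilde w \in \cH_2$ and $\tilde v = Uv$ with $v \in \dom(S)$, one has $(\tilde w, USU^{-1}\tilde v)_{\cH_2} = (U^{-1}\tilde w, Sv)_{\cH_1}$, so the condition $\tilde w \in \dom([USU^{-1}]^*)$ is equivalent to $U^{-1}\tilde w \in \dom(S^*)$, with adjoint action $[USU^{-1}]^*\tilde w = US^*U^{-1}\tilde w$. Hence $\dom([USU^{-1}]^*) = U\dom(S^*)$ and the claimed identity follows.

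The identities for the Friedrichs and Krein--von Neumann extensions then follow directly from the intrinsic characterizations \eqref{Fr-2} and \eqref{Fr-2X}. Indeed, in \eqref{Fr-2} every ingredient, namely membership in $\dom(S^*)$, approximation by elements of $\dom(S)$ in the $\cH_1$-norm, and the Cauchy condition $((v_j-v_k), S(v_j-v_k))_{\cH_1} \to 0$, is preserved verbatim when one replaces $v_j$ by $Uv_j$, $v$ by $Uv$, $S$ by $USU^{-1}$, and $S^*$ by $US^*U^{-1} = [USU^{-1}]^*$, because $\|Uw\|_{\cH_2} = \|w\|_{\cH_1}$ and $(Uw, USU^{-1}Uw')_{\cH_2} = (w, Sw')_{\cH_1}$. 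Therefore $\dom([USU^{-1}]_F) = U\dom(S_F)$, and the restriction of $[USU^{-1}]^*$ to this domain equals $US_F U^{-1}$. The same line of reasoning, now using the Ando--Nishio characterization \eqref{Fr-2X} together with the identity $\|Sv_j - S^*v\|_{\cH_1} = \|USU^{-1}Uv_j - US^*U^{-1}Uv\|_{\cH_2}$, yields $[USU^{-1}]_K = US_K U^{-1}$.

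I do not anticipate a genuine obstacle here; the argument is essentially bookkeeping, and the only care required is to confirm that the conditions appearing in \eqref{Fr-2} and \eqref{Fr-2X} transport cleanly under $U$. As an alternative route, one could instead derive the conclusions for $S_F$ and $S_K$ from Theorem \ref{T-kkrr}: unitary conjugation preserves the order relation between nonnegative self-adjoint operators via the resolvent form \eqref{Res} (note that $U(A + aI_{\cH_1})^{-1}U^{-1} = (UAU^{-1} + aI_{\cH_2})^{-1}$), so the extremal nonnegative self-adjoint extensions of $USU^{-1}$ are necessarily the unitary conjugates of those of $S$, giving the identities for $S_F$ and $S_K$ at once.
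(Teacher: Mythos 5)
Your proof is correct and follows essentially the same route as the paper, which derives Proposition \ref{p2.3} directly from the Freudenthal characterization \eqref{Fr-2} and the Ando--Nishio characterization \eqref{Fr-2X} by transporting all the defining conditions through the unitary $U$. The alternative argument you sketch via the extremality/resolvent ordering \eqref{Res} is also valid, but the characterization-based argument is the one the paper uses.
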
 

\begin{proposition}  \lb{p2.4}  
Let $J\subseteq \bbN$ be some countable index set and consider 
$\cH = \bigoplus_{j\in J} \cH_j$ and $S=\bigoplus_{j\in J} S_j$, where each $S_j$ is a densely defined, closed, nonnegative operator in 
$\cH_j$, $j\in J$. Denoting by $(S_j)_F$ and $(S_j)_K$ the Friedrichs and 
Krein--von Neumann extension of $S_j$ in $\cH_j$, $j\in J$, one infers  
\begin{equation}
S^*=\bigoplus_{j\in J} \; (S_j)^*, \quad S_F=\bigoplus_{j\in J} \; (S_j)_F, 
\quad S_K = \bigoplus_{j\in J} \; (S_j)_K. 
\end{equation}
\end{proposition}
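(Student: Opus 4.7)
The plan is to verify the three identities in turn, using the intrinsic characterizations of $S^{*}$, $S_F$, and $S_K$ already recalled in the excerpt. I write elements of $\cH = \bigoplus_{j\in J}\cH_j$ as tuples $u = (u_j)_{j\in J}$ with $\|u\|_{\cH}^2 = \sum_j \|u_j\|_{\cH_j}^2$, and give $S = \bigoplus_{j\in J}S_j$ its natural maximal domain
\[
\dom(S) = \Big\{u \in \cH : u_j \in \dom(S_j) \text{ for every } j\in J,\;\; \sum_{j \in J}\|S_j u_j\|_{\cH_j}^2 < \infty\Big\},
\]
with $Su = (S_j u_j)_{j\in J}$; that this $S$ is densely defined, closed, and nonnegative is routine given the corresponding properties of each $S_j$.

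For $S^{*} = \bigoplus_j (S_j)^{*}$, I would test the defining identity $(u, S v)_{\cH} = (w, v)_{\cH}$ against vectors $v \in \dom(S)$ supported in a single coordinate $j_0$; this forces $u_{j_0} \in \dom((S_{j_0})^{*})$ with $w_{j_0} = (S_{j_0})^{*}u_{j_0}$, yielding the inclusion $\dom(S^{*}) \subseteq \bigoplus_j \dom((S_j)^{*})$, and the reverse inclusion follows by a direct coordinatewise Cauchy--Schwarz estimate.

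For $S_F = \bigoplus_j (S_j)_F$ I would use the sesquilinear form approach \eqref{Fr-Q}. The form $q_S$ on $\dom(S)$ decomposes as $q_S(u,u) = \sum_j q_{S_j}(u_j,u_j)$, and I would show that its closure $Q_S$ coincides with the direct-sum form
\[
Q(u,u) := \sum_{j\in J} Q_{S_j}(u_j, u_j), \quad \dom(Q) := \Big\{u \in \cH : u_j \in \dom(Q_{S_j}),\; \sum_{j\in J} Q_{S_j}(u_j, u_j) < \infty\Big\},
\]
which is exactly the form generating $\bigoplus(S_j)_F$. The inclusion $Q_S \subseteq Q$ is immediate from closedness of each $Q_{S_j}$; the reverse is obtained by truncating $u \in \dom(Q)$ to $u^{(N)} = (u_1,\dots,u_N,0,0,\dots)$, approximating each coordinate in the $Q_{S_j}$-norm by elements of $\dom(S_j)$, and extracting a diagonal sequence. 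The first and second representation theorems then identify the associated operator as both $S_F$ and $\bigoplus (S_j)_F$. For $S_K = \bigoplus_j (S_j)_K$ I would invoke the Ando--Nishio characterization \eqref{Fr-2X}: its two witnessing conditions decompose cleanly, since $((v^{(k)} - v^{(\ell)}), S(v^{(k)} - v^{(\ell)}))_{\cH}$ is a sum of nonnegative coordinate terms and $\|Sv^{(k)} - S^{*}u\|_{\cH}^2 = \sum_j \|S_j v_j^{(k)} - (S_j)^{*}u_j\|_{\cH_j}^2$. One inclusion is obtained by restricting a witnessing sequence to each coordinate, the other by the same truncation/diagonalization device used for $S_F$.

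The main obstacle is the diagonalization step when $J$ is infinite: one must produce a single approximating sequence lying in $\dom(S)$ (which already requires $\sum_j \|S_j u_j\|_{\cH_j}^2 < \infty$) that simultaneously realizes the $\cH$-norm convergence and the form-Cauchy condition, or the strong convergence of $Sv^{(k)}$ in the Krein case. This is handled by the absolute convergence of the series defining membership in $\dom(Q_S)$ or $\dom(S^{*})$, which gives uniform tail control; it is the one place where genuine analysis, beyond purely algebraic coordinate bookkeeping, enters the argument.
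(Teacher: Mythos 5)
Your proof is correct, and it is essentially the route the paper intends: the paper offers no written argument for Proposition \ref{p2.4}, asserting only that it ``immediately follows'' from the characterizations \eqref{Fr-2} and \eqref{Fr-2X}, and your coordinatewise application of the Ando--Nishio characterization together with the truncation/tail-control argument is exactly the fleshed-out version of that claim. The only (cosmetic) deviation is that for $S_F$ you route through the closed form $Q_S$ of \eqref{Fr-Q} rather than through Freudenthal's description \eqref{Fr-2} directly; since the two are equivalent, this changes nothing of substance.
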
 

The following is a consequence of a slightly more general result formulated
in \cite[Theorem 1]{AN70}: 

\begin{proposition}\label{Pr-an}
Let $S$ be a densely defined, closed, nonnegative operator in $\cH$.
Then $S_K$, the Krein--von Neumann extension of $S$, has the
property that
\begin{equation}\label{an-T1}
\dom\big((S_K)^{1/2}\big)=\biggl\{u\in\cH\,\bigg|\,\sup_{v\in\dom(S)}
\frac{|(u,Sv)_\cH|^2}{(v,Sv)_\cH}
<+\infty\biggr\},
\end{equation}
and
\begin{equation}\label{an-T2}
\big\|(S_K)^{1/2}u\big\|^2_\cH=\sup_{v\in\dom(S)}
\frac{|(u,Sv)_\cH|^2}{(v,Sv)_\cH}, \quad  u\in\dom\big((S_K)^{1/2}\big).  
\end{equation}
\end{proposition}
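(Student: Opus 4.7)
The plan is to prove \eqref{an-T1} and \eqref{an-T2} simultaneously via two matched inequalities. Writing $Q(u)$ for the supremum on the right-hand side of \eqref{an-T2} (with natural domain $\cD=\{u\in\cH : Q(u)<\infty\}$, interpreting the supremum as taken over $v\in\dom(S)$ with $(v,Sv)_\cH>0$), the goal is to show $\cD=\dom\big((S_K)^{1/2}\big)$ and that $Q(u)=\big\|(S_K)^{1/2}u\big\|_\cH^2$ there. The main tools are: the inclusion $S\subseteq S_K$, the order relation $S_K\le S_F$ from Theorem \ref{T-kkrr} (read through \eqref{AleqB}), the form-core description of $S_F$ in \eqref{Fr-Q}, the identification $\ker(S_K)=\ran(S)^{\perp}=\ker(S^*)$ in \eqref{Fr-4Tf}, and the Riesz representation theorem.

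For the direction $Q(u)\leq\big\|(S_K)^{1/2}u\big\|_\cH^2$ (which yields $\dom\big((S_K)^{1/2}\big)\subseteq\cD$), I would use Cauchy--Schwarz in the form of $S_K$. For $u\in\dom\big((S_K)^{1/2}\big)$ and $v\in\dom(S)\subseteq\dom(S_K)$, the identity $Sv=S_Kv$ gives
\begin{equation*}
(u,Sv)_\cH=(u,S_Kv)_\cH=\big((S_K)^{1/2}u,(S_K)^{1/2}v\big)_\cH,
\end{equation*}
hence $|(u,Sv)_\cH|^2\leq\big\|(S_K)^{1/2}u\big\|_\cH^2\,(v,Sv)_\cH$, and taking the supremum in $v$ yields the inequality.

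For the reverse direction, fix $u\in\cD$ and set $M=Q(u)$. I would introduce the antilinear functional
\begin{equation*}
\ell:\dom(S)\to\bbC,\quad \ell(v)=(Sv,u)_\cH,
\end{equation*}
which satisfies $|\ell(v)|^2\leq M(v,Sv)_\cH=M\big\|(S_F)^{1/2}v\big\|_\cH^2$ using $Sv=S_Fv$ for $v\in\dom(S)\subseteq\dom(S_F)$. Since by \eqref{Fr-Q} the space $\dom(S)$ is a form-core for $S_F$, $\ell$ extends by continuity to a bounded antilinear functional on $\dom\big((S_F)^{1/2}\big)$ equipped with the inner product $\big((S_F)^{1/2}\cdot,(S_F)^{1/2}\cdot\big)_\cH$. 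The Riesz representation theorem then produces $w\in\dom\big((S_F)^{1/2}\big)$ with $\big\|(S_F)^{1/2}w\big\|_\cH^2\leq M$ and
\begin{equation*}
(Sv,u)_\cH=\big((S_F)^{1/2}w,(S_F)^{1/2}v\big)_\cH=(Sv,w)_\cH,\quad v\in\dom(S).
\end{equation*}
Thus $u-w\in\ran(S)^{\perp}$, which by \eqref{Fr-4Tf} equals $\ker(S_K)\subseteq\dom\big((S_K)^{1/2}\big)$ and is annihilated by $(S_K)^{1/2}$. The order relation $S_K\leq S_F$ together with \eqref{AleqB} gives $\dom\big((S_F)^{1/2}\big)\subseteq\dom\big((S_K)^{1/2}\big)$ and $\big\|(S_K)^{1/2}w\big\|_\cH\leq\big\|(S_F)^{1/2}w\big\|_\cH$. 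Consequently $u=w+(u-w)\in\dom\big((S_K)^{1/2}\big)$ with
\begin{equation*}
\big\|(S_K)^{1/2}u\big\|_\cH^2=\big\|(S_K)^{1/2}w\big\|_\cH^2\leq\big\|(S_F)^{1/2}w\big\|_\cH^2\leq M=Q(u).
\end{equation*}

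The hard part will be the Riesz step in the general case $S\geq 0$: the quantity $(v,Sv)_\cH^{1/2}$ defines a genuine norm on $\dom(S)$ only when $\ker(S_F)=\{0\}$, for instance when $S\geq\varepsilon I_\cH$, in which case the completion embeds into $\cH$ and the representer $w$ is produced unambiguously as an element of $\dom\big((S_F)^{1/2}\big)$. When $\ker(S_F)\neq\{0\}$ one must first quotient $\dom(S)$ by $\{v\in\dom(S):(v,Sv)_\cH=0\}$, identify the abstract completion with $\dom\big((S_F)^{1/2}\big)/\ker(S_F)$, and choose a representative $w$ so that the chain of equalities above survives the quotient; the ambiguity in $w$ is harmless because $\ker(S_F)\subseteq\ker(S^*)=\ker(S_K)$, so it is absorbed into the $\ker(S_K)$-summand of $u-w$ and does not affect $\big\|(S_K)^{1/2}w\big\|_\cH$. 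All remaining ingredients — Cauchy--Schwarz, the form-core property of $\dom(S)$ for $S_F$, the extremality $S_K\leq S_F$, and the identification \eqref{Fr-4Tf} — are either immediate or supplied directly by Theorem \ref{T-kkrr}.
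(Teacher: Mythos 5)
Your two-inequality strategy is sound: the easy direction (Cauchy--Schwarz through $S\subseteq S_K$) and the final assembly ($u=w+(u-w)$ with $u-w\in\ran(S)^{\bot}=\ker(S_K)$, then $S_K\le S_F$ read through \eqref{AleqB}) are exactly right. Note that the paper itself offers no proof of Proposition \ref{Pr-an} but simply cites \cite[Theorem 1]{AN70}, so yours is an independent route. Under the additional hypothesis $S\ge\varepsilon I_{\cH}$ your argument is complete: the form norm $\|(S_F)^{1/2}\cdot\|_{\cH}$ then dominates $\|\cdot\|_{\cH}$, so $\dom\big((S_F)^{1/2}\big)$ is complete in it, $\dom(S)$ is dense in it by \eqref{Fr-Q}, and Riesz delivers $w\in\dom\big((S_F)^{1/2}\big)$ as claimed. (In your displayed chain the middle term should be $\big((S_F)^{1/2}v,(S_F)^{1/2}w\big)_{\cH}$ under the paper's convention that the inner product is linear in the second slot; this is cosmetic.)

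The gap sits precisely in the step you flag as ``the hard part,'' and your proposed repair does not close it. The dichotomy $\ker(S_F)=\{0\}$ versus $\ker(S_F)\ne\{0\}$ is not the relevant one: even when $\ker(S_F)=\{0\}$, so that $(v,Sv)_{\cH}^{1/2}$ is a genuine norm, the completion of $\dom(S)$ in this norm embeds into $\cH$ only if $S\ge\varepsilon I_{\cH}$. If $\inf\sigma(S_F)=0$ without $0$ being an eigenvalue --- perfectly compatible with $S\ge 0$ closed and densely defined, which is all the proposition assumes --- the completion is strictly larger than $\dom\big((S_F)^{1/2}\big)$ and carries no natural embedding into $\cH$ (compare the completion of $C_0^\infty$ in the homogeneous Dirichlet norm). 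The same obstruction defeats the identification with $\dom\big((S_F)^{1/2}\big)/\ker(S_F)$ in the degenerate case. Hence the Riesz representer need not be of the form $(S_F)^{1/2}w$ with $w\in\dom\big((S_F)^{1/2}\big)$, and the chain $\big\|(S_K)^{1/2}u\big\|^2=\big\|(S_K)^{1/2}w\big\|^2\le\big\|(S_F)^{1/2}w\big\|^2\le M$ cannot be completed. What you can legitimately extract (by applying Riesz inside $\cH$ on the closed subspace $\ol{\{(S_F)^{1/2}v \,|\, v\in\dom(S)\}}$) is a vector $y\in\cH$ with $\|y\|_{\cH}\le M^{1/2}$ and $(u,Sv)_{\cH}=\big(y,(S_F)^{1/2}v\big)_{\cH}$ for all $v\in\dom(S)$; converting this into $u\in\dom\big((S_K)^{1/2}\big)$ requires a further argument, and this is where Ando--Nishio's proof genuinely diverges from yours. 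In short: your proof is valid for strictly positive $S$ --- the only case the paper actually uses --- but does not yet establish the proposition in the stated generality $S\ge 0$.
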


A word of explanation is in order here: Given $S\geq 0$ as in the statement of
Proposition \ref{Pr-an}, the Cauchy-Schwarz-type inequality
\begin{equation}\label{CS-I.1}
|(u,Sv)_{\cH}|^2\leq (u,Su)_{\cH} (v,Sv)_{\cH}, \quad u,v\in\dom(S), 
\end{equation}
shows (due to the fact that $\dom(S)\hookrightarrow\cH$ densely) that
\begin{equation}\label{CS-I.2}
u\in\dom(S) \,\mbox{ and } \, (u,S u)_{\cH} =0 \,
\text{ imply }\,Su=0.
\end{equation}
Thus, whenever the denominator of the fractions appearing
in \eqref{an-T1}, \eqref{an-T2} vanishes, so does the numerator, and
one interprets $0/0$ as being zero in \eqref{an-T1}, \eqref{an-T2}.

We continue by recording an abstract result regarding the
parametrization of all nonnegative self-adjoint extensions of a given
strictly positive, densely defined, symmetric operator. The following results were 
developed from Krein \cite{Kr47}, Vi{\u s}ik \cite{Vi63}, and Birman \cite{Bi56}, by 
Grubb \cite{Gr68}, \cite{Gr70}. Subsequent expositions are due to Faris 
\cite[Sect.\ 15]{Fa75}, Alonso and Simon \cite{AS80} (in the present form, the next theorem appears in \cite{GM10}), and Derkach and Malamud \cite{DM91}, \cite{Ma92}. We start by collecting our basic assumptions:

\begin{hypothesis}  \lb{h2.6}
Suppose that $S$ is a densely defined, symmetric, closed operator 
with nonzero deficiency indices in $\cH$ that satisfies 
\begin{equation}
S\geq \varepsilon I_{\cH} \, \text{ for some $\varepsilon >0$.}    \lb{3.1}
\end{equation}
\end{hypothesis}

\begin{theorem}\label{AS-th}
Suppose Hypothesis \ref{h2.6}. Then there exists a one-to-one correspondence between nonnegative self-adjoint operators 
$0 \leq B:\dom(B)\subseteq \cW\to \cW$, $\ol{\dom(B)}=\cW$, where $\cW$
is a closed subspace of $\cN_0 :=\ker(S^*)$, and nonnegative self-adjoint
extensions $S_{B,\cW}\geq 0$ of $S$. More specifically, $S_F$ is invertible, 
$S_F\geq \varepsilon I_{\cH}$, 
and one has
\begin{align} 
& \dom(S_{B,\cW})  
=\big\{f + (S_F)^{-1}(Bw + \eta)+ w \,\big|\,
f\in\dom(S),\, w\in\dom(B),\, \eta\in \cN_0 \cap \cW^{\bot}\big\},  \no \\
& S_{B,\cW} = S^*|_{\dom(S_{B,\cW})},       \label{AS-2}  
\end{align} 
where $\cW^{\bot}$ denotes the orthogonal complement of $\cW$ in $\cN_0$. In 
addition,
\begin{align}
&\dom\big((S_{B,\cW})^{1/2}\big) = \dom\big((S_F)^{1/2}\big) \dotplus 
\dom\big(B^{1/2}\big),   \\
&\big\|(S_{B,\cW})^{1/2}(u+g)\big\|_{\cH}^2 =\big\|(S_F)^{1/2} u\big\|_{\cH}^2 
+ \big\|B^{1/2} g\big\|_{\cH}^2, \\ 
& \hspace*{2.3cm} u \in \dom\big((S_F)^{1/2}\big), \; g \in \dom\big(B^{1/2}\big),  \no
\end{align}
implying,
\begin{equation}\label{K-ee}
\ker(S_{B,\cW})=\ker(B). 
\end{equation} 
Moreover, 
\begin{equation}
B \leq \wti B \, \text{ implies } \, S_{B,\cW} \leq S_{\wti B,\wti \cW},
\end{equation}
where 
\begin{align}
\begin{split}
& B\colon \dom(B) \subseteq \cW \to \cW, \quad 
 \wti B\colon \dom\big(\wti B\big) \subseteq \wti \cW \to \wti \cW,   \\
& \ol{\dom\big(\wti B\big)} = \wti\cW \subseteq \cW = \ol{\dom(B)}. 
\end{split}
\end{align}

In the above scheme, the Krein--von Neumann extension $S_K$
of $S$ corresponds to the choice $\cW=\cN_0$ and $B=0$ $($with 
$\dom(B)=\dom\big(B^{1/2}\big)=\cN_0=\ker (S^*)$$)$.
In particular, one thus recovers \eqref{SK}, and \eqref{Fr-4Tf}, and also obtains
\begin{align}
&\dom\big((S_{K})^{1/2}\big) = \dom\big((S_F)^{1/2}\big) \dotplus \ker (S^*),  
\lb{SKform1}  \\
&\big\|(S_{K})^{1/2}(u+g)\big\|_{\cH}^2 =\big\|(S_F)^{1/2} u\big\|_{\cH}^2, 
\quad u \in \dom\big((S_F)^{1/2}\big), \; g \in \ker (S^*).   \lb{SKform2}
\end{align}
Finally, the Friedrichs extension $S_F$ corresponds to the choice $\dom(B)=\{0\}$ $($i.e., formally, 
$B\equiv\infty$$)$, in which case one recovers \eqref{SF}. 
\end{theorem}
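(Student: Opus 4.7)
\emph{Strategy and setup.} The plan is to establish Theorem \ref{AS-th} by the quadratic form (Kato) approach, leveraging the decomposition $\dom(S^*)=\dom(S_F)\dotplus\ker(S^*)$ from Theorem \ref{T-kkrr}. Strict positivity $S\ge\varepsilon I_\cH$ passes to $S_F\ge\varepsilon I_\cH$ (by \eqref{Fr-c} applied to $S-\varepsilon I_\cH$), so $(S_F)^{-1}\in\cB(\cH)$ is well-defined and \eqref{SF}, \eqref{S*} are at hand. Given a pair $(B,\cW)$ as in the statement, I introduce the form
\[
q_{B,\cW}(u_1+g_1,u_2+g_2):=\big((S_F)^{1/2}u_1,(S_F)^{1/2}u_2\big)_\cH+\big(B^{1/2}g_1,B^{1/2}g_2\big)_\cH
\]
on $\dom(q_{B,\cW}):=\dom\big((S_F)^{1/2}\big)\dotplus\dom\big(B^{1/2}\big)$. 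Directness of the sum rests on $\dom\big((S_F)^{1/2}\big)\cap\ker(S^*)=\{0\}$: any $u$ in the intersection satisfies $q_F(u,v)=(S^*u,v)_\cH=0$ for $v\in\dom(S)$ (form-dense in $\dom\big((S_F)^{1/2}\big)$), hence $q_F(u,u)=0$, and $S_F\ge\varepsilon I_\cH$ then forces $u=0$. Closedness of $q_{B,\cW}$ is routine, using completeness of the two form domains and the bound $\|u\|_\cH\le\varepsilon^{-1/2}\|(S_F)^{1/2}u\|_\cH$, which makes the decomposition $v=u+g$ continuous in the form norm. The first and second representation theorems then deliver a unique nonnegative self-adjoint $S_{B,\cW}$ with $\dom\big((S_{B,\cW})^{1/2}\big)=\dom(q_{B,\cW})$ and the stated form identity.

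\emph{Operator domain.} For $v=u+g\in\dom(S_{B,\cW})$ with $h:=S_{B,\cW}v$, testing $q_{B,\cW}(\phi,v)=(\phi,h)_\cH$ against $\phi\in\dom\big((S_F)^{1/2}\big)$ forces $u\in\dom(S_F)$ with $S_F u=h$; writing $u=f+(S_F)^{-1}\zeta$ via \eqref{SF} (with $f\in\dom(S)$, $\zeta\in\cN_0$) and splitting $\zeta=\pi_\cW\zeta+\eta$ in $\cW\oplus(\cW^\bot\cap\cN_0)$, the second test against $\phi\in\dom\big(B^{1/2}\big)$ identifies $g\in\dom(B)$ with $\pi_\cW\zeta=Bg$. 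This yields \eqref{AS-2} together with $S_{B,\cW}v=Sf+Bg+\eta=S^*v$; the orthogonalities $\ran(S)\perp\cN_0$ and $\cW\perp(\cW^\bot\cap\cN_0)$ keep the bookkeeping consistent. The identity $\ker(S_{B,\cW})=\ker(B)$ is immediate from the form formula, since $\|(S_F)^{1/2}u\|_\cH=0$ compels $u=0$.

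\emph{Bijection, monotonicity, and special cases.} Conversely, any nonnegative self-adjoint extension $\wti S$ of $S$ satisfies $S_K\le\wti S\le S_F$ by Theorem \ref{T-kkrr}, so via \eqref{PPa-1}
\[
\dom\big((S_F)^{1/2}\big)\subseteq\dom\big((\wti S)^{1/2}\big)\subseteq\dom\big((S_K)^{1/2}\big)=\dom\big((S_F)^{1/2}\big)\dotplus\cN_0,
\]
the last equality being the $(B,\cW)=(0,\cN_0)$ instance of the construction above. Each $v\in\dom\big((\wti S)^{1/2}\big)$ splits as $v=u+g$ with $u\in\dom\big((S_F)^{1/2}\big)$ and $g=v-u\in\dom\big((\wti S)^{1/2}\big)\cap\cN_0$; the crucial observation is the orthogonality $q_{\wti S}(u,g)=(Su,g)_\cH=0$ for $u\in\dom(S)$ (since $\cN_0=\ran(S)^\bot$), which extends by density to all $u\in\dom\big((S_F)^{1/2}\big)$ and yields the form-splitting $q_{\wti S}(u+g,u+g)=q_F(u,u)+q_{\wti S}(g,g)$. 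Taking $\cW:=\overline{\dom((\wti S)^{1/2})\cap\cN_0}$ and defining $B$ as the nonnegative self-adjoint operator on $\cW$ associated with the closed form $q_{\wti S}|_{\dom((\wti S)^{1/2})\cap\cN_0}$, a form comparison gives $\wti S=S_{B,\cW}$. Monotonicity $B\le\wti B\Rightarrow S_{B,\cW}\le S_{\wti B,\wti\cW}$ is then immediate from the form decomposition (the $q_F$-summand is common). Finally, the substitutions $(\cW,B)=(\cN_0,0)$ (whence $\cW^\bot\cap\cN_0=\{0\}$, forcing $\eta=0$, and $Bw=0$) and $\cW=\{0\}$ (whence $g=0$ and $\eta\in\cN_0$ is unconstrained) in \eqref{AS-2} recover \eqref{SK} together with \eqref{SKform1}--\eqref{SKform2}, and \eqref{SF}, respectively.

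\emph{Main obstacle.} The most delicate step is the precise extraction of \eqref{AS-2} and the reverse construction of $(B,\cW)$ from $\wti S$: both require careful tracking of the two orthogonal projections on $\cN_0$ (onto $\cW$ and $\cW^\bot\cap\cN_0$) through the bounded bijection $(S_F)^{-1}\colon\cH\to\dom(S_F)$, together with the verification that $S_{B,\cW}\subseteq S^*$ and that the cross-term $q_{\wti S}(u,g)$ genuinely vanishes on the whole form domain. Once these identifications are in place, the remaining assertions reduce to form-theoretic bookkeeping.
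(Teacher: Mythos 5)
Your argument is correct: the direct-sum form $q_{B,\cW}$ on $\dom\big((S_F)^{1/2}\big)\dotplus\dom\big(B^{1/2}\big)$, the extraction of \eqref{AS-2} by testing against the two summands, and the converse splitting of $q_{\wti S}$ via the vanishing cross-term on $\cN_0=\ran(S)^{\bot}$ together constitute a complete proof of Theorem \ref{AS-th}. The paper itself supplies no proof of this theorem --- it is recalled from the Birman--Krein--Vi{\u s}ik literature (in this form from \cite{GM10}, with expositions in Faris \cite{Fa75} and Alonso--Simon \cite{AS80}) --- and your quadratic-form route is precisely the standard argument of those references, so nothing further is needed beyond the routine verification (which you correctly flag) of both inclusions in \eqref{AS-2} and of the injectivity of $(B,\cW)\mapsto S_{B,\cW}$ via the recoverability of $\dom\big(B^{1/2}\big)$ as $\dom(q_{B,\cW})\cap\cN_0$.
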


The relation $B \leq \wti B$ in the case where $\wti \cW \subsetneqq \cW$ requires an explanation: In analogy to \eqref{AleqB} we mean 
\begin{equation}
\big(|B|^{1/2}u, U_B |B|^{1/2}u\big)_{\cW} \leq \big(|\wti B|^{1/2}u, U_{\wti B} |\wti B|^{1/2}u\big)_{\cW}, 
\quad u \in \dom\big(|\wti B|^{1/2}\big) 
\end{equation}
and (following \cite{AS80}) we put
\begin{equation}
\big(|\wti B|^{1/2}u, U_{\wti B} |\wti B|^{1/2}u)_{\cW} = \infty \, \text{ for } \, 
u \in \cW \backslash \dom\big(|\wti B|^{1/2}\big).
\end{equation}

For subsequent purposes we also note that under the assumptions on $S$ in 
Hypothesis \ref{h2.6}, one has 
\begin{equation}
\dim(\ker (S^*-z I_{\cH})) = \dim(\ker(S^*)) = \dim (\cN_0) = {\rm def} (S), 
\quad z\in \bbC\backslash [\varepsilon,\infty).   \lb{dim}
\end{equation}

The following result is a simple consequence of \eqref{SK}, \eqref{SF}, and 
\eqref{Fr-2X}, but since it seems not to have been explicitly stated in \cite{Kr47}, we provide the short proof for completeness (see also \cite[Remark\ 3]{Ma92}). First we recall that two self-adjoint extensions $S_1$ and $S_2$ of $S$ are called 
{\it relatively prime} if $\dom (S_1) \cap \dom (S_2) = \dom (S)$.

\begin{lemma}  \lb{lKF}
Suppose Hypothesis \ref{h2.6}. Then $S_F$ and $S_K$ are 
relatively prime, that is,
\begin{equation}
\dom (S_F) \cap \dom (S_K) = \dom (S).    \lb{RP}
\end{equation}
\end{lemma}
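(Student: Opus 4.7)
The plan is to derive \eqref{RP} directly from the three direct sum decompositions \eqref{SF}, \eqref{SK}, and \eqref{S*} supplied by Theorem \ref{T-kkrr}. The inclusion $\dom(S) \subseteq \dom(S_F) \cap \dom(S_K)$ is immediate, since both $S_F$ and $S_K$ are self-adjoint extensions of $S$, so the entire content of the lemma is the reverse inclusion.

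For the nontrivial direction, I would start by noting that Hypothesis \ref{h2.6} forces $S_F \geq \varepsilon I_{\cH}$, so $(S_F)^{-1}$ is a well-defined bounded operator and, in particular, $(S_F)^{-1}$ maps $\ker(S^*)$ injectively into $\dom(S_F)$. Now let $u \in \dom(S_F) \cap \dom(S_K)$. Using \eqref{SF} I can write $u = f_1 + (S_F)^{-1} w_1$ for uniquely determined $f_1 \in \dom(S)$ and $w_1 \in \ker(S^*)$; using \eqref{SK} I can also write $u = f_2 + w_2$ for uniquely determined $f_2 \in \dom(S)$ and $w_2 \in \ker(S^*)$. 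Since $u \in \dom(S_F), \dom(S_K) \subseteq \dom(S^*)$, both expressions must agree with the unique decomposition of $u$ asserted by \eqref{S*}, namely $u = g + (S_F)^{-1} h + k$ with $g \in \dom(S)$, $h, k \in \ker(S^*)$.

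Reading off components: from $u = f_1 + (S_F)^{-1} w_1 + 0$ one gets $g = f_1$, $h = w_1$, $k = 0$, while from $u = f_2 + 0 + w_2$ one gets $g = f_2$, $h = 0$, $k = w_2$. Comparing, $w_1 = 0$, $w_2 = 0$, and $f_1 = f_2 \in \dom(S)$, so $u = f_1 \in \dom(S)$, proving \eqref{RP}. The only substantive point to verify carefully is that \eqref{S*} really is a direct sum (so the components can be compared unambiguously); this is part of Theorem \ref{T-kkrr} and relies on the invertibility of $S_F$, which in turn rests on the strict positivity assumption \eqref{3.1}. No other input is needed, and I do not anticipate any genuine obstacle.
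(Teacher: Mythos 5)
Your proof is correct and follows essentially the same route as the paper's: both rest entirely on the Krein--von Neumann decompositions \eqref{SF}, \eqref{SK}, and \eqref{S*} and the strict positivity of $S_F$. The only difference is one of self-containedness — you cite the directness of the three-fold sum \eqref{S*} wholesale from Theorem \ref{T-kkrr}, whereas the paper explicitly verifies the one intersection property it needs, namely $\ker(S^*)\cap (S_F)^{-1}\ker(S^*)=\{0\}$, by observing that $f_0=(S_F)^{-1}g_0$ with $S^*f_0=0$ forces $g_0=S_F f_0=S^*f_0=0$.
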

\begin{proof}
By \eqref{SF} and \eqref{SK} it suffices to prove that 
$\ker (S^*) \cap (S_F)^{-1}\ker (S^*) = \{0\}$. Let 
$f_0 \in \ker (S^*) \cap (S_F)^{-1}\ker (S^*)$. Then $S^* f_0 =0$ and 
$f_0=(S_F)^{-1}g_0$ for some $g_0 \in \ker (S^*)$. Thus one concludes that 
$f_0 \in\dom (S_F)$ and $S_F f_0 =g_0$. But $S_F = S^*|_{\dom (S_F)}$ and hence 
$g_0 =S_F f_0 = S^* f_0 = 0$. Since $g_0 =0$ one finally obtains $f_0 =0$. 
\end{proof}

Next, we consider a self-adjoint operator
\begin{equation} \label{Barr-1}
T:\dom(T)\subseteq \cH\to\cH,\quad T=T^*,
\end{equation} 
which is bounded from below, that is, there exists $\alpha\in\bbR$ such that
\begin{equation} \label{Barr-2}
T\geq \alpha I_{\cH}.
\end{equation} 
We denote by $\{E_T(\lambda)\}_{\lambda\in\bbR}$ the family of strongly right-continuous spectral projections of $T$, and introduce, as usual, $E_T((a,b))=E_T(b_-) - E_T(a)$, 
$E_T(b_-) = \slim_{\varepsilon\downarrow 0}E_T(b-\varepsilon)$, $-\infty \leq a < b$. In addition, we set 
\begin{equation} \label{Barr-3}
\mu_{T,j}:=\inf\,\bigl\{\lambda\in\bbR\,|\,
\dim (\ran (E_T((-\infty,\lambda)))) \geq j\bigr\},\quad j\in\bbN.
\end{equation} 
Then, for fixed $k\in\bbN$, either: \\
$(i)$ $\mu_{T,k}$ is the $k$th eigenvalue of $T$ counting multiplicity below the bottom of the essential spectrum, $\sigma_{\rm ess}(T)$, of $T$, \\
or, \\
$(ii)$ $\mu_{T,k}$ is the bottom of the essential spectrum of $T$, 
\begin{equation}
\mu_{T,k} = \inf \{\lambda \in \bbR \,|\, \lambda \in \sigma_{\rm ess}(T)\}, 
\end{equation}
and in that case $\mu_{T,k+\ell} = \mu_{T,k}$, $\ell\in\bbN$, and there are at most $k-1$ eigenvalues (counting multiplicity) of $T$ below $\mu_{T,k}$. 

We now record a basic result of M.\ Krein \cite{Kr47} with an important extension due 
to Alonso and Simon \cite{AS80} and some additional results recently derived in 
\cite{AGMST09}. For this purpose we introduce the {\it reduced  
Krein--von Neumann operator} $\hatt S_K$ in the Hilbert space (cf.\ \eqref{Fr-4Tf})
\begin{equation}
\hatt \cH = [\ker (S^*)]^{\bot} = \big[I_{\cH} - P_{\ker(S^*)}\big] \cH 
= \big[I_{\cH} - P_{\ker(S_K)}\big] \cH = [\ker (S_K)]^{\bot},    \lb{hattH}
\end{equation}
by 
\begin{align}
\hatt S_K:&=S_K|_{[\ker(S_K)]^{\bot}}    \label{Barr-4} \\
\begin{split}
& = S_K[I_{\cH} - P_{\ker(S_K)}]   \lb{SKP} \, \text{ in $[I_{\cH} - P_{\ker(S_K)}]\cH$} \\
&= [I_{\cH} - P_{\ker(S_K)}]S_K[I_{\cH} - P_{\ker(S_K)}] 
\, \text{ in $[I_{\cH} - P_{\ker(S_K)}]\cH$},  
\end{split}
\end{align} 
where $P_{\ker(S_K)}$ denotes the orthogonal projection onto $\ker(S_K)$ and we are alluding to the orthogonal direct sum decomposition of $\cH$ into 
\begin{equation}
\cH = P_{\ker(S_K)}\cH \oplus [I_{\cH} - P_{\ker(S_K)}]\cH.
\end{equation}
Assuming Hypothesis \ref{h2.6}, we recall that Krein \cite{Kr47} (see also 
\cite[Corollary\ 5]{Ma92} for a generalization to the case $S\geq 0$) proved the formula
\begin{equation}
\big(\hatt S_K\big)^{-1} = [I_{\cH} - P_{\ker(S_K)}] (S_F)^{-1} [I_{\cH} - P_{\ker(S_K)}].   
\lb{SKinv}
\end{equation}

\begin{theorem}  \lb{AS-thK}
Suppose Hypothesis \ref{h2.6}. Then, 
\begin{equation}\label{Barr-5}
\varepsilon \leq \mu_{S_F,j} \leq \mu_{\hatt S_K,j}, \quad j\in\bbN.
\end{equation} 
In particular, if the Friedrichs extension $S_F$ of $S$ has purely discrete
spectrum, then, except possibly for $\lambda=0$, the Krein--von Neumann extension
$S_K$ of $S$ also has purely discrete spectrum in $(0,\infty)$, that is, 
\begin{equation}
\sigma_{\rm ess}(S_F) = \emptyset \, \text{ implies } \, 
\sigma_{\rm ess}(S_K) \backslash\{0\} = \emptyset.      \lb{ESSK}
\end{equation}
In addition, let $p\in (0,\infty)\cup\{\infty\}$, then
\begin{align}
\begin{split}
& (S_F - z_0 I_{\cH})^{-1} \in \cB_p(\cH) 
\, \text{ for some $z_0\in \bbC\backslash [\varepsilon,\infty)$}   \\ 
& \quad \text{implies } \, 
(S_K - zI_{\cH})^{-1}[I_{\cH} - P_{\ker(S_K)}] \in \cB_p(\cH) 
 \, \text{ for all $z\in \bbC\backslash [\varepsilon,\infty)$}. 
\lb{CPK}
\end{split} 
\end{align}
In fact, the $\ell^p(\bbN)$-based trace ideals $\cB_p(\cH)$ of $\cB(\cH)$ can be replaced by any two-sided symmetrically normed ideals of $\cB(\cH)$.
\end{theorem}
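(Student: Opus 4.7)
The plan is to deduce all three assertions from Krein's identity \eqref{SKinv}, which expresses $(\hatt{S}_K)^{-1}$ as the compression of $(S_F)^{-1}$ to $\hatt{\cH}=[\ker(S_K)]^{\bot}$. Note that Hypothesis \ref{h2.6} together with Theorem \ref{T-kkrr} gives $S_F\ge \varepsilon I_\cH$, so $(S_F)^{-1}\in\cB(\cH)$ with $\|(S_F)^{-1}\|\le\varepsilon^{-1}$; consequently \eqref{SKinv} exhibits $(\hatt{S}_K)^{-1}$ as a bounded nonnegative self-adjoint operator on $\hatt{\cH}$, and in fact $\hatt{S}_K\ge\varepsilon I_{\hatt{\cH}}$.

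To prove \eqref{Barr-5}, I would apply the min-max principle to the bounded nonnegative operators $A:=(S_F)^{-1}$ on $\cH$ and $B:=P_1 A P_1|_{\hatt{\cH}}$ on $\hatt{\cH}$, where $P_1:=I_\cH-P_{\ker(S_K)}$. Since $B$ is the orthogonal compression of $A$ to the closed subspace $\hatt{\cH}$, the usual variational characterization of the $j$-th min-max value measured from the top of the spectrum (counting multiplicities, with the convention that values in the essential spectrum repeat) yields that the $j$-th such value of $B$ is dominated by that of $A$. Passing to reciprocals, and using the one-to-one correspondence between min-max values from above of a strictly positive bounded self-adjoint operator and min-max values from below of its inverse, this translates precisely into $\mu_{S_F,j}\le\mu_{\hatt{S}_K,j}$, which together with $S_F\ge\varepsilon I_\cH$ gives \eqref{Barr-5}.

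For \eqref{ESSK}, observe that under the strict positivity $S_F\ge\varepsilon I_\cH$, the condition $\sigma_{\rm ess}(S_F)=\emptyset$ is equivalent to $(S_F)^{-1}\in\cB_\infty(\cH)$. Since $\cB_\infty(\cH)$ is a two-sided ideal in $\cB(\cH)$ and $P_1\in\cB(\cH)$, \eqref{SKinv} yields $(\hatt{S}_K)^{-1}\in\cB_\infty(\hatt{\cH})$; hence $\hatt{S}_K$ has purely discrete spectrum in $[\varepsilon,\infty)$. Because $S_K$ decomposes as the orthogonal direct sum of $\hatt{S}_K$ on $\hatt{\cH}$ and the zero operator on $\ker(S_K)$ (cf.\ \eqref{hattH}--\eqref{SKP}), the only spectral point $S_K$ may possess beyond $\sigma(\hatt{S}_K)$ is $\{0\}$, proving \eqref{ESSK}.

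For \eqref{CPK}, exactly the same ideal-theoretic argument applies with $\cB_p$ in place of $\cB_\infty$. If $(S_F-z_0 I_\cH)^{-1}\in\cB_p(\cH)$ for some $z_0\in\rho(S_F)$, the first resolvent identity applied between $z_0$ and $0\in\rho(S_F)$ gives $(S_F)^{-1}\in\cB_p(\cH)$. By \eqref{SKinv} and the two-sided ideal property, $(\hatt{S}_K)^{-1}\in\cB_p(\hatt{\cH})$, and another application of the first resolvent identity yields $(\hatt{S}_K-z I_{\hatt{\cH}})^{-1}\in\cB_p(\hatt{\cH})$ for every $z\in\rho(\hatt{S}_K)$. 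Writing $(S_K-zI_\cH)^{-1}P_1=(\hatt{S}_K-z I_{\hatt{\cH}})^{-1}P_1$ (under the canonical identification $\hatt{\cH}=P_1\cH$) then gives \eqref{CPK}, and the argument is insensitive to the precise choice of symmetrically normed two-sided ideal. The main technical issue is the careful interpretation of \eqref{SKinv} as genuinely producing the inverse of $\hatt{S}_K$ on $\hatt{\cH}$, and the bookkeeping required when transferring min-max values between operators acting on $\cH$ and on $\hatt{\cH}$; once that is set up correctly, the three assertions follow from standard variational and ideal-theoretic facts.
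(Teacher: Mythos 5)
Your proof is correct and follows precisely the route the paper itself indicates: the remark after Theorem \ref{AS-thK} states that \eqref{Barr-5} ``immediately follows from the minimax principle and Krein's formula \eqref{SKinv},'' and attributes \eqref{CPK} to the ideal-theoretic argument in \cite{AGMST09}, which is exactly your compression-plus-two-sided-ideal reasoning. Nothing further is needed.
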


We note that \eqref{ESSK} is a classical result of Krein \cite{Kr47}, the more general fact \eqref{Barr-5} has not been mentioned explicitly in Krein's paper \cite{Kr47}, although it immediately follows from the minimax principle and Krein's formula \eqref{SKinv}. On the other hand, in the special case ${\rm def}(S)<\infty$, Krein states an extension of 
\eqref{Barr-5} in his Remark 8.1 in the sense that he also considers self-adjoint extensions different from the Krein extension. Apparently, \eqref{Barr-5} in the context of infinite deficiency indices has first been proven by Alonso and Simon \cite{AS80} by a somewhat different method. Relation \eqref{CPK} was 
recently proved in \cite{AGMST09} for $p \in (0, \infty)$.

Finally, we very briefly mention some new results on the Krein--von Neumann extension which were developed when working on this paper. These results exhibit the Krein--von Neumann extension as a natural object in elasticity theory by relating it to an abstract buckling problem as follows:

We start by introducing an abstract version of Proposition\ 1 in Grubb's paper  
\cite{Gr83} devoted to Krein--von Neumann extensions of even order elliptic differential operators on bounded smooth domains. We recall that Proposition\ 1 in 
\cite{Gr83} describes an  intimate connection between the nonzero eigenvalues of the Krein--von Neumann extension of an appropriate minimal elliptic differential operator of order $2m$, $m\in\bbN$, and nonzero eigenvalues of a suitable higher-order buckling problem (cf.\ \eqref{Df-H6}). The abstract version of this remarkable connection reads as follows:

\begin{lemma}  \lb{l3.3}
Assume Hypothesis \ref{h2.6} and let $\lambda \neq 0$. Then there exists 
$0 \neq v \in \dom(S_K)$ with
\begin{equation}
S_K v = \lambda v   \lb{3.1b}
\end{equation}
if and only if there exists $0 \neq u \in \dom(S^* S)$ such that
\begin{equation}
S^* S u = \lambda S u.   \lb{3.1c}
\end{equation}
In particular, the solutions $v$ of \eqref{3.1b} are in one-to-one correspondence with the solutions $u$ of \eqref{3.1c} given by the formulas
\begin{align}
u & = (S_F)^{-1} S_K v,    \lb{3.1d}  \\
v & = \lambda^{-1} S u.   \lb{3.1e}
\end{align}
Of course, since $S_K \geq 0$, any $\lambda \neq 0$ in \eqref{3.1b} and \eqref{3.1c}  necessarily satisfies $\lambda > 0$.
\end{lemma}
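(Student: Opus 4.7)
The plan is to exploit the concrete description of the Krein--von Neumann domain provided by \eqref{SK}, namely $\dom(S_K) = \dom(S) \dotplus \ker(S^*)$, together with the fact that $S_K$ acts as the restriction of $S^*$. For the forward implication, I would start from $v \in \dom(S_K)$ with $S_K v = \lambda v$, $\lambda \neq 0$, and decompose $v = f + w$ with $f \in \dom(S)$ and $w \in \ker(S^*)$. Since $S^* w = 0$ and $S^*|_{\dom(S)} = S$ (by symmetry of $S$), the eigenvalue equation reduces to $S f = S^* v = \lambda v$. The natural candidate is then $u := f$: it lies in $\dom(S)$, and $S u = \lambda v \in \dom(S^*)$, because $\dom(S) + \ker(S^*) \subseteq \dom(S^*)$, so $u \in \dom(S^* S)$. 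Applying $S^*$ to $S u = \lambda v$ and invoking $S^* v = \lambda v$ once more yields $S^* S u = \lambda^2 v = \lambda S u$, as required. The identity $u = (S_F)^{-1} S_K v$ then follows from $f \in \dom(S) \subseteq \dom(S_F)$ with $S_F f = S f = \lambda v$, giving $f = \lambda (S_F)^{-1} v = (S_F)^{-1} S_K v$; furthermore $u \neq 0$, since $u = 0$ would force $\lambda v = S f = 0$ and hence $v = 0$.

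For the reverse implication, given $0 \neq u \in \dom(S^* S)$ with $S^* S u = \lambda S u$, I would set $v := \lambda^{-1} S u$ and split $v = u + (v - u)$. Clearly $u \in \dom(S)$, and the key computation
\[
S^*(v - u) = \lambda^{-1} S^* S u - S^* u = \lambda^{-1}(\lambda S u) - S u = 0
\]
shows $v - u \in \ker(S^*)$. Hence $v \in \dom(S) \dotplus \ker(S^*) = \dom(S_K)$ by \eqref{SK}, and
\[
S_K v = S^* v = \lambda^{-1} S^* S u = S u = \lambda v.
\]
Non-vanishing of $v$ follows from $u \neq 0$ together with the injectivity of $S$ guaranteed by $S \geq \varepsilon I_{\cH}$. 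Finally, any $\lambda \neq 0$ arising in either formulation is automatically positive, since $S_K \geq 0$.

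The only real subtlety — which I would flag as the main point requiring care — is the bookkeeping of domains: one must verify both that the element $u$ produced from $v$ genuinely lies in $\dom(S^* S)$ (not merely in $\dom(S)$), and conversely that $\lambda^{-1} S u - u$ lies in $\ker(S^*)$ in the strong (domain) sense, not just as a vector annihilated by $S^*$ in some weaker sense. Both checks reduce to the inclusion $\dom(S) + \ker(S^*) \subseteq \dom(S^*)$ and the identity $S^*|_{\dom(S)} = S$, which are built into the definitions. The explicit formulas $u = (S_F)^{-1} S_K v$ and $v = \lambda^{-1} S u$ then give a genuine bijection between the two eigenvalue problems, whose mutually inverse character is transparent from the computations above.
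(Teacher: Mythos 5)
Your proof is correct, and it follows the natural (and, as far as one can tell, the intended) route: the paper itself only cites \cite{AGMST09} for the proof of Lemma \ref{l3.3}, and the argument there likewise rests on the decomposition $\dom(S_K)=\dom(S)\dotplus\ker(S^*)$ from \eqref{SK} together with $S_K\subseteq S^*$ and the invertibility of $S_F$. All the domain bookkeeping you flag (that $Su=\lambda v\in\dom(S^*)$, that $\lambda^{-1}Su-u\in\ker(S^*)$, and that uniqueness of the direct-sum decomposition makes the two maps mutually inverse) is handled correctly.
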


We refer to \cite{AGMST09} for the proof of Lemma \ref{l3.3}. Due to the generalized buckling problem \eqref{MM-1}, respectively, \eqref{MM-2}, we will call the linear pencil eigenvalue problem $S^* Su = \lambda S u$ in \eqref{3.1c} the {\it abstract buckling problem} associated with the Krein--von Neumann extension $S_K$ of $S$.

Next, we turn to a variational formulation of the correspondence between the inverse of the reduced Krein extension $\hatt S_K$ and the abstract buckling problem in terms of appropriate sesquilinear forms by following the treatment of Kozlov 
\cite{Ko79}--\cite{Ko84} in the context of elliptic partial differential operators. This will then lead to an even stronger connection between the Krein--von Neumann extension $S_K$ of $S$ and the associated abstract buckling eigenvalue problem \eqref{3.1c}, culminating in a unitary equivalence result in Theorem \ref{t3.3}. 

Given the operator $S$, we introduce the following sesquilinear forms in $\cH$,
\begin{align}
a(u,v) & = (Su,Sv)_{\cH}, \quad u, v \in \dom(a) = \dom(S),    \lb{3.2} \\
b(u,v) & = (u,Sv)_{\cH}, \quad u, v \in  \dom(b) = \dom(S).    \lb{3.3} 
\end{align}
Then $S$ being densely defined and closed implies that the sesquilinear form $a$ shares these properties and  \eqref{3.1} implies its boundedness from below,
\begin{equation}
a(u,u) \geq \varepsilon^2 \|u\|_{\cH}^2, \quad u \in \dom(S).    \lb{3.4}
\end{equation} 
Thus, one can introduce the Hilbert space 
$\cW=(\dom(S), (\dott,\dott)_{\cW})$ with associated scalar product 
\begin{equation}
(u,v)_{\cW}=a(u,v) = (Su,Sv)_{\cH}, \quad u, v \in \dom(S).   \lb{3.5}
\end{equation}
In addition, we denote by $\iota_{\cW}$ the continuous embedding operator of $\cW$ 
into $\cH$,
\begin{equation}
\iota_{\cW} : \cW \hookrightarrow \cH.    \lb{3.6}
\end{equation}
Hence, we will use the notation
\begin{equation}
(w_1,w_2)_{\cW} =a(\iota_{\cW} w_1,\iota_{\cW} w_2) 
= (S\iota_{\cW} w_1, S\iota_{\cW} w_2)_{\cH}, \quad w_1, w_2 \in \cW,   \lb{3.7}
\end{equation}
in the following. 

Given the sesquilinear forms $a$ and $b$ and the Hilbert space $\cW$, we next define the operator $T$ in $\cW$ by
\begin{align}
\begin{split}
(w_1,T w_2)_{\cW} & = a(\iota_{\cW} w_1,\iota_{\cW} T w_2) 
= (S \iota_{\cW} w_1,S\iota_{\cW} T w_2)_{\cH}   \\
& = b(\iota_{\cW} w_1,\iota_{\cW} w_2) = (\iota_{\cW} w_1,S \iota_{\cW} w_2)_{\cH},
\quad w_1, w_2 \in \cW.    \lb{3.8}
\end{split}
\end{align}
One verifies that $T$ is well-defined and that 
\begin{equation}
|(w_1,T w_2)_{\cW}| \leq \|\iota_{\cW} w_1\|_{\cH} \|S \iota_{\cW} w_2\|_{\cH} 
\leq \varepsilon^{-1} \|w_1\|_{\cW} \|w_2\|_{\cW}, \quad w_1, w_2 \in \cW,   \lb{3.9}
\end{equation}
and hence that
\begin{equation}
0 \leq T = T^* \in \cB(\cW), \quad \|T\|_{\cB(\cW)} \leq \varepsilon^{-1}.  \lb{3.10}
\end{equation}
For reasons to become clear in connection with \eqref{3.20a}--\eqref{3.39}, we called 
$T$ the {\it abstract buckling problem operator} associated with the Krein--von Neumann extension $S_K$ of $S$ in \cite{AGMST09}.  

Next, recalling the notation 
$\hatt \cH = [\ker (S^*)]^{\bot} = \big[I_{\cH} - P_{\ker(S^*)}\big] \cH$ (cf.\ \eqref{hattH}), 
we introduce the operator
\begin{equation}
\hatt S: \begin{cases} \cW \to \hatt \cH,  \\
w \mapsto S \iota_{\cW} w,  \end{cases}      \lb{3.12}
\end{equation}
and note that 
\begin{equation} 
\ran\big(\hatt S\big) = \ran (S) = \hatt \cH,    \lb{3.12aa}
\end{equation}
since $S\geq \varepsilon I_{\cH}$ for some $\varepsilon > 0$ and $S$ is closed in $\cH$ 
(see, e.g., \cite[Theorem\ 5.32]{We80}). In fact, 
\begin{equation}
\hatt S\in\cB(\cW,\hatt \cH) \, \text{ maps $\cW$ unitarily onto $\hatt \cH$.} 
\end{equation}

Continuing, we briefly recall the polar decomposition of $S$, 
\begin{equation}
S = U_S |S|,   \lb{3.19a}
\end{equation} 
with
\begin{equation}
|S| = (S^* S)^{1/2} \geq \varepsilon I_{\cH}, \; \varepsilon > 0, \quad 
U_S \in \cB\big(\cH,\hatt \cH\big) \, \text{ unitary,}    \lb{3.19b}
\end{equation}
and state the principal unitary equivalence result proven in \cite{AGMST09}:

\begin{theorem} \lb{t3.3}
Assume Hypothesis \ref{h2.6}. Then the inverse of the reduced Krein--von Neumann extension $\hatt S_K$ in $\hatt \cH = \big[I_{\cH} - P_{\ker(S^*)}\big] \cH$ and the abstract buckling problem operator $T$ in $\cW$ are unitarily equivalent, in particular,
\begin{equation}
\big(\hatt S_K\big)^{-1} = \hatt S T (\hatt S)^{-1}.    \lb{3.20}
\end{equation}
Moreover, one has
\begin{equation}
\big(\hatt S_K\big)^{-1} = U_S \big[|S|^{-1} S |S|^{-1}\big] (U_S)^{-1},    \lb{3.20a}
\end{equation}
where $U_S\in \cB\big(\cH,\hatt \cH\big)$ is the unitary operator in the polar decomposition \eqref{3.19a} of $S$ and the operator $|S|^{-1} S |S|^{-1}\in\cB(\cH)$ is self-adjoint in $\cH$. 
\end{theorem}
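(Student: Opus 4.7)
The plan is to establish \eqref{3.20} first, by combining Krein's resolvent formula \eqref{SKinv} with a short test-function identity, and then to deduce \eqref{3.20a} by factoring the unitary $\hatt S$ through the polar decomposition of $S$.

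I would begin by recording the structural facts that carry the argument. The hypothesis $S \ge \varepsilon I_{\cH}$ together with closedness of $S$ forces $\ran(S)$ to be closed and equal to $[\ker(S^*)]^{\bot} = \hatt{\cH}$ (cf.\ \eqref{3.12aa}), so $S$ is a bijection of $\dom(S)$ onto $\hatt{\cH}$, and $|S|\ge\varepsilon I_{\cH}$ gives $|S|^{-1}\in\cB(\cH)$ with $\ran(|S|^{-1})=\dom(S)$. The very definition \eqref{3.5} of the scalar product on $\cW$ yields
$$
(\hatt S w_1, \hatt S w_2)_{\hatt{\cH}} = (S\iota_{\cW}w_1, S\iota_{\cW}w_2)_{\cH} = (w_1,w_2)_{\cW},
$$
so $\hatt S \in \cB(\cW,\hatt{\cH})$ is unitary. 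Likewise $(u,v)_{\cW} = (|S|u,|S|v)_{\cH}$ for $u,v\in\dom(S)$, so $J:=|S|\iota_{\cW}:\cW\to\cH$ is unitary, and the polar decomposition \eqref{3.19a} furnishes the factorization $\hatt S = U_S J$ with $U_S\in\cB(\cH,\hatt{\cH})$ unitary; this factorization is the bridge between \eqref{3.20} and \eqref{3.20a}.

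For the main identity \eqref{3.20}, fix $y\in\hatt{\cH}$ and set $w:=(\hatt S)^{-1}y\in\cW$, $u:=\iota_{\cW}w\in\dom(S)$ (so $Su=y$), and $\tilde u:=\iota_{\cW}(Tw)\in\dom(S)$. By the defining identity \eqref{3.8} for $T$, for every $w_1\in\cW$ with $f_1:=\iota_{\cW}w_1$,
$$
(Sf_1, S\tilde u)_{\cH} = (w_1, Tw)_{\cW} = (f_1, Su)_{\cH} = (f_1, y)_{\cH}.
$$
On the other hand, setting $z:=(S_F)^{-1}y\in\dom(S_F)$, from $S\subseteq S_F$ and $S_F=(S_F)^*$ one obtains $(Sf_1,z)_{\cH} = (S_F f_1, z)_{\cH} = (f_1,S_F z)_{\cH} = (f_1,y)_{\cH}$; and $Sf_1\in\ran(S)=\hatt{\cH}$ together with $\ker(S_K)=\ker(S^*)$ (cf.\ \eqref{Fr-4Tf}) yields $(Sf_1,[I_{\cH}-P_{\ker(S_K)}]z)_{\cH}=(Sf_1,z)_{\cH}$. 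Combining,
$$
(Sf_1,\, S\tilde u - [I_{\cH}-P_{\ker(S_K)}]z)_{\cH} = 0, \quad f_1\in\dom(S).
$$
Both $S\tilde u$ and $[I_{\cH}-P_{\ker(S_K)}]z$ lie in $\hatt{\cH}=\overline{\ran(S)}$, so density of $\ran(S)$ in $\hatt{\cH}$ forces $S\tilde u = [I_{\cH}-P_{\ker(S_K)}]z$. Invoking \eqref{SKinv} then gives
$$
\hatt S T (\hatt S)^{-1}y = S\tilde u = [I_{\cH}-P_{\ker(S_K)}](S_F)^{-1}y = \big(\hatt S_K\big)^{-1}y,
$$
which is \eqref{3.20}. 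Finally, substituting $\hatt S=U_S J$ into \eqref{3.20} yields $(\hatt S_K)^{-1} = U_S (JTJ^{-1}) U_S^{-1}$; a calculation analogous to the one above, testing against $g:=Jw_1=|S|\iota_{\cW}w_1$ and using $(u,v)_{\cW}=(|S|u,|S|v)_{\cH}$ in \eqref{3.8}, identifies $JTJ^{-1}$ with $|S|^{-1}S|S|^{-1}\in\cB(\cH)$. Boundedness of the latter follows from $\||S|^{-1}\|_{\cB(\cH)}\le\varepsilon^{-1}$ and the unitarity of $S|S|^{-1}=U_S$, while its self-adjointness follows from self-adjointness of $|S|^{-1}$ and symmetry of $S$ on $\ran(|S|^{-1})=\dom(S)$.

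The main obstacle is not computational but organizational: one must keep careful track of three distinct Hilbert spaces $\cW$, $\cH$, $\hatt{\cH}$ and of the role of the projector $I_{\cH}-P_{\ker(S_K)}$ in \eqref{SKinv}. The key observation that dissolves the difficulty is that $\ran(S)\subseteq\hatt{\cH}=[\ker(S^*)]^{\bot}$, so this projector is automatically absorbed when one tests against elements of $\ran(S)$; this is precisely what promotes the equality "modulo $\ker(S^*)$" inherent in $(S_F)^{-1}$ to the strict operator identity \eqref{3.20}, and simultaneously guarantees that the operator $|S|^{-1}S|S|^{-1}$ in \eqref{3.20a} is well-defined and self-adjoint.
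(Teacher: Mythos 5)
Your argument is correct: the weak identity $(Sf_1,S\tilde u)_{\cH}=(f_1,y)_{\cH}$ for all $f_1\in\dom(S)$, combined with the adjoint move through $S_F=(S_F)^*$, the absorption of $I_{\cH}-P_{\ker(S_K)}$ on $\ran(S)=\hatt{\cH}$, and Krein's formula \eqref{SKinv}, yields \eqref{3.20}, and the factorization $\hatt S=U_S\,(|S|\iota_{\cW})$ then gives \eqref{3.20a}. The paper only states Theorem \ref{t3.3} and defers its proof to \cite{AGMST09}; your route (Krein's resolvent formula plus testing against $\ran(S)$, followed by the polar decomposition) is essentially the one used there.
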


Equation \eqref{3.20a} is of course motivated by rewriting the abstract linear pencil buckling eigenvalue problem \eqref{3.1c}, $S^* S u = \lambda S u$, $\lambda \neq 0$, in the form
\begin{equation}
\lambda^{-1} S^* S u = \lambda^{-1} (S^* S)^{1/2} \big[(S^* S)^{1/2} u\big]  
= S (S^* S)^{-1/2} \big[(S^* S)^{1/2} u\big]    \lb{3.38}
\end{equation}
and hence in the form of a standard eigenvalue problem
\begin{equation}
|S|^{-1} S |S|^{-1} w = \lambda^{-1} w, \quad \lambda \neq 0, \quad w = |S| u.  \lb{3.39}
\end{equation}

Concluding this section, we point out that a great variety of additional results 
for the Krein--von Neumann extension can be found, for instance, in 
\cite[Sect.\ 109]{AG81a}, \cite{AS80}, \cite{AN70}, \cite{Ar98}, \cite{Ar00}, \cite{AHSD01}, 
\cite{AT02}, \cite{AT03}, \cite{AT05}, \cite{AT09}, \cite{AGMST09}, \cite{BC05}, 
\cite{DM91}, \cite{DM95}, \cite[Part III]{Fa75}, 
\cite[Sect.\ 3.3]{FOT94}, \cite{GM10}, \cite{Gr83}, \cite{Ha57}, 
\cite{HK09}, \cite{HMD04}, \cite{HSDW07}, \cite{KO77}, \cite{KO78}, \cite{Ne83}, 
\cite{PS96}, \cite{SS03}, \cite{Si98}, \cite{Sk79}, \cite{St96}, \cite{Ts80}, \cite{Ts81}, 
\cite{Ts92}, and the references therein. We also mention the references \cite{EM05}, 
\cite{EMP04}, \cite{EMMP07} (these authors, apparently unaware of the work of von 
Neumann, Krein, Vi{\u s}hik, Birman, Grubb, {\u S}trauss, etc., in this context, 
introduced the Krein Laplacian and called it the harmonic operator, see also 
\cite{Gr06}).

\section{Trace Theory in Lipschitz Domains}
\label{s3}

In this section we shall review material pertaining to analysis
in Lipschitz domains, starting with Dirichlet and Neumann boundary traces
in Subsection \ref{s3X}, and then continuing with a brief survey of
perturbed Dirichlet and Neumann Laplacians in Subsection \ref{s4X}.

\subsection{Dirichlet and Neumann Traces in Lipschitz Domains}
\label{s3X}

The goal of this subsection is to introduce the relevant material pertaining
to Sobolev spaces $H^s(\Omega)$ and $H^r(\partial\Omega)$ corresponding to subdomains $\Om$ of $\bbR^n$, $n\in\bbN$, and discuss various trace results. 

Before we focus primarily on bounded Lipschitz domains, we briefly recall some basic facts in connection with Sobolev spaces corresponding to open sets 
$\Om\subseteq\bbR^n$, $n\in\bbN$: For an arbitrary $m\in\bbN\cup\{0\}$, we follow the customary way of defining $L^2$-Sobolev spaces of order $\pm m$ in $\Om$ as
\begin{align}\label{hGi-1}
H^m(\Om) &:=\big\{u\in L^2(\Om;d^nx)\,\big|\,\partial^\alpha u\in L^2(\Om;d^nx), 
\, 0 \leq |\alpha|\leq m\big\}, \\
H^{-m}(\Om) &:=\biggl\{u\in\cD^{\prime}(\Om)\,\bigg|\,u=\sum_{0 \leq |\alpha|\leq m}
\partial^\alpha u_{\alpha}, \mbox{ with }u_\alpha\in L^2(\Om;d^nx), 
\,  0 \leq |\alpha|\leq m\biggr\},
\label{hGi-2}
\end{align}
equipped with natural norms (cf., e.g., \cite[Ch.\ 3]{AF03}, \cite[Ch.\ 1]{Ma85}). 
Here $\cD^\prime(\Om)$ denotes the usual set of 
distributions on $\Omega\subseteq \bbR^n$. Then we set
\begin{equation}\label{hGi-3}
H^m_0(\Om):=\,\mbox{the closure of $C^\infty_0(\Om)$ in $H^m(\Om)$}, 
\quad m \in \bbN\cup\{0\}.
\end{equation}
As is well-known, all three spaces above are Banach, reflexive and,
in addition,
\begin{equation} \label{hGi-4}
\bigl(H^m_0(\Om)\bigr)^*=H^{-m}(\Om).
\end{equation} 
Again, see, for instance, \cite[Ch.\ 3]{AF03}, \cite[Ch.\ 1]{Ma85}.

We recall that an open, nonempty set $\Omega\subseteq\bbR^n$ is
called a {\it Lipschitz domain} if the following property holds: 
There exists an open covering $\{{\mathcal O}_j\}_{1\leq j\leq N}$
of the boundary $\partial\Omega$ of $\Om$ such that for every
$j\in\{1,...,N\}$, ${\mathcal O}_j\cap\Omega$ coincides with the portion
of ${\mathcal O}_j$ lying in the over-graph of a Lipschitz function
$\varphi_j:\bbR^{n-1}\to\bbR$ $($considered in a new system of coordinates
obtained from the original one via a rigid motion$)$. The number
$\max\,\{\|\nabla\varphi_j\|_{L^\infty (\bbR^{n-1};d^{n-1}x')^{n-1}}\,|\,1\leq j\leq N\}$
is said to represent the {\it Lipschitz character} of $\Omega$.

The classical theorem of Rademacher on almost everywhere differentiability of Lipschitz
functions ensures that for any  Lipschitz domain $\Omega$, the
surface measure $d^{n-1} \omega$ is well-defined on  $\partial\Omega$ and
that there exists an outward  pointing normal vector $\nu$ at
almost every point of $\partial\Omega$. 

As regards $L^2$-based Sobolev spaces of fractional order $s\in\bbR$,
on arbitrary Lipschitz domains $\Om\subseteq\bbR^n$, we introduce
\begin{align}\label{HH-h1}
H^{s}(\bbR^n) &:=\bigg\{U\in \cS^\prime(\bbR^n)\,\bigg|\,
\norm{U}_{H^{s}(\bbR^n)}^2 = \int_{\bbR^n}d^n\xi\,
\big|\hatt U(\xi)\big|^2\big(1+\abs{\xi}^{2s}\big)<\infty \bigg\},
\\
H^{s}(\Om) &:=\big\{u\in \cD^\prime(\Om)\,\big|\,u=U|_\Om\text{ for some }
U\in H^{s}(\bbR^n)\big\} = R_{\Om} \, H^s(\bbR^n),
\label{HH-h2}
\end{align} 
where $R_{\Om}$ denotes the restriction operator (i.e., $R_{\Om} \, U=U|_{\Om}$, 
$U\in H^{s}(\bbR^n)$),   
$\cS^\prime(\bbR^n)$ is the space of tempered distributions on $\bbR^n$,
and $\hatt U$ denotes the Fourier transform of $U\in\cS^\prime(\bbR^n)$.
These definitions are consistent with \eqref{hGi-1}, \eqref{hGi-2}. 
Next, retaining that $\Om\subseteq \bbR^n$ is an arbitrary Lipschitz domain, we introduce
\begin{equation}\label{incl-xxx}
H^{s}_0(\Omega):=\big\{u\in H^{s}(\bbR^n)\,\big|\, {\rm supp} (u)\subseteq\ol{\Omega}\big\}, 
\quad s\in\bbR,
\end{equation} 
equipped with the natural norm induced by $H^{s}(\bbR^n)$. The space 
$H^{s}_0(\Omega)$ is reflexive, being a closed subspace of $H^{s}(\bbR^n)$. 
Finally, we introduce for all $s\in\bbR$,
\begin{align} 
\mathring{H}^{s} (\Omega) &= \mbox{the closure of $C^\infty_0(\Omega)$ in $H^s(\Omega)$},   \\
H^{s}_{z} (\Om) &= R_{\Om} \, H^{s}_0(\Omega).
\end{align}

Assuming from now on that $\Om\subset\bbR^n$ is a Lipschitz domain with a compact boundary, we recall the existence of a universal linear extension operator 
$E_{\Om}:\cD^\prime (\Om) \to \cS^\prime (\bbR^n)$ such that 
$E_{\Om}: H^s(\Om) \to H^s(\bbR^n)$ is bounded for all $s\in\bbR$, and 
$R_{\Om}  E_{\Om}=I_{H^s(\Om)}$ (cf.\ \cite{Ry99}). If $\widetilde{C_0^\infty(\Om)}$ denotes the set of $C_0^\infty(\Om)$-functions extended to all of $\bbR^n$ by setting functions zero outside of $\Omega$, then for all $s\in\bbR$, 
$\widetilde{C_0^\infty(\Om)} \hookrightarrow H^s_0(\Om)$ densely.  

Moreover, one has
\begin{equation}\label{incl-Ya}
\big(H^{s}_0(\Omega)\big)^*=H^{-s}(\Omega),  \quad s\in\bbR.
\end{equation}
(cf., e.g., \cite{JK95}) consistent with \eqref{hGi-3}, and also, 
\begin{equation}
\big(H^s(\Om)\big)^* = H^{-s}_0(\Om),  \quad s\in\bbR, 
\end{equation}
in particular, $H^s(\Om)$ is a reflexive Banach space. We shall also use the fact that  
for a Lipschitz domain $\Om\subset\bbR^n$ with compact boundary, the space 
$\mathring{H}^{s} (\Omega)$ 
satisfies 
\begin{equation}\label{incl-Yb}
\mathring{H}^{s} (\Omega) = H^s_{z}(\Om)
\, \mbox{ if } \, s > -1/2,\,\,s\notin{\textstyle\big\{{\frac12}}+\bbN_0\big\}. 
\end{equation}
For a Lipschitz domain $\Omega\subseteq\bbR^n$ with compact boundary it is also known that
\begin{equation}\label{dual-xxx}
\bigl(H^{s}(\Omega)\bigr)^*=H^{-s}(\Omega), \quad - 1/2 <s< 1/2.
\end{equation}
See \cite{Tr02} for this and other related properties. Throughout this paper,
we agree to use the {\it adjoint} (rather than the dual) space $X^*$ of a Banach space 
$X$.

From this point on we will always make the following assumption (unless explicitly stated otherwise):

\begin{hypothesis}\label{h2.1}
Let $n\in\bbN$, $n\geq 2$, and assume that $\emptyset \neq \Om\subset{\bbR}^n$ is
a bounded Lipschitz domain.
\end{hypothesis}

To discuss Sobolev spaces on the boundary of a Lipschitz domains, consider
first the case where $\Omega\subset\bbR^n$ is the domain lying above the graph
of a Lipschitz function $\varphi\colon\bbR^{n-1}\to\bbR$. In this setting,
we define the Sobolev space $H^s(\partial\Omega)$ for $0\leq s\leq 1$,
as the space of functions $f\in L^2(\partial\Omega;d^{n-1}\omega)$ with the
property that $f(x',\varphi(x'))$, as a function of $x'\in\bbR^{n-1}$,
belongs to $H^s(\bbR^{n-1})$. This definition is easily adapted to the case
when $\Omega$ is a Lipschitz domain whose boundary is compact,
by using a smooth partition of unity. Finally, for $-1\leq s\leq 0$, we set
\begin{equation}\label{A.6}
H^s(\dOm) = \big(H^{-s}(\dOm)\big)^*, \quad -1 \le s \le 0.
\end{equation}
 From the above characterization of $H^s(\partial\Omega)$ it follows that
any property of Sobolev spaces (of order $s\in[-1,1]$) defined in Euclidean
domains, which are invariant under multiplication by smooth, compactly
supported functions as well as composition by bi-Lipschitz diffeomorphisms,
readily extends to the setting of $H^s(\partial\Omega)$ (via localization and
pullback). For additional background
information in this context we refer, for instance, to
\cite[Chs.\ V, VI]{EE89}, \cite[Ch.\ 1]{Gr85}.

Assuming Hypothesis \ref{h2.1}, we introduce the boundary trace
operator $\ga_D^0$ (the Dirichlet trace) by
\begin{equation}
\ga_D^0\colon C(\ol{\Om})\to C(\dOm), \quad \ga_D^0 u = u|_\dOm.   \label{2.5}
\end{equation}
Then there exists a bounded, linear operator $\gamma_D$
\begin{align}
\begin{split}
& \ga_D\colon H^{s}(\Om)\to H^{s-(1/2)}(\dOm) \hookrightarrow \LdOm,
\quad 1/2<s<3/2, \label{2.6}  \\
& \ga_D\colon H^{3/2}(\Om)\to H^{1-\varepsilon}(\dOm) \hookrightarrow \LdOm,
\quad \varepsilon \in (0,1) 
\end{split}
\end{align}
(cf., e.g., \cite[Theorem 3.38]{Mc00}), whose action is compatible with that of 
$\ga_D^0$. That is, the two Dirichlet trace operators coincide on the intersection 
of their domains. Moreover, we recall that
\begin{equation}\label{2.6a}
\ga_D\colon H^{s}(\Om)\to H^{s-(1/2)}(\dOm) \, \text{ is onto for $1/2<s<3/2$}.
\end{equation}

Next, retaining Hypothesis \ref{h2.1}, we introduce the operator
$\ga_N$ (the strong Neumann trace) by
\begin{equation} \label{2.7}
\ga_N = \nu\cdot\ga_D\nabla \colon H^{s+1}(\Om)\to \LdOm, \quad 1/2<s<3/2,
\end{equation} 
where $\nu$ denotes the outward pointing normal unit vector to
$\partial\Om$. It follows from \eqref{2.6} that $\ga_N$ is also a
bounded operator. We seek to extend the action of the Neumann trace
operator \eqref{2.7} to other (related) settings. To set the stage,
assume Hypothesis \ref{h2.1} and observe that the inclusion
\begin{equation}\label{inc-1}
\iota:H^{s_0}(\Omega)\hookrightarrow \bigl(H^r(\Omega)\bigr)^*, \quad
s_0>-1/2,\; r>1/2,
\end{equation}
is well-defined and bounded. We then introduce the weak Neumann trace
operator
\begin{equation}\label{2.8}
\wti\ga_N\colon\big\{u\in H^{s+1/2}(\Om)\,\big|\,\Delta u\in H^{s_0}(\Om)\big\}
\to H^{s-1}(\dOm),\quad s\in(0,1),\; s_0>-1/2,
\end{equation}
as follows: Given $u\in H^{s+1/2}(\Om)$ with $\Delta u \in H^{s_0}(\Om)$
for some $s\in(0,1)$ and $s_0>-1/2$, we set (with $\iota$ as in
\eqref{inc-1} for $r:=3/2-s>1/2$)
\begin{equation} \label{2.9}
\langle\phi,\wti\ga_N u \rangle_{1-s}
={}_{H^{1/2-s}(\Om)}\langle\nabla\Phi,\nabla u\rangle_{(H^{1/2-s}(\Om))^*}
+ {}_{H^{3/2-s}(\Om)}\langle\Phi,\iota(\Delta u)\rangle_{(H^{3/2-s}(\Om))^*},
\end{equation} 
for all $\phi\in H^{1-s}(\dOm)$ and $\Phi\in H^{3/2-s}(\Om)$ such that
$\ga_D\Phi=\phi$. We note that the first pairing in the right-hand side
above is meaningful since
\begin{equation} \label{2.9JJ}
\bigl(H^{1/2-s}(\Om)\bigr)^*=H^{s-1/2}(\Om),\quad s\in (0,1),
\end{equation}  
that the definition \eqref{2.9} is independent of the particular
extension $\Phi$ of $\phi$, and that $\wti\ga_N$ is a bounded extension
of the Neumann trace operator $\ga_N$ defined in \eqref{2.7}.

For further reference, let us also point out here that if $\Omega\subset\bbR^n$
is a bounded Lipschitz domain then for any $j,k\in\{1,...,n\}$ the
(tangential first-order differential) operator
\begin{equation}\label{Pf-2}
\partial/\partial\tau_{j,k}:=\nu_j\partial_k-\nu_k\partial_j:
H^s(\partial\Omega)\to H^{s-1}(\partial\Omega),\quad 0\leq s\leq 1,
\end{equation}
is well-defined, linear and bounded. Assuming Hypothesis \ref{h2.1},
we can then define the tangential gradient operator
\begin{equation} \label{Tan-C1}
\nabla_{tan}: \begin{cases}H^1(\partial\Omega)\to 
\big(L^2(\partial\Omega;d^{n-1}\omega)\big)^n   \\
\hspace*{1cm} f \mapsto 
\nabla_{tan}f:=\Big(\sum_{k=1}^n\nu_k\frac{\partial f}{\partial\tau_{kj}}
\Big)_{1\leq j\leq n} \end{cases} 
,\quad f\in H^1(\partial\Omega).
\end{equation} 
The following result has been proved in \cite{MMS05}.

\begin{theorem}\label{T-MMS}
Assume Hypothesis \ref{h2.1} and denote by $\nu$ the outward unit normal
to $\partial\Omega$. Then the operator
\begin{equation}\label{Tan-C2} 
\gamma_2: \begin{cases} H^2(\Omega)\to \bigl\{(g_0,g_1)\in H^1(\partial\Omega) \times L^2(\partial\Omega;d^{n-1}\omega)\,\big|\,   \nabla_{tan}g_0  
+g_1\nu\in \bigl(H^{1/2}(\partial\Omega)\bigr)^n\bigl\}  \\
\hspace*{8mm} 
u \mapsto \gamma_2 u=(\gamma_D u\,,\,\gamma_N u), 
\end{cases}
\end{equation}
is well-defined, linear, bounded, onto, and has a linear, bounded
right-inverse. The space $\bigl\{(g_0,g_1)\in H^1(\partial\Omega)
\times L^2(\partial\Omega;d^{n-1}\omega)\,\big|\,
\nabla_{tan}g_0+g_1\nu\in \bigl(H^{1/2}(\partial\Omega)\bigr)^n\bigl\}$ in \eqref{Tan-C2} 
is equipped with the natural norm
\begin{equation} \label{NoRw-1}
(g_0,g_1)\mapsto \|g_0\|_{H^1(\partial\Omega)}
+\|g_1\|_{L^2(\partial\Omega;d^{n-1}\omega)}
+\|\nabla_{tan}g_0+g_1\nu\|_{(H^{1/2}(\partial\Omega))^n}.
\end{equation} 
Furthermore, the null space of the operator \eqref{Tan-C2}
is given by
\begin{equation} \label{Tan-C3}
\ker(\gamma_2):= \big\{u\in H^2(\Omega)\,\big|\,\gamma_D u =\gamma_N u=0\big\}
=H^2_0(\Omega),
\end{equation}
with the latter space denoting the closure of $C^\infty_0(\Omega)$
in $H^2(\Omega)$.
\end{theorem}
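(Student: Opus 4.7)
My plan is to break the theorem into three parts: (A) well-definedness and boundedness of $\gamma_2$, (B) identification of the kernel as $H^2_0(\Omega)$, and (C) surjectivity onto the stated image space with a continuous right-inverse. I expect (C) to be the main obstacle; (A) and (B) follow by assembling Lipschitz-domain trace facts already recalled in the excerpt.

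For (A), I would start from the observation that $u\in H^2(\Omega)$ implies $\partial_j u\in H^1(\Omega)$, so by \eqref{2.6} each $\gamma_D(\partial_j u)\in H^{1/2}(\partial\Omega)$, hence $\gamma_D(\nabla u)\in (H^{1/2}(\partial\Omega))^n$. The pointwise (a.e.\ with respect to surface measure) orthogonal decomposition of a boundary vector field into tangential and normal parts yields
\[
\gamma_D(\nabla u)=\nabla_{tan}(\gamma_D u)+(\gamma_N u)\,\nu,
\]
which can be verified by integration by parts against smooth test vector fields, using the definitions of $\nabla_{tan}$ in \eqref{Tan-C1} and $\partial/\partial\tau_{j,k}$ in \eqref{Pf-2}. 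Thus $\nabla_{tan}(\gamma_D u)+(\gamma_N u)\nu\in(H^{1/2}(\partial\Omega))^n$. That $\gamma_D u\in H^1(\partial\Omega)$ follows because its tangential derivatives $\partial\gamma_D u/\partial\tau_{j,k}=\nu_j\gamma_D(\partial_k u)-\nu_k\gamma_D(\partial_j u)$ lie in $H^{1/2}(\partial\Omega)\subset L^2(\partial\Omega)$; that $\gamma_N u\in L^2(\partial\Omega)$ follows similarly. Boundedness with respect to the norm \eqref{NoRw-1} is a tally of the continuity constants.

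For (B), the inclusion $H^2_0(\Omega)\subseteq\ker(\gamma_2)$ is immediate by density. Conversely, if $u\in H^2(\Omega)$ and $\gamma_D u=\gamma_N u=0$, then by the decomposition above $\gamma_D(\nabla u)=0$, so each $\partial_j u\in H^1(\Omega)$ has vanishing Dirichlet trace and its extension by zero $\widetilde{\partial_j u}$ lies in $H^1(\bbR^n)$. Combined with $\gamma_D u=0$, this yields that the zero-extension $\widetilde u$ belongs to $H^2(\bbR^n)$ with support in $\overline\Omega$. A standard translate-and-mollify argument, using the Lipschitz character of $\partial\Omega$ to push the support slightly inside $\Omega$ before mollification, produces an approximating sequence in $C^\infty_0(\Omega)$ and places $u$ in $H^2_0(\Omega)$.

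Part (C) is the crux. The natural first move is: given $(g_0,g_1)$ in the target space, set $\vec G:=\nabla_{tan}g_0+g_1\nu\in(H^{1/2}(\partial\Omega))^n$, use surjectivity of $\gamma_D:H^1(\Omega)^n\to H^{1/2}(\partial\Omega)^n$ to lift $\vec G$ to some $\vec F\in H^1(\Omega)^n$, and then try to produce $u\in H^2(\Omega)$ with $\nabla u=\vec F$ at the boundary and $\gamma_D u=g_0$. The obstruction is that $\vec F$ is generally not a gradient, so this cannot be done globally by a direct antiderivative. My plan is therefore to proceed by localization: choose a partition of unity $\{\chi_j\}$ subordinate to the boundary covering $\{\mathcal O_j\}$ from the Lipschitz-domain definition and, in each chart, straighten the boundary via the bi-Lipschitz map $(x',x_n)\mapsto(x',x_n-\varphi_j(x'))$. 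Since bi-Lipschitz maps preserve the scales $H^{s}$ for $|s|\le 1$ (both in the interior and on the flattened boundary), the problem reduces, in each chart, to producing $u\in H^2(\bbR^n_+)$ with prescribed $\gamma_D u\in H^{3/2}(\bbR^{n-1})$ and $\gamma_N u\in H^{1/2}(\bbR^{n-1})$. On the half-space this is classical and admits an explicit Fourier-multiplier construction (e.g., $u(x',x_n)=\mathcal F^{-1}_{x'\leftarrow\xi}[(\widehat{g_0}(\xi)-x_n\widehat{g_1}(\xi))e^{-x_n\langle\xi\rangle}]$ with suitable modifications to decouple the two pieces), producing a bounded right-inverse on the half-space.

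The delicate step I would flag as the main obstacle is verifying that, under the bi-Lipschitz pullback, the coupled boundary condition $\nabla_{tan}g_0+g_1\nu\in(H^{1/2}(\partial\Omega))^n$ passes to the flat-boundary condition $\gamma_D u\in H^{3/2}(\bbR^{n-1})$, $\gamma_N u\in H^{1/2}(\bbR^{n-1})$: individually $g_0\in H^1(\partial\Omega)$ pulls back only to $H^1(\bbR^{n-1})$ and $g_1\in L^2$ pulls back only to $L^2$, so one must exhibit the extra half-derivative of $g_0$ and the half-derivative of $g_1$ from the joint $H^{1/2}$-control of the combination $\nabla_{tan}g_0+g_1\nu$, using the almost-everywhere algebraic decomposition into tangential and normal parts. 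Once this is done in each chart, I would patch the local lifts $u_j$ via the partition of unity into a global $u\in H^2(\Omega)$, absorb the lower-order commutators $[\chi_j,\partial]$ into the estimate, and check that the construction is linear in $(g_0,g_1)$ so that it defines the required bounded right-inverse.
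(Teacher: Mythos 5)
Your parts (A) and (B) are sound: the decomposition $\gamma_D(\nabla u)=\nabla_{tan}(\gamma_D u)+(\gamma_N u)\,\nu$, checked against the definitions \eqref{Pf-2}--\eqref{Tan-C1}, gives both the mapping property and the boundedness in the norm \eqref{NoRw-1}, and your kernel argument (zero extension of $u$ and of each $\partial_j u$, then \eqref{incl-Yb} with $s=2$) is the standard one. Note, though, that the paper itself does not prove Theorem \ref{T-MMS}; it quotes it from \cite{MMS05}, so the comparison below is with the argument given there.

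The genuine gap is in part (C), and it is precisely at the step you flagged — but the difficulty is worse than "delicate": the flattening strategy fails in principle for Lipschitz domains. The map $\Phi(x',x_n)=(x',x_n-\varphi_j(x'))$ is only bi-Lipschitz, and, as you yourself note, bi-Lipschitz changes of variables preserve $H^s$ only for $|s|\leq 1$. Consequently $u\mapsto u\circ\Phi^{-1}$ does not map $H^2$ of the curved chart to $H^2(\bbR^n_+)$ (the chain rule brings in second derivatives of $\varphi_j$, which are merely distributions), so even if you solved the flat problem you could not transport the solution back. On the data side the reduction also fails: for a genuinely Lipschitz boundary the coupled condition $\nabla_{tan}g_0+g_1\nu\in\bigl(H^{1/2}(\partial\Omega)\bigr)^n$ does \emph{not} imply that $g_0$ pulls back to $H^{3/2}(\bbR^{n-1})$ or that $g_1$ pulls back to $H^{1/2}(\bbR^{n-1})$ — the image of $\gamma_2$ is genuinely different from $H^{3/2}\times H^{1/2}$ unless $\partial\Omega\in C^{1,r}$, $r>1/2$ (this is the same phenomenon as $N^{1/2}(\partial\Omega)\neq H^{1/2}(\partial\Omega)$ noted after \eqref{Tan-C5}), so there is no "extra half-derivative" to be extracted. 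The proof in \cite{MMS05} avoids flattening altogether: it regards the pair $\bigl(g_0,\nabla_{tan}g_0+g_1\nu\bigr)$ as a first-order Whitney--Sobolev array satisfying the compatibility relations $\partial g_0/\partial\tau_{jk}=\nu_j G_k-\nu_k G_j$, and constructs a bounded extension operator for such arrays directly on the Lipschitz graphs by explicit integral (Whitney-type) operators estimated on the relevant Sobolev--Besov scales. To repair your argument you would either have to restrict to $C^{1,1}$ domains (where $\Phi$ preserves $H^2$ and your half-space multiplier construction works) or replace the localization-and-flattening step by such a direct extension of the Whitney array.
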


Continuing to assume Hypothesis \ref{h2.1}, we now introduce
\begin{equation} \label{Tan-C4} 
N^{1/2}(\partial\Omega):=\big\{g\in L^2(\partial\Omega;d^{n-1}\omega)\,\big|\,
g\nu_j\in H^{1/2}(\partial\Omega),\,\,1\leq j\leq n\big\},
\end{equation} 
where the $\nu_j$'s are the components of $\nu$.  We equip this space with 
the natural norm
\begin{equation} \label{Tan-C4B}
\|g\|_{N^{1/2}(\partial\Omega)}
:=\sum_{j=1}^n\|g\nu_j\|_{H^{1/2}(\partial\Omega)}.
\end{equation} 

Then $N^{1/2}(\partial\Omega)$ is a reflexive Banach space which embeds
continuously into $L^2(\partial\Omega;d^{n-1}\omega)$. Furthermore,
\begin{equation} \label{Tan-C5}
N^{1/2}(\partial\Omega)=H^{1/2}(\partial\Omega)\, 
\mbox{ whenever $\Omega$ is a bounded $C^{1,r}$ domain with $r>1/2$}.
\end{equation} 

It should be mentioned that the spaces
$H^{1/2}(\partial\Omega)$ and $N^{1/2}(\partial\Omega)$ can be quite
different for an arbitrary Lipschitz domain $\Omega$.
Our interest in the latter space stems from the fact that this
arises naturally when considering the Neumann trace operator acting on 
\begin{equation} \label{Tan-C6}
\big\{u\in H^2(\Omega)\,\big|\,\gamma_D u =0\big\}=H^2(\Omega)\cap H^1_0(\Omega),
\end{equation} 
considered as a closed subspace of $H^2(\Omega)$ (hence, a Banach space
when equipped with the $H^2$-norm). More specifically, we have
(cf.\ \cite{GM10} for a proof):

\begin{lemma}\label{Lo-Tx}
Assume Hypothesis \ref{h2.1}. Then the Neumann trace operator $\gamma_N$
considered in the context
\begin{equation} \label{Tan-C7}
\gamma_N:H^2(\Omega)\cap H^1_0(\Omega)\to N^{1/2}(\partial\Omega)
\end{equation} 
is well-defined, linear, bounded, onto and with a linear, bounded 
right-inverse. In addition, the null space of $\gamma_N$ in \eqref{Tan-C7} is
precisely $H^2_0(\Omega)$, the closure of $C^\infty_0(\Omega)$ in
$H^2(\Omega)$.
\end{lemma}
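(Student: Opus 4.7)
The plan is to deduce Lemma \ref{Lo-Tx} as an immediate consequence of Theorem \ref{T-MMS}, the point being that specializing to functions with $\gamma_D u = 0$ collapses the compatibility condition
$\nabla_{tan} g_0 + g_1 \nu \in (H^{1/2}(\partial\Omega))^n$ from \eqref{Tan-C2} to
$g_1 \nu \in (H^{1/2}(\partial\Omega))^n$, which is precisely the definition of $N^{1/2}(\partial\Omega)$ in \eqref{Tan-C4}.

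First I would verify well-definedness and boundedness. By \eqref{incl-Yb}, $H^1_0(\Omega)$ coincides with the kernel of $\gamma_D$ on $H^1(\Omega)$, so for $u \in H^2(\Omega) \cap H^1_0(\Omega)$ one has $\gamma_D u = 0$. Applying Theorem \ref{T-MMS} gives $\gamma_2 u = (0, \gamma_N u)$ with
$\nabla_{tan}(0) + (\gamma_N u)\nu = (\gamma_N u)\nu \in (H^{1/2}(\partial\Omega))^n$, so $\gamma_N u \in N^{1/2}(\partial\Omega)$. The continuity of $\gamma_N$ in \eqref{Tan-C7} then follows from the boundedness of $\gamma_2$, since by \eqref{NoRw-1} and \eqref{Tan-C4B},
\[
\|\gamma_N u\|_{N^{1/2}(\partial\Omega)}
= \sum_{j=1}^{n} \|(\gamma_N u)\nu_j\|_{H^{1/2}(\partial\Omega)}
\lesssim \|\gamma_2 u\|_{H^1 \times L^2, \text{compat.}}
\lesssim \|u\|_{H^2(\Omega)}.
\]

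Next, for surjectivity together with a bounded right-inverse, consider the map
$\iota : N^{1/2}(\partial\Omega) \to H^1(\partial\Omega) \times L^2(\partial\Omega; d^{n-1}\omega)$, $\iota(g) := (0, g)$. The pair $(0,g)$ belongs to the compatibility subspace in Theorem \ref{T-MMS}, since the compatibility condition reduces to $g \nu \in (H^{1/2}(\partial\Omega))^n$, which is exactly the condition $g \in N^{1/2}(\partial\Omega)$; moreover $\iota$ is bounded with respect to the norm \eqref{NoRw-1}. Composing $\iota$ with the bounded right-inverse of $\gamma_2$ furnished by Theorem \ref{T-MMS} produces a bounded operator $E_N : N^{1/2}(\partial\Omega) \to H^2(\Omega)$ satisfying $\gamma_D(E_N g) = 0$ and $\gamma_N(E_N g) = g$. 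Since $\gamma_D(E_N g) = 0$, one actually has $E_N g \in H^2(\Omega) \cap H^1_0(\Omega)$ (again using \eqref{incl-Yb}), which yields both the surjectivity of $\gamma_N$ in \eqref{Tan-C7} and the existence of a bounded right-inverse.

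Finally, for the kernel: if $u \in H^2(\Omega) \cap H^1_0(\Omega)$ with $\gamma_N u = 0$, then $\gamma_2 u = (0,0)$, and \eqref{Tan-C3} gives $u \in H^2_0(\Omega)$. Conversely, $H^2_0(\Omega) \subseteq H^1_0(\Omega)$ and both $\gamma_D$ and $\gamma_N$ vanish on $C^\infty_0(\Omega)$ and hence, by continuity of $\gamma_2 : H^2(\Omega) \to H^1(\partial\Omega) \times L^2(\partial\Omega; d^{n-1}\omega)$, on the $H^2$-closure $H^2_0(\Omega)$. I do not anticipate a genuine obstacle, as Theorem \ref{T-MMS} does the substantive work; the only content is the identification of the compatibility subspace with $\{0\} \times N^{1/2}(\partial\Omega)$ when the first component vanishes, which is immediate from \eqref{Tan-C4} and the reduction of \eqref{NoRw-1} in that situation.
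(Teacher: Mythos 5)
Your argument is correct, and it is precisely the route the paper sets up: the paper itself only cites \cite{GM10} for the proof, but Theorem \ref{T-MMS} is stated immediately beforehand exactly so that Lemma \ref{Lo-Tx} follows by restricting $\gamma_2$ to the subspace where $\gamma_D u=0$, whereupon the compatibility condition in \eqref{Tan-C2} degenerates to $g_1\nu\in(H^{1/2}(\partial\Omega))^n$, i.e.\ membership in $N^{1/2}(\partial\Omega)$. Your treatment of boundedness, of the right-inverse via $g\mapsto(0,g)$, and of the kernel via \eqref{Tan-C3} is exactly this specialization and is complete.
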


Most importantly for us here is the fact that one can use the above Neumann
trace result in order to extend the action of the Dirichlet trace operator
\eqref{2.6} to $\dom(-\Delta_{max,\Om})$, the domain of the maximal
Laplacian, that is, $\{u\in L^2(\Omega;d^nx)\,|\,\Delta u\in L^2(\Omega;d^nx)\}$,
which we consider equipped with the graph norm
$u\mapsto \|u\|_{L^2(\Omega;d^nx)}+\|\Delta u\|_{L^2(\Omega;d^nx)}$.
Specifically, with $\bigl(N^{1/2}(\partial\Omega)\bigr)^*$ denoting the
conjugate dual space of $N^{1/2}(\partial\Omega)$, we have the
following result from \cite{GM10}:

\begin{theorem}\label{New-T-tr}
Assume Hypothesis \ref{h2.1}. Then there exists a unique linear, bounded
operator
\begin{equation} \label{Tan-C10}
\widehat{\gamma}_D:\big\{u\in L^2(\Omega;d^nx)\,\big|\,\Delta u\in L^2(\Omega;d^nx)\big\}
\to \bigl(N^{1/2}(\partial\Omega)\bigr)^*
\end{equation} 
which is compatible with the Dirichlet trace introduced in \eqref{2.6},
in the sense that, for each $s>1/2$, one has
\begin{equation} \label{Tan-C11}
\widehat{\gamma}_D u =\gamma_D u \, \mbox{ for every $u\in H^s(\Omega)$
with $\Delta u\in L^2(\Omega;d^nx)$}.
\end{equation} 
Furthermore, this extension of the Dirichlet trace operator in \eqref{2.6}
allows for the following generalized integration by parts formula
\begin{equation} \label{Tan-C12}
{}_{N^{1/2}(\partial\Omega)}\langle\gamma_N w,\widehat{\gamma}_D u 
\rangle_{(N^{1/2}(\partial\Omega))^*}
=(\Delta w,u)_{L^2(\Om;d^nx)}
- (w,\Delta u)_{L^2(\Om;d^nx)},
\end{equation} 
valid for every $u\in L^2(\Omega;d^nx)$ with $\Delta u\in L^2(\Omega;d^nx)$
and every $w\in H^2(\Omega)\cap H^1_0(\Omega)$.
\end{theorem}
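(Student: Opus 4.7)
\medskip

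\noindent
\textbf{Proof proposal.} The plan is to \emph{define} $\widehat{\gamma}_D u$ directly via the integration by parts formula \eqref{Tan-C12} (so that \eqref{Tan-C12} becomes a tautology once the definition is justified), and then verify that the resulting functional is a well-defined, bounded extension of $\gamma_D$. Concretely, given $u \in L^2(\Omega;d^n x)$ with $\Delta u \in L^2(\Omega;d^n x)$, I will define, for every $\phi \in N^{1/2}(\partial\Omega)$,
\begin{equation*}
\langle \phi,\widehat{\gamma}_D u\rangle
:= (\Delta w, u)_{L^2(\Om;d^nx)} - (w,\Delta u)_{L^2(\Om;d^nx)},
\end{equation*}
where $w$ is any element of $H^2(\Omega)\cap H^1_0(\Omega)$ with $\gamma_N w=\phi$. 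Such a $w$ exists, and can be chosen to depend linearly and boundedly on $\phi$, by the surjectivity of $\gamma_N\colon H^2(\Omega)\cap H^1_0(\Omega)\to N^{1/2}(\partial\Omega)$ together with the existence of a bounded right-inverse from Lemma~\ref{Lo-Tx}.

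The first substantive step is to show independence of the choice of $w$. If $w_1,w_2$ are two such extensions, then $\gamma_D(w_1-w_2)=0$ and $\gamma_N(w_1-w_2)=0$, so by the characterization \eqref{Tan-C3} of $\ker(\gamma_2)$ in Theorem~\ref{T-MMS}, $w_0:=w_1-w_2\in H^2_0(\Omega)$. Approximating $w_0$ in $H^2(\Omega)$-norm by a sequence $\{\varphi_k\}_{k\in\bbN}\subset C^\infty_0(\Omega)$ and applying the obvious identity $(\Delta \varphi_k,u)_{L^2}=(\varphi_k,\Delta u)_{L^2}$, then passing to the limit using $\varphi_k\to w_0$ in $L^2$ and $\Delta\varphi_k\to\Delta w_0$ in $L^2$, yields $(\Delta w_0,u)_{L^2}=(w_0,\Delta u)_{L^2}$, proving independence. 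Boundedness of $\widehat{\gamma}_D u$ as a functional on $N^{1/2}(\partial\Omega)$ is then immediate from the estimate
\begin{equation*}
|\langle\phi,\widehat{\gamma}_D u\rangle| \leq \|\Delta w\|_{L^2(\Om;d^nx)}\|u\|_{L^2(\Om;d^nx)} + \|w\|_{L^2(\Om;d^nx)}\|\Delta u\|_{L^2(\Om;d^nx)},
\end{equation*}
combined with the $H^2$-bound on $w$ afforded by the bounded right-inverse of $\gamma_N$; this yields continuity of $\widehat{\gamma}_D$ in the graph norm on $\dom(-\Delta_{\max,\Om})$. Uniqueness of the extension is automatic from the density of $\gamma_N\big(H^2(\Omega)\cap H^1_0(\Omega)\big) = N^{1/2}(\partial\Omega)$ in itself (trivially), coupled with the fact that \eqref{Tan-C12} prescribes $\widehat{\gamma}_D u$ on all of $N^{1/2}(\partial\Omega)$.

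The one remaining point is compatibility \eqref{Tan-C11}: if $u\in H^s(\Omega)$ with $s>1/2$ and $\Delta u\in L^2(\Omega;d^nx)$, then $\widehat{\gamma}_D u$ coincides with $\gamma_D u$ as given by \eqref{2.6}. For such $u$, both sides make sense (noting $N^{1/2}(\partial\Omega)\hookrightarrow L^2(\partial\Omega;d^{n-1}\omega)$, so the $L^2$-pairing on $\partial\Omega$ embeds continuously into the $N^{1/2}$-$(N^{1/2})^*$ duality pairing), so the claim reduces to the classical Green formula
\begin{equation*}
(\Delta w,u)_{L^2(\Om;d^nx)} - (w,\Delta u)_{L^2(\Om;d^nx)}
= \int_{\partial\Omega}(\gamma_N w)\,\overline{(\gamma_D u)}\,d^{n-1}\omega,
\end{equation*}
valid for $w\in H^2(\Omega)\cap H^1_0(\Omega)$ and $u\in H^s(\Omega)$ with $\Delta u \in L^2$. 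I would establish this by first treating the case $u\in H^2(\Omega)$ (where the standard Green formula for Lipschitz domains applies), then approximating a general $u\in H^s(\Omega)$ with $\Delta u\in L^2(\Om;d^nx)$ by smoother functions $u_k\in C^\infty(\overline{\Omega})$ using a universal extension $E_\Om$ followed by mollification and restriction; the key is that $\gamma_D u_k\to \gamma_D u$ in, say, $L^2(\partial\Omega;d^{n-1}\omega)$, while the bulk integrals converge by $L^2$-convergence of $u_k$ and $\Delta u_k$.

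The main obstacle, in my view, is the compatibility step: asserting that mollification produces sequences with $\Delta u_k\to\Delta u$ in $L^2(\Omega;d^nx)$ while simultaneously keeping control over the traces on a merely Lipschitz boundary is slightly delicate, and is precisely the place where the structure of the space $N^{1/2}(\partial\Omega)$ (as opposed to $H^{1/2}(\partial\Omega)$) enters in an essential way, because for general Lipschitz $\Omega$ one only has $\gamma_N w\in N^{1/2}(\partial\Omega)\subsetneq H^{1/2}(\partial\Omega)$. All other steps are routine once the tools from Lemma~\ref{Lo-Tx} and Theorem~\ref{T-MMS} are in hand.
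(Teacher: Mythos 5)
The paper does not actually prove Theorem \ref{New-T-tr}; it imports the statement from \cite{GM10} without argument (just as it does for its Neumann counterpart, Theorem \ref{3ew-T-tr}). That said, your construction is the standard one underlying \cite{GM10}: define $\widehat{\gamma}_D u$ by duality against $\gamma_N w$ through the Green identity, using the surjectivity of $\gamma_N\colon H^2(\Omega)\cap H^1_0(\Omega)\to N^{1/2}(\partial\Omega)$ with bounded right-inverse (Lemma \ref{Lo-Tx}) for existence and boundedness, and the identification $\ker(\gamma_2)=H^2_0(\Omega)$ from \eqref{Tan-C3} for well-definedness. Those steps are correct as you present them, and this is exactly why the space $N^{1/2}(\partial\Omega)$ (rather than $H^{1/2}(\partial\Omega)$) is the right target for $\gamma_N$ here.

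Two points deserve more care than you give them. First, your uniqueness argument establishes uniqueness of an operator satisfying \eqref{Tan-C12}, whereas the theorem asserts uniqueness of a bounded operator satisfying the compatibility condition \eqref{Tan-C11}; the latter requires knowing that functions covered by \eqref{Tan-C11} (e.g., $H^2(\Omega)$, or $C^\infty(\overline\Omega)$) are dense in $\{u\in L^2(\Omega;d^nx)\,|\,\Delta u\in L^2(\Omega;d^nx)\}$ for the graph norm. Second, that same density statement is what your compatibility step hinges on, and it is not free on a Lipschitz domain: a straightforward mollification of $E_\Omega u$ does not keep $\Delta u_k$ convergent in $L^2(\Omega;d^nx)$, since $E_\Omega$ does not commute with $\Delta$. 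The standard remedy is a partition of unity subordinate to the graph coordinate patches, followed by translation of $u$ into the domain along the graph direction and then mollification at a much smaller scale; the cutoff commutators $[\Delta,\chi]u=2\nabla\chi\cdot\nabla u+(\Delta\chi)u$ are harmless because they are supported compactly in $\Omega$, where interior elliptic regularity gives $u\in H^2_{\rm loc}(\Omega)$. With that density result in hand, both the compatibility and the (compatibility-based) uniqueness follow, and your proof closes.
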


We next review the case of the Neumann trace, whose action
is extended to $\dom(-\Delta_{max,\Om})$. To this end, we need
to address a number of preliminary matters. First, assuming
Hypothesis \ref{h2.1}, we make the following definition
(compare with \eqref{Tan-C4}):
\begin{equation} \label{3an-C4} 
N^{3/2}(\partial\Omega):=\bigl\{g\in H^1(\partial\Omega)\,\big|\,
\nabla_{tan}g\in \bigl(H^{1/2}(\partial\Omega)\bigr)^n\bigl\},
\end{equation} 
equipped with the natural norm
\begin{equation} \label{3an-C4B}
\|g\|_{N^{3/2}(\partial\Omega)}
:=\|g\|_{L^2(\partial\Omega;d^{n-1}\omega)}+
\|\nabla_{tan}g\|_{(H^{1/2}(\partial\Omega))^n}.
\end{equation} 
Assuming Hypothesis \ref{h2.1}, $N^{3/2}(\partial\Omega)$ is a
reflexive Banach space which embeds continuously into the space 
$H^1(\partial\Omega;d^{n-1}\omega)$. In addition, this turns out
to be a natural substitute for the more familiar space
$H^{3/2}(\partial\Omega)$ in the case where $\Omega$
is sufficiently smooth. Concretely, one has 
\begin{equation} \label{3an-C5}
N^{3/2}(\partial\Omega)=H^{3/2}(\partial\Omega),
\end{equation} 
(as vector spaces with equivalent norms),
whenever $\Omega$ is a bounded $C^{1,r}$ domain with $r>1/2$.
The primary reason we are interested in $N^{3/2}(\partial\Omega)$ is that
this space arises naturally when considering the Dirichlet trace operator
acting on 
\begin{equation} \label{3an-C6N}
\big\{u\in H^2(\Omega)\,\big|\,\gamma_N u =0\big\},
\end{equation} 
considered as a closed subspace of $H^2(\Omega)$ (thus, a Banach space
when equipped with the norm inherited from $H^2(\Omega)$).
Concretely, the following result has been established in \cite{GM10}.

\begin{lemma}\label{3o-TxD}
Assume Hypothesis \ref{h2.1}. Then the Dirichlet trace operator $\gamma_D$
considered in the context
\begin{equation} \label{3an-C7D}
\gamma_D:\big\{u\in H^2(\Omega)\,\big|\,\gamma_N u =0\big\}
\to N^{3/2}(\partial\Omega)
\end{equation} 
is well-defined, linear, bounded, onto and with a linear, bounded 
right-inverse. In addition, the null space of $\gamma_D$ in \eqref{3an-C7D} is
precisely $H^2_0(\Omega)$, the closure of $C^\infty_0(\Omega)$ in
$H^2(\Omega)$.
\end{lemma}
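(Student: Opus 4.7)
The result is essentially a specialization of Theorem \ref{T-MMS} to the subspace where the Neumann trace vanishes, so the plan is to extract each conclusion directly from the joint trace map $\gamma_2 = (\gamma_D, \gamma_N)$ rather than to do any new boundary analysis.

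First, to see that $\gamma_D$ maps $\{u\in H^2(\Omega)\,|\,\gamma_N u=0\}$ boundedly into $N^{3/2}(\partial\Omega)$, I would simply observe that for such $u$ one has $\gamma_2 u=(\gamma_D u,0)$, and Theorem \ref{T-MMS} forces the range pair to satisfy $\nabla_{tan}(\gamma_D u)+0\cdot\nu\in (H^{1/2}(\partial\Omega))^n$. Combined with $\gamma_D u\in H^1(\partial\Omega)$ (also from Theorem \ref{T-MMS}), this places $\gamma_D u\in N^{3/2}(\partial\Omega)$ by the very definition \eqref{3an-C4}. The norm estimate
\[
\|\gamma_D u\|_{N^{3/2}(\partial\Omega)}
=\|\gamma_D u\|_{L^2(\partial\Omega;d^{n-1}\omega)}
+\|\nabla_{tan}\gamma_D u\|_{(H^{1/2}(\partial\Omega))^n}
\le C\|u\|_{H^2(\Omega)}
\]
is then immediate from the boundedness of $\gamma_2$ in Theorem \ref{T-MMS} applied with second component equal to zero.

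For surjectivity with a bounded right-inverse, given $g\in N^{3/2}(\partial\Omega)$, I would form the pair $(g,0)\in H^1(\partial\Omega)\times L^2(\partial\Omega;d^{n-1}\omega)$ and note that it lies in the codomain of $\gamma_2$ in \eqref{Tan-C2}, since $\nabla_{tan}g+0\cdot\nu=\nabla_{tan}g\in (H^{1/2}(\partial\Omega))^n$ by definition of $N^{3/2}$. Theorem \ref{T-MMS} then supplies a $u\in H^2(\Omega)$ with $\gamma_2 u=(g,0)$, depending linearly and boundedly on $(g,0)$ and hence on $g$ (using \eqref{NoRw-1}, which reduces to the $N^{3/2}$ norm when the second component vanishes). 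By construction $\gamma_N u=0$, so $u$ belongs to the domain in \eqref{3an-C7D} and $\gamma_D u=g$; composing with $g\mapsto(g,0)$ yields the desired bounded linear right-inverse.

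Finally, the null space computation is a direct transcription of \eqref{Tan-C3}: if $u$ belongs to the domain and $\gamma_D u=0$, then $\gamma_2 u=(0,0)$, so Theorem \ref{T-MMS} gives $u\in H^2_0(\Omega)$; conversely $H^2_0(\Omega)$ is contained in $\ker(\gamma_2)\subseteq\{u\in H^2(\Omega)\,|\,\gamma_N u=0\}$ and is annihilated by $\gamma_D$. There is no genuine obstacle in this argument — everything is bookkeeping on top of Theorem \ref{T-MMS}. The only point requiring minor care is checking that the norm on the codomain of $\gamma_2$ restricted to pairs of the form $(g,0)$ is equivalent to the $N^{3/2}$ norm \eqref{3an-C4B}, which is transparent from \eqref{NoRw-1}; this is what makes both the bound for $\gamma_D$ and the bound for its right-inverse automatic.
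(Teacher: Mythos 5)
Your argument is correct, and it is the natural one: the paper itself does not write out a proof of Lemma \ref{3o-TxD} but defers to \cite{GM10}, and your derivation simply specializes the joint trace result of Theorem \ref{T-MMS} to pairs of the form $(g,0)$, for which the codomain in \eqref{Tan-C2} with its norm \eqref{NoRw-1} reduces exactly to $N^{3/2}(\partial\Omega)$ with an equivalent norm (since $H^{1/2}(\partial\Omega)\hookrightarrow L^2(\partial\Omega)$ makes $\|g\|_{H^1(\partial\Omega)}$ controlled by $\|g\|_{L^2(\partial\Omega;d^{n-1}\omega)}+\|\nabla_{tan}g\|_{(H^{1/2}(\partial\Omega))^n}$). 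All four claims — boundedness, surjectivity, the bounded right-inverse obtained by composing $g\mapsto(g,0)$ with the right-inverse of $\gamma_2$, and the identification of the kernel via \eqref{Tan-C3} — follow as you indicate, with no gap.
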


It is then possible to use the Neumann trace result from Lemma \ref{3o-TxD}
in order to extend the action of the Neumann trace operator \eqref{2.7}
to $\dom(-\Delta_{max,\Om})=\big\{u\in L^2(\Omega;d^nx)\,\big|\,
\Delta u\in L^2(\Omega;d^nx)\big\}$.
As before, this space is equipped with the natural graph norm.
Let $\bigl(N^{3/2}(\partial\Omega)\bigr)^*$ denote the
conjugate dual space of $N^{3/2}(\partial\Omega)$. The following result holds:

\begin{theorem}\label{3ew-T-tr}
Assume Hypothesis \ref{h2.1}. Then there exists a unique linear, bounded
operator
\begin{equation} \label{3an-C10}
\widehat{\gamma}_N:\big\{u\in L^2(\Omega;d^nx)\,\big|\,\Delta u\in L^2(\Omega;d^nx)\big\}
\to \bigl(N^{3/2}(\partial\Omega)\bigr)^*
\end{equation} 
which is compatible with the Neumann trace introduced in \eqref{2.7},
in the sense that, for each $s>3/2$, one has
\begin{equation} \label{3an-C11}
\widehat{\gamma}_N u =\gamma_N u \, \mbox{ for every $u\in H^s(\Omega)$
with $\Delta u\in L^2(\Omega;d^nx)$}.
\end{equation} 
Furthermore, this extension of the Neumann trace operator from \eqref{2.7}
allows for the following generalized integration by parts formula
\begin{equation} \label{3an-C12}
{}_{N^{3/2}(\partial\Omega)}\langle\gamma_D w,\widehat\gamma_N u 
\rangle_{(N^{3/2}(\partial\Omega))^*}
= ( w,\Delta u)_{L^2(\Om;d^nx)}
- (\Delta w,u)_{L^2(\Om;d^nx)},
\end{equation} 
valid for every $u\in L^2(\Omega;d^nx)$ with $\Delta u\in L^2(\Omega;d^nx)$
and every $w\in H^2(\Omega)$ with $\gamma_N w =0$.
\end{theorem}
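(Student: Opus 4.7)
The plan is to mimic, mutatis mutandis, the proof of Theorem~\ref{New-T-tr}, now using Lemma~\ref{3o-TxD} in place of Lemma~\ref{Lo-Tx}. Explicitly, let $R\colon N^{3/2}(\partial\Omega)\to\{w\in H^2(\Omega)\,|\,\gamma_N w=0\}$ denote the bounded linear right-inverse of $\gamma_D$ furnished by Lemma~\ref{3o-TxD}, so that $\gamma_D(R\phi)=\phi$, $\gamma_N(R\phi)=0$, and $\|R\phi\|_{H^2(\Omega)}\leq C\|\phi\|_{N^{3/2}(\partial\Omega)}$. The core structural fact powering the construction is the kernel identification $\gamma_D|_{\{w\in H^2(\Omega)\,|\,\gamma_N w=0\}}^{-1}(0)=H^2_0(\Omega)$ from the same lemma, which is what allows a two-sided extension without appeal to boundary pseudodifferential calculus.

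Given $u\in\dom(-\Delta_{max,\Omega})$, I would define $\widehat\gamma_N u\in (N^{3/2}(\partial\Omega))^*$ by forcing \eqref{3an-C12} to hold by decree: for $\phi\in N^{3/2}(\partial\Omega)$, set
\begin{equation*}
{}_{N^{3/2}(\partial\Omega)}\langle\phi,\widehat\gamma_N u\rangle_{(N^{3/2}(\partial\Omega))^*}
:=(w,\Delta u)_{L^2(\Omega;d^n x)}-(\Delta w,u)_{L^2(\Omega;d^n x)},
\end{equation*}
where $w$ is any element of $H^2(\Omega)$ with $\gamma_N w=0$ and $\gamma_D w=\phi$ (for instance $w=R\phi$). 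Independence of the choice of $w$ reduces to showing that $(w,\Delta u)_{L^2}=(\Delta w,u)_{L^2}$ whenever $w\in H^2_0(\Omega)$ and $u,\Delta u\in L^2(\Omega;d^n x)$: approximate such $w$ in the $H^2(\Omega)$-norm by $w_k\in C_0^\infty(\Omega)$, for which the identity is trivial, and pass to the limit using $w_k\to w$ and $\Delta w_k\to\Delta w$ in $L^2(\Omega;d^n x)$. Boundedness in the graph norm is then immediate from the choice $w=R\phi$:
\begin{equation*}
|\langle\phi,\widehat\gamma_N u\rangle|\leq\|R\phi\|_{L^2}\|\Delta u\|_{L^2}+\|\Delta(R\phi)\|_{L^2}\|u\|_{L^2}\leq C\|\phi\|_{N^{3/2}(\partial\Omega)}\bigl(\|u\|_{L^2}+\|\Delta u\|_{L^2}\bigr).
\end{equation*}

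For the compatibility statement \eqref{3an-C11}, fix $u\in H^s(\Omega)$ with $s>3/2$ and $\Delta u\in L^2(\Omega;d^n x)$, so that $\gamma_N u$ is well-defined via \eqref{2.7} (or its weak counterpart \eqref{2.8}); the classical Green's identity for $w\in H^2(\Omega)$ with $\gamma_N w=0$ then yields $(w,\Delta u)_{L^2}-(\Delta w,u)_{L^2}=\langle\gamma_D w,\gamma_N u\rangle$, which shows that $\widehat\gamma_N u$ coincides with $\gamma_N u$ as a functional on $N^{3/2}(\partial\Omega)$. Uniqueness reduces to the density of $\{u\in H^s(\Omega)\,|\,\Delta u\in L^2(\Omega;d^n x)\}$ (equivalently, $C^\infty(\overline\Omega)\cap\dom(-\Delta_{max,\Omega})$) in $\dom(-\Delta_{max,\Omega})$ with the graph norm, a standard but nontrivial fact for Lipschitz $\Omega$ that is proved by a partition of unity together with mollification combined with inward shifts near $\partial\Omega$. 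I expect the chief technical nuisances to be precisely this density assertion and the rigorous justification of the classical Green's identity in the compatibility step at the low regularity $H^s$, $s>3/2$; the remainder of the argument is parallel to, and no harder than, the proof of Theorem~\ref{New-T-tr}.
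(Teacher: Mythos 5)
Your construction is exactly the intended one: the paper defers the detailed proof to \cite{GM10}, but its surrounding text makes clear that $\widehat{\gamma}_N$ is to be obtained by duality from Lemma \ref{3o-TxD}, taking \eqref{3an-C12} as the definition and using the kernel identification $H^2_0(\Omega)$ for well-definedness, precisely as you do. The two technical points you flag (density of $H^s(\Omega)\cap\dom(-\Delta_{max,\Om})$ in the graph norm for Lipschitz domains, and the Green identity at low regularity) are indeed the only steps requiring care, and both go through by the localization, inward-translation, and mollification arguments you describe.
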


A proof of Theorem \ref{3ew-T-tr} can be found in \cite{GM10}.

\subsection{Perturbed Dirichlet and Neumann Laplacians}
\label{s4X}

Here we shall discuss operators of the form $-\Delta+V$ equipped with
Dirichlet and Neumann boundary conditions. Temporarily, we will employ
the following assumptions:

\begin{hypothesis}\label{h.V}
Let $n\in\bbN$, $n\geq 2$, assume that $\Om\subset{\bbR}^n$ is
an open, bounded, nonempty set, and suppose that
\begin{equation} \label{VV-W}
V\in L^\infty(\Om;d^nx)
\, \mbox{ and }\, V \,\mbox{is real-valued a.e.\ on } \,\Omega.
\end{equation} 
\end{hypothesis}

We start by reviewing the perturbed Dirichlet and Neumann Laplacians $H_{D,\Om}$ 
and $H_{N,\Om}$ associated with an open set $\Om$ in $\bbR^n$ and a potential $V$ satisfying Hypothesis \ref{h.V}: Consider the sesquilinear forms in $L^2(\Om;d^n x)$, 
\begin{equation}
Q_{D,\Om} (u,v) = (\nabla u, \nabla v) + (u,Vv), \quad u,v \in \dom (Q_{D,\Om}) 
= H^1_0(\Om),   \lb{3.QD}
\end{equation}
and 
\begin{equation}
Q_{N,\Om} (u,v) = (\nabla u, \nabla v) + (u,Vv), \quad u,v \in \dom (Q_{N,\Om}) 
= H^1(\Om).    \lb{3.QN}
\end{equation}
Then both forms in \eqref{3.QD} and \eqref{3.QN} are densely, defined, closed, and bounded from below in $L^2(\Om;d^n x)$. Thus, by the first and second representation theorems for forms (cf., e.g., \cite[Sect.\ VI.2]{Ka80}), one concludes that there exist unique self-adjoint operators $H_{D,\Om}$ and $H_{N,\Om}$ in $L^2(\Om;d^n x)$, both bounded from below, associated with the forms $Q_{D,\Om}$ and $Q_{N,\Om}$, respectively, which satisfy
\begin{align}
& Q_{D,\Om} (u,v) = (u,H_{D,\Om} v), \quad u \in \dom (Q_{D,\Om}), \, 
v \in \dom(H_{D,\Om}),  \lb{3.QHD} \\
& \dom(H_{D,\Om}) \subset \dom\big(|H_{D,\Om}|^{1/2}\big) = \dom (Q_{D,\Om}) 
= H^1_0(\Om)   \lb{3.HD}
\end{align}
and 
\begin{align}
& Q_{N,\Om} (u,v) = (u,H_{N,\Om} v), \quad u \in \dom (Q_{N,\Om}), \, 
v \in \dom(H_{N,\Om}),  \lb{3.QHN} \\
& \dom(H_{N,\Om}) \subset \dom\big(|H_{N,\Om}|^{1/2}\big) = \dom (Q_{N,\Om}) 
= H^1(\Om).   \lb{3.HN}
\end{align}
In the case of the perturbed Dirichlet Laplacian, $H_{D,\Om}$, one actually can say a bit more: Indeed, $H_{D,\Om}$ coincides with the Friedrichs extension of the operator 
\begin{equation}
H_{c,\Om} u = (-\Delta + V) u, 
\quad u \in \dom(H_{c,\Om}):=C^\infty_0(\Omega)
\end{equation}
in $L^2(\Om;d^nx)$, 
\begin{equation}
(H_{c,\Om})_F = H_{D,\Om},   \lb{3.cFD}
\end{equation}
and one obtains as an immediate consequence of \eqref{Fr-Q} and \eqref{3.QD}
\begin{equation}
H_{D,\Om}u= (-\Delta+V)u,  \quad 
u\in \dom(H_{D,\Om}) 
= \big\{v\in H_0^1(\Om)\,\big|\,\Delta v\in L^2(\Om;d^n x)\big\}.     \lb{3.HDF}
\end{equation}
We also refer to \cite[Sect.\ IV.2, Theorem VII.1.4]{EE89}). In addition, $H_{D,\Om}$ is known to have a compact resolvent and hence purely discrete spectrum bounded from below.

In the case of the perturbed Neumann Laplacian, $H_{N,\Om}$, it is not possible to be more specific under this general hypothesis on $\Om$ just being open. However, under the additional assumptions on the domain $\Om$ in Hypothesis \ref{h2.1} one can be more explicit about the domain of $H_{N,\Om}$ and also characterize its spectrum as follows. In addition, we also record an improvement of \eqref{3.HDF} under the additional Lipschitz hypothesis on $\Om$:

\begin{theorem} \label{t2.5} \hspace*{-1mm} 
Assume Hypotheses \ref{h2.1} and \ref{h.V}.\ 
Then the perturbed Dirichlet Laplacian, $H_{D,\Om}$, given by
\begin{align} 
& H_{D,\Om}u= (-\Delta+V)u,\no \\
& u\in \dom(H_{D,\Om}) =
\big\{v\in H^1(\Om)\,\big|\,\Delta v\in L^2(\Om;d^n x),\, 
\gamma_D v=0\text{ in $H^{1/2}(\dOm)$}\big\}   \label{2.39} \\
& \hspace*{2.46cm} =\big\{v\in H_0^1(\Om)\,\big|\,\Delta v\in L^2(\Om;d^n x)\big\}, \no 
\end{align}
is self-adjoint and bounded from below in $L^2(\Om;d^nx)$. Moreover,
\begin{equation}\label{2.40}
\dom\big(|H_{D,\Om}|^{1/2}\big)=H^1_0(\Om), 
\end{equation}
and the spectrum of $H_{D,\Om}$, is purely discrete $($i.e., it consists of eigenvalues of finite multiplicity$)$, 
\begin{equation}
\sigma_{\rm ess}(H_{D,\Om}) = \emptyset. 
\end{equation}
If, in addition, $V\geq 0$ a.e.\ in $\Omega$, then $H_{D,\Om}$ is strictly positive in 
$L^2(\Om;d^nx)$. 
\end{theorem}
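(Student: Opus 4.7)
The plan is to proceed via the form method: everything should be extracted from the closed, semibounded sesquilinear form $Q_{D,\Om}$ already introduced in \eqref{3.QD}, combined with the trace theory from Subsection \ref{s3X}. First I would verify that $Q_{D,\Om}$ is densely defined (since $C^\infty_0(\Om)\subset H^1_0(\Om)$), symmetric, closed, and bounded from below in $L^2(\Om;d^nx)$. Closedness of the gradient part on $H^1_0(\Om)$ holds by definition, and the potential part is just a bounded symmetric perturbation because $V\in L^\infty$, so $|(u,Vu)|\leq \|V\|_{L^\infty}\|u\|_{L^2}^2$; hence $Q_{D,\Om}(u,u)\geq -\|V\|_{L^\infty}\|u\|_{L^2}^2$ and closedness is preserved. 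Applying the first and second representation theorems of Kato (cf.\ \cite[Sect.\ VI.2]{Ka80}) then produces a unique self-adjoint, semibounded operator $H_{D,\Om}$ in $L^2(\Om;d^nx)$ satisfying \eqref{3.QHD}, \eqref{3.HD}; in particular, \eqref{2.40} is immediate.

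Next, I would identify $\dom(H_{D,\Om})$. By the first representation theorem, $u\in\dom(H_{D,\Om})$ precisely when $u\in H^1_0(\Om)$ and the antilinear functional $v\mapsto Q_{D,\Om}(v,u)$ is $L^2$-continuous on $H^1_0(\Om)$. Restricting $v$ to $C^\infty_0(\Om)$, this continuity forces $-\Delta u+Vu\in L^2(\Om;d^nx)$ distributionally, and since $Vu\in L^2$ automatically, we get $\Delta u\in L^2(\Om;d^nx)$. Conversely, if $u\in H^1_0(\Om)$ with $\Delta u\in L^2(\Om;d^nx)$, a density argument (approximating $v\in H^1_0(\Om)$ by $C^\infty_0(\Om)$ functions and integrating by parts) shows the functional is bounded. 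This yields the second description in \eqref{2.39}; the equivalence with the first is then just the classical Lipschitz-domain identity $H^1_0(\Om)=\{v\in H^1(\Om)\,|\,\gamma_D v=0\text{ in }H^{1/2}(\dOm)\}$, valid by \eqref{2.6} and the surjectivity \eqref{2.6a}.

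For the discrete-spectrum statement, I would invoke the Rellich--Kondrachov theorem in the Lipschitz setting: the embedding $H^1_0(\Om)\hookrightarrow L^2(\Om;d^nx)$ is compact for bounded Lipschitz $\Om$. Writing $H_{D,\Om}=(H_{D,\Om}+cI)-cI$ with $c$ chosen so that $H_{D,\Om}+cI\geq I$, the square-root $(H_{D,\Om}+cI)^{-1/2}$ maps $L^2(\Om;d^nx)$ isomorphically onto $\dom(|H_{D,\Om}|^{1/2})=H^1_0(\Om)$, hence $(H_{D,\Om}+cI)^{-1}$ factors through the compact embedding and is compact; thus $\sigma_{\rm ess}(H_{D,\Om})=\emptyset$ and the spectrum consists of eigenvalues of finite multiplicity. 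Finally, when $V\geq 0$ a.e., the Poincar\'e inequality on the bounded domain $\Om$ yields $c_\Om>0$ with $\|\nabla u\|_{L^2}^2\geq c_\Om\|u\|_{L^2}^2$ for $u\in H^1_0(\Om)$; combined with $(u,Vu)\geq 0$ this gives $Q_{D,\Om}(u,u)\geq c_\Om\|u\|_{L^2}^2$, so $H_{D,\Om}\geq c_\Om I>0$. The main (mild) obstacle is the domain identification in \eqref{2.39}, which hinges on the trace-theoretic characterization of $H^1_0(\Om)$ for Lipschitz $\Om$; every other step is a routine form-method consequence.
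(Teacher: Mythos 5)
Your proof is correct and follows essentially the same route as the paper, which defines $H_{D,\Om}$ via the closed, semibounded form $Q_{D,\Om}$ in \eqref{3.QD} and the first and second representation theorems (deferring the remaining details to \cite[App.\ A]{GLMZ05}, \cite{GMZ07}); the domain identification \eqref{3.HDF} and the form-domain statement \eqref{2.40} are obtained there exactly as you describe. Your treatment of the remaining points (the trace characterization of $H^1_0(\Om)$ for Lipschitz domains, compactness of the resolvent via the embedding $H^1_0(\Om)\hookrightarrow L^2(\Om;d^nx)$, and strict positivity from the Poincar\'e inequality when $V\geq 0$) is the standard argument and is sound.
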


The corresponding result for the perturbed Neumann Laplacian $H_{N,\Om}$
reads as follows:

\begin{theorem}\label{t2.3}
Assume Hypotheses \ref{h2.1} and \ref{h.V}.
Then the perturbed Neumann Laplacian, $H_{N,\Om}$, given by
\begin{align}\label{2.20}
& H_{N,\Om}u = (-\Delta+V)u,  \\
& u \in \dom(H_{N,\Om}) =
\big\{v\in H^1(\Om)\,\big|\,\Delta v\in L^2(\Om;d^nx),\, 
\wti\gamma_N v =0\text{ in }H^{-1/2}(\dOm)\big\},  \no 
\end{align}
is self-adjoint and bounded from below in $L^2(\Om;d^nx)$. Moreover,
\begin{equation}\label{2.40a}
\dom\big(|H_{N,\Om}|^{1/2}\big)=H^1(\Om), 
\end{equation}
and the spectrum of $H_{N,\Om}$, is purely discrete $($i.e., it consists of eigenvalues of finite multiplicity$)$, 
\begin{equation}
\sigma_{\rm ess}(H_{N,\Om}) = \emptyset. 
\end{equation}
If, in addition, $V\geq 0$ a.e.\ in $\Omega$, then $H_{N,\Om}$ is nonnegative in 
$L^2(\Om;d^nx)$. 
\end{theorem}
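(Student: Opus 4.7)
\textbf{Proof proposal for Theorem \ref{t2.3}.} The strategy is entirely parallel to the Dirichlet case (Theorem \ref{t2.5}), with the crucial difference that the Neumann boundary condition will emerge from the weak Neumann trace operator $\wti\gamma_N$ of \eqref{2.8}--\eqref{2.9} rather than being imposed in the form domain.

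First I would verify that $Q_{N,\Om}$ from \eqref{3.QN} is densely defined, symmetric, closed, and bounded from below on $\dom(Q_{N,\Om}) = H^1(\Om)$. Density and symmetry are immediate. Since $V \in L^\infty(\Om;d^nx)$ is real-valued, $Q_{N,\Om}(u,u) \geq \|\nabla u\|_{L^2}^2 - \|V\|_{L^\infty} \|u\|_{L^2}^2$, and closedness follows since the norm $(Q_{N,\Om}(u,u) + (1+\|V\|_{L^\infty})\|u\|_{L^2}^2)^{1/2}$ is equivalent to the $H^1(\Om)$-norm, and $H^1(\Om)$ is complete. The first and second representation theorems (cf.\ \cite[Sect.\ VI.2]{Ka80}) then produce a unique self-adjoint, semibounded operator $H_{N,\Om}$ satisfying \eqref{3.QHN}, \eqref{3.HN}, which gives \eqref{2.40a} directly. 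Nonnegativity under the additional assumption $V \geq 0$ is immediate from $Q_{N,\Om}(u,u) \geq 0$.

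Next I would prove the domain characterization \eqref{2.20}. For the inclusion $\subseteq$, let $u \in \dom(H_{N,\Om})$ and set $f := H_{N,\Om} u \in L^2(\Om;d^nx)$. Testing $Q_{N,\Om}(v,u) = (v,f)_{L^2}$ against $v \in C_0^\infty(\Om) \subset H^1(\Om)$ shows $-\Delta u + Vu = f$ in $\cD'(\Om)$, hence $\Delta u = Vu - f \in L^2(\Om;d^nx)$. Now, for arbitrary $\phi \in H^{1/2}(\dOm)$, pick $\Phi \in H^1(\Om)$ with $\gamma_D \Phi = \phi$ (using the surjectivity \eqref{2.6a}); combining the defining identity \eqref{2.9} of $\wti\gamma_N$ (with $s = 1/2$, $s_0 = 0$) with $Q_{N,\Om}(\Phi, u) = (\Phi, f)_{L^2}$ yields
\begin{equation}
\langle \phi, \wti\gamma_N u\rangle_{1/2}
= \int_\Om d^n x \, \ol{\nabla\Phi}\cdot\nabla u + \int_\Om d^n x \, \ol{\Phi}\,\Delta u
= Q_{N,\Om}(\Phi,u) - (\Phi, Vu - \Delta u)_{L^2} = 0,
\end{equation}
so $\wti\gamma_N u = 0$ in $H^{-1/2}(\dOm)$. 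For the reverse inclusion $\supseteq$, given $u \in H^1(\Om)$ with $\Delta u \in L^2(\Om;d^nx)$ and $\wti\gamma_N u = 0$, the same identity \eqref{2.9} (now read in reverse) shows that for every $v \in H^1(\Om)$,
\begin{equation}
Q_{N,\Om}(v,u) = \int_\Om d^n x\, \ol{\nabla v}\cdot\nabla u + (v, Vu)_{L^2}
= -(v, \Delta u)_{L^2} + (v, Vu)_{L^2} = (v, (-\Delta + V)u)_{L^2},
\end{equation}
with the middle equality being the generalized Green formula obtained by choosing $\Phi$ to be an extension of $\gamma_D v \in H^{1/2}(\dOm)$ combined with the vanishing of $\wti\gamma_N u$ applied to $\gamma_D v$. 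By the defining property of $H_{N,\Om}$ this places $u \in \dom(H_{N,\Om})$ with $H_{N,\Om} u = (-\Delta + V) u$.

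Finally, for the discreteness of the spectrum, I would use that for a bounded Lipschitz domain $\Om$ one has the Rellich--Kondrachov compact embedding $H^1(\Om) \hookrightarrow L^2(\Om;d^nx)$. Choosing $c > 0$ large enough so that $H_{N,\Om} + c I \geq I$, the square root $(H_{N,\Om} + c I)^{1/2}$ has domain $H^1(\Om)$ by \eqref{2.40a}, so $(H_{N,\Om} + c I)^{-1/2}$ maps $L^2(\Om;d^nx)$ continuously onto $H^1(\Om)$ and is therefore compact as an operator in $L^2(\Om;d^nx)$. Consequently $(H_{N,\Om} + cI)^{-1}$ is compact, which yields $\sigma_{\rm ess}(H_{N,\Om}) = \emptyset$ and the purely discrete spectrum consisting of eigenvalues of finite multiplicity accumulating only at $+\infty$.

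The only mildly subtle step is the use of \eqref{2.9} for the Neumann trace in both directions of the domain characterization; everything else is standard form theory combined with the Rellich compactness valid on bounded Lipschitz domains. No refined regularity beyond $u \in H^1(\Om)$ with $\Delta u \in L^2(\Om;d^nx)$ is being claimed, which is exactly why the \emph{weak} Neumann trace $\wti\gamma_N$ is the correct object to impose the boundary condition on.
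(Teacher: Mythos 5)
Your proof is correct and proceeds exactly along the lines the paper relies on: the paper itself gives no proof of Theorem \ref{t2.3} but defers to \cite[App.\ A]{GLMZ05}, \cite{GMZ07}, where the same form-theoretic argument (representation theorems for the closed, semibounded form $Q_{N,\Om}$ on $H^1(\Om)$, identification of the boundary condition via the weak Neumann trace \eqref{2.8}--\eqref{2.9} with $s=1/2$, $s_0=0$, and discreteness from the compactness of $H^1(\Om)\hookrightarrow L^2(\Om;d^nx)$ for bounded Lipschitz domains) is carried out. Your handling of the two inclusions in the domain characterization, in particular using the extension-independence of \eqref{2.9} to take $\Phi=v$ in the converse direction, is exactly the intended use of the weak Neumann trace.
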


In the sequel, corresponding to the case where $V\equiv 0$, we shall abbreviate
\begin{equation} \label{V-Df1}
-\Delta_{D,\Om}\, \mbox{ and }\, -\Delta_{N,\Om},
\end{equation} 
for $H_{D,\Om}$ and $H_{N,\Om}$, respectively, and simply refer to these operators as, the Dirichlet and Neumann Laplacians. The above results have been proved in 
\cite[App.\ A]{GLMZ05}, \cite{GMZ07} for considerably more general potentials than assumed in Hypothesis \ref{h.V}.

Next, we shall now consider the minimal and maximal perturbed Laplacians.
Concretely, given an open set $\Omega\subset{\mathbb{R}}^n$
and a potential $0\leq V\in L^\infty(\Om;d^nx)$,
we introduce the maximal perturbed Laplacian in $L^2(\Omega;d^nx)$
\begin{align}\label{Yan-1}
\begin{split}
& H_{max,\Om} u:=(-\Delta+V)u,  \\
& u\in \dom(H_{max,\Om} ):=\big\{v\in L^2(\Omega;d^nx)\,\big|\,
\Delta v \in L^2(\Omega;d^nx)\big\}.
\end{split}
\end{align}

We pause for a moment to dwell on the notation used in connection with the symbol 
$\Delta$:

\begin{remark}
Throughout this manuscript the symbol $\Delta$ 
alone indicates that the Laplacian acts in the sense of distributions, 
\begin{equation}
\Delta\colon \cD^\prime(\Om) \to \cD^\prime(\Om).    \lb{DELTA} 
\end{equation}
In some cases, when it is necessary to interpret $\Delta$ as a bounded operator 
acting between Sobolev spaces, we write 
$\Delta \in \cB\big(H^s(\Om),H^{s-2}(\Om)\big)$ for various ranges of $s\in\bbR$ 
(which is of course compatible with \eqref{DELTA}). 
In addition, as a consequence of standard interior elliptic regularity (cf.\ Weyl's classical lemma) it is not difficult to see that if $\Om\subseteq\bbR$ is open, $u \in \cD'(\Om)$ and 
$\Delta u \in L^2_{\rm loc}(\Om; d^nx)$ then actually $u \in H^2_{\rm loc}(\Om)$. In particular, this comment applies to $u\in \dom(H_{max,\Om} )$ in \eqref{Yan-1}.
\end{remark}

In the remainder of this subsection we shall collect a number of
results, originally proved in \cite{GM10} when $V\equiv 0$,
but which are easily seen to hold in the more general setting considered here.

\begin{lemma}\label{Max-M1}
Assume Hypotheses \ref{h2.1} and \ref{h.V}.
Then the maximal perturbed Laplacian associated with
$\Omega$ and the potential $V$ is a closed, densely defined operator for which
\begin{equation} \label{Yan-2}
H^2_0(\Omega)\subseteq \dom((H_{max,\Om})^*)
\subseteq \big\{u\in L^2(\Omega;d^nx)\,\big|\,
\Delta u\in L^2(\Omega;d^nx), \, 
\widehat{\gamma}_D u=\widehat{\gamma}_N u =0\big\}.
\end{equation} 
\end{lemma}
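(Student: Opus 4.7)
The plan is to split the argument into four pieces: dense definition, closedness, the lower inclusion, and the upper inclusion. For dense definition I would observe that $C^\infty_0(\Om) \subset \dom(H_{max,\Om})$ and that $C^\infty_0(\Om)$ is dense in $L^2(\Om; d^n x)$. For closedness I would exploit that $V \in L^\infty(\Om; d^n x)$: if $\dom(H_{max,\Om}) \ni u_k \to u$ and $(-\Delta + V)u_k \to f$ in $L^2(\Om; d^n x)$, then $V u_k \to V u$ in $L^2(\Om; d^n x)$, which forces $\Delta u_k$ to converge in $L^2(\Om; d^n x)$; distributional continuity of $\Delta$ on $\cD^\prime(\Om)$ then pins down the limit as $\Delta u = V u - f \in L^2(\Om; d^n x)$, so $u \in \dom(H_{max,\Om})$ and $H_{max,\Om} u = f$.

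For the lower inclusion $H^2_0(\Om) \subseteq \dom((H_{max,\Om})^*)$, I would fix $w \in H^2_0(\Om)$, recall from Theorem \ref{T-MMS} that $\gamma_D w = \gamma_N w = 0$, and then invoke the generalized integration-by-parts formula of Theorem \ref{New-T-tr} (with $w$ as the $H^2(\Om) \cap H^1_0(\Om)$ participant and any $u \in \dom(H_{max,\Om})$ as the maximal participant). Since $\gamma_N w = 0$, the boundary pairing on the left of \eqref{Tan-C12} vanishes, yielding $(\Delta w, u)_{L^2(\Om;d^nx)} = (w, \Delta u)_{L^2(\Om;d^nx)}$. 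Together with the reality of $V$ this gives $(w, H_{max,\Om} u)_{L^2(\Om;d^nx)} = ((-\Delta + V)w, u)_{L^2(\Om;d^nx)}$, so $w \in \dom((H_{max,\Om})^*)$ with $(H_{max,\Om})^* w = (-\Delta + V)w$.

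For the upper inclusion, I would take $w \in \dom((H_{max,\Om})^*)$, set $g := (H_{max,\Om})^* w \in L^2(\Om;d^nx)$, and first specialize the adjoint identity to $u \in C^\infty_0(\Om)$. This extracts $-\Delta w + V w = g$ as a distributional identity on $\Om$, and since $V w, g \in L^2(\Om;d^nx)$ it upgrades to $\Delta w \in L^2(\Om;d^nx)$ with $(H_{max,\Om})^* w = (-\Delta + V) w$. For the two boundary conditions I would probe the adjoint relation with richer classes of test functions. Subtracting $((-\Delta + V)w, u)_{L^2(\Om;d^nx)}$ from both sides of the adjoint identity, the $V$-terms cancel and one is left with $(\Delta w, u)_{L^2(\Om;d^nx)} = (w, \Delta u)_{L^2(\Om;d^nx)}$ for every $u \in \dom(H_{max,\Om})$. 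Restricting to $u \in H^2(\Om) \cap H^1_0(\Om)$ and applying Theorem \ref{New-T-tr} (now with $u$ playing the role of the $H^2(\Om) \cap H^1_0(\Om)$ participant and $w$ the maximal one), the pairing ${}_{N^{1/2}(\dOm)}\langle \gamma_N u, \widehat{\gamma}_D w \rangle_{(N^{1/2}(\dOm))^*}$ vanishes for every such $u$; since Lemma \ref{Lo-Tx} supplies $\gamma_N \colon H^2(\Om) \cap H^1_0(\Om) \to N^{1/2}(\dOm)$ onto, this forces $\widehat{\gamma}_D w = 0$. The mirror argument, with $u$ running through $\{v \in H^2(\Om) \mid \gamma_N v = 0\}$, Theorem \ref{3ew-T-tr} in place of Theorem \ref{New-T-tr}, and Lemma \ref{3o-TxD} in place of Lemma \ref{Lo-Tx}, produces $\widehat{\gamma}_N w = 0$.

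The main subtlety is that the Green formulas of Theorems \ref{New-T-tr} and \ref{3ew-T-tr} each demand one participant in an $H^2(\Om)$-subspace with a prescribed trace vanishing, whereas $w \in \dom((H_{max,\Om})^*)$ is a priori only in the maximal domain. The resolution, as above, is to split the duality between the traces $\widehat{\gamma}_D w$ and $\widehat{\gamma}_N w$ by using two separate large $H^2(\Om)$-subspaces of $\dom(H_{max,\Om})$ as test-function spaces, each matched to one of the two available Green formulas, and then to invoke the surjectivity of $\gamma_N$ (respectively, $\gamma_D$) on the respective subspace in order to conclude that the remaining boundary pairing sees enough elements to force the corresponding trace of $w$ to vanish.
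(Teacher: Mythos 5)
Your proof is correct, and it uses exactly the machinery the paper assembles for this purpose: the Green formulas of Theorems \ref{New-T-tr} and \ref{3ew-T-tr} paired with the surjectivity statements of Lemmas \ref{Lo-Tx} and \ref{3o-TxD} (plus Theorem \ref{T-MMS} for the lower inclusion). The paper itself does not spell out a proof of Lemma \ref{Max-M1}, deferring to \cite{GM10} for the case $V\equiv 0$, but your argument — in particular, establishing $\Delta w\in L^2(\Om;d^nx)$ first so that $\widehat{\gamma}_D w$ and $\widehat{\gamma}_N w$ are defined, and then killing each trace by testing against a separate $H^2$-subspace matched to the appropriate Green formula — is the intended route.
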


For an open set $\Omega\subset{\mathbb{R}}^n$ and a potential
$0\leq V\in L^\infty(\Om;d^nx)$, we also bring in the minimal
perturbed Laplacian in $L^2(\Omega;d^nx)$, that is, 
\begin{equation}\label{Yan-6} 
H_{min,\Om} u:=(-\Delta+V)u,\quad u\in \dom(H_{min,\Om}):=H^2_0(\Omega).
\end{equation}

\begin{corollary}\label{Max-M2}
Assume Hypotheses \ref{h2.1} and \ref{h.V}.
Then $H_{min,\Om} $ is a densely defined, symmetric operator which satisfies
\begin{equation} \label{Yan-7}
H_{min,\Om} \subseteq (H_{max,\Om})^*\, \mbox{ and }\, 
H_{max,\Om} \subseteq (H_{min,\Om})^*.
\end{equation}
Equality occurs in one $($and hence, both$)$ inclusions in \eqref{Yan-7} if and only if
\begin{equation} \label{Yan-8}
H^2_0(\Omega) \, \text{ equals } \, \big\{u\in L^2(\Omega;d^nx)\,\big|\,\Delta u\in L^2(\Omega;d^nx), \, 
\widehat{\gamma}_D u =\widehat{\gamma}_N u =0\big\}.
\end{equation} 
\end{corollary}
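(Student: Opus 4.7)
Density of $\dom(H_{min,\Om}) = H_0^2(\Omega)$ in $L^2(\Omega; d^n x)$ follows immediately from $C_0^\infty(\Omega) \subset H_0^2(\Omega)$. For symmetry, I would perform two integrations by parts on pairs of $C_0^\infty$ functions (no boundary contributions), extend by $H^2$-continuity, and use the real-valuedness of $V$ to handle the lower-order term. The entire argument will hinge on the fact that elements of $H_0^2(\Omega)$ have \emph{both} Dirichlet and Neumann traces equal to zero.

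For the two inclusions in \eqref{Yan-7}, the plan is to handle them simultaneously through a single application of the generalized Green's formula \eqref{Tan-C12} in Theorem \ref{New-T-tr}. The key observation is that for $w \in H_0^2(\Omega) \subset H^2(\Omega) \cap H_0^1(\Omega)$ one has $\gamma_N w = 0$, so for any $u \in \dom(H_{max,\Om})$ the identity \eqref{Tan-C12} collapses to $(\Delta w, u)_{L^2} = (w, \Delta u)_{L^2}$; combined with $(Vw, u)_{L^2} = (w, Vu)_{L^2}$ (since $V$ is real-valued), this yields $(H_{min,\Om} w, u)_{L^2} = (w, H_{max,\Om} u)_{L^2}$, producing simultaneously both $H_{min,\Om} \subseteq (H_{max,\Om})^*$ and $H_{max,\Om} \subseteq (H_{min,\Om})^*$.

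For the equality criterion, abbreviate $\cM := \{u\in L^2(\Omega;d^n x) \,|\, \Delta u\in L^2(\Omega;d^n x),\, \widehat\gamma_D u = \widehat\gamma_N u = 0\}$. Lemma \ref{Max-M1} already sandwiches $\dom((H_{max,\Om})^*)$ between $H_0^2(\Omega)$ and $\cM$, so when condition \eqref{Yan-8} holds (i.e., $H_0^2(\Omega) = \cM$) the sandwich collapses to $\dom((H_{max,\Om})^*) = \dom(H_{min,\Om})$, giving $H_{min,\Om} = (H_{max,\Om})^*$; taking adjoints and using closedness of $H_{max,\Om}$ (from Lemma \ref{Max-M1}) then propagates the equality to $H_{max,\Om} = (H_{min,\Om})^*$.

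The hard part will be the converse direction: given equality in either inclusion of \eqref{Yan-7}, one recovers $\dom((H_{max,\Om})^*) = H_0^2(\Omega)$, but to extract $\cM = H_0^2(\Omega)$ one needs the upper bound in Lemma \ref{Max-M1} to be saturated, i.e., $\cM \subseteq \dom((H_{max,\Om})^*)$. This saturation amounts to a Green-type identity $(\Delta v, u)_{L^2} = (v, \Delta u)_{L^2}$ for $u \in \cM$ and every $v \in \dom(H_{max,\Om})$, where both elements have only $L^2$-Laplacians and only distributional boundary traces lying in the \emph{different} dual spaces $\bigl(N^{1/2}(\partial\Omega)\bigr)^*$ and $\bigl(N^{3/2}(\partial\Omega)\bigr)^*$; my plan is to approximate $v$ by smoother functions to which Theorems \ref{New-T-tr} and \ref{3ew-T-tr} apply directly, and then pass to the limit using $\widehat\gamma_D u = \widehat\gamma_N u = 0$ together with the density results from \cite{GM10}.
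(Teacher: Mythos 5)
Your handling of density, symmetry, the two inclusions in \eqref{Yan-7}, and the implication ``\eqref{Yan-8} $\Rightarrow$ equality'' is correct and is the intended route: for $w\in H^2_0(\Om)$ one has $\gamma_D w=\gamma_N w=0$ by Theorem \ref{T-MMS}, so \eqref{Tan-C12} collapses to $(\Delta w,u)_{L^2(\Om;d^nx)}=(w,\Delta u)_{L^2(\Om;d^nx)}$ for every $u\in\dom(H_{max,\Om})$, which gives both inclusions at once, and the sandwich \eqref{Yan-2} then collapses when \eqref{Yan-8} holds.

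The converse is where the proposal breaks down, and the obstruction is structural rather than technical. Write $\cM$ for the space on the right-hand side of \eqref{Yan-8}. You correctly observe that extracting \eqref{Yan-8} from $H_{min,\Om}=(H_{max,\Om})^*$ requires $\cM\subseteq\dom((H_{max,\Om})^*)$. But one always has $(H_{max,\Om})^*=\ol{(-\Delta+V)|_{C^\infty_0(\Om)}}$, since $H_{max,\Om}=\big((-\Delta+V)|_{C^\infty_0(\Om)}\big)^*$ by the very definition of the distributional Laplacian; and the elementary identities $\|{\rm Hess}(\phi)\|_{L^2(\Om;d^nx)}=\|\Delta\phi\|_{L^2(\Om;d^nx)}$ and $\|\nabla\phi\|^2_{(L^2(\Om;d^nx))^n}=(\phi,-\Delta\phi)_{L^2(\Om;d^nx)}$ for $\phi\in C^\infty_0(\Om)$ (cf.\ the computation in \eqref{mam-7}) show that the graph norm of $(-\Delta+V)|_{C^\infty_0(\Om)}$ is equivalent to the $H^2(\Om)$-norm on $C^\infty_0(\Om)$. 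Hence $\dom((H_{max,\Om})^*)$ coincides with the $H^2(\Om)$-closure of $C^\infty_0(\Om)$, that is, with $H^2_0(\Om)$, for \emph{any} bounded open set. The inclusion $\cM\subseteq\dom((H_{max,\Om})^*)$ that you isolate as ``the hard part'' is therefore literally the statement \eqref{Yan-8}: any proof of it under Hypothesis \ref{h2.1} alone would establish \eqref{Yan-8} for every bounded Lipschitz domain, which the paper explicitly rules out at the start of Subsection \ref{s5X}. It is not a sub-lemma you can supply by an approximation argument; the substantive, and usable, content of the corollary is the ``if'' direction, which is the only one invoked later (Theorem \ref{T-DD1}).

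The proposed approximation also fails on its own terms. First, the class of $w$ to which \eqref{Tan-C12} applies, $H^2(\Om)\cap H^1_0(\Om)$, is not dense in $\dom(H_{max,\Om})$ in the graph norm: the graph closure of $(-\Delta+V)$ restricted to that class is contained in the closed operator $H_{D,\Om}$, whose domain lies in $H^1_0(\Om)$, and so it misses every $L^2$ null solution of $-\Delta+V$ that fails to belong to $H^1(\Om)$ --- and these make up most of the infinite-dimensional kernel of $H_{max,\Om}$ on a Lipschitz domain. Second, for an approximant $w\in H^2(\Om)$ carrying no boundary condition there is no Green formula pairing $\gamma_D w$ against $\widehat\gamma_N u$: that pairing requires $\gamma_D w\in N^{3/2}(\partial\Om)$, whereas for a general $w\in H^2(\Om)$ on a Lipschitz domain one only gets $\gamma_D w\in H^1(\partial\Om)$. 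This is precisely why Theorems \ref{New-T-tr} and \ref{3ew-T-tr} each impose a vanishing trace on $w$, and why the two formulas \eqref{Tan-C12} and \eqref{3an-C12} cannot be merged into one valid for all of $H^2(\Om)$.
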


\section{Boundary Value Problems in Quasi-Convex Domains}
\label{s5}

This section is divided into three parts. In Subsection \ref{s5X} we introduce
a distinguished category of the family of Lipschitz domains in $\bbR^n$,
called quasi-convex domains, which is particularly well-suited for the
kind of analysis we have in mind. In Subsection \ref{s6X} and Subsection \ref{s7X},
we then proceed to review, respectively, trace operators and boundary problems,
and Dirichlet-to-Neumann operators in quasi-convex domains.

\subsection{The Class of Quasi-Convex Domains}
\label{s5X}

In the class of Lipschitz domains, the two spaces appearing in \eqref{Yan-8}
are not necessarily equal (although, obviously, the left-to-right inclusion
always holds). The question now arises: What extra properties of the
Lipschitz domain will guarantee equality in \eqref{Yan-8}?
This issue has been addressed in \cite{GM10}, where a class of domains
(which is in the nature of best possible) has been identified.

To describe this class, we need some preparations. Given $n\geq 1$, denote by
$MH^{1/2}(\bbR^n)$ the class of pointwise multipliers of the Sobolev
space $H^{1/2}(\bbR^n)$. That is,
\begin{equation} \label{MaS-1}
MH^{1/2}(\bbR^n):=\bigl\{f\in L^1_{\loc}(\bbR^n)\,\big|\,
M_f\in\cB\bigl(H^{1/2}(\bbR^n)\bigr)\bigr\},
\end{equation} 
where $M_f$ is the operator of pointwise multiplication by $f$. This
space is equipped with the natural norm, that is, 
\begin{equation} \label{MaS-2}
\|f\|_{MH^{1/2}(\bbR^n)}:=\|M_f\|_{\cB(H^{1/2}(\bbR^n))}.
\end{equation} 
For a comprehensive and systematic treatment of spaces of multipliers,
the reader is referred to the 1985 monograph of Maz'ya and Shaposhnikova 
\cite{MS85}. Following \cite{MS85}, \cite{MS05}, we now
introduce a special class of domains, whose boundary regularity properties
are expressed in terms of spaces of multipliers.

\begin{definition}\label{Def-MS}
Given $\delta>0$, call a bounded, Lipschitz domain $\Omega\subset\bbR^n$
to be of class $MH^{1/2}_\delta$, and write
\begin{equation} \label{MaS-3}
\dOm\in MH^{1/2}_\delta,
\end{equation} 
provided the following holds: There exists a finite open covering
$\{{\mathcal O}_j\}_{1\leq j\leq N}$ of the boundary $\partial\Omega$ of
$\Om$ such that for every $j\in\{1,...,N\}$, ${\mathcal O}_j\cap\Omega$
coincides with the portion of ${\mathcal O}_j$ lying in the over-graph of
a Lipschitz function $\varphi_j:\bbR^{n-1}\to\bbR$ $($considered in a new
system of coordinates obtained from the original one via a rigid motion$)$
which, additionally, has the property that
\begin{equation} \label{MaS-4}
\nabla\varphi_j\in \big(MH^{1/2}(\bbR^{n-1})\big)^n\, \mbox{ and }\, 
\|\varphi_j\|_{(MH^{1/2}(\bbR^{n-1}))^n}\leq\delta.
\end{equation} 
\end{definition}

Going further, we consider the classes of domains
\begin{equation} \label{MaS-5}
MH^{1/2}_\infty:=\bigcup_{\delta>0}MH^{1/2}_\delta,\quad
MH^{1/2}_0:=\bigcap_{\delta>0}MH^{1/2}_\delta,
\end{equation} 
and also introduce the following definition: 

\begin{definition}\label{Def-MS2}
We call a bounded Lipschitz domain $\Omega\subset\bbR^n$ to be
{\it square-Dini}, and write
\begin{equation} \label{MaS-6}
\dOm\in {\rm SD},
\end{equation} 
provided the following holds: There exists a finite open covering
$\{{\mathcal O}_j\}_{1\leq j\leq N}$ of the boundary $\partial\Omega$ of
$\Om$ such that for every $j\in\{1,...,N\}$, ${\mathcal O}_j\cap\Omega$
coincides with the portion of ${\mathcal O}_j$ lying in the over-graph of
a Lipschitz function $\varphi_j:\bbR^{n-1}\to\bbR$ $($considered in a new
system of coordinates obtained from the original one via a rigid motion$)$
which, additionally, has the property that the following square-Dini
condition holds,
\begin{equation} \label{MaS-7}
\int_0^1 \frac{dt}{t} \bigg(\frac{\omega(\nabla\varphi_j;t)}{t^{1/2}}\bigg)^2 
<\infty.
\end{equation} 
Here, given a $($possibly vector-valued\,$)$ function $f$ in $\bbR^{n-1}$,
\begin{equation} \label{MaS-8}
\omega(f;t):=\sup\,\{|f(x)-f(y)|\,|\,x,y\in\bbR^{n-1},\,\,|x-y|\leq t\},
\quad t\in(0,1),
\end{equation} 
is the modulus of continuity of $f$, at scale $t$.
\end{definition}

 From the work of Maz'ya and Shaposhnikova \cite{MS85} \cite{MS05},
it is known that if $r>1/2$, then
\begin{equation} \label{MaS-9}
\Om\in C^{1,r}\Longrightarrow
\Om\in{\rm SD}\Longrightarrow
\Om\in MH^{1/2}_0\Longrightarrow
\Om\in MH^{1/2}_\infty.
\end{equation} 
As pointed out in \cite{MS05}, domains of class $MH^{1/2}_\infty$ can have
certain types of vertices and edges when $n\geq 3$. Thus, the domains
in this class can be nonsmooth.

Next, we recall that a domain is said to satisfy a uniform exterior
ball condition (UEBC) provided there exists a number $r>0$ with the property that
\begin{align} \label{UEBC}
\begin{split}
& \mbox{for every $x\in\dOm$, there exists $y\in\bbR^n$, such that 
$B(y,r)\cap\Om=\emptyset$} \\
& \quad \mbox{and $x\in\partial B(y,r)\cap\dOm$}.
\end{split} 
\end{align} 
Heuristically, \eqref{UEBC} should be interpreted as a lower bound on
the curvature of $\partial\Omega$. Next, we review the class of almost-convex 
domains introduced in \cite{MTV}. 

\begin{definition}\label{Def-AC}
A bounded Lipschitz domain $\Omega\subset{\mathbb{R}}^n$ is called 
an almost-convex domain provided there exists a family 
$\{\Omega_\ell\}_{\ell\in{\mathbb{N}}}$
of open sets in ${\mathbb{R}}^n$ with the following properties:  
\begin{enumerate}
\item[$(i)$] $\partial\Omega_\ell\in C^2$ and 
$\overline{\Omega_{\ell}}\subset\Omega$ for every $\ell\in{\mathbb{N}}$. 
\item[$(ii)$] $\Omega_\ell\nearrow\Omega$ as $\ell\to\infty$, in the sense
that $\overline{\Omega_{\ell}}\subset\Omega_{\ell+1}$ for each 
$\ell\in{\mathbb{N}}$ and $\bigcup_{\ell\in{\mathbb{N}}}\Omega_{\ell}=\Omega$. 
\item[$(iii)$] There exists a neighborhood $U$ of $\partial\Omega$ and, 
for each $\ell\in{\mathbb{N}}$, a $C^2$ real-valued function $\rho_{\ell}$ 
defined in $U$ with the property that $\rho_{\ell}<0$ on $U\cap\Omega_{\ell}$, 
$\rho_{\ell}>0$ in $U \backslash \overline{\Omega_{\ell}}$, and $\rho_{\ell}$  
vanishes on $\partial\Omega_\ell$. In addition, it is assumed that 
there exists some constant $C_1\in (1,\infty)$ such that 
\begin{eqnarray}\label{MTV3.1}
C_1^{-1}\leq |\nabla\rho_\ell(x)|\leq C_1,
\quad x\in \partial\Omega_\ell,\; \ell\in{\mathbb{N}}. 
\end{eqnarray} 
\item[$(iv)$] There exists $C_2\geq 0$ such that for every number
$\ell\in{\mathbb{N}}$, every point $x\in\partial\Omega_{\ell}$, 
and every vector $\xi\in{\mathbb{R}}^n$ which is tangent to 
$\partial\Omega_{\ell}$ at $x$, there holds 
\begin{eqnarray}\label{MTV3.2}
\big\langle{\rm Hess}\,(\rho_\ell)\xi\,,\,\xi\big\rangle\geq -C_2|\xi|^2, 
\end{eqnarray}
\noindent where $\langle\dott,\dott\rangle$ is the standard Euclidean inner 
product in ${\mathbb{R}}^n$ and  
\begin{eqnarray}\label{MTV3.3}
{\rm Hess}\,(\rho_\ell):=\left(\frac{\partial^2\rho_\ell}
{\partial x_j\partial x_k}\right)_{1\leq j,k \leq n},
\end{eqnarray} 
\noindent is the Hessian of $\rho_{\ell}$. 
\end{enumerate}
\end{definition}

\noindent A few remarks are in order: First, it is not difficult to see
that \eqref{MTV3.1} ensures that each domain $\Omega_\ell$ is Lipschitz, 
with Lipschitz constant bounded uniformly in $\ell$. Second, \eqref{MTV3.2}
simply says that, as quadratic forms on the tangent bundle 
$T\partial\Omega_\ell$ to $\partial\Omega_{\ell}$, one has  
\begin{eqnarray}\label{MT-SR}
{\rm Hess}\,(\rho_\ell)\geq -C_2\,I_n,
\end{eqnarray}
\noindent where $I_n$ is the $n\times n$ identity matrix. Hence, another equivalent 
formulation of \eqref{MTV3.2} is the following requirement: 
\begin{eqnarray}\label{MTV3.4}
\sum\limits_{j,k=1}^n\frac{\partial^2\rho_\ell}{\partial x_j \partial x_k}
\xi_j \xi_k \geq -C_2 \sum\limits_{j=1}^n\xi_j^2, \, \mbox{ whenever }\,
\rho_\ell=0\,\mbox{ and }\,\sum\limits_{j=1}^n
\frac{\partial\rho_\ell}{\partial x_j}\xi_j =0.
\end{eqnarray}
\noindent We note that, since the second fundamental form $II_{\ell}$ on 
$\partial\Omega_{\ell}$ is 
$II_\ell={{\rm Hess}\,\rho_\ell}/{|\nabla\rho_\ell|}$,
almost-convexity is, in view of \eqref{MTV3.1}, equivalent to 
requiring that $II_\ell$ be bounded from below, uniformly in $\ell$.

We now discuss some important special classes of almost-convex
domains. 

\begin{definition}\label{eu-RF}
A bounded Lipschitz domain $\Omega\subset{\mathbb{R}}^n$ satisfies 
a local exterior ball condition, henceforth referred to as LEBC,
if every boundary point $x_0\in\partial\Omega$ has an open 
neighborhood ${\mathcal{O}}$ which satisfies the following two conditions: 
\begin{enumerate}
\item[$(i)$] There exists a Lipschitz function
$\varphi:{{\mathbb{R}}}^{n-1}\to{{\mathbb{R}}}$ with
$\varphi(0)=0$ such that if $D$ is the domain above the graph of $\varphi$, 
then $D$ satisfies a UEBC.  
\item[$(ii)$] There exists a $C^{1,1}$ diffeomorphism $\Upsilon$ mapping 
${\mathcal{O}}$ onto the unit ball $B(0,1)$ in ${{\mathbb{R}}}^n$ such 
that $\Upsilon(x_0)=0$, $\Upsilon({\mathcal{O}}\cap\Omega)=B(0,1)\cap D$,
$\Upsilon({\mathcal{O}} \backslash {\ol\Omega})=B(0,1) \backslash \overline{D}$.
\end{enumerate}
\end{definition}

\noindent It is clear from Definition \ref{eu-RF} that the class of 
bounded domains satisfying a LEBC is invariant under $C^{1,1}$ diffeomorphisms.
This makes this class of domains amenable to working on manifolds. 
This is the point of view adopted in \cite{MTV}, where the following 
result is also proved:  
\begin{lemma}\label{MTVp3.1}
If the bounded Lipschitz domain $\Omega\subset{\mathbb{R}}^n$ satisfies 
a LEBC then it is almost-convex.
\end{lemma}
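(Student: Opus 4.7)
The plan is to reduce to the model case of a UEBC Lipschitz graph domain, establish almost-convexity there via mollification, and then globalize using the finite $C^{1,1}$-cover furnished by the LEBC hypothesis together with a smooth partition of unity.

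For the local step, I would take $D=\{(x',x_n)\in\bbR^n\,|\,x_n>\varphi(x')\}$ with $\varphi$ Lipschitz of constant $L$ and $D$ satisfying UEBC with radius $r>0$, and show first that $\varphi$ is semi-convex. Indeed, at any $x_0'\in\bbR^{n-1}$, a ball of radius $r$ lying below the graph and tangent to $\partial D$ at $(x_0',\varphi(x_0'))$ has a concave upper hemisphere whose Taylor expansion gives
\begin{equation*}
\varphi(x')\geq \varphi(x_0')+\langle p_{x_0'},x'-x_0'\rangle -(2r)^{-1}|x'-x_0'|^2+o(|x'-x_0'|^2).
\end{equation*}
Since $x_0'$ is arbitrary, this forces $D^2\varphi\geq -C I_{n-1}$ in $\cD'(\bbR^{n-1})$ for a uniform $C=C(r)>0$. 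Mollifying by $\varphi_\eps:=\varphi*\psi_\eps$ (standard nonnegative mollifier $\psi_\eps$) preserves this bound pointwise, and yields $\varphi_\eps\in C^\infty$, $\|\varphi_\eps-\varphi\|_\infty\leq L\eps$, $\|\nabla\varphi_\eps\|_\infty\leq L$, and $D^2\varphi_\eps\geq -CI_{n-1}$. Setting $D_\eps:=\{x_n>\varphi_\eps(x')+L\eps\}$ and $\rho_\eps(x):=x_n-\varphi_\eps(x')-L\eps$ produces $\ol{D_\eps}\subset D$, $D_\eps\nearrow D$, $|\nabla\rho_\eps|\in[1,\sqrt{1+L^2}]$, and $\mathrm{Hess}(\rho_\eps)\geq -CI_n$ throughout $\bbR^n$; thus $D$ is almost-convex with uniform constants depending only on $r$ and $L$.

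To globalize, I would extract from the LEBC hypothesis a finite cover $\{{\mathcal O}_j\}_{j=1}^N$ of $\partial\Om$, each ${\mathcal O}_j$ admitting a $C^{1,1}$ diffeomorphism $\Upsilon_j:{\mathcal O}_j\to B(0,1)$ identifying $\Om\cap{\mathcal O}_j$ with $B(0,1)\cap D_j$ for a UEBC graph domain $D_j$. For each $\eps>0$, let $\wti\rho_j^{(\eps)}:=\rho_\eps^{(j)}\circ\Upsilon_j$ on ${\mathcal O}_j$, where $\rho_\eps^{(j)}$ is constructed as above for $D_j$, and fix a $C^\infty$ partition of unity $\{\chi_j\}_{j=1}^N$ subordinate to $\{{\mathcal O}_j\}$ in a tubular neighborhood $U$ of $\partial\Om$. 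Putting
\begin{equation*}
\rho^{(\eps)}(x):=\sum_{j=1}^N\chi_j(x)\wti\rho_j^{(\eps)}(x) \quad \text{for } x\in U,
\end{equation*}
and smoothly extending to a strictly negative function on $\Om\setminus U$, the candidate domains will be $\Om_\ell:=\{\rho^{(\eps_\ell)}<0\}$ for a sequence $\eps_\ell\downarrow 0$. Conditions (i)--(iii) of Definition \ref{Def-AC} will then follow from $\ol{D_\eps^{(j)}}\subset D_j$ locally, the preservation of $C^{1,1}$-structure under $\Upsilon_j$, and the smoothness of $\chi_j$, together with the observation that each $\nabla\wti\rho_j^{(\eps)}$ is uniformly transverse to $\partial\Om$, giving the uniform gradient bound in \eqref{MTV3.1}.

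The main obstacle will be verifying the Hessian lower bound in \eqref{MTV3.2}. Two effects must be controlled: first, the chain rule yields
\begin{equation*}
\mathrm{Hess}(\wti\rho_j^{(\eps)})=(D\Upsilon_j)^T\mathrm{Hess}(\rho_\eps^{(j)})(D\Upsilon_j)+\sum_k(\partial_k\rho_\eps^{(j)})\mathrm{Hess}(\Upsilon_{j,k}),
\end{equation*}
and although $C^{1,1}$ regularity of $\Upsilon_j$ places $\mathrm{Hess}(\Upsilon_{j,k})$ only in $L^\infty$, it is uniformly bounded; combined with the first summand (which is $\geq -C\|D\Upsilon_j\|^2 I_n$), this will give $\mathrm{Hess}(\wti\rho_j^{(\eps)})\geq -C' I_n$ independent of $\eps$. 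Second, the partition-of-unity sum contributes cross terms $\nabla\chi_j\otimes\nabla\wti\rho_j^{(\eps)}$ and $\wti\rho_j^{(\eps)}\mathrm{Hess}(\chi_j)$; the latter vanishes on $\partial\Om$, and both are uniformly bounded, so they can be absorbed into a single uniform constant $C_2\geq 0$. Restricting the resulting bound to tangent vectors at points of $\partial\Om_\ell$ will then yield \eqref{MTV3.2} and complete the proof that $\Om$ is almost-convex.
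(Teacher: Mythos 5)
First, a point of reference: the paper does not prove Lemma \ref{MTVp3.1} at all --- it is quoted from \cite{MTV}, so there is no in-text argument to compare yours against. Your strategy (UEBC $\Rightarrow$ semiconvexity of the local graph functions $\Rightarrow$ inner approximation by mollified graphs, then globalization through the $C^{1,1}$ charts of Definition \ref{eu-RF}) is the natural one and is in the spirit of the construction in \cite{MTV}. The local step is essentially sound, with one caveat: your defining function $\rho_\eps(x)=x_n-\varphi_\eps(x')-L\eps$ is \emph{positive} on $D_\eps$, which violates the sign convention of Definition \ref{Def-AC}\,$(iii)$; more importantly, with that sign one has $\mathrm{Hess}(\rho_\eps)=-D^2\varphi_\eps\oplus 0$, so verifying \eqref{MTV3.2} would require semi\emph{concavity} of $\varphi_\eps$, which UEBC does not give. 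You must take $\rho_\eps=\varphi_\eps(x')+L\eps-x_n$, for which the semiconvexity bound $D^2\varphi_\eps\ge -CI_{n-1}$ that you correctly extracted from the exterior balls is exactly what \eqref{MTV3.2} asks for. (Also, $\varphi_\eps+L\eps\ge\varphi$ only gives $D_\eps\subseteq D$, not $\ol{D_\eps}\subset D$, and $\varphi_\eps+L\eps$ need not be monotone in $\eps$; use a shift of $2L\eps$ and pass to a suitable sequence $\eps_\ell\downarrow 0$ to secure $(i)$--$(ii)$ of Definition \ref{Def-AC}.)

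The genuine gap is in the globalization. Definition \ref{Def-AC} requires the approximating domains $\Om_\ell$ to be of class $C^2$ and the defining functions $\rho_\ell$ to be $C^2$ in a neighborhood of $\partial\Om$, with the Hessian inequality \eqref{MTV3.2} holding \emph{pointwise} on $\partial\Om_\ell$. Your $\wti\rho_j^{(\eps)}=\rho_\eps^{(j)}\circ\Upsilon_j$ is only $C^{1,1}$, because $\Upsilon_j$ is only $C^{1,1}$; consequently $\mathrm{Hess}(\wti\rho_j^{(\eps)})$, and hence $\mathrm{Hess}(\rho^{(\eps)})$, exists merely almost everywhere (equivalently, as an $L^\infty$-valued distribution), and the restriction of an a.e.-defined Hessian to the measure-zero set $\partial\Om_\ell$ in \eqref{MTV3.2} is not meaningful; nor are the level sets $\{\rho^{(\eps_\ell)}<0\}$ of class $C^2$. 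The fix is routine but must be stated: the patched function $\rho^{(\eps)}$ is semiconvex near $\partial\Om$ (distributional Hessian $\ge -C'I_n$, by the chain-rule and cross-term estimates you give), and a second mollification at scale $\delta\ll\eps$ produces a $C^\infty$ function with the \emph{pointwise} bound $\mathrm{Hess}\ge -C'I_n$ (mollification preserves distributional Hessian lower bounds), while perturbing $\rho^{(\eps)}$ and $\nabla\rho^{(\eps)}$ by $O(\delta)$ in the sup norm, so that the uniform transversality \eqref{MTV3.1} and the inclusions $\ol{\Om_\ell}\subset\Om_{\ell+1}\subset\Om$ survive. You should also justify the lower bound in \eqref{MTV3.1} for the patched gradient: near $\partial\Om$ one has $\nabla\rho^{(\eps)}=\sum_j\chi_j\nabla\wti\rho_j^{(\eps)}+\sum_j\wti\rho_j^{(\eps)}\nabla\chi_j$, where the second sum is $O(\eps)$ and the vectors $\nabla\wti\rho_j^{(\eps)}$ all lie in a fixed open cone about the outward direction determined by the Lipschitz character of $\partial\Om$, so their convex combination is bounded away from zero; this cone argument is where the Lipschitz constant enters and should not be left implicit.
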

\noindent Hence, in the class of bounded Lipschitz domains 
in ${\mathbb{R}}^n$, we have
\begin{eqnarray}\label{ewT-1}
\mbox{convex}\,\Longrightarrow\,
\mbox{UEBC}\,\Longrightarrow\,
\mbox{LEBC}\,\Longrightarrow\,
\mbox{almost-convex}.
\end{eqnarray}
We are now in a position to specify the class of domains in which most
of our subsequent analysis will be carried out. 

\begin{definition}\lb{d.Conv}
Let $n\in\bbN$, $n\geq 2$, and assume that $\Omega\subset{\bbR}^n$ is
a bounded Lipschitz domain. Then $\Om$ is called a quasi-convex domain if 
there exists $\delta>0$ 
sufficiently small $($relative to $n$ and the Lipschitz character of $\Om$$)$, 
with the following property that for every $x\in\dOm$ there exists an open 
subset $\Om_x$ of $\Omega$ such that $\dOm\cap\dOm_x$ is an open neighborhood 
of $x$ in $\dOm$, and for which one of the following two conditions holds: \\
$(i)$ \, $\Omega_x$ is of class $MH^{1/2}_\delta$ if $n\geq 3$, and 
of class $C^{1,r}$ for some $1/2<r<1$ if $n=2$.   \\
$(ii)$ $\Omega_x$ is an almost-convex domain. 
\end{definition}

Given Definition \ref{d.Conv}, we thus introduce the following basic assumption:

\begin{hypothesis}\lb{h.Conv}
Let $n\in\bbN$, $n\geq 2$, and assume that $\Omega\subset{\bbR}^n$ is
a quasi-convex domain. 
\end{hypothesis}

Informally speaking, the above definition ensures that the boundary
singularities are directed outwardly. A typical example of such a domain
is shown in Fig.\ \ref{Pic} below.  
\begin{figure}[th]
\centering
\begin{picture}(4,4)
\put(2,1.8){$\Omega$}
\curve(0.1,2, 1.1,2.2, 1.3,3.1, 0.9,3.6)
\curve(0.9,3.6, 1.4,3.5, 1.8,3.1)
\curve(1.8,3.1, 2.3,2.9, 2.7,3.1)
\curve(2.7,3.1, 3.2,3.45, 3.9,3.5)
\curve(3.9,3.5, 3,3.1, 3.05,2.9, 3.2,2.85, 3.3,2.6, 3.4,2.6, 3.6,2.7, 4.1,2.8)
\curve(4.1,2.8, 3.5,1.8, 4.2,0.9)
\curve(4.2,0.9, 3,0.8, 2.15,0.65, 1.6,0.1)
\curve(1.6,0.1, 1.7,0.65, 1.5,0.9, 1.25,1.15, 1.1,1.35, 0.8,1.7, 0.1,2)
\end{picture}
\caption{A quasi-convex domain.}\label{Pic}
\end{figure}
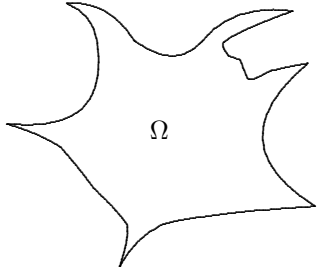

Being quasi-convex is a certain type of regularity condition of the boundary
of a Lipschitz domain. The only way we are going to utilize this
property is via the following elliptic regularity result proved in
\cite{GM10}.

\begin{proposition}\label{Bjk}
Assume Hypotheses \ref{h.V} and \ref{h.Conv}. Then
\begin{equation}\label{Yan-9}
\dom\big(H_{D,\Om}\big)\subset H^{2}(\Omega), \quad
\dom\big(H_{N,\Om}\big)\subset H^{2}(\Omega).
\end{equation}
\end{proposition}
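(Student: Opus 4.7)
\medskip

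\noindent
\textbf{Proof proposal.} My plan is to reduce the perturbed statement to the unperturbed version of the same inclusions, which is the substantive content inherited from \cite{GM10}. Fix $u\in\dom(H_{D,\Om})$. By \eqref{2.39} we have $u\in H^1_0(\Om)$ together with $\Delta u\in L^2(\Om;d^nx)$, and $H_{D,\Om}u=(-\Delta+V)u=:f\in L^2(\Om;d^nx)$. Since $V\in L^\infty(\Om;d^nx)$ and $u\in L^2(\Om;d^nx)$, the product $Vu$ again lies in $L^2(\Om;d^nx)$, so that $-\Delta u=f-Vu\in L^2(\Om;d^nx)$. In particular, $u$ satisfies the defining conditions of $\dom(-\Delta_{D,\Om})$, that is, of the unperturbed Dirichlet Laplacian (the $V\equiv 0$ instance of \eqref{2.39}). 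Entirely analogously, for $u\in\dom(H_{N,\Om})$ we combine $u\in H^1(\Om)$, $\Delta u\in L^2(\Om;d^nx)$, $\wti\gamma_N u=0$ with $-\Delta u=H_{N,\Om}u-Vu\in L^2(\Om;d^nx)$ to place $u$ in $\dom(-\Delta_{N,\Om})$. Thus \eqref{Yan-9} will follow once the unperturbed statements
\[
\dom(-\Delta_{D,\Om})\subset H^2(\Om),\qquad \dom(-\Delta_{N,\Om})\subset H^2(\Om)
\]
are in hand.

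For the unperturbed inclusions, I would split according to the dichotomy in Definition \ref{d.Conv}, localizing near $\dOm$ with a smooth partition of unity associated with the covering $\{\dOm\cap\dOm_x\}$; since $H^2$-regularity is an interior-plus-boundary property and interior $H^2$-regularity is the standard Weyl lemma, it suffices to prove the estimate in each patch $\Om_x$. In the almost-convex patches I would employ the Grisvard/Rellich-type integration-by-parts identity
\[
\int_{\Om_\ell} |D^2 v|^2\,d^nx = \int_{\Om_\ell} |\Delta v|^2\,d^nx + \text{(boundary terms involving }II_\ell\text{)},
\]
valid on each $C^2$ exhausting subdomain $\Om_\ell$ from Definition \ref{Def-AC}. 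The key point is that the uniform lower bound $II_\ell\ge -C_2 I_n$ (equivalently \eqref{MTV3.4}) controls the boundary integrals from below by quantities of the form $\|\nabla_{\tan} v\|^2_{L^2(\dOm_\ell)}$, yielding a uniform-in-$\ell$ bound $\|v\|_{H^2(\Om_\ell)}\le C(\|\Delta v\|_{L^2(\Om_\ell)}+\|v\|_{H^1(\Om_\ell)})$ for $v$ satisfying the Dirichlet or Neumann condition on $\dOm_\ell$. Applying this to $v=u|_{\Om_\ell}$ and passing to the limit $\ell\to\infty$ (using the approximation properties from Definition \ref{Def-AC} and the already known $H^1$ control from \eqref{2.40}, \eqref{2.40a}) yields $u\in H^2(\Om_x)$.

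In the patches of class $MH^{1/2}_\delta$ (and in the $C^{1,r}$, $r>1/2$, case when $n=2$), I would flatten the boundary via the bi-Lipschitz map $(x',x_n)\mapsto (x',x_n-\vp_j(x'))$ whose Jacobian enjoys the multiplier bound \eqref{MaS-4}. In the straightened coordinates the Laplacian becomes $\Delta+R$, where $R$ is a second-order operator whose coefficients involve $\nabla\vp_j$; the multiplier norm estimate $\|\nabla\vp_j\|_{MH^{1/2}}\le \delta$ turns $R$ into a small perturbation of $\Delta$ from $H^2\cap(\text{boundary condition})$ into $L^2$, at least after the sharp half-space trace/regularity theory for Dirichlet and Neumann data in $H^{3/2}$ and $H^{1/2}$ respectively has been invoked. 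For $\delta$ sufficiently small (depending only on $n$ and the Lipschitz character of $\Om$), a Neumann series argument shows that $\Delta+R$ remains an isomorphism onto $L^2$ from the $H^2$ space cut out by the prescribed boundary trace, giving $u\in H^2(\Om_x)$.

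The genuinely hard step is the second one, the $MH^{1/2}_\delta$-flattening argument, since it is exactly here that the ``outward-only'' nature of admissible boundary singularities is exploited; without the multiplier smallness \eqref{MaS-4} the perturbation $R$ is not controllable in $\cB(H^2,L^2)$ and the classical counterexamples to $H^2$-regularity on Lipschitz domains with inward cusps or reentrant corners intervene. Once both patch-wise regularity statements are in place, gluing via the partition of unity and absorbing lower-order terms into $\|u\|_{H^1(\Om)}$ (which is controlled by \eqref{2.40}, \eqref{2.40a}) completes the unperturbed claim, hence, by the first paragraph, the perturbed claim \eqref{Yan-9}.
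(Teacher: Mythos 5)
Your opening reduction is correct, and in fact even more immediate than you present it: by \eqref{2.39} and \eqref{2.20}, the sets $\dom(H_{D,\Om})$ and $\dom(H_{N,\Om})$ coincide verbatim with $\dom(-\Delta_{D,\Om})$ and $\dom(-\Delta_{N,\Om})$, since none of the defining conditions ($H^1_0(\Om)$ resp.\ $H^1(\Om)$ membership, $\Delta u\in L^2(\Om;d^nx)$, and the boundary condition) involves $V$; your observation that $-\Delta u=H_{D,\Om}u-Vu\in L^2(\Om;d^nx)$ is the one-line form of this. That reduction is essentially all the paper itself supplies: Proposition \ref{Bjk} is presented as an ``elliptic regularity result proved in \cite{GM10}'', so the unperturbed inclusions are imported as a black box rather than proved. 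Your second and third paragraphs therefore go beyond the paper. As an outline they are structurally faithful to the dichotomy in Definition \ref{d.Conv}: Rellich--Grisvard identities on the $C^2$ exhaustion of Definition \ref{Def-AC}, with the uniform lower bound on the second fundamental forms controlling the boundary terms, is indeed the route of \cite{MTV} for the almost-convex patches; and boundary flattening followed by a smallness/Neumann-series argument driven by the multiplier bound \eqref{MaS-4} is the route of \cite{MS85}, \cite{MS05} for the $MH^{1/2}_\delta$ patches. But these remain sketches: the justification of the Rellich identity and the limit $\ell\to\infty$ under only Lipschitz regularity of the ambient domain, and the precise mapping properties of the remainder $R$ after flattening, are each substantial arguments carried out in the cited references. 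Nothing you write is wrong in principle, but paragraphs two and three should be read as a roadmap to \cite{GM10} and its sources rather than as a self-contained proof; the part of the proposition that the present paper actually establishes is exactly your first paragraph.
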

%
\noindent In fact, all of our results in this paper hold in the class of Lipschitz
domains for which the two inclusions in \eqref{Yan-9} hold.

The following theorem addresses the issue raised at
the beginning of this subsection. Its proof is similar to the
special case $V\equiv 0$, treated in \cite{GM10}.

\begin{theorem}\label{T-DD1}
Assume Hypotheses \ref{h.V} and \ref{h.Conv}.
Then \eqref{Yan-8} holds. In particular,
\begin{align}
\dom(H_{min,\Om}) & = H^2_0(\Omega)   \no \\
 & =\big\{u\in L^2(\Omega;d^nx)\,\big|\,\Delta u\in L^2(\Omega;d^nx), \, 
\widehat{\gamma}_D u =\widehat{\gamma}_N u =0\big\},     \lb{dmin} \\
\dom(H_{max,\Om} ) & = \big\{u\in L^2(\Omega;d^nx)\,\big|\,
\Delta u \in L^2(\Omega;d^nx)\big\},    \lb{dmax}
\end{align}
and
\begin{equation} \label{Yan-10}
H_{min,\Om} 
= (H_{max,\Om})^*\, \mbox{ and }\, H_{max,\Om} = (H_{min,\Om})^*.
\end{equation} 
\end{theorem}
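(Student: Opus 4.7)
The plan is to reduce Theorem \ref{T-DD1} to the single identity \eqref{Yan-8}, namely
$$
H^2_0(\Omega) = \big\{u \in L^2(\Omega;d^nx) \,\big|\, \Delta u \in L^2(\Omega;d^nx),\ \widehat{\gamma}_D u = \widehat{\gamma}_N u = 0\big\}.
$$
Indeed, \eqref{dmax} is immediate from the definition of $H_{max,\Om}$, the first equality in \eqref{dmin} is the definition of $H_{min,\Om}$, and the adjoint identities \eqref{Yan-10} will then follow at once from Corollary \ref{Max-M2}. The inclusion $\subseteq$ in \eqref{Yan-8} is routine: if $u \in H^2_0(\Omega)$, then $\Delta u \in L^2(\Om;d^nx)$, while Theorem \ref{T-MMS} gives $\gamma_D u = \gamma_N u = 0$, and the compatibility relations \eqref{Tan-C11}, \eqref{3an-C11} identify these classical traces with $\widehat{\gamma}_D u$ and $\widehat{\gamma}_N u$.

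For the reverse inclusion I would fix $u$ in the right-hand side and exploit that, since $V\geq 0$ and $\Omega$ is bounded, Poincar\'e's inequality gives $H_{D,\Om} \geq \varepsilon I_{L^2(\Om;d^nx)}$ for some $\varepsilon > 0$, so $H_{D,\Om}$ is a bijection from $\dom(H_{D,\Om})$ onto $L^2(\Om;d^nx)$. Here the quasi-convexity hypothesis enters essentially: Proposition \ref{Bjk} yields $\dom(H_{D,\Om}) \subset H^2(\Omega)$, and hence via \eqref{2.39}, $\dom(H_{D,\Om}) = H^2(\Omega)\cap H^1_0(\Omega)$. Set $f := (-\Delta + V)u \in L^2(\Om;d^nx)$, let $v \in H^2(\Om)\cap H^1_0(\Om)$ be the unique Dirichlet solution of $(-\Delta+V)v = f$, and put $w := u - v$. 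Then $w \in L^2(\Om;d^nx)$ with $\Delta w \in L^2(\Om;d^nx)$, one has $(-\Delta+V)w = 0$ as an identity in $L^2(\Om;d^nx)$, and $\widehat{\gamma}_D w = \widehat{\gamma}_D u - \gamma_D v = 0$ by \eqref{Tan-C11}.

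To show $w = 0$, take any $\phi \in \dom(H_{D,\Om})$; Lemma \ref{Lo-Tx} supplies $\gamma_N \phi \in N^{1/2}(\dOm)$, and a brief computation using the generalized Green identity \eqref{Tan-C12}, together with the reality of $V$, gives
$$
((-\Delta+V)\phi, w)_{L^2(\Om;d^nx)} = (\phi,(-\Delta+V)w)_{L^2(\Om;d^nx)} - {}_{N^{1/2}(\dOm)}\langle\gamma_N \phi, \widehat{\gamma}_D w\rangle_{(N^{1/2}(\dOm))^*} = 0.
$$
Surjectivity of $H_{D,\Om}$ now forces $(g,w)_{L^2(\Om;d^nx)} = 0$ for every $g \in L^2(\Om;d^nx)$, so $w = 0$ and $u = v \in H^2(\Om)\cap H^1_0(\Om)$. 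Since $u \in H^2(\Om)$, compatibility \eqref{3an-C11} converts the remaining hypothesis $\widehat{\gamma}_N u = 0$ into $\gamma_N u = 0$; combined with $\gamma_D u = 0$, Theorem \ref{T-MMS} places $u$ in $\ker(\gamma_2) = H^2_0(\Om)$, establishing \eqref{Yan-8} and hence the theorem. The main obstacle in this scheme is the surjectivity-plus-Green duality step, whose success hinges critically on Proposition \ref{Bjk} to upgrade the Dirichlet solution $v$ from a merely distributional object to an honest $H^2(\Om)$-function with well-defined classical traces; the weaker hypothesis of a general bounded Lipschitz domain would not suffice.
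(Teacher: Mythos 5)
Your argument is correct and, as far as one can tell, follows the same route the authors have in mind: the paper itself offers no proof of Theorem \ref{T-DD1} beyond the remark that it is ``similar to the special case $V\equiv 0$, treated in \cite{GM10}'', and your decomposition $u=v+w$ with $v$ the Dirichlet solution and $w$ annihilated by a duality argument against the surjective operator $H_{D,\Om}$ is exactly the standard mechanism that makes the quasi-convexity hypothesis (via Proposition \ref{Bjk}) do its work. Two remarks. First, you were right to build the solvability of the Dirichlet problem from Theorem \ref{t2.5} and Proposition \ref{Bjk} rather than quoting Theorem \ref{tH.A}, which appears only in the following subsection and whose uniqueness proof is essentially the same Green-identity computation you carry out; this keeps the logic non-circular. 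Second, a small imprecision: Theorem \ref{T-DD1} assumes only Hypothesis \ref{h.V}, under which $V$ is real and bounded but not necessarily nonnegative, so your appeal to ``$V\geq 0$ and Poincar\'e'' to get $H_{D,\Om}\geq \varepsilon I$ is not literally licensed by the stated hypotheses (it is consistent with the surrounding text, since \eqref{Yan-1} and \eqref{Yan-6} introduce $H_{max,\Om}$ and $H_{min,\Om}$ only for $0\leq V$). The fix is routine: pick any $z\in\bbR\backslash\sigma(H_{D,\Om})$, solve $(-\Delta+V-z)v=(-\Delta+V-z)u$ instead, and run the identical duality argument with $-\Delta+V-z$; the trace operators see only the Laplacian, so nothing else changes. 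With that adjustment the proof is complete.
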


We conclude this subsection with the following result which is essentially
contained in \cite{GM10}.

\begin{proposition}\label{L-Fri1}
Assume Hypotheses \ref{h2.1} and \ref{h.V}.
Then the Friedrichs extension of $(-\Delta+V)|_{C^\infty_0(\Om)}$ in 
$L^2(\Om;d^nx)$ is precisely the perturbed Dirichlet Laplacian $H_{D,\Om}$.
Consequently, if Hypothesis \ref{h.Conv} is assumed in place
of Hypothesis \ref{h2.1}, then the Friedrichs extension
of $H_{min,\Om} $ in \eqref{Yan-6} is the perturbed Dirichlet Laplacian
$H_{D,\Om}$.
\end{proposition}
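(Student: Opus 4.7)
The plan is to identify the Friedrichs extension through its canonical variational construction via the closure of the sesquilinear form generated by the symmetric operator $H_{c,\Om} = (-\Delta+V)|_{C_0^\infty(\Om)}$, and then recognize that this closed form coincides with the form $Q_{D,\Om}$ defined in \eqref{3.QD}, which by the first and second representation theorems is already known to generate the perturbed Dirichlet Laplacian $H_{D,\Om}$.

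First, I would compute the form $q_{H_{c,\Om}}$ associated with $H_{c,\Om}$. Integrating by parts on $C_0^\infty(\Om) \times C_0^\infty(\Om)$ (boundary terms vanish since test functions are compactly supported in $\Om$), one has
\begin{equation*}
q_{H_{c,\Om}}(u,v) = (u, H_{c,\Om} v)_{L^2(\Om;d^nx)} = (\nabla u, \nabla v)_{(L^2(\Om;d^nx))^n} + (u, Vv)_{L^2(\Om;d^nx)},
\end{equation*}
for $u,v \in C_0^\infty(\Om)$. Since $V \in L^\infty(\Om;d^nx)$ is real-valued, the form is symmetric and bounded from below by $-\|V\|_{L^\infty(\Om;d^nx)}$; moreover, the form norm $(q_{H_{c,\Om}}(u,u) + (1+\|V\|_{L^\infty})\|u\|_{L^2}^2)^{1/2}$ is equivalent to the $H^1(\Om)$-norm on $C_0^\infty(\Om)$. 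Hence the closure of $q_{H_{c,\Om}}$ is obtained by extending the form expression to the closure of $C_0^\infty(\Om)$ in the $H^1(\Om)$-norm, namely $H_0^1(\Om)$. This yields precisely the form $Q_{D,\Om}$ of \eqref{3.QD} with $\dom(Q_{D,\Om}) = H_0^1(\Om)$.

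Next, the Friedrichs extension of $H_{c,\Om}$ is, by definition (cf.\ \eqref{Fr-Q}), the unique self-adjoint operator in $L^2(\Om;d^nx)$ associated with this closed form. On the other hand, by \eqref{3.QHD}--\eqref{3.HD}, the unique self-adjoint operator associated with $Q_{D,\Om}$ is $H_{D,\Om}$. By uniqueness in the first representation theorem, therefore, $(H_{c,\Om})_F = H_{D,\Om}$, which proves the first assertion.

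For the second assertion, assume Hypothesis \ref{h.Conv} and consider $H_{min,\Om}$, whose domain $H_0^2(\Om)$ (cf.\ \eqref{Yan-6}) contains $C_0^\infty(\Om)$ as a dense subspace in the $H^2(\Om)$-norm, and hence also in the (weaker) $H^1(\Om)$-norm. Consequently, the closures of $C_0^\infty(\Om)$ and of $H_0^2(\Om)$ in the form norm of $q_{H_{min,\Om}}(u,v) = (\nabla u,\nabla v) + (u,Vv)$ coincide with $H_0^1(\Om)$, so the closed form associated with $H_{min,\Om}$ is again $Q_{D,\Om}$. Therefore $(H_{min,\Om})_F = H_{D,\Om}$ as well. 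I do not anticipate substantial obstacles; the only point requiring a bit of care is the verification that the form norm is genuinely equivalent to the $H^1(\Om)$-norm (which follows from $V \in L^\infty(\Om;d^nx)$), so that the form-closure of $C_0^\infty(\Om)$ is exactly $H_0^1(\Om)$ rather than some smaller or larger space.
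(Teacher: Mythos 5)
Your argument is correct and is essentially the route the paper takes: the paper identifies $(H_{c,\Om})_F$ with $H_{D,\Om}$ by noting (cf.\ \eqref{Fr-Q}, \eqref{3.QD}, \eqref{3.cFD}) that the closure of the form $q_{H_{c,\Om}}$ is exactly $Q_{D,\Om}$ on $H^1_0(\Om)$, whose associated self-adjoint operator is $H_{D,\Om}$ by the representation theorems. Your observation that the second assertion follows from the density of $C^\infty_0(\Om)$ in $H^2_0(\Om)$ (hence equality of the form closures) matches the paper's reduction via $\ol{(-\Delta+V)|_{C^\infty_0(\Om)}}=H_{min,\Om}$, since an operator and its closure share the same Friedrichs extension.
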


\subsection{Trace Operators and Boundary Problems on Quasi-Convex Domains}
\label{s6X}

Here we revisit the issue of traces, originally taken up in Section \ref{s2},
and extend the scope of this theory. The goal is to extend our earlier
results to a context that is well-suited for the treatment of the
perturbed Krein Laplacian in quasi-convex domains, later on.
All results in this subsection are direct generalizations of similar results
proved in the case where $V\equiv 0$ in \cite{GM10}.

\begin{theorem}\label{tH.A}
Assume Hypotheses \ref{h.V} and \ref{h.Conv},
and suppose that $z\in\bbC\backslash\si(H_{D,\Om})$. Then for any
functions $f\in L^2(\Om;d^nx)$ and $g\in (N^{1/2}(\partial\Omega))^*$
the following inhomogeneous Dirichlet boundary value problem
\begin{equation}\label{Yan-14}
\begin{cases}
(-\Delta+V-z)u=f\text{ in }\,\Om,
\\
u\in L^2(\Om;d^nx),
\\
\widehat\ga_D u =g\text{ on }\,\dOm,
\end{cases}
\end{equation}
has a unique solution $u=u_D$. This solution satisfies
\begin{equation}\label{Hh.3X}
\|u_D\|_{L^2(\Om;d^nx)}
+\|\widehat\ga_N u_D\|_{(N^{3/2}(\partial\Omega))^*}\leq C_D
(\|f\|_{L^2(\Om;d^nx)}+\|g\|_{(N^{1/2}(\partial\Omega))^*})
\end{equation}
for some constant $C_D=C_D(\Omega,V,z)>0$, and the following regularity
results hold:
\begin{align} \label{3.3Y}
& g\in H^1(\partial\Omega) \,\text{ implies }\, u_D\in H^{3/2}(\Omega),
\\
& g\in\gamma_D\bigl(H^2(\Omega)\bigr)
\, \text{ implies }\, u_D\in H^2(\Omega).
\label{3.3Ys}
\end{align} 
In particular,
\begin{equation} \label{3.3Ybis}
g=0\, \text{ implies } \, u_D\in H^2(\Omega)\cap H^1_0(\Omega).
\end{equation} 
Natural estimates are valid in each case.

Moreover, the solution operator for \eqref{Yan-14} with $f=0$
$($i.e., $P_{D,\Om,V,z}:g\mapsto u_D$$)$ satisfies
\begin{equation}\label{3.34Y}
P_{D,\Om,V,z}=\big[\ga_N(H_{D,\Om}-{\ol z}I_\Om)^{-1}\big]^*
\in\cB\big((N^{1/2}(\partial\Omega))^*,L^2(\Om;d^nx)\big),
\end{equation}
and the solution of \eqref{Yan-14} is given by the formula
\begin{equation}\label{3.35Y}
u_D=(H_{D,\Om}-zI_\Om)^{-1}f
-\big[\ga_N(H_{D,\Om}-\ol{z}I_\Om)^{-1}\big]^*g.
\end{equation}
\end{theorem}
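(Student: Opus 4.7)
The plan is to construct the solution by the explicit formula \eqref{3.35Y}, where $P_{D,\Om,V,z}$ is realized as the conjugate-dual adjoint of a bounded Neumann-trace operator built from the Dirichlet resolvent, to verify the prescribed Dirichlet trace through the extended integration-by-parts identity \eqref{Tan-C12}, and to promote regularity of $u_D$ by lifting the boundary datum. Uniqueness follows from a short direct argument: if $u\in L^2(\Om;d^nx)$ satisfies $(-\Delta+V-z)u=0$ with $\widehat{\ga}_D u=0$, then $\Delta u=(V-z)u\in L^2(\Om;d^nx)$, so $u\in\dom(H_{max,\Om})$; for arbitrary $\psi\in L^2(\Om;d^nx)$, set $w=(H_{D,\Om}-\ol z I_\Om)^{-1}\psi$, which belongs to $\dom(H_{D,\Om})\subset H^2(\Om)\cap H^1_0(\Om)$ by Proposition \ref{Bjk}. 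Expanding $\Delta u$ and $\Delta w$ using the equations they satisfy, and using reality of $V$ to cancel the $V$- and $z$-terms, the identity \eqref{Tan-C12} reduces to $0 = {}_{N^{1/2}(\dOm)}\langle \ga_N w,\widehat{\ga}_D u\rangle_{(N^{1/2}(\dOm))^*} = -(\psi,u)_{L^2(\Om;d^nx)}$; since $\psi$ was arbitrary, $u=0$.

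For existence, Proposition \ref{Bjk} combined with Lemma \ref{Lo-Tx} shows that the composition $\ga_N(H_{D,\Om}-\ol z I_\Om)^{-1}$ is a bounded surjection from $L^2(\Om;d^nx)$ onto $N^{1/2}(\dOm)$; its conjugate-dual adjoint defines $P_{D,\Om,V,z}\in\cB\bigl((N^{1/2}(\dOm))^*,L^2(\Om;d^nx)\bigr)$, establishing \eqref{3.34Y}. Define $u_D$ by \eqref{3.35Y}. To verify the equation, for every $\vp\in C^\infty_0(\Om)\subset\dom(H_{D,\Om})$ one has $\ga_N\vp=0$, whence the defining property of $P_{D,\Om,V,z}$ as the adjoint of $\ga_N(H_{D,\Om}-\ol z I_\Om)^{-1}$ gives
\begin{equation}
\bigl((-\Delta+V-\ol z)\vp,P_{D,\Om,V,z}g\bigr)_{L^2(\Om;d^nx)}={}_{N^{1/2}(\dOm)}\langle\ga_N\vp,g\rangle_{(N^{1/2}(\dOm))^*}=0,
\end{equation}
so $(-\Delta+V-z)P_{D,\Om,V,z}g=0$ in $\cD^\prime(\Om)$, and then $(-\Delta+V-z)u_D=f$ follows from $(H_{D,\Om}-zI_\Om)^{-1}f\in\dom(H_{D,\Om})$. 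To verify $\widehat{\ga}_D u_D=g$, rerun the uniqueness computation with $u=P_{D,\Om,V,z}g$: combining $(\Delta w,P_{D,\Om,V,z}g)-(w,\Delta P_{D,\Om,V,z}g)=-(\psi,P_{D,\Om,V,z}g)_{L^2(\Om;d^nx)}$ with the adjoint identity $(\psi,P_{D,\Om,V,z}g)_{L^2(\Om;d^nx)} = {}_{N^{1/2}(\dOm)}\langle \ga_N w,g\rangle_{(N^{1/2}(\dOm))^*}$ produces
\begin{equation}
{}_{N^{1/2}(\dOm)}\bigl\langle\ga_N w,\widehat{\ga}_D P_{D,\Om,V,z}g+g\bigr\rangle_{(N^{1/2}(\dOm))^*}=0,\quad \psi\in L^2(\Om;d^nx),
\end{equation}
and surjectivity of $\ga_N\circ(H_{D,\Om}-\ol z I_\Om)^{-1}$ onto $N^{1/2}(\dOm)$ forces $\widehat{\ga}_D P_{D,\Om,V,z}g=-g$, hence $\widehat{\ga}_D u_D=g$. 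The bound \eqref{Hh.3X} is then immediate from continuity of the constituent operators together with Theorem \ref{3ew-T-tr}.

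For the regularity statements, the case $g=0$ is \eqref{3.3Ybis}, read off directly from \eqref{3.35Y} and Proposition \ref{Bjk}. For $g\in\ga_D(H^2(\Om))$, pick $G\in H^2(\Om)$ with $\ga_D G=g$; then $u_D-G$ solves $(-\Delta+V-z)(u_D-G)=f-(-\Delta+V-z)G\in L^2(\Om;d^nx)$ with $\widehat{\ga}_D(u_D-G)=0$, whence \eqref{3.3Ybis} applied to this auxiliary problem yields $u_D-G\in H^2(\Om)$, giving \eqref{3.3Ys}. The intermediate case $g\in H^1(\dOm)$ is proved analogously by lifting $g$ to a function in $H^{3/2}(\Om)$ and invoking the sharp $L^2$-Sobolev regularity for the Dirichlet problem on Lipschitz domains. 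The principal obstacle throughout is that $P_{D,\Om,V,z}g$ has no a priori Sobolev regularity beyond $L^2(\Om;d^nx)$, so every boundary manipulation must be carried out at the level of the extended Dirichlet trace furnished by Theorem \ref{New-T-tr}; quasi-convexity of $\Om$ enters decisively through Proposition \ref{Bjk} and Lemma \ref{Lo-Tx}, which between them supply the $H^2$-regularity of $\dom(H_{D,\Om})$ and the surjectivity of $\ga_N$ that close all the arguments above.
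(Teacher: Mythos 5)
The paper itself contains no proof of Theorem \ref{tH.A}: it is stated as a direct generalization of the $V\equiv 0$ case treated in \cite{GM10}. Your construction --- realizing the solution operator for $f=0$ as the conjugate-dual adjoint of $\gamma_N(H_{D,\Om}-\ol{z}I_\Om)^{-1}$, proving uniqueness and the boundary-trace identity by pairing against $\dom(H_{D,\Om})=H^2(\Om)\cap H^1_0(\Om)$ through \eqref{Tan-C12}, and deriving \eqref{3.3Ys}, \eqref{3.3Ybis} by lifting the datum and reducing to Proposition \ref{Bjk} --- is exactly that route, and these parts of your argument are complete and correct, including the estimate \eqref{Hh.3X} via Theorem \ref{3ew-T-tr}. (Your computation also makes visible a sign inconsistency already present in the statement: you correctly find $\widehat{\gamma}_D\bigl[\gamma_N(H_{D,\Om}-\ol{z}I_\Om)^{-1}\bigr]^*g=-g$, so \eqref{3.35Y} is right, but then the operator in \eqref{3.34Y} is the negative of the map $g\mapsto u_D$ that is later used with $\widehat{\gamma}_D P_{D,\Om,V}=I$ in \eqref{3.Gv}; this is the paper's bookkeeping, not yours.)

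The one genuine gap is \eqref{3.3Y}. Your reduction --- ``lift $g\in H^1(\dOm)$ to a function in $H^{3/2}(\Om)$'' --- is not supplied by anything quoted in the paper: by \eqref{2.6}, $\gamma_D$ maps $H^{3/2}(\Om)$ only into $H^{1-\eps}(\dOm)$, and the surjectivity statement \eqref{2.6a} stops strictly below $s=3/2$, so no extension operator from $H^1(\dOm)$ into $H^{3/2}(\Om)$ is available from the trace lemmas. In fact the existence of such a lift (e.g.\ the harmonic extension of an $H^1(\dOm)$ datum lying in $H^{3/2}(\Om)$, together with the mapping property of $(H_{D,\Om}-zI_\Om)^{-1}$ on the dual scale needed to absorb $(-\Delta+V-z)G\in H^{-1/2}(\Om)$) is essentially the content of the Jerison--Kenig solvability of the regularity problem in Lipschitz domains \cite{JK95}, i.e.\ of the very statement \eqref{3.3Y} you are proving. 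So that clause must be imported as an external input --- which is what \cite{GM10} does --- rather than derived from Lemma \ref{Lo-Tx}, Theorem \ref{New-T-tr}, and Proposition \ref{Bjk}. Everything else in your proposal stands.
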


\begin{corollary}\label{New-CV22}
Assume Hypotheses \ref{h.V} and \ref{h.Conv}.
Then for every $z\in\bbC\backslash \si(H_{D,\Om})$ the map  
\begin{equation} \label{Tan-Bq2}
\widehat{\gamma}_D: \big\{u\in L^2(\Omega;d^nx)\,\big|\,(-\Delta+V-z)u=0
\,\mbox{in}\, \Omega\}\to \bigl(N^{1/2}(\partial\Omega)\bigr)^*
\end{equation} 
is an isomorphism $($i.e., bijective and bicontinuous\,$)$. 
\end{corollary}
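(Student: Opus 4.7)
The plan is to extract the corollary as a direct consequence of Theorem \ref{tH.A}, with the $L^2$-topology on the source space. Let
\[
\cN_z := \big\{u\in L^2(\Om;d^nx)\,\big|\,(-\Delta+V-z)u = 0\text{ in }\Om\big\}.
\]
First I would observe that $\cN_z$ is a closed subspace of $L^2(\Om;d^nx)$: if $u_k\to u$ in $L^2(\Om;d^nx)$ with $u_k\in\cN_z$, then $u_k\to u$ in $\cD'(\Om)$, and continuity of differential operators on $\cD'(\Om)$ forces $(-\Delta+V-z)u=0$ in $\Om$. Thus $\cN_z$, equipped with the $L^2$-norm, is itself a Banach space. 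Moreover, every $u\in\cN_z$ satisfies $\Delta u=(V-z)u\in L^2(\Om;d^nx)$, so $u\in\dom(-\Delta_{max,\Om})$ and Theorem \ref{New-T-tr} makes $\widehat{\gamma}_D u\in (N^{1/2}(\partial\Om))^*$ meaningful, with
\[
\|\widehat{\gamma}_D u\|_{(N^{1/2}(\partial\Om))^*} \le C\big(\|u\|_{L^2(\Om;d^nx)}+\|\Delta u\|_{L^2(\Om;d^nx)}\big) \le C'(V,z)\,\|u\|_{L^2(\Om;d^nx)}.
\]
Hence $\widehat{\gamma}_D\big|_{\cN_z}\in\cB(\cN_z,(N^{1/2}(\partial\Om))^*)$.

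Next I would read injectivity and surjectivity directly off Theorem \ref{tH.A}. For injectivity, suppose $u\in\cN_z$ with $\widehat{\gamma}_D u=0$; then $u$ solves the boundary value problem \eqref{Yan-14} with $f=0$ and $g=0$. The uniqueness half of Theorem \ref{tH.A} (which applies because $z\in\bbC\backslash\sigma(H_{D,\Om})$) forces $u=0$. For surjectivity, given any $g\in (N^{1/2}(\partial\Om))^*$, the existence half of Theorem \ref{tH.A}, applied with $f=0$, produces $u_D=P_{D,\Om,V,z}\,g\in L^2(\Om;d^nx)$ with $(-\Delta+V-z)u_D=0$ and $\widehat{\gamma}_D u_D=g$; thus $u_D\in\cN_z$ and $\widehat{\gamma}_D u_D=g$.

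Finally, for bicontinuity, the bounded inverse is already explicitly provided: the operator
\[
P_{D,\Om,V,z}=\big[\gamma_N(H_{D,\Om}-\ol{z}I_\Om)^{-1}\big]^*\in\cB\big((N^{1/2}(\partial\Om))^*,L^2(\Om;d^nx)\big)
\]
from \eqref{3.34Y} provides a bounded right-inverse, and by the preceding injectivity it is a two-sided inverse of $\widehat{\gamma}_D\big|_{\cN_z}$. (Alternatively, the open mapping theorem applied to the continuous bijection $\widehat{\gamma}_D\big|_{\cN_z}:\cN_z\to (N^{1/2}(\partial\Om))^*$ between the two Banach spaces yields the same conclusion.) There is no real obstacle here, since Theorem \ref{tH.A} encapsulates the entire difficulty; the only cosmetic point to double-check is that the $L^2$-norm and the graph norm of $-\Delta_{max,\Om}$ are equivalent on $\cN_z$, which follows from $\|\Delta u\|_{L^2}\le (\|V\|_{L^\infty}+|z|)\|u\|_{L^2}$ for $u\in\cN_z$.
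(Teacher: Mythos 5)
Your proof is correct and follows exactly the route the paper intends: the corollary is stated as an immediate consequence of Theorem \ref{tH.A} (the paper gives no separate argument, referring to \cite{GM10} for the case $V\equiv 0$), and your derivation—boundedness via Theorem \ref{New-T-tr} and the identity $\Delta u=(V-z)u$ on $\cN_z$, injectivity/surjectivity from the uniqueness/existence halves of Theorem \ref{tH.A} with $f=0$, and bicontinuity from the bounded solution operator $P_{D,\Om,V,z}$ in \eqref{3.34Y}—is precisely that consequence spelled out.
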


\begin{theorem}\label{tH.G2}
Assume Hypotheses \ref{h.V} and \ref{h.Conv}   
and suppose that $z\in\bbC\backslash\si(H_{N,\Om})$. Then for any functions
$f\in L^2(\Om;d^nx)$ and $g\in (N^{3/2}(\partial\Omega))^*$ the
following inhomogeneous Neumann boundary value problem
\begin{equation}\label{n-1H}
\begin{cases}
(-\Delta+V-z)u=f\text{ in }\,\Om,
\\[4pt]
u\in L^2(\Om;d^nx),
\\[4pt]
\widehat\ga_N u=g\text{ on }\,\dOm,
\end{cases}
\end{equation}
has a unique solution $u=u_N$. This solution satisfies
\begin{equation}\label{Hh.3f}
\|u_N\|_{L^2(\Om;d^nx)}
+\|\widehat\ga_D u_N\|_{(N^{1/2}(\partial\Omega))^*}\leq C_N
(\|f\|_{L^2(\Om;d^nx)}+\|g\|_{(N^{3/2}(\partial\Omega))^*})
\end{equation}
for some constant $C_N=C_N(\Omega,V,z)>0$, and the following regularity
results hold:
\begin{align} \label{3.3f}
& g\in L^2(\partial\Omega;d^{n-1}\omega)\, \text{ implies }\, u_N\in H^{3/2}(\Omega),
\\
& g\in\gamma_N\bigl(H^2(\Om)\bigr)\, \text{ implies }\, u_N\in H^2(\Omega).
\label{3.3fbis}
\end{align} 
Natural estimates are valid in each case.

Moreover, the solution operator for \eqref{n-1H} with $f=0$
$($i.e., $P_{N,\Om,V,z}:g\mapsto u_N$$)$ satisfies
\begin{equation}\label{3.34f}
P_{N,\Om,V,z}=\big[\ga_D(H_{N,\Om}-{\ol z}I_\Om)^{-1}\big]^*
\in\cB\big((N^{3/2}(\partial\Omega))^*,L^2(\Om;d^nx)\big),
\end{equation}
and the solution of \eqref{n-1H} is given by the formula
\begin{equation}\label{3.a5Y}
u_N=(H_{N,\Om}-zI_\Om)^{-1}f
+\big[\ga_D(H_{N,\Om}-\ol{z}I_\Om)^{-1}\big]^*g.
\end{equation}
\end{theorem}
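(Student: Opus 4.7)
The plan is to mirror the proof of the Dirichlet analog, Theorem \ref{tH.A}, with the roles of the Dirichlet and Neumann traces interchanged, and with the sign conventions dictated by the generalized Green formula \eqref{3an-C12} of Theorem \ref{3ew-T-tr}.

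First, I would show that the operator $P_{N,\Om,V,z}$ defined in \eqref{3.34f} is bounded as claimed. For every $h\in L^2(\Om;d^n x)$, Proposition \ref{Bjk} yields $(H_{N,\Om}-\bar z I_\Om)^{-1}h\in\dom(H_{N,\Om})\subset H^2(\Om)$ with vanishing Neumann trace, so that Lemma \ref{3o-TxD} provides the bound $\|\gamma_D(H_{N,\Om}-\bar z I_\Om)^{-1}h\|_{N^{3/2}(\dOm)}\leq C\|h\|_{L^2(\Om;d^nx)}$. Thus $\gamma_D(H_{N,\Om}-\bar{z}I_{\Om})^{-1}\in\cB(L^2(\Om;d^nx),N^{3/2}(\dOm))$, and its conjugate-dual adjoint is an element of $\cB((N^{3/2}(\dOm))^*,L^2(\Om;d^nx))$, which is \eqref{3.34f}.

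Next, I would verify that $u_N$ given by \eqref{3.a5Y} solves the problem. The summand $v:=(H_{N,\Om}-zI_{\Om})^{-1}f\in\dom(H_{N,\Om})$ plainly satisfies $(-\De+V-z)v=f$ in $\Om$ and $\gamma_N v=0$, hence $\widehat\gamma_N v=0$. For $w:=P_{N,\Om,V,z}g$, testing against $\psi\in C_0^\infty(\Om)\subset\dom(H_{N,\Om})$ (which has $\gamma_D \psi=0$) via the adjoint relation applied to $h=(H_{N,\Om}-\bar z I_{\Om})\psi$ gives $(w,(-\De+V-\bar z)\psi)_{L^2(\Om;d^nx)}=\langle\gamma_D\psi,g\rangle=0$, so $(-\De+V-z)w=0$ distributionally, and hence in $L^2(\Om;d^nx)$. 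To identify $\widehat\gamma_N w$, for arbitrary $\varphi\in\dom(H_{N,\Om})$ the formula \eqref{3an-C12}, together with $\De w=(V-z)w$ and the reality of $V$, yields
\begin{equation*}
\langle\gamma_D\varphi,\widehat\gamma_N w\rangle=(\varphi,\De w)_{L^2(\Om;d^nx)}-(\De\varphi,w)_{L^2(\Om;d^nx)}=((H_{N,\Om}-\bar z I_{\Om})\varphi,w)_{L^2(\Om;d^nx)}=\langle\gamma_D\varphi,g\rangle,
\end{equation*}
where the final equality is the defining property of $w$ as an adjoint image. Since $\gamma_D$ is onto $N^{3/2}(\dOm)$ from $\dom(H_{N,\Om})$ (Lemma \ref{3o-TxD}), this forces $\widehat\gamma_N w=g$.

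Uniqueness reduces to the case $f=0$, $g=0$; the same Green formula then gives $(H_{N,\Om}\varphi,u)_{L^2(\Om;d^nx)}=(\varphi,zu)_{L^2(\Om;d^nx)}$ for every $\varphi\in\dom(H_{N,\Om})$, so that $u\in\dom((H_{N,\Om})^*)=\dom(H_{N,\Om})$ with $H_{N,\Om}u=zu$; as $z\notin\sigma(H_{N,\Om})$, one concludes $u=0$. The a priori estimate \eqref{Hh.3f} results by combining the boundedness of $P_{N,\Om,V,z}$ and of $(H_{N,\Om}-zI_\Om)^{-1}$ with the boundedness of $\widehat\gamma_D$ on $\dom(-\De_{max,\Om})$ furnished by Theorem \ref{New-T-tr}. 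For the regularity assertion \eqref{3.3fbis}: if $g=\gamma_N\Psi$ for some $\Psi\in H^2(\Om)$, then $u_N-\Psi$ solves the inhomogeneous Neumann problem with right-hand side $f-(-\De+V-z)\Psi\in L^2(\Om;d^nx)$ and zero Neumann datum, hence lies in $\dom(H_{N,\Om})\subset H^2(\Om)$ by Proposition \ref{Bjk}. The intermediate case \eqref{3.3f}, requiring $u_N\in H^{3/2}(\Om)$ when $g\in L^2(\dOm;d^{n-1}\omega)$, is the delicate step I expect to be the main obstacle; it would be handled by single-layer-potential techniques on quasi-convex domains as in \cite{GM10}, exploiting sharp boundary $L^2$-estimates to lift the homogeneous part of $u_N$ from $L^2(\Om;d^nx)$ into $H^{3/2}(\Om)$.
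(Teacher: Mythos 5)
Your proposal is correct and follows the route the paper intends: the solution operator is constructed as the adjoint $\big[\gamma_D(H_{N,\Om}-\ol{z}I_\Om)^{-1}\big]^*$, the boundary condition $\widehat\gamma_N w=g$ is identified via the generalized Green formula \eqref{3an-C12} combined with the surjectivity of $\gamma_D$ from Lemma \ref{3o-TxD} (valid on $\dom(H_{N,\Om})$ thanks to Proposition \ref{Bjk}), and uniqueness follows from the self-adjointness of $H_{N,\Om}$ and $z\notin\sigma(H_{N,\Om})$. The paper itself supplies no proof here (it defers to \cite{GM10}, where the case $V\equiv 0$ is treated), and the one step you leave open, the $H^{3/2}$ regularity \eqref{3.3f}, is exactly the step that cannot be extracted from the quoted trace lemmas and does require the layer-potential machinery of \cite{GM10}, so your deferral there matches the paper's.
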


\begin{corollary}\label{New-CV33}
Assume Hypotheses \ref{h.V} and \ref{h.Conv}.
Then, for every $z\in\bbC\backslash \sigma(H_{N,\Om})$, the map 
\begin{equation} \label{Tan-FFF}
\widehat{\gamma}_N:\big\{u\in L^2(\Omega;d^nx)\,\big|\,(-\Delta+V-z)u=0
\,\mbox{ in }\Omega\big\}\to \bigl(N^{3/2}(\partial\Omega)\bigr)^*
\end{equation} 
is an isomorphism $($i.e., bijective and bicontinuous\,$)$.
\end{corollary}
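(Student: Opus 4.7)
The plan is to read off the corollary directly from Theorem \ref{tH.G2} specialized to $f=0$, much as Corollary \ref{New-CV22} is obtained from Theorem \ref{tH.A}. Let $\cK_z:=\{u\in L^2(\Om;d^nx)\,|\,(-\Delta+V-z)u=0 \text{ in }\Om\}$, which is a closed subspace of $L^2(\Om;d^nx)$; by Theorem \ref{3ew-T-tr}, the restriction of $\widehat{\gamma}_N$ to $\cK_z$ is a well-defined linear map into $(N^{3/2}(\partial\Om))^*$.

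For surjectivity, given any $g\in (N^{3/2}(\partial\Om))^*$, Theorem \ref{tH.G2} applied with $f=0$ produces $u_N\in\cK_z$ with $\widehat{\gamma}_N u_N=g$; hence the map is onto. For injectivity, any $u\in\cK_z$ with $\widehat{\gamma}_N u=0$ is a solution of the boundary value problem \eqref{n-1H} with $f=0$ and $g=0$, and the uniqueness assertion in Theorem \ref{tH.G2} forces $u=0$.

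It remains to check bicontinuity. Continuity of the inverse is immediate from the a priori estimate \eqref{Hh.3f}, which with $f=0$ gives $\|u_N\|_{L^2(\Om;d^nx)}\leq C_N\|g\|_{(N^{3/2}(\partial\Om))^*}$. For continuity of $\widehat{\gamma}_N|_{\cK_z}$ itself, note that on $\cK_z$ one has $\Delta u=(V-z)u$, so
\begin{equation}
\|u\|_{L^2(\Om;d^nx)}+\|\Delta u\|_{L^2(\Om;d^nx)}\leq (1+\|V\|_{L^\infty(\Om;d^nx)}+|z|)\|u\|_{L^2(\Om;d^nx)},
\end{equation}
which shows that on $\cK_z$ the graph norm of the maximal Laplacian is equivalent to the $L^2$-norm. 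Since $\widehat{\gamma}_N$ is bounded on $\{u\in L^2(\Om;d^nx)\,|\,\Delta u\in L^2(\Om;d^nx)\}$ equipped with the graph norm (by Theorem \ref{3ew-T-tr}), its restriction to $\cK_z$ is bounded in the $L^2$-norm, completing the proof.

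I expect no serious obstacle: the surjectivity/uniqueness package is already packaged inside Theorem \ref{tH.G2}, and the only thing one must verify by hand is the equivalence of norms on $\cK_z$, which is a one-line consequence of $V\in L^\infty(\Om;d^nx)$. One could alternatively invoke the open mapping theorem in place of the explicit estimate \eqref{Hh.3f} to obtain continuity of the inverse, once bijectivity is established.
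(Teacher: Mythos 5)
Your proof is correct and is precisely the intended deduction: the paper omits an explicit argument (deferring to the $V\equiv 0$ case in \cite{GM10}), but the standard route is exactly yours, namely reading bijectivity off the existence and uniqueness statement of Theorem \ref{tH.G2} with $f=0$, continuity of the inverse off the a priori estimate \eqref{Hh.3f}, and continuity of the forward map from Theorem \ref{3ew-T-tr} combined with the equivalence of the graph norm and the $L^2$-norm on the null space of $-\Delta+V-z$. No gaps.
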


\subsection{Dirichlet-to-Neumann Operators on Quasi-Convex Domains}
\label{s7X}

In this subsection we review spectral parameter dependent Dirichlet-to-Neumann maps, also known in the literature as Weyl--Titchmarsh and Poincar\'e--Steklov operators. Assuming Hypotheses \ref{h.V} and \ref{h.Conv}, introduce the 
Dirichlet-to-Neumann map
$M_{D,N,\Om,V}(z)$ associated with $-\Delta+V-z$ on $\Om$, as follows:
\begin{equation} \label{3.44v}
M_{D,N,\Om,V}(z) \colon
\begin{cases}
\bigl(N^{1/2}(\dOm)\bigr)^*\to \bigl(N^{3/2}(\dOm)\bigr)^*,  \\
\hspace*{1.8cm}
f\mapsto -\widehat\ga_N u_D,
\end{cases}
\; z\in\bbC\backslash\si(H_{D,\Om}),
\end{equation} 
where $u_D$ is the unique solution of
\begin{equation} \label{3.45v}
(-\Delta+V-z)u=0\,\text{ in }\Om,\quad u\in L^2(\Om;d^nx),
\;\; \widehat\ga_D u=f\,\text{ on }\dOm.
\end{equation} 
Retaining Hypotheses \ref{h.V} and \ref{h.Conv},
we next introduce the Neumann-to-Dirichlet map $M_{N,D,\Om,V}(z)$
associated with $-\Delta+V-z$ on $\Om$, as follows:
\begin{equation} \label{3.48v}
M_{N,D,\Om,V}(z)\colon
\begin{cases}
\bigl(N^{3/2}(\dOm)\bigr)^*\to \bigl(N^{1/2}(\dOm)\bigr)^*,
\\
\hspace*{1.8cm}
g\mapsto\widehat\ga_D u_N,
\end{cases}
\; z\in\bbC\backslash\si(H_{N,\Om}),
\end{equation} 
where $u_{N}$ is the unique solution of
\begin{equation} \label{3.49v}
(-\Delta+V-z)u=0\,\text{ in }\Om,\quad u\in L^2(\Om;d^nx),
\;\; \widehat\ga_Nu=g\,\text{ on }\dOm.
\end{equation} 
As in \cite{GM10}, where the case $V\equiv 0$ has been treated,
we then have the following result:

\begin{theorem}\label{t3.5v}
Assume Hypotheses \ref{h.V} and \ref{h.Conv}.
Then, with the above notation,
\begin{equation} \label{3.46v}
M_{D,N,\Om,V}(z)\in\cB\big((N^{1/2}(\dOm))^*\,,\,(N^{3/2}(\dOm))^*\big),
\quad z\in\bbC\backslash\si(H_{D,\Om}),
\end{equation}
and
\begin{equation}\label{3.47v}
M_{D,N,\Om,V}(z)=\widehat\gamma_N
\big[\gamma_N(H_{D,\Om}-\ol{z}I_\Om)^{-1}\big]^*,
\quad z\in\bbC\backslash\si(H_{D,\Om}).
\end{equation}
Similarly,
\begin{equation}\label{3.50v}
M_{N,D,\Om,V}(z)\in\cB\big((N^{3/2}(\dOm))^*\,,\,(N^{1/2}(\dOm))^*\big),
\quad z\in\bbC\backslash\si(H_{N,\Om}),
\end{equation}
and
\begin{equation}\label{3.52v}
M_{N,D,\Om,V}(z)
= \widehat \gamma_D\big[\gamma_D(H_{N,\Om}-\ol{z}I_\Om)^{-1}\big]^*,
\quad z\in\bbC\backslash\si(H_{N,\Om}).
\end{equation}
Moreover,
\begin{equation}\label{3.53v}
M_{N,D,\Om,V}(z)=-M_{D,N,\Om,V}(z)^{-1},\quad
z\in\bbC\backslash(\si(H_{D,\Om})\cup\si(H_{N,\Om})),
\end{equation}
and
\begin{equation}\label{NaLa}
\big[M_{D,N,\Om,V}(z)\big]^*=M_{D,N,\Om,V}(\ol{z}),\quad
\big[M_{N,D,\Om,V}(z)\big]^*=M_{N,D,\Om,V}(\ol{z}).
\end{equation}
As a consequence, one also has
\begin{align} \label{3.TTa}
& M_{D,N,\Om,V}(z)\in\cB\big(N^{3/2}(\dOm)\,,\,N^{1/2}(\dOm)\big),
\quad z\in\bbC\backslash\si(H_{D,\Om}),
\\
& M_{N,D,\Om,V}(z)\in\cB\big(N^{1/2}(\dOm)\,,\,N^{3/2}(\dOm)\big),
\quad z\in\bbC\backslash\si(H_{N,\Om}).
\label{3.TTb}
\end{align} 
\end{theorem}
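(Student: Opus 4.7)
The plan is to extract all statements from the two inhomogeneous solvability results, Theorem \ref{tH.A} and Theorem \ref{tH.G2}, together with the two generalized integration by parts formulas in Theorems \ref{New-T-tr} and \ref{3ew-T-tr}. First, to prove the boundedness \eqref{3.46v}, I note that when $f=0$ in \eqref{Yan-14}, the solution operator $g\mapsto u_D$ is bounded from $(N^{1/2}(\dOm))^*$ to $L^2(\Om;d^nx)$ by \eqref{Hh.3X}. Since $u_D$ then satisfies $\Delta u_D = (V-z)u_D \in L^2(\Om;d^nx)$, the extended Neumann trace operator $\widehat\gamma_N$ of Theorem \ref{3ew-T-tr} acts boundedly into $(N^{3/2}(\dOm))^*$; composing, $M_{D,N,\Om,V}(z)=-\widehat\gamma_N u_D$ has the asserted mapping property. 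The representation \eqref{3.47v} is then read off immediately from the solution formula \eqref{3.35Y} with $f=0$, applying $\widehat\gamma_N$ to both sides; \eqref{3.50v} and \eqref{3.52v} follow symmetrically from Theorem \ref{tH.G2} and formula \eqref{3.a5Y}.

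Next, I establish the inverse relation \eqref{3.53v}. Fix $z\in\bbC\backslash(\si(H_{D,\Om})\cup\si(H_{N,\Om}))$ and $f\in (N^{1/2}(\dOm))^*$, and let $u_D$ be the unique $L^2$-solution of $(-\Delta+V-z)u_D=0$ with $\widehat\gamma_D u_D=f$. Set $g:=M_{D,N,\Om,V}(z)f=-\widehat\gamma_N u_D\in (N^{3/2}(\dOm))^*$. Then $v:=-u_D\in L^2(\Om;d^nx)$ solves the homogeneous equation together with $\widehat\gamma_N v=g$, and by the uniqueness part of Theorem \ref{tH.G2} it must coincide with $u_N$ in the definition \eqref{3.48v} of $M_{N,D,\Om,V}(z)$. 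Hence $M_{N,D,\Om,V}(z)g=\widehat\gamma_D v=-\widehat\gamma_D u_D=-f$, which proves $M_{N,D,\Om,V}(z)M_{D,N,\Om,V}(z)=-I$; the reverse composition is treated in the same way, proving \eqref{3.53v}.

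For the adjoint formulas \eqref{NaLa} I take two solutions $u_j$, $j=1,2$, of $(-\Delta+V-z_j)u_j=0$ with Dirichlet data $f_j:=\widehat\gamma_D u_j$ and Neumann data $g_j:=\widehat\gamma_N u_j$, with $z_1=z$ and $z_2=\bar z$. The strategy is to apply the generalized integration by parts formulas \eqref{Tan-C12} and \eqref{3an-C12}. Since $u_2$ does not in general lie in the test space $H^2(\Om)\cap H^1_0(\Om)$ required by \eqref{Tan-C12}, I will first decompose $u_2$ via \eqref{3.35Y} (or \eqref{3.a5Y}) into the sum of the Dirichlet (or Neumann) solution of an inhomogeneous problem with right-hand side in $L^2$ and a piece in $\dom(H_{D,\Om})\subset H^2(\Om)$ guaranteed by Proposition \ref{Bjk}, to which the integration by parts identities legitimately apply. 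Computing $(u_1,(-\Delta+V)u_2)_{L^2}-((-\Delta+V)u_1,u_2)_{L^2}=(z_2-\bar z_1)(u_1,u_2)_{L^2}=0$ on account of $z_2=\bar z_1$, and rearranging the two boundary pairings produced by \eqref{Tan-C12} and \eqref{3an-C12}, yields $\langle g_1,f_2\rangle=\langle f_1,g_2\rangle$; converting this identity via the definitions of $M_{D,N}$ and $M_{N,D}$ gives precisely \eqref{NaLa}.

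Finally, the refined mapping properties \eqref{3.TTa} and \eqref{3.TTb} follow by combining the self-adjointness just established with the improved regularity statements \eqref{3.3Ys} and \eqref{3.3fbis}. Indeed, if $g\in N^{3/2}(\dOm)=\gamma_D(H^2(\Om)\cap H^1_0(\Om))$ via Lemma \ref{3o-TxD}, one can lift it to some $G\in H^2(\Om)$ with $\gamma_D G=g$, subtract off to reduce \eqref{Yan-14} to a Dirichlet problem with $L^2$ right-hand side and zero boundary data, use Proposition \ref{Bjk} to conclude $u_D\in H^2(\Om)$, and then $\widehat\gamma_N u_D=\gamma_N u_D\in N^{1/2}(\dOm)$ by Lemma \ref{Lo-Tx}; symmetrically for $M_{N,D,\Om,V}(z)$, using Theorem \ref{T-MMS}. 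The main obstacle is the adjoint step: the integration by parts formulas were stated for one function in $\dom(H_{max,\Om})$ and the other in a smooth test class, so justifying their use when both functions lie only in $\dom(H_{max,\Om})$ requires the decomposition via the Poisson-type operators $P_{D,\Om,V,z}$ and $P_{N,\Om,V,z}$ indicated above.
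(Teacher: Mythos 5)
The paper itself gives no proof of Theorem \ref{t3.5v}; it is stated as carrying over from the case $V\equiv 0$ treated in \cite{GM10}. Your architecture --- the mapping properties and representations \eqref{3.47v}, \eqref{3.52v} read off from \eqref{Hh.3X}, \eqref{3.35Y}, \eqref{3.a5Y}; the relation \eqref{3.53v} from the two uniqueness statements; \eqref{NaLa} from a Green-type identity; \eqref{3.TTa}, \eqref{3.TTb} by duality or regularity --- is exactly the intended route, and the first three blocks of your argument are correct as written. (Two side remarks: \eqref{Hh.3X} already contains the bound on $\|\widehat\ga_N u_D\|_{(N^{3/2}(\partial\Omega))^*}$, so the detour through Theorem \ref{3ew-T-tr} in your first step is superfluous; and you are right to avoid \eqref{T-Green}, which would be circular here since $\tau_{N,V,z}$ is built from $M_{D,N,\Om,V}(z)$.)

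The one step that does not work as described is the decomposition you propose for \eqref{NaLa}. The splitting \eqref{3.35Y} separates $u_D$ into the resolvent term $(H_{D,\Om}-zI_\Om)^{-1}f$ and the homogeneous-solution term carrying the boundary datum; applied to a solution of the homogeneous equation --- the only case you need --- it reduces to the identity $u_2=u_2$ and produces no piece in $H^2(\Omega)\cap H^1_0(\Omega)$. The correct reduction is the lifting argument you only deploy in your final paragraph: since for the adjoint identity it suffices to test against $\phi\in N^{3/2}(\partial\Omega)$, use Lemma \ref{3o-TxD} to choose $\Phi\in H^2(\Omega)$ with $\gamma_N\Phi=0$ and $\gamma_D\Phi=\phi$, and write $u_2=\Phi+v$ with $v:=(H_{D,\Om}-\ol{z}I_\Om)^{-1}\big[-(-\Delta+V-\ol{z})\Phi\big]$, which lies in $\dom(H_{D,\Om})\subset H^2(\Omega)\cap H^1_0(\Omega)$ by Proposition \ref{Bjk}. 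Both $\Phi$ and $v$ are then admissible test functions in \eqref{Tan-C12} and \eqref{3an-C12} (while $u_1$ qualifies as the rough factor, since $\Delta u_1=(V-z)u_1\in L^2(\Om;d^nx)$), and the pairing computation you outline closes. Finally, \eqref{3.TTa} and \eqref{3.TTb} follow either by your regularity argument (via \eqref{3.3Ys}, \eqref{3.3fbis}, and Lemmas \ref{Lo-Tx}, \ref{3o-TxD}) or, more directly, by taking adjoints in \eqref{NaLa} between the reflexive spaces $N^{1/2}(\partial\Omega)$, $N^{3/2}(\partial\Omega)$ and their conjugate duals; the latter is what the phrase ``as a consequence'' in the statement refers to.
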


For closely related recent work on Weyl--Titchmarsh operators associated with nonsmooth domains we refer to \cite{GM08}, \cite{GM09a}, \cite{GM09b},  \cite{GM10}, and \cite{GMZ07}. 
For an extensive list of references on $z$-dependent Dirichlet-to-Neumann maps we also refer, for instance, to \cite{Ag03}, \cite{ABMN05}, \cite{AP04}, \cite{BL07}, \cite{BMN02}, \cite{BGW09}, \cite{BHMNW09}, \cite{BMNW08}, \cite{BGP08}, 
\cite{DM91}, \cite{DM95}, \cite{GLMZ05}--\cite{GMZ07}, \cite{Gr08a}, \cite{Po08}, 
\cite{Ry07}, \cite{Ry09}, \cite{Ry10}.

\section{Regularized Neumann Traces and Perturbed Krein Laplacians}
\label{s8}

This section is structured into two parts dealing, respectively, with
the regularized Neumann trace operator (Subsection \ref{s8X}), and the
perturbed Krein Laplacian in quasi-convex domains (Subsection \ref{s9X}). 

\subsection{The Regularized Neumann Trace Operator on Quasi-Convex Domains}
\label{s8X}

Following earlier work in \cite{GM10}, we now consider a version of the
Neumann trace operator which is suitably normalized to permit
the familiar version of Green's formula (cf.\ \eqref{T-Green} below)
to work in the context in which the functions involved are only known
to belong to $\dom(-\Delta_{\max,\Om})$. The following theorem is a
slight extension of a similar result proved in \cite{GM10} when $V\equiv 0$.

\begin{theorem}\label{LL.w}
Assume Hypotheses \ref{h.V} and \ref{h.Conv}.
Then, for every $z\in\bbC\backslash\si(H_{D,\Om})$, the map 
\begin{equation} \label{3.Aw1}
\tau_{N,V,z}:\bigl\{u\in L^2(\Om;d^nx);\,\Delta u\in L^2(\Om;d^nx)\bigr\}
\to N^{1/2}(\partial\Omega)
\end{equation} 
given by
\begin{equation} \label{3.Aw2}
\tau_{N,V,z} u:=\widehat\gamma_N u 
+M_{D,N,\Om,V}(z)\bigl(\widehat\gamma_D u \bigr),
\quad u\in L^2(\Om;d^nx),\,\,\Delta u\in L^2(\Om;d^nx),
\end{equation} 
is well-defined, linear and bounded, where the space
\begin{equation} 
\big\{u\in L^2(\Om;d^nx)\,\big|\, \Delta u\in L^2(\Om;d^nx)\big\}  
\end{equation}
is endowed with the natural graph norm
$u\mapsto\|u\|_{L^2(\Om;d^nx)}+\|\Delta u\|_{L^2(\Om;d^nx)}$.
Moreover, this operator satisfies the following additional properties:

\begin{enumerate}
\item[$(i)$] The map $\tau_{N,V,z}$ in \eqref{3.Aw1}, \eqref{3.Aw2} is onto
$($i.e., $\tau_{N,V,z}(\dom(H_{max,\Om} ))=N^{1/2}(\partial\Omega)$$)$,
for each $z\in\bbC\backslash\si(H_{D,\Om})$. In fact,
\begin{equation} \label{3.ON}
\tau_{N,V,z}\bigl(H^2(\Om)\cap H^1_0(\Om)\bigr)=N^{1/2}(\partial\Omega)
\, \mbox{ for each } \, z\in\bbC\backslash\si(H_{D,\Om}).
\end{equation} 
\item[$(ii)$] One has 
\begin{equation} \label{3.Aw9}
\tau_{N,V,z}=\gamma_N(H_{D,\Om}-zI_{\Om})^{-1}(-\Delta-z),\quad
z\in\bbC\backslash \si(H_{D,\Om}).
\end{equation} 
\item[$(iii)$] For each $z\in\bbC\backslash\si(H_{D,\Om})$, the
kernel of the map $\tau_{N,V,z}$ in \eqref{3.Aw1}, \eqref{3.Aw2} is
\begin{equation} \label{3.AKe}
\ker(\tau_{N,V,z})=H^2_0(\Omega)\dot{+}\{u\in L^2(\Om;d^nx)\,|\,
(-\Delta+V-z)u=0\,\mbox{ in }\,\Omega\}.
\end{equation} 
In particular, if $z\in\bbC\backslash\si(H_{D,\Om})$, then
\begin{equation}\label{Sim-Gr}
\tau_{N,V,z} u =0\, \mbox{ for every }\, u\in\ker(H_{max,\Om} -zI_{\Om}).
\end{equation} 
\item[$(iv)$] The following Green formula holds for every
$u,v\in\dom(H_{max,\Om} )$ and every complex number
$z\in\bbC\backslash \si(H_{D,\Om})$:
\begin{align}\label{T-Green}
&  ((-\Delta+V-z)u\,,\,v)_{L^2(\Omega;d^nx)}
- (u\,,\,(-\Delta+V-\ol{z})v)_{L^2(\Omega;d^nx)}
\nonumber\\
& \quad
=-{}_{N^{1/2}(\partial\Omega)}\langle\tau_{N,V,z} u,\widehat\gamma_D v
\rangle_{(N^{1/2}(\partial\Omega))^*}  
+\,\ol{{}_{N^{1/2}(\partial\Omega)}\langle\tau_{N,V,\ol{z}} v,
\widehat{\gamma}_D u \rangle_{(N^{1/2}(\partial\Omega))^*}}.
\end{align}
\end{enumerate}
\end{theorem}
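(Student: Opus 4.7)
The strategy is to realize $\tau_{N,V,z} u$ as the \emph{strong} Neumann trace of the ``regular part'' of $u \in \dom(H_{max,\Omega})$, obtained by subtracting from $u$ the unique $L^2$-solution of $(-\Delta + V - z)\,\cdot = 0$ whose extended Dirichlet trace matches $\widehat{\gamma}_D u$. Given $u \in \dom(H_{max,\Omega})$, Corollary \ref{New-CV22} supplies a unique $u_D \in L^2(\Omega;d^n x)$ with $(-\Delta + V - z)u_D = 0$ in $\Omega$ and $\widehat{\gamma}_D u_D = \widehat{\gamma}_D u$. Setting $u_1 := u - u_D$, one has $(-\Delta + V - z) u_1 = (-\Delta + V - z) u \in L^2(\Omega;d^n x)$ with $\widehat{\gamma}_D u_1 = 0$, so the regularity assertion \eqref{3.3Ybis} of Theorem \ref{tH.A} forces $u_1 \in H^2(\Omega) \cap H^1_0(\Omega)$ and identifies $u_1 = (H_{D,\Omega} - zI_{\Omega})^{-1}(-\Delta + V - z) u$. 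Since $M_{D,N,\Omega,V}(z)(\widehat{\gamma}_D u) = -\widehat{\gamma}_N u_D$ by definition of the Dirichlet-to-Neumann map, it follows that
\begin{equation*}
\tau_{N,V,z} u = \widehat{\gamma}_N u + M_{D,N,\Omega,V}(z)(\widehat{\gamma}_D u) = \widehat{\gamma}_N(u - u_D) = \gamma_N u_1,
\end{equation*}
the last equality using that $u_1 \in H^2(\Omega)$ makes the extended Neumann trace reduce to the strong one. Lemma \ref{Lo-Tx} then yields $\gamma_N u_1 \in N^{1/2}(\partial\Omega)$ with the required boundedness estimate (via \eqref{Hh.3X}), proving well-definedness, continuity, and identity (ii).

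The remaining claims are essentially corollaries of this representation. For surjectivity in (i), any $\phi \in N^{1/2}(\partial\Omega)$ equals $\gamma_N w$ for some $w \in H^2(\Omega)\cap H^1_0(\Omega)$ by Lemma \ref{Lo-Tx}; as $\widehat{\gamma}_D w = 0$, one computes $\tau_{N,V,z} w = \widehat{\gamma}_N w = \gamma_N w = \phi$, which proves \eqref{3.ON}. For (iii), $\tau_{N,V,z} u = 0$ iff $\gamma_N u_1 = 0$, equivalently (again by Lemma \ref{Lo-Tx}) $u_1 \in H^2_0(\Omega)$, which rewrites as $u \in H^2_0(\Omega) + \ker(H_{max,\Omega} - zI_{\Omega})$. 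The sum is direct since any element of the intersection lies in $H^2_0(\Omega) \subseteq \dom(H_{D,\Omega})$ and simultaneously satisfies $(-\Delta+V-z)u=0$, which forces $u = 0$ because $z \notin \sigma(H_{D,\Omega})$. Assertion \eqref{Sim-Gr} is the specialization $u_1 = 0$.

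For Green's formula (iv), decompose $u = u_D + u_1$ and $v = v_D + v_1$, where $u_D, v_D$ are the $L^2$-solutions associated with spectral parameters $z$ and $\bar z$, respectively. Expanding the left-hand side of \eqref{T-Green} produces four pieces. The ``$u_1, v_1$'' piece vanishes by the classical $H^2$ Green identity applied to two $H^2 \cap H^1_0$ functions (the $V$- and spectral-parameter contributions cancel because $V$ is real-valued and the inner product is conjugate-linear in the first slot, so that the pair $(-z,-\bar z)$ on the two sides accounts for the complex shift symmetrically). The ``$u_D, v_D$'' piece is absent since both factors are annihilated by the respective operators. The two surviving mixed terms, $((-\Delta+V-z) u_1, v_D)$ and $(u_D, (-\Delta + V - \bar z) v_1)$, reduce to pure Laplacian pairings after using $(-\Delta+V-z)u_D = 0$, $(-\Delta+V-\bar z)v_D = 0$, real $V$, and conjugate-linearity to cancel the lower-order contributions. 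Applying the generalized integration-by-parts formula \eqref{Tan-C12} of Theorem \ref{New-T-tr} (valid since one factor lies in $H^2(\Omega)\cap H^1_0(\Omega)$ and the other in $\dom(-\Delta_{max,\Omega})$) converts these inner products into the boundary pairings $-\langle \gamma_N u_1, \widehat{\gamma}_D v_D\rangle$ and $\overline{\langle \gamma_N v_1, \widehat{\gamma}_D u_D\rangle}$ with the $N^{1/2}(\partial\Omega), (N^{1/2}(\partial\Omega))^*$ duality. The identifications $\gamma_N u_1 = \tau_{N,V,z} u$, $\gamma_N v_1 = \tau_{N,V,\bar z} v$, $\widehat{\gamma}_D u_D = \widehat{\gamma}_D u$, $\widehat{\gamma}_D v_D = \widehat{\gamma}_D v$ then complete the matching with the right-hand side of \eqref{T-Green}.

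The main technical obstacle is purely bookkeeping rather than conceptual: one must carefully verify that the $V$- and $z$-dependent contributions cancel across the two inner-product terms in each application of integration by parts (thanks to $V$ being real and to the conjugation $z \leftrightarrow \bar z$), so that only the Laplacian portion generates nontrivial boundary pairings, and one must confirm that the extended trace $\widehat{\gamma}_N$ agrees with the strong trace $\gamma_N$ on the $H^2 \cap H^1_0$ component $u_1$ to which \eqref{Tan-C12} is actually being applied. Once these points are secured, all four assertions of Theorem \ref{LL.w} follow directly from the results assembled in Section \ref{s3} and Subsection \ref{s5X}--namely Theorem \ref{tH.A}, Theorem \ref{New-T-tr}, Lemma \ref{Lo-Tx}, and Corollary \ref{New-CV22}.
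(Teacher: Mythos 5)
Your proof is correct and follows the route the paper itself relies on (the theorem's proof is deferred to \cite{GM10} for $V\equiv 0$): decompose $u=u_1+u_D$ via Theorem \ref{tH.A} and Corollary \ref{New-CV22}, identify $\tau_{N,V,z}u=\gamma_N u_1$ with $u_1\in H^2(\Om)\cap H^1_0(\Om)$, and read off $(i)$, $(iii)$, $(iv)$ from Lemma \ref{Lo-Tx} and the integration-by-parts formula \eqref{Tan-C12}; the four-term expansion in $(iv)$ and the cancellations you describe (using that $V$ is real and that $\Delta u_D=(V-z)u_D$, $\Delta v_D=(V-\ol{z})v_D$) do go through. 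One point worth flagging: your representation yields $\tau_{N,V,z}=\gamma_N(H_{D,\Om}-zI_\Om)^{-1}(-\Delta+V-z)$, whereas \eqref{3.Aw9} as printed has $(-\Delta-z)$ in the last factor; the two differ by $\gamma_N(H_{D,\Om}-zI_\Om)^{-1}M_V$, which does not vanish for general $V$, so the printed formula appears to carry a typo from the $V\equiv 0$ case and your version is the correct one. A minor imprecision: for the boundedness of $\tau_{N,V,z}$ the relevant chain is $\|(-\Delta+V-z)u\|_{L^2(\Om;d^nx)}\leq C\big(\|u\|_{L^2(\Om;d^nx)}+\|\Delta u\|_{L^2(\Om;d^nx)}\big)$, boundedness of $(H_{D,\Om}-zI_\Om)^{-1}$ from $L^2(\Om;d^nx)$ into $H^2(\Om)$ (Proposition \ref{Bjk} combined with the closed graph theorem), and Lemma \ref{Lo-Tx}, rather than \eqref{Hh.3X}, which only controls the $\bigl(N^{3/2}(\partial\Omega)\bigr)^*$ norm of $\widehat\gamma_N u_D$.
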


\subsection{The Perturbed Krein Laplacian in Quasi-Convex Domains}
\label{s9X}

We now discuss the Krein--von Neumann extension of the Laplacian 
$-\Delta\big|_{C^\infty_0(\Omega)}$ perturbed by a nonnegative, bounded potential $V$ in 
$L^2(\Om; d^n x)$. We will conveniently call this operator the {\it perturbed Krein Laplacian} and introduce the following basic assumption:

\begin{hypothesis}\label{h.VK}
$(i)$ Let $n\in\bbN$, $n\geq 2$, and assume that $\emptyset \neq \Om\subset{\bbR}^n$ is
a bounded Lipschitz domain satisfying Hypothesis \ref{h.Conv}. \\
$(ii)$ Assume that
\begin{equation} \label{VV-WW}
V\in L^\infty(\Om;d^nx)
\, \mbox{ and }\, V \geq 0 \mbox{ a.e.\ in } \,\Omega.
\end{equation} 
\end{hypothesis}

Denoting by $\ol{T}$ the closure of a linear operator $T$ in a Hilbert space $\cH$, we have the following result: 

\begin{lemma}\label{C-Da}
Assume Hypothesis \ref{h.VK}.
Then $H_{min,\Om}$ is a densely defined, closed,
nonnegative $($in particular, symmetric$)$ operator in $L^2(\Om; d^n x)$. Moreover,
\begin{equation}\label{Pos-3}
\ol{(-\Delta+V)\big|_{C^\infty_0(\Om)}} = H_{min,\Om}.  
\end{equation}  
\end{lemma}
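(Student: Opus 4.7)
\textbf{Proof plan for Lemma \ref{C-Da}.} The strategy is to verify the three claimed properties of $H_{min,\Om}$ (dense domain, closedness, nonnegativity) separately and then establish the closure identity \eqref{Pos-3} by approximating $H^2_0(\Omega)$-elements in the graph norm via $C_0^\infty(\Omega)$-functions.

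First, since $C_0^\infty(\Omega) \subseteq H_0^2(\Omega) = \dom(H_{min,\Om})$ and $C_0^\infty(\Omega)$ is dense in $L^2(\Omega;d^n x)$, the operator $H_{min,\Om}$ is densely defined. For closedness, the cleanest route is to invoke Theorem \ref{T-DD1}, which (under Hypothesis \ref{h.VK}, a specialization of Hypotheses \ref{h.V} and \ref{h.Conv}) identifies $H_{min,\Om} = (H_{max,\Om})^*$; adjoints of densely defined operators are automatically closed.

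For nonnegativity, I would take $u\in H^2_0(\Omega)$ and use the inclusion $H^2_0(\Omega)\subseteq H^1_0(\Omega)$ together with the standard integration-by-parts identity valid for $u\in H^2(\Omega)\cap H^1_0(\Omega)$,
\begin{equation*}
(u,(-\Delta+V)u)_{L^2(\Omega;d^n x)}
= \|\nabla u\|_{(L^2(\Omega;d^n x))^n}^2 + \|V^{1/2} u\|_{L^2(\Omega;d^n x)}^2 \geq 0,
\end{equation*}
where the boundary term vanishes because $\gamma_D u = 0$ (which holds since $u$ lies in $H^1_0(\Omega)$) and where we used $V\geq 0$ a.e. Strictly speaking, one obtains this first for $u\in C_0^\infty(\Omega)$ by classical integration by parts, then passes to $u\in H^2_0(\Omega)$ by continuity in the $H^2$-norm of both sides (the left-hand side via the bound $|(u, (-\Delta+V)u)|\leq \|u\|_{L^2}(\|\Delta u\|_{L^2}+\|V\|_{L^\infty}\|u\|_{L^2})$; the right-hand side via $\|\nabla \cdot\|_{L^2}$ and $V\in L^\infty$).

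Finally, for the closure identity \eqref{Pos-3}, the inclusion $\overline{(-\Delta+V)|_{C_0^\infty(\Omega)}}\subseteq H_{min,\Om}$ is immediate from $C_0^\infty(\Omega)\subseteq H^2_0(\Omega)$ together with the already-established closedness of $H_{min,\Om}$. For the reverse inclusion, I would fix $u\in H^2_0(\Omega)$; by definition of $H^2_0(\Omega)$ (cf.\ \eqref{hGi-3}), there exists a sequence $\{\varphi_j\}_{j\in\bbN}\subset C_0^\infty(\Omega)$ with $\varphi_j\to u$ in $H^2(\Omega)$. Then $\varphi_j\to u$ in $L^2(\Omega;d^n x)$, $\Delta\varphi_j\to \Delta u$ in $L^2(\Omega;d^n x)$, and since $V\in L^\infty(\Omega;d^n x)$ the multiplication operator is bounded, so $V\varphi_j\to V u$ in $L^2(\Omega;d^n x)$. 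Hence $(-\Delta+V)\varphi_j\to (-\Delta+V)u = H_{min,\Om} u$ in $L^2(\Omega;d^n x)$, placing $u$ in $\dom\bigl(\overline{(-\Delta+V)|_{C_0^\infty(\Omega)}}\bigr)$ with the correct image. No step presents a genuine obstacle here; the lemma is essentially a bookkeeping consequence of Theorem \ref{T-DD1} and the definition of $H^2_0(\Omega)$, with the mildest subtlety being the boundedness of multiplication by $V$ which ensures continuity of $H_{min,\Om}$ as a map $H^2(\Omega)\to L^2(\Omega;d^nx)$.
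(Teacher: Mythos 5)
Your proposal is correct, and for the basic properties (dense domain, closedness via $H_{min,\Om}=(H_{max,\Om})^*$ from Theorem \ref{T-DD1}, nonnegativity by integration by parts on $C^\infty_0(\Om)$ plus density in $H^2_0(\Om)$) as well as for the inclusion $H_{min,\Om}\subseteq \ol{(-\Delta+V)|_{C^\infty_0(\Om)}}$ you argue exactly as the paper does. The one place where you genuinely diverge is the opposite inclusion $\ol{(-\Delta+V)|_{C^\infty_0(\Om)}}\subseteq H_{min,\Om}$. You dispatch it abstractly: since $(-\Delta+V)|_{C^\infty_0(\Om)}\subseteq H_{min,\Om}$ and $H_{min,\Om}$ is closed (being the adjoint $(H_{max,\Om})^*$ by \eqref{Yan-10}), the closure of the restriction is automatically contained in $H_{min,\Om}$. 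The paper instead unwinds what membership in the closure means: given $u_j\in C^\infty_0(\Om)$ with $u_j\to u$ and $(-\Delta+V)u_j\to v$ in $L^2(\Om;d^nx)$, it uses the continuity of the extended trace maps $\widehat\gamma_D$ and $\widehat\gamma_N$ on $\dom(H_{max,\Om})$ equipped with the graph norm (Theorems \ref{New-T-tr} and \ref{3ew-T-tr}) to conclude $\widehat\gamma_D u=\widehat\gamma_N u=0$, and then invokes the characterization \eqref{Yan-8} of $H^2_0(\Om)$ to place $u$ in $\dom(H_{min,\Om})$. Logically the two arguments rest on the same quasi-convexity input, since \eqref{Yan-8} and \eqref{Yan-10} are equivalent by Corollary \ref{Max-M2}; your route is shorter and purely operator-theoretic, while the paper's route makes the mechanism explicit (the boundary traces survive the graph-norm limit and vanish), which is the viewpoint exploited repeatedly later in the paper. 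Either way the lemma is established.
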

\begin{proof}
The first claim in the statement is a direct consequence of
Theorem \ref{T-DD1}. As for \eqref{Pos-3}, let us temporarily denote by $H_0$
the closure of $-\Delta+V$ defined on $C^\infty_0(\Om)$. Then
\begin{equation} \label{Pos-4}
u\in\dom(H_0) \, \text{ if and only if }  
\begin{cases}
\mbox{there exist }v\in L^2(\Om;d^nx)\mbox{ and }
u_j\in C^\infty_0(\Om),\,j\in\bbN,\mbox{ such that }
\\
u_j\to u \,\mbox{ and } \,(-\Delta+V)u_j\to v \,\mbox{ in }\,L^2(\Om;d^nx)
\,\mbox{ as }\, j\to\infty.
\end{cases} 
\end{equation}
Thus, if $u\in \dom(H_0)$ and $v$, $\{u_j\}_{j\in\bbN}$ are
as in the right-hand side of \eqref{Pos-4}, then
$(-\Delta+V)u=v$ in the sense of distributions in $\Omega$, and
\begin{align} \label{Pos-5}
\begin{split} 
0&=\widehat\gamma_D u_j \to \widehat\gamma_D u 
\, \mbox{ in }\, \bigl(N^{1/2}(\dOm)\bigr)^* \, \mbox{ as }\, j\to\infty,
\\
0&=\widehat\gamma_N u_j \to \widehat\gamma_N u 
\, \mbox{ in }\, \bigl(N^{1/2}(\dOm)\bigr)^* \, \mbox{ as }\, j\to\infty,
\end{split}
\end{align}
by Theorem \ref{New-T-tr} and Theorem \ref{3ew-T-tr}.
Consequently, $u\in\dom(H_{max,\Om} )$ satisfies
$\widehat\gamma_D u =0$ and $\widehat\gamma_N u =0$.
Hence, $u\in H^2_0(\Om)=\dom(H_{min,\Om} )$ by Theorem \ref{T-DD1}
and the current assumptions on $\Omega$. This shows that
$H_0\subseteq H_{min,\Om} $. The converse inclusion readily follows
from the fact that any $u\in H^2_0(\Om)$ is the limit in
$H^2(\Om)$ of a sequence of test functions in $\Omega$.
\end{proof}

\begin{lemma}\label{C-DaW}
Assume Hypothesis \ref{h.VK}.
Then the Krein--von Neumann extension $H_{K,\Om}$ of 
$(-\Delta+V)\big|_{C^\infty_0(\Om)}$ in $L^2(\Om;d^nx)$ is the $L^2$-realization of 
$-\Delta+V$ with domain
\begin{align} \label{Kre-Frq1}
\begin{split} 
\dom(H_{K,\Om}) &= \dom(H_{min,\Om})\,\dot{+}\ker(H_{max,\Om})  \\
& =H^2_0(\Om)\,\dot{+}\,
\big\{u\in L^2(\Om;d^nx)\,\big|\,(-\Delta+V)u=0\mbox{ in }\Omega\big\}.
\end{split} 
\end{align} 
\end{lemma}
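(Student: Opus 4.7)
The strategy is to verify the hypotheses of Theorem~\ref{T-kkrr} for the closure of $(-\Delta+V)|_{C^\infty_0(\Om)}$, and then read off the result from formula~\eqref{SK}. Since the Krein--von Neumann extension of a symmetric operator and of its closure coincide, by Lemma~\ref{C-Da} it suffices to analyze $H_{K,\Om}$ as the Krein--von Neumann extension of $H_{min,\Om}$, which is densely defined, closed, and symmetric in $L^2(\Om;d^n x)$.

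First I would verify strict positivity: $H_{min,\Om} \geq \varepsilon I_{L^2(\Om;d^n x)}$ for some $\varepsilon>0$, so that Hypothesis~\ref{h2.6} is in force. There are two equally short routes. Route (a): for $u\in C^\infty_0(\Om)$, integration by parts yields $(u,(-\Delta+V)u)_{L^2(\Om;d^n x)} = \|\nabla u\|_{L^2(\Om;d^n x)^n}^2 + \|V^{1/2}u\|_{L^2(\Om;d^n x)}^2 \geq \lambda_1\|u\|_{L^2(\Om;d^n x)}^2$, where $\lambda_1>0$ denotes the first Dirichlet eigenvalue of $-\Delta$ on the bounded domain $\Om$ (Poincar\'e inequality); this lower bound then passes to the closure. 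Route (b): by Proposition~\ref{L-Fri1} the Friedrichs extension of $(-\Delta+V)|_{C^\infty_0(\Om)}$ is $H_{D,\Om}$, which by the last statement of Theorem~\ref{t2.5} is strictly positive under the standing hypothesis $V\geq 0$; since the Friedrichs lower bound coincides with the form lower bound of the symmetric operator, strict positivity of $H_{min,\Om}$ follows.

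Next I would invoke formula~\eqref{SK} of Theorem~\ref{T-kkrr}, applied to $S=H_{min,\Om}$, to obtain
\begin{equation*}
\dom(H_{K,\Om}) = \dom(H_{min,\Om}) \dotplus \ker\bigl((H_{min,\Om})^*\bigr).
\end{equation*}
Under Hypothesis~\ref{h.VK} (which subsumes Hypothesis~\ref{h.Conv}), Theorem~\ref{T-DD1} gives $(H_{min,\Om})^* = H_{max,\Om}$, and by definition \eqref{Yan-6} one has $\dom(H_{min,\Om}) = H^2_0(\Om)$. This immediately yields the first equality of \eqref{Kre-Frq1}.

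Finally I would identify the kernel of $H_{max,\Om}$ with the distributional kernel $\{u\in L^2(\Om;d^n x)\,|\,(-\Delta+V)u=0 \text{ in } \Om\}$. One inclusion is trivial from the definition \eqref{Yan-1}. For the reverse, if $u\in L^2(\Om;d^n x)$ satisfies $(-\Delta+V)u=0$ in $\cD'(\Om)$, then $\Delta u = V u \in L^2(\Om;d^n x)$ since $V\in L^\infty(\Om;d^n x)$, hence $u\in \dom(H_{max,\Om})$ and $H_{max,\Om} u = 0$. Substituting into the previous display completes the proof of~\eqref{Kre-Frq1}. No genuine obstacle arises; the nontrivial work has already been done in Theorems~\ref{T-kkrr}, \ref{T-DD1}, \ref{t2.5}, Proposition~\ref{L-Fri1}, and Lemma~\ref{C-Da}.
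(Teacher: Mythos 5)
Your proof is correct and follows essentially the same route as the paper's: reduce to the closure $H_{min,\Om}$ via Lemma \ref{C-Da}, apply Krein's formula \eqref{SK}, and identify $(H_{min,\Om})^*=H_{max,\Om}$ via Theorem \ref{T-DD1} (i.e., \eqref{Yan-10}). The only difference is that you explicitly verify the strict-positivity hypothesis of Theorem \ref{T-kkrr} and the identification of $\ker(H_{max,\Om})$ with the distributional kernel, both of which the paper leaves implicit.
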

\begin{proof}
By virtue of \eqref{SK}, \eqref{Yan-10}, and the fact that 
$(-\Delta+V)|_{C^\infty_0(\Om)}$ and its closure, $H_{min,\Om}$ (cf.\ \eqref{Pos-3}) 
have the same self-adjoint extensions, one obtains 
\begin{align} \label{Kre-Def}
\dom(H_{K,\Om}) & = \dom(H_{min,\Om})\,\dot{+}\ker((H_{min,\Om})^*)
\nonumber\\
& = \dom(H_{min,\Om} )\,\dot{+}\ker(H_{max,\Om})  \no  \\
& = H^2_0(\Om)\,\dot{+}\,
\big\{u\in L^2(\Om;d^nx)\,\big|\,(-\Delta+V)u=0\mbox{ in }\Omega\big\}, 
\end{align}
as desired.
\end{proof}

Nonetheless, we shall adopt a different point of view which better elucidates
the nature of the boundary condition associated with this
perturbed Krein Laplacian. More specifically, following the same
pattern as in \cite{GM10}, the following result can be proved.

\begin{theorem}\label{T-Kr}
Assume Hypothesis \ref{h.VK} and 
fix $z\in\bbC\backslash \si(H_{D,\Om})$.
Then $H_{K,\Om,z}$ in $L^2(\Omega;d^nx)$, given by
\begin{align} \label{A-zz.1}
\begin{split}
& H_{K,\Om,z} u:=(-\Delta+V-z)u, \\
& u\in \dom(H_{K,\Om,z}):=\{v\in\dom(H_{max,\Om} )\,|\, \tau_{N,V,z} v =0\},
\end{split} 
\end{align}
satisfies
\begin{equation} \label{A-zz.W}
(H_{K,\Om,z})^*=H_{K,\Om,\ol{z}}, 
\end{equation} 
and agrees with the self-adjoint perturbed Krein Laplacian $H_{K,\Om}=H_{K,\Om,0}$ 
when taking $z=0$. In particular, if $z\in\bbR\backslash \si(H_{D,\Om})$ then
$H_{K,\Om,z}$ is self-adjoint. Moreover, if $z \leq 0$, then $H_{K,\Om,z}$ is
nonnegative. Hence, the perturbed Krein Laplacian
$H_{K,\Om}$ is a self-adjoint operator in $L^2(\Om;d^nx)$
which admits the description given in \eqref{A-zz.1} when $z=0$, and which
satisfies
\begin{equation} \label{A-zz.b}
H_{K,\Om}\geq 0 \,\mbox{ and }\, 
H_{min,\Om} \subseteq H_{K,\Om}\subseteq H_{max,\Om} .
\end{equation} 
Furthermore, 
\begin{align}
& \ker(H_{K,\Om})=\big\{u\in L^2(\Om;d^nx)\,\big|\,(-\Delta+V)u=0\big\}, \\ 
& \dim(\ker(H_{K,\Om})) = {\rm def} (H_{min,\Om}) 
=  {\rm def} \big(\ol{(-\Delta+V)\big|_{C^\infty_0(\Om)}}\big) =\infty, \\
& \ran(H_{K,\Om})=(-\Delta+V) H^2_0(\Om),  \\
& \text{$H_{K,\Om}$ has a purely discrete spectrum in $(0,\infty)$},  
\quad \sigma_{\rm ess}(H_{K,\Om}) = \{0\},       \label{spec-1} 
\end{align}
and for any nonnegative self-adjoint extension $\wti S$ of 
$(-\Delta+V)|_{C^\infty_0(\Om)}$ one has $($cf.\ \eqref{PPa-1}$)$, 
\begin{equation}\label{Ok.1}
H_{K,\Om}\leq \wti S\leq H_{D,\Om}.
\end{equation} 
\end{theorem}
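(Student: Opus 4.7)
The plan is to first identify $H_{K,\Om,z}$ concretely. Theorem \ref{LL.w}(iii) gives
\begin{equation*}
\dom(H_{K,\Om,z}) = \ker(\tau_{N,V,z}) = H^2_0(\Om) \,\dot{+}\, \ker(H_{max,\Om} - z I_\Om),
\end{equation*}
and at $z = 0$ this matches $\dom(H_{K,\Om})$ from Lemma \ref{C-DaW}, yielding $H_{K,\Om,0} = H_{K,\Om}$. On such $u = u_0 + u_1$, the operator acts as $H_{K,\Om,z}u = (-\Delta+V-z)u_0$, since $(-\Delta+V-z)u_1 = 0$.

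For the identity $(H_{K,\Om,z})^* = H_{K,\Om,\ol{z}}$, the inclusion $H_{K,\Om,\ol{z}} \subseteq (H_{K,\Om,z})^*$ follows immediately from Green's formula \eqref{T-Green}, because both boundary terms vanish whenever $\tau_{N,V,z}u=0$ and $\tau_{N,V,\ol{z}}v=0$. The reverse inclusion is where I expect the main technical work. Let $v \in \dom((H_{K,\Om,z})^*)$; testing against $C_0^\infty(\Om) \subset H^2_0(\Om) \subset \dom(H_{K,\Om,z})$ places $v$ in $\dom(H_{max,\Om})$ and identifies $(H_{K,\Om,z})^*v = (-\Delta+V-\ol{z})v$. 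Now for arbitrary $u = u_0 + u_1 \in \dom(H_{K,\Om,z})$ with $u_1 \in \ker(H_{max,\Om}-z I_\Om)$, the identity $(H_{K,\Om,z}u, v)_{L^2(\Om;d^nx)} = (u, (H_{K,\Om,z})^*v)_{L^2(\Om;d^nx)}$ combined with \eqref{T-Green} reduces to
\begin{equation*}
\ol{{}_{N^{1/2}(\dOm)}\langle \tau_{N,V,\ol{z}}v,\, \widehat{\gamma}_D u_1\rangle_{(N^{1/2}(\dOm))^*}} = 0
\end{equation*}
for every such $u_1$. Since $\widehat{\gamma}_D$ maps $\ker(H_{max,\Om}-z I_\Om)$ bijectively onto $(N^{1/2}(\dOm))^*$ (Corollary \ref{New-CV22}), this forces $\tau_{N,V,\ol{z}}v = 0$, i.e., $v \in \dom(H_{K,\Om,\ol{z}})$. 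Self-adjointness for real $z$ is then automatic.

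Nonnegativity of $H_{K,\Om,z}$ for real $z \leq 0$ comes from the same decomposition: the cross term $((-\Delta+V-z)u_0, u_1)_{L^2(\Om;d^nx)} = (u_0, (-\Delta+V-z)u_1)_{L^2(\Om;d^nx)} = 0$ follows from Green's formula since $u_0 \in H^2_0(\Om)$ has vanishing Dirichlet and Neumann traces, leaving
\begin{equation*}
(H_{K,\Om,z}u, u)_{L^2(\Om;d^nx)} = \|\nabla u_0\|^2_{(L^2(\Om;d^nx))^n} + \|V^{1/2}u_0\|^2_{L^2(\Om;d^nx)} - z\|u_0\|^2_{L^2(\Om;d^nx)} \geq 0.
\end{equation*}
In particular $H_{K,\Om} = H_{K,\Om,0} \geq 0$. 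The sandwich $H_{min,\Om} \subseteq H_{K,\Om} \subseteq H_{max,\Om}$ is then routine from $\dom(H_{min,\Om}) = H^2_0(\Om) \subseteq \dom(H_{K,\Om})$ together with $H_{K,\Om} = (H_{K,\Om})^* \subseteq (H_{min,\Om})^* = H_{max,\Om}$, using Theorem \ref{T-DD1}.

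The remaining claims combine the decomposition with the abstract Krein theory from Section \ref{s2}. From $\dom(H_{K,\Om}) = H^2_0(\Om) \,\dot{+}\, \ker(H_{max,\Om})$ and the formula $H_{K,\Om}(u_0+u_1) = (-\Delta+V)u_0$, one reads off $\ker(H_{K,\Om}) = \ker(H_{max,\Om})$ and $\ran(H_{K,\Om}) = (-\Delta+V)H^2_0(\Om)$; the deficiency count then follows from Lemma \ref{C-Da} and \eqref{Fr-4Tf}. Infinite-dimensionality of $\ker(H_{max,\Om})$ is a consequence of Corollary \ref{New-CV22} (at $z = 0$), which realizes it as isomorphic to the infinite-dimensional space $(N^{1/2}(\dOm))^*$. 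Since $H_{D,\Om}$ is the Friedrichs extension of $H_{min,\Om}$ (Proposition \ref{L-Fri1}) and has purely discrete spectrum (Theorem \ref{t2.5}), Theorem \ref{AS-thK} yields $\sigma_{\rm ess}(H_{K,\Om}) \subseteq \{0\}$, while the infinite-dimensional kernel forces $0 \in \sigma_{\rm ess}(H_{K,\Om})$. Finally, the extremal inequality \eqref{Ok.1} is Theorem \ref{T-kkrr} applied to $S = H_{min,\Om}$, again invoking Proposition \ref{L-Fri1}. The chief obstacle, as noted, will be the reverse inclusion in the adjoint identity, where surjectivity of $\widehat{\gamma}_D$ on the kernel space and the precise mapping properties of $\tau_{N,V,z}$ from Theorem \ref{LL.w} must be leveraged carefully.
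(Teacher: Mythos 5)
Your proposal is correct and proceeds along exactly the route the paper intends: the paper itself only sketches this proof by deferring to \cite{GM10} (the case $V\equiv 0$), and the ingredients you use --- the identification $\dom(H_{K,\Om,z})=\ker(\tau_{N,V,z})=H^2_0(\Om)\,\dot{+}\,\ker(H_{max,\Om}-zI_\Om)$ from Theorem \ref{LL.w}\,$(iii)$, Green's formula \eqref{T-Green} for the inclusion $H_{K,\Om,\ol{z}}\subseteq (H_{K,\Om,z})^*$, the surjectivity of $\widehat{\gamma}_D$ on $\ker(H_{max,\Om}-zI_\Om)$ (Corollary \ref{New-CV22}) for the reverse inclusion, and the abstract Krein theory (Theorem \ref{T-kkrr}, \eqref{Fr-4Tf}, Theorem \ref{AS-thK}) for the kernel, range, spectral, and extremality statements --- are precisely the intended ones. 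No gaps worth noting.
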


The nonlocal boundary condition 
\begin{equation}
\tau_{N,V,0} v = \hatt \gamma_N v + M_{D,N,\Om,V} (0) v = 0, \quad 
v \in \dom(H_{K,\Om})
\end{equation} 
(cf.\ \eqref{A-zz.1}) in connection with the Krein--von Neumann extension $H_{K,\Om}$, 
in the special one-dimensional half-line case $\Om= [a,\infty)$ has first been established 
in \cite{Ts87}. In terms of abstract boundary conditions in connection with the theory of 
boundary value spaces, such a condition has been derived in \cite{DMT88} and \cite{DMT89}. 
However, we emphasize that this abstract boundary value space approach, while applicable 
to ordinary differential operators, is not applicable to partial differential operators even in the 
case of smooth boundaries $\partial\Om$ (see, e.g., the discussion in \cite{BL07}). In 
particular, it does not apply to the nonsmooth 
domains $\Om$ studied in this paper. In fact, only very recently, appropriate modifications of 
the theory of boundary value spaces have successfully been applied to partial differential 
operators in smooth domains in \cite{BL07}, \cite{BGW09}, \cite{BHMNW09}, \cite{BMNW08}, 
\cite{Po08}, \cite{PR09}, \cite{Ry07}, \cite{Ry09}, and \cite{Ry10}. With the exception of the following 
short discussions: Subsection\ 4.1 in \cite{BL07} (which treat the special case where $\Om$ 
equals the unit ball in $\bbR^2$), Remark\ 3.8 in \cite{BGW09}, Section\ 2 in \cite{Ry07}, 
Subsection\ 2.4 in \cite{Ry09}, and Remark\ 5.12 in \cite{Ry10}, these investigations did not enter a detailed discussion of the Krein-von Neumann extension. In particular, none of these references 
applies to the case of nonsmooth domains $\Om$.

\section{Connections with the Problem of the Buckling of a Clamped Plate}
\label{s10}

In this section we proceed to study a fourth-order problem, which is a
perturbation of the classical problem for the buckling of a clamped plate,
and which turns out to be essentially spectrally equivalent to the
perturbed Krein Laplacian $H_{K,\Om}:=H_{K,\Om,0}$.

For now, let us assume Hypotheses \ref{h2.1} and \ref{h.V}.
Given $\lambda\in\bbC$, consider the eigenvalue problem for the
generalized buckling of a clamped plate in the domain $\Om\subset\bbR^n$
\begin{equation}\label{MM-1}
\begin{cases}
u\in\dom(-\Delta_{max,\Om}),
\\
(-\Delta+V)^2u=\lambda\,(-\Delta+V)u\,\mbox{ in }\, \Omega,
\\
\widehat\ga_D u =0 \,\mbox{ in } \,\big(N^{1/2}(\dOm)\big)^*,
\\
\widehat\ga_N u =0 \,\mbox{ in } \,\big(N^{3/2}(\dOm)\big)^*,
\end{cases}
\end{equation}
where $(-\Delta+V)^2u:=(-\Delta+V)(-\Delta u+Vu)$ in the sense of
distributions in $\Om$. Due to the trace theory developed in Sections \ref{s3} 
and \ref{s5}, this formulation is meaningful. In addition, if Hypothesis \ref{h.Conv} is assumed
in place of Hypothesis \ref{h2.1} then, by \eqref{Yan-8}, this problem
can be equivalently rephrased as
\begin{equation}\label{MM-2} 
\begin{cases}
u\in H^2_0(\Omega),
\\
(-\Delta+V)^2u=\lambda\,(-\Delta+V)u \,\mbox{ in } \,\Omega.
\end{cases}
\end{equation}

\begin{lemma}\label{L-MM-1}
Assume Hypothesis \ref{h.VK} and suppose
that $u\not=0$ solves \eqref{MM-1} for some $\lambda\in\bbC$. Then necessarily
$\lambda\in (0,\infty)$.
\end{lemma}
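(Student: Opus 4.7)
The plan is to test the fourth-order equation against $u$ itself and convert it to an identity for two nonnegative real quadratic forms, from which reality and strict positivity of $\lambda$ follow. Under Hypothesis \ref{h.VK}, formulation \eqref{MM-2} applies, so I work with $u \in H^2_0(\Omega)$, $u\neq 0$, satisfying $(-\Delta+V)^2 u = \lambda\,(-\Delta+V)u$ in $\cD'(\Omega)$. Note that $(-\Delta+V)u \in L^2(\Omega;d^nx)$ because $u\in H^2(\Omega)$ and $V\in L^\infty(\Omega;d^nx)$, and then the equation itself forces $(-\Delta+V)^2 u \in L^2(\Omega;d^nx)$.

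First I would pair the equation with $u$ in $L^2(\Omega;d^nx)$. To justify moving one copy of $(-\Delta+V)$ from the left-hand to the right-hand factor on each side, I pick a sequence $u_j \in C^\infty_0(\Omega)$ with $u_j \to u$ in $H^2(\Omega)$ (possible since $u \in H^2_0(\Omega)$). For test functions $u_j$, integration by parts is immediate, and the $H^2$-convergence together with $(-\Delta+V)u \in L^2$ let one pass to the limit, yielding the two identities
\begin{align*}
\bigl((-\Delta+V)^2 u,\,u\bigr)_{L^2(\Omega;d^nx)} &= \bigl\|(-\Delta+V)u\bigr\|_{L^2(\Omega;d^nx)}^2, \\
\bigl((-\Delta+V)u,\,u\bigr)_{L^2(\Omega;d^nx)} &= \|\nabla u\|_{(L^2(\Omega;d^nx))^n}^2 + \bigl\|V^{1/2}u\bigr\|_{L^2(\Omega;d^nx)}^2.
\end{align*}
Combining these with $(-\Delta+V)^2 u = \lambda (-\Delta+V)u$ produces the central identity
\begin{equation*}
\bigl\|(-\Delta+V)u\bigr\|_{L^2(\Omega;d^nx)}^2 = \lambda\,\Bigl(\|\nabla u\|_{(L^2(\Omega;d^nx))^n}^2 + \bigl\|V^{1/2}u\bigr\|_{L^2(\Omega;d^nx)}^2\Bigr).
\end{equation*}

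Both sides are real, and the factor multiplying $\lambda$ is nonnegative, so $\lambda$ must be real as soon as this factor is nonzero. To see that it is strictly positive, suppose toward a contradiction that $\|\nabla u\|^2 + \|V^{1/2}u\|^2 = 0$. Then $\nabla u = 0$ a.e.\ in $\Omega$, so $u$ is (locally) constant; since $u\in H^2_0(\Omega)\subset H^1_0(\Omega)$, this constant must be zero, contradicting $u\neq 0$. Hence the coefficient of $\lambda$ is strictly positive, the left-hand side is nonnegative, and consequently $\lambda \in [0,\infty)$.

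Finally, to exclude $\lambda = 0$: the identity then gives $(-\Delta+V)u = 0$ in $L^2(\Omega;d^nx)$, and taking the $L^2$-pairing against $u$ (again justified by the same $H^2$-approximation argument) forces $\|\nabla u\|^2 + \|V^{1/2}u\|^2 = 0$, which by the preceding paragraph implies $u=0$, a contradiction. Therefore $\lambda \in (0,\infty)$. There is no real obstacle here; the only mild point of care is the integration-by-parts step, which is handled uniformly by approximation in $H^2(\Omega)$, using only the membership $u\in H^2_0(\Omega)$ and the $L^\infty$-bound on $V$, without invoking any trace theory on $\partial\Omega$.
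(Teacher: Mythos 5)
Your proposal is correct and follows essentially the same route as the paper: test the equation against $u$, integrate by parts twice to obtain $\|(-\Delta+V)u\|^2_{L^2(\Om;d^nx)}=\lambda\,\big(\|\nabla u\|^2_{(L^2(\Om;d^nx))^n}+\|V^{1/2}u\|^2_{L^2(\Om;d^nx)}\big)$, and note that the denominator cannot vanish for $0\neq u\in H^2_0(\Om)$. The only (inessential) difference is that you justify the integration by parts via density of $C^\infty_0(\Om)$ in $H^2_0(\Om)$ rather than via the trace formulas \eqref{2.9} and \eqref{Tan-C12}, and you spell out the exclusion of $\lambda=0$ slightly more explicitly than the paper does.
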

\begin{proof}
Let $u,\lambda$ be as in the statement of the lemma. Then,
as already pointed out above, $u\in H^2_0(\Omega)$. Based on this,
the fact that $\Delta u\in\dom(-\Delta_{max,\Om})$, and the integration
by parts formulas \eqref{2.9} and \eqref{Tan-C12}, we may then write
(we recall that our $L^2$ pairing is conjugate linear in the {\it first}
argument):
\begin{align}\label{MM-3}
& \lambda\bigl[\|\nabla u\|^2_{(L^2(\Om;d^nx))^n}
+\|V^{1/2}u\|^2_{(L^2(\Om;d^nx))^n}\bigr]
=\lambda\, (u,(-\Delta+V)u)_{L^2(\Om;d^nx)}
\nonumber\\
& \quad
=(u\,,\,\lambda\,(-\Delta+V)u)_{L^2(\Om;d^nx)}
= \big(u,(-\Delta+V)^2 u\big)_{L^2(\Om;d^nx)}
\nonumber\\[4pt]
& \quad
=(u,(-\Delta+V)(-\Delta u+Vu))_{L^2(\Om;d^nx)}
=((-\Delta+V)u,(-\Delta+V)u)_{L^2(\Om;d^nx)}
\nonumber\\[4pt]
& \quad
=\|(-\Delta+V)u\|^2_{L^2(\Om;d^nx)}.
\end{align}
Since, according to Theorem \ref{tH.A},
$L^2(\Om;d^nx)\ni u\not=0$ and $\widehat\ga_D u =0$ prevent $u$ from being a
constant function, \eqref{MM-3} entails
\begin{equation} \label{MM-4}
\lambda=\frac{\|(-\Delta+V)u\|^2_{L^2(\Om;d^nx)}}
{\|\nabla u\|^2_{(L^2(\Om;d^nx))^n}+\|V^{1/2}u\|^2_{(L^2(\Om;d^nx))^n}}>0,
\end{equation} 
as desired.
\end{proof}

Next, we recall the operator $P_{D,\Om,V,z}$ introduced just above \eqref{3.34Y}
and agree to simplify notation by abbreviating $P_{D,\Om,V}:=P_{D,\Om,V,0}$.
That is,
\begin{equation} \label{3.34Yz}
P_{D,\Om,V}=\big[\ga_N (H_{D,\Om})^{-1}\big]^*
\in\cB\big((N^{1/2}(\partial\Omega))^*,L^2(\Om;d^nx)\big)
\end{equation}
is such that if $u:=P_{D,\Om,V} g$ for some
$g\in\bigl(N^{1/2}(\partial\Omega)\bigr)^*$, then
\begin{equation}\label{Yan-14z}
\begin{cases}
(-\Delta+V)u=0\text{ in }\,\Om,
\\[4pt]
u\in L^2(\Om;d^nx),
\\[4pt]
\widehat\ga_D u =g\text{ on }\,\dOm.
\end{cases}
\end{equation}
Hence,
\begin{align}\label{3.Gv}
\begin{split} 
& (-\Delta+V) P_{D,\Om,V}=0,  \\ 
& \widehat\ga_N P_{D,\Om,V}=-M_{D,N,\Omega,V}(0)
\, \mbox{ and }\, 
\widehat\ga_D P_{D,\Om,V}=I_{(N^{1/2}(\dOm))^*},
\end{split} 
\end{align}
with $I_{(N^{1/2}(\dOm))^*}$ the identity operator, on $\bigl(N^{1/2}(\dOm)\bigr)^*$.

\begin{theorem}\label{T-MM-1}
Assume Hypothesis \ref{h.VK}.
If $0\not=v\in L^2(\Om;d^nx)$ is an eigenfunction of the
perturbed Krein Laplacian $H_{K,\Om}$ corresponding to
the eigenvalue $0\not=\lambda\in\bbC$ $($hence $\lambda>0$$)$, then
\begin{equation} \label{MM-5}
u:=v-P_{D,\Om,V}(\widehat\ga_D v)
\end{equation} 
is a nontrivial solution of \eqref{MM-1}. Conversely, if
$0\not=u\in L^2(\Om;d^nx)$ solves \eqref{MM-1} for some $\lambda\in\bbC$ then
$\lambda$ is a $($strictly$)$ positive eigenvalue of the perturbed Krein Laplacian
$H_{K,\Om}$, and
\begin{equation} \label{MM-6}
v:=\lambda^{-1}(-\Delta+V)u
\end{equation} 
is a nonzero eigenfunction of the
perturbed Krein Laplacian, corresponding to this eigenvalue.
\end{theorem}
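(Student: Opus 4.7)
The plan is to verify both directions of the correspondence by exploiting the description of $\dom(H_{K,\Om})$ in \eqref{A-zz.1}, namely $v\in\dom(H_{max,\Om})$ with $\tau_{N,V,0} v = \widehat\gamma_N v + M_{D,N,\Om,V}(0)\bigl(\widehat\gamma_D v\bigr) = 0$, and the characterization \eqref{dmin} of $H^2_0(\Om)$ as those $L^2$-functions in $\dom(-\Delta_{max,\Om})$ whose $\widehat\gamma_D$- and $\widehat\gamma_N$-traces both vanish. The Poisson-type operator $P_{D,\Om,V}$, together with \eqref{3.Gv}, provides the exact bridge between the two boundary conditions.

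For the forward direction, suppose $H_{K,\Om}v=\lambda v$ with $v\neq 0$ and $\lambda\neq 0$; by Theorem \ref{T-Kr}, $\lambda>0$. Set $u:=v-P_{D,\Om,V}(\widehat\gamma_D v)$. Since $P_{D,\Om,V}(\widehat\gamma_D v)\in\ker(H_{max,\Om})\subset\dom(-\Delta_{max,\Om})$, one has $u\in\dom(-\Delta_{max,\Om})$ with $(-\Delta+V)u=(-\Delta+V)v=\lambda v$. Using \eqref{3.Gv}, $\widehat\gamma_D u=\widehat\gamma_D v-\widehat\gamma_D v=0$, while
\[
\widehat\gamma_N u=\widehat\gamma_N v+M_{D,N,\Om,V}(0)(\widehat\gamma_D v)=\tau_{N,V,0}v=0,
\]
because $v\in\dom(H_{K,\Om})$. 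Theorem \ref{T-DD1} (cf.\ \eqref{dmin}) then places $u$ in $H^2_0(\Om)$. Applying $(-\Delta+V)$ once more, $(-\Delta+V)^2 u=\lambda(-\Delta+V)v=\lambda^2 v=\lambda(-\Delta+V)u$, giving \eqref{MM-1}. If $u=0$ then $v=P_{D,\Om,V}(\widehat\gamma_D v)\in\ker(-\Delta+V)$, forcing $\lambda v=0$ and hence $v=0$, a contradiction.

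For the reverse direction, let $0\neq u$ solve \eqref{MM-1}; by Lemma \ref{L-MM-1}, $\lambda>0$. Set $v:=\lambda^{-1}(-\Delta+V)u\in L^2(\Om;d^nx)$, so that $(-\Delta+V)u=\lambda v$ and, applying $(-\Delta+V)$ together with \eqref{MM-1}, $(-\Delta+V)v=\lambda v$; in particular $v\in\dom(H_{max,\Om})$. The key step is to show $\tau_{N,V,0}v=0$. Since $(-\Delta+V)(v-u)=\lambda v-\lambda v=0$ and, by $u\in H^2_0(\Om)$, $\widehat\gamma_D(v-u)=\widehat\gamma_D v$, the uniqueness statement in Corollary \ref{New-CV22} yields $v-u=P_{D,\Om,V}(\widehat\gamma_D v)$. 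Invoking \eqref{3.Gv} and $\widehat\gamma_N u=0$ produces
\[
\widehat\gamma_N v=\widehat\gamma_N u+\widehat\gamma_N P_{D,\Om,V}(\widehat\gamma_D v)=-M_{D,N,\Om,V}(0)(\widehat\gamma_D v),
\]
so $\tau_{N,V,0}v=0$ and $v\in\dom(H_{K,\Om})$ with $H_{K,\Om}v=\lambda v$. Non-triviality of $v$ follows from the Green-type identity in \eqref{MM-3}: if $v=0$ then $(-\Delta+V)u=0$, whence $\lambda\bigl[\|\nabla u\|^2_{(L^2)^n}+\|V^{1/2}u\|^2_{L^2}\bigr]=\|(-\Delta+V)u\|^2_{L^2}=0$, forcing $u\equiv 0$ (since $\lambda>0$ and $u\in H^1_0(\Om)$), a contradiction.

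The only nontrivial point is the computation of $\tau_{N,V,0}v$; once one recognizes that $v-u$ is exactly the harmonic extension (for $-\Delta+V$) of $\widehat\gamma_D v$, the nonlocal Krein boundary condition reduces, via \eqref{3.Gv}, to the standard fact that $\gamma_N u=0$ for $u\in H^2_0(\Om)$. Everything else is bookkeeping between $H_{max,\Om}$, the Poisson operator $P_{D,\Om,V}$, and the Dirichlet-to-Neumann map $M_{D,N,\Om,V}(0)$ supplied by Sections \ref{s5} and \ref{s8}.
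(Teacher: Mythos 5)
Your proof is correct and follows essentially the same route as the paper's: in both directions you use the Poisson operator $P_{D,\Om,V}$ together with \eqref{3.Gv} to convert between the nonlocal condition $\tau_{N,V,0}v=0$ and the vanishing of both traces of $u$, with the uniqueness of the Dirichlet problem identifying $v-u$ as $P_{D,\Om,V}(\widehat\gamma_D v)$ in the converse direction. The nontriviality arguments at the end of each direction also match the paper's, so there is nothing to add.
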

\begin{proof}
In one direction, assume that $0\not=v\in L^2(\Om;d^nx)$ is an
eigenfunction of the perturbed Krein Laplacian $H_{K,\Om}$
corresponding to the eigenvalue $0\not=\lambda\in\bbC$
(since $H_{K,\Om}\geq 0$
--\,cf.\ Theorem \ref{T-Kr}-- it follows that $\lambda>0$).
Thus, $v$ satisfies
\begin{equation} \label{MM-7}
v\in\dom(H_{max,\Om} ),\quad
(-\Delta+V)v=\lambda\,v,\; 
\tau_{N,V,0} v =0.
\end{equation} 
In particular, $\widehat\ga_D v \in\bigl(N^{1/2}(\dOm)\bigr)^*$ by
Theorem \ref{New-T-tr}. Hence, by \eqref{3.34Yz}, $u$ in \eqref{MM-5} is
a well-defined function which belongs to $L^2(\Om;d^nx)$. In fact,
since also $(-\Delta+V)u=(-\Delta+V)v\in L^2(\Om;d^nx)$, it follows that
$u\in\dom(H_{max,\Om} )$. Going further, we note that
\begin{align} \label{MM-8}
\begin{split} 
(-\Delta+V)^2u &=(-\Delta+V)(-\Delta+V)u
= (-\Delta+V)(-\Delta+V)v   \\
&= \lambda\,(-\Delta+V)v=\lambda\,(-\Delta+V)u.
\end{split} 
\end{align} 
Hence, $(-\Delta+V)^2u=\lambda\,(-\Delta+V)u$ in $\Om$. In addition, by \eqref{3.Gv},
\begin{equation} \label{MM-9}
\widehat\ga_D u =\widehat\ga_D v
-\widehat\ga_D(P_{D,\Om,V}(\widehat\ga_D v )
=\widehat\ga_D v -\widehat\ga_D v =0,
\end{equation} 
whereas
\begin{equation} \label{MM-10}
\widehat\ga_N u =\widehat\ga_N v 
-\widehat\ga_N(P_{D,\Om,V}(\widehat\ga_D v )
=\widehat\ga_N v +M_{D,N,\Om,V}(0)(\widehat\ga_D v )
=\tau_{N,V,0} v=0,
\end{equation} 
by the last condition in \eqref{MM-7}.
Next, to see that $u$ cannot vanish identically,
we note that $u=0$ would imply $v=P_{D,\Om,V}(\widehat\ga_D v)$
which further entails $\lambda\,v=(-\Delta+V)v
=(-\Delta+V)P_{D,\Om,V}(\widehat\ga_D v)=0$, that is, 
$v=0$ (since $\lambda\not=0$). This contradicts the original assumption
on $v$ and shows that $u$ is a nontrivial solution of \eqref{MM-1}.
This completes the proof of the first half of the theorem.

Turning to the second half, suppose that $\lambda\in\bbC$ and
$0\not=u\in L^2(\Om;d^nx)$ is a solution of \eqref{MM-1}. Lemma \ref{L-MM-1}
then yields $\lambda>0$, so that $v:=\lambda^{-1}(-\Delta+V)u$ is a
well-defined function satisfying
\begin{equation} \label{MM-11}
v\in\dom(H_{max,\Om} )\, \mbox{ and }\, 
(-\Delta+V)v=\lambda^{-1}\,(-\Delta+V)^2u=(-\Delta+V)u=\lambda\,v.
\end{equation} 
If we now set $w:=v-u\in L^2(\Omega;d^nx)$ it follows that
\begin{equation} \label{MM-12}
(-\Delta+V)w=(-\Delta+V)v-(-\Delta+V)u=\lambda\,v-\lambda\,v=0,
\end{equation} 
and
\begin{equation} \label{MM-13}
\widehat\ga_N w =\widehat\ga_N v,\quad
\widehat\ga_D w =\widehat\ga_D v.
\end{equation} 
In particular, by the uniqueness in the Dirichlet problem \eqref{Yan-14z},
\begin{equation} \label{MM-14}
w=P_{D,\Om,V}(\widehat\ga_D v).
\end{equation} 
Consequently,
\begin{equation} \label{MM-15}
\widehat\ga_N v=\widehat\ga_N w
=\widehat\ga_N(P_{D,\Om,V}(\widehat\ga_D v)
=-M_{D,N,\Om,V}(0)(\widehat\ga_D v),
\end{equation} 
which shows that
\begin{equation} \label{MM-16}
\tau_{N,V,0} v=\widehat\ga_N v +M_{D,N,\Om,V}(0)(\widehat\ga_D v)=0.
\end{equation} 
Hence $v\in\dom(H_{K,\Om})$. We note that $v=0$ would entail that the
function $u\in H^2_0(\Omega)$ is a null solution of $-\Delta+V$, hence
identically zero which, by assumption, is not the case. Therefore, $v$ does
not vanish identically. Altogether, the above reasoning shows that $v$ is
a nonzero eigenfunction of the perturbed Krein Laplacian,
corresponding to the positive eigenvalue $\lambda >0$, completing the proof. 
\end{proof}

\begin{proposition} \lb{pHKv} 
$(i)$ Assume Hypothesis \ref{h.VK} and let $0\neq v$ be any eigenfunction of 
$H_{K,\Om}$ corresponding to the eigenvalue 
$0 \neq \lambda \in \sigma(H_{K,\Om})$. In addition suppose that the operator of multiplication 
by $V$ satisfies 
\begin{equation}\label{MUL}
M_V\in\cB\bigl(H^2(\Om),H^s(\Om)\bigr) \, \mbox{ for some } \, 1/2<s\leq 2.
\end{equation}
Then $u$ defined in \eqref{MM-5} satisfies 
\begin{equation}
u \in H^{5/2}(\Om), \, \text{ implying } \, v \in H^{1/2}(\Om).   \lb{Kv}
\end{equation}
$(ii)$ Assume the smooth case, that is, $\partial\Omega$ is $C^\infty$ and 
$V\in C^\infty(\ol\Om)$, and let $0 \neq v$ be any eigenfunction of $H_{K,\Om}$ corresponding to the eigenvalue $0 \neq \lambda \in \sigma(H_{K,\Om})$. 
Then $u$ defined in 
\eqref{MM-5} satisfies 
\begin{equation}
u\in C^\infty(\ol \Om), \, \text{ implying } \, v\in C^\infty(\ol \Om).  \lb{KvC}
\end{equation}
\end{proposition}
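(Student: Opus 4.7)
The plan is to reduce both claims to regularity statements for the auxiliary function $u$ of \eqref{MM-5}. By Theorem~\ref{T-MM-1} combined with the quasi-convex reformulation \eqref{MM-2}, this $u$ lies in $H^2_0(\Om)$ and satisfies the fourth-order Dirichlet problem $(-\Delta+V)^2 u = \lambda(-\Delta+V)u$ in $\Om$. Once sufficient Sobolev regularity of $u$ is in hand, the identity $v = \lambda^{-1}(-\Delta+V)u$ immediately delivers the desired regularity of $v$: for part $(i)$, if $u \in H^{5/2}(\Om)$ then $\Delta u \in H^{1/2}(\Om)$, and the multiplier hypothesis \eqref{MUL} applied at the embedding $H^{5/2}(\Om) \hookrightarrow H^2(\Om)$ gives $Vu \in H^s(\Om) \hookrightarrow H^{1/2}(\Om)$, whence $v \in H^{1/2}(\Om)$; for part $(ii)$, with $V \in C^\infty(\overline{\Om})$ and $u \in C^\infty(\overline{\Om})$ one trivially obtains $v \in C^\infty(\overline{\Om})$.

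For part $(i)$, the task therefore reduces to establishing $u \in H^{5/2}(\Om)$. The plan is to expand the fourth-order equation in Leibniz form,
\begin{equation*}
\Delta^2 u = \Delta(Vu) + (V-\lambda)\Delta u + V(\lambda - V)u \, \mbox{ in } \, \Om,
\end{equation*}
and to note that, thanks to \eqref{MUL} and $u \in H^2_0(\Om)$, the right-hand side is controlled in $H^{s-2}(\Om)$ with $s > 1/2$, while the memberships $\gamma_D u = \gamma_N u = 0$ built into $u \in H^2_0(\Om)$ serve as the complementing Dirichlet boundary conditions for the principal part $\Delta^2$. One then appeals to the sharp regularity theory for the biharmonic Dirichlet problem on Lipschitz/quasi-convex domains, in the spirit of Proposition~\ref{Bjk} and its fourth-order extensions from \cite{GM10}, to gain the half-derivative needed to conclude $u \in H^{5/2}(\Om)$.

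For part $(ii)$, under the smooth hypothesis the equation $(-\Delta+V)^2 u = \lambda(-\Delta+V)u$ is linear, fourth-order, and elliptic, with $C^\infty$ coefficients, complemented by the genuinely smooth Dirichlet data $\gamma_D u = \gamma_N u = 0$ carried by $u \in H^2_0(\Om)$. Classical elliptic boundary regularity theory (cf.\ \cite{Ag97}) then yields $u \in C^\infty(\overline{\Om})$ in one stroke, finishing the proof. The main obstacle, as I see it, lies in part $(i)$: the biharmonic Dirichlet problem in a merely quasi-convex, and hence generally nonsmooth, domain does not admit arbitrary Sobolev regularity, and extracting precisely the half-derivative beyond $H^2_0(\Om)$ required for the conclusion calls upon the most delicate boundary regularity estimates available in the nonsmooth setting.
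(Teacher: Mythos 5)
Your proposal is correct and follows essentially the same route as the paper: reduce to the fourth-order Dirichlet problem for $u\in H^2_0(\Om)$, use \eqref{MUL} to place $\Delta^2 u$ in $H^{s-2}(\Om)$ with $s>1/2$, invoke the sharp $H^{5/2}$ regularity for the biharmonic Dirichlet problem on Lipschitz domains, and then read off the regularity of $v=\lambda^{-1}(-\Delta+V)u$; part $(ii)$ is classical elliptic bootstrap exactly as you describe. The one imprecision is the attribution of the key half-derivative gain: it comes from the boundary regularity theory of Pipher--Verchota \cite{PV95} (see also \cite{AP98}) for higher-order operators on Lipschitz domains, not from a fourth-order analogue of Proposition \ref{Bjk} in \cite{GM10}.
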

\begin{proof}
$(i)$ We note that $u\in L^2(\Om;d^nx)$ satisfies $\widehat\gamma_D(u)=0$, 
$\widehat\gamma_N(u)=0$, and 
$(-\Delta+V)u=(-\Delta+V)v=\lambda v\in L^2(\Omega;d^nx)$. Hence, by Theorems 
\ref{T-DD1} and \ref{tH.A}, we obtain that $u\in H^2_0(\Omega)$. Next, observe that 
$(-\Delta+V)^2u=\lambda^2 v\in L^2(\Omega;d^nx)$ which therefore entails 
$\Delta^2u\in H^{s-2}(\Om)$ by \eqref{MUL}. With this at hand, the regularity results
in \cite{PV95} (cf.\ also \cite{AP98} for related results) yield that 
$u\in H^{5/2}(\Om)$. 

$(ii)$ Given the eigenfunction $0\neq v$ of $H_{K,\Om}$, \eqref{MM-5} yields that $u$ satisfies the 
generalized buckling problem \eqref{MM-1}, so that by elliptic regularity 
$u\in C^\infty(\ol \Om)$. By \eqref{MM-6} and \eqref{MM-7} one thus obtains 
\begin{equation}
\lambda v = (-\Delta +V) v = (-\Delta + V) u, \, \text{ with } \, u\in C^\infty(\ol \Om), 
\end{equation} 
proving \eqref{KvC}. 
\end{proof}

In passing, we note that the multiplier condition \eqref{MUL} 
is satisfied, for instance, if $V$ is Lipschitz. 

We next wish to prove that the perturbed Krein Laplacian has
only point spectrum (which, as the previous theorem shows, is directly
related to the eigenvalues of the generalized buckling of the clamped
plate problem). This requires some preparations, and we proceed by
first establishing the following.

\begin{lemma}\label{L-MM-2}
Assume Hypothesis \ref{h.VK}.
Then there exists a discrete subset $\Lambda_{\Om}$ of $(0,\infty)$ without
any finite accumulation points which has the following significance: 
For every $z\in\bbC\backslash \Lambda_{\Om}$ and every
$f\in H^{-2}(\Om)$, the problem
\begin{equation} \label{MM-17}
\begin{cases}
u\in H^2_0(\Omega),
\\
(-\Delta+V)(-\Delta+V-z)u=f \,\mbox{ in } \,\Omega,
\end{cases}
\end{equation}
has a unique solution. In addition, there exists $C=C(\Omega,z)>0$
such that the solution satisfies
\begin{equation} \label{MM-18}
\|u\|_{H^2(\Omega)}\leq C\|f\|_{H^{-2}(\Omega)}.
\end{equation} 

Finally, if $z\in\Lambda_{\Om}$, then there exists $u\not=0$ satisfying
\eqref{MM-2}. In fact, the space of solutions for the
problem \eqref{MM-2} is, in this case, finite-dimensional and nontrivial.
\end{lemma}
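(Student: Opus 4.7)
My plan is to reformulate \eqref{MM-17} as a generalized eigenvalue problem on the Hilbert space $W := H^2_0(\Om)$, equipped with the sesquilinear forms $a$ and $b$ introduced in \eqref{Xkko-13}, \eqref{Xkko-13F}. The first step is to observe that, by integrating by parts twice (legitimate since $\widehat{\gamma}_D u = \widehat{\gamma}_N u = 0$ for $u \in H^2_0(\Om)$ by Theorem \ref{T-DD1}, and $(-\Delta + V)u\in L^2(\Om;d^nx)$), the problem \eqref{MM-17} is equivalent to finding $u\in W$ satisfying
\[
a(v,u) - z\, b(v,u) = {}_{H^2_0(\Om)}\langle v, f\rangle_{H^{-2}(\Om)}, \quad v \in H^2_0(\Om).
\]

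Next, I would exploit the fact that Proposition \ref{Bjk} (which is where quasi-convexity is crucially used) yields $H^2_0(\Om) \subseteq \dom(H_{D,\Om}) \subseteq H^2(\Om)$, and $H_{D,\Om}^{-1}\colon L^2(\Om;d^nx) \to H^2(\Om)$ is bounded since $H_{D,\Om}$ is strictly positive (Theorem \ref{t2.5}). Consequently, for every $u \in W$,
\[
\|u\|_{H^2(\Om)} \leq C\,\|(-\Delta+V)u\|_{L^2(\Om;d^nx)} = C\, a(u,u)^{1/2},
\]
so that $a$ is coercive on $W$ and endows it with a norm equivalent to the standard one. By Lax--Milgram, the operator $\cA\colon W \to W^* = H^{-2}(\Om)$ associated with $a$ is an isomorphism. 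The form $b$ is bounded, symmetric, and nonnegative on $W \times W$, and factors through the compact embedding $H^2_0(\Om) \hookrightarrow H^1(\Om)$ (Rellich--Kondrachov, valid for bounded Lipschitz $\Om$). It follows that the operator $T \in \cB(W)$ determined by $a(v, Tu) = b(v,u)$ satisfies $T = T^* \geq 0$ in the Hilbert space $(W, a(\dott,\dott))$ and is compact.

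The reformulated problem now reads $(I_W - zT)u = \cA^{-1} f$ in $W$. By the spectral theorem for compact self-adjoint operators, $T$ has a sequence of eigenvalues $\{\mu_k\}_{k\in\bbN} \subset [0,\infty)$ accumulating only at $0$, and I would define
\[
\Lambda_\Om := \{1/\mu_k \,|\, k\in\bbN,\, \mu_k > 0\}.
\]
Lemma \ref{L-MM-1} already guarantees $\Lambda_\Om \subset (0,\infty)$, and $\mu_k \to 0$ ensures $\Lambda_\Om$ is discrete with no finite accumulation point. For $z \in \bbC\setminus\Lambda_\Om$, $(I_W - zT)^{-1} \in \cB(W)$ exists, yielding a unique solution $u$ of \eqref{MM-17} with $\|u\|_{H^2(\Om)} \leq C(\Om,V,z)\,\|f\|_{H^{-2}(\Om)}$, which is \eqref{MM-18}. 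For $z \in \Lambda_\Om$, $\ker(I_W - zT)$ is precisely the finite-dimensional eigenspace of $T$ at $\mu_k = 1/z$, and via the equivalence of \eqref{MM-1} and \eqref{MM-2} (valid under quasi-convexity by \eqref{Yan-8}), these are the nontrivial solutions of \eqref{MM-2}.

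The only nontrivial ingredients are the coercivity of $a$ on $H^2_0(\Om)$ and the compactness of $T$. The former hinges entirely on the quasi-convexity of $\Om$ through Proposition \ref{Bjk}; the latter reduces to the standard Rellich--Kondrachov embedding. Everything else is a routine application of the Fredholm alternative for compact self-adjoint operators, so I do not anticipate any serious obstacle beyond correctly bookkeeping the form-operator-dualities.
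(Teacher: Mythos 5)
Your proof is correct, but it takes a genuinely different route from the paper's. The paper does not work with the single compact operator $T$ at this stage; instead it first establishes coercivity of the \emph{shifted} form $a_{V,z}(u,v)=a(u,v)-z\,b(u,v)$ for $\Re(z)\leq -M$ with $M$ large (by expanding $\|(-\Delta+V)u\|^2$ via integration by parts and absorbing the cross term $2\Re\int_\Om \Delta u\,V\ol{u}$ with Poincar\'e's inequality), obtains invertibility of $A_{V,z_0}=(-\Delta+V)(-\Delta+V-z_0)\in\cB(H^2_0(\Om),H^{-2}(\Om))$ for one such $z_0$ by Lax--Milgram, writes $A_{V,z}=A_{V,z_0}[I+B_{V,z}]$ with $B_{V,z}=(z_0-z)A_{V,z_0}^{-1}(-\Delta+V)$ compact and analytic in $z$, and invokes the \emph{analytic Fredholm theorem} to produce the discrete exceptional set in $\bbC$; only afterwards does it use the Fredholm-index-zero property together with Lemma \ref{L-MM-1} to push $\Lambda_\Om$ into $(0,\infty)$. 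Your argument instead exploits the self-adjoint, nonnegative structure from the outset: coercivity of $a$ at $z=0$ (which you obtain from Proposition \ref{Bjk} plus the closed graph theorem; the paper later derives the same estimate \eqref{mam-6} by Poincar\'e and the Open Mapping Theorem) turns $H^2_0(\Om)$ into a Hilbert space on which $T$ is compact, self-adjoint and nonnegative, so discreteness and positivity of $\Lambda_\Om$ fall out of the spectral theorem with no need for analytic Fredholm theory or the index argument. In effect you are anticipating the construction of Lemma \ref{T-MAM-1} (your $T$ is the operator $B$ there), with the important difference that you define $T$ via the Riesz representation for the coercive form $a$ rather than via $(-\Delta+V)^{-2}$, which keeps the argument free of circularity since \eqref{mam-9} itself rests on Lemma \ref{L-MM-2}. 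What the paper's route buys is robustness: it never needs coercivity of $a$ at $z=0$, and it would survive non-symmetric or sign-indefinite lower-order forms $b$, whereas your route is shorter and makes the positivity of $\Lambda_\Om$ and the finite multiplicity statements immediate.
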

\begin{proof}
In a first stage, fix $z\in\bbC$ with $\Re(z)\leq -M$,
where $M=M(\Om,V)>0$ is a large constant to be specified later, and
consider the bounded sesquilinear form
\begin{align}\label{MM-19} 
& a_{V,z}(\dott,\dott):H^2_0(\Om)\times H^2_0(\Om)\to \bbC,  \no
\\
& a_{V,z}(u,v):= ((-\Delta+V)u,(-\Delta+V)v)_{L^2(\Om;d^nx)}
+ \big(V^{1/2}u,V^{1/2}v\big)_{L^2(\Om;d^nx)}
\\
& \hskip 0.77in
-z\, (\nabla u,\nabla v)_{(L^2(\Om;d^nx))^n},\quad
u,v\in H^2_0(\Om).  \no 
\end{align}
Then, since $f\in H^{-2}(\Om)=\bigl(H^2_0(\Om)\bigr)^*$, the well-posedness
of \eqref{MM-17} will follow with the help of the Lax-Milgram lemma as soon as
we show that \eqref{MM-19} is coercive. To this end, observe that
via repeated integrations by parts
\begin{align}\label{MM-20}
\begin{split} 
a_{V,z}(u,u) &= \sum_{j,k=1}^n\int_{\Omega}d^nx\,\Big|
\frac{\partial^2 u}{\partial x_j\partial x_k}\Big|^2
-z \sum_{j=1}^n\int_{\Omega}d^nx\,
\Big|\frac{\partial u}{\partial x_j}\Big|^2
\\
& \quad +\int_{\Omega}d^nx\,\big|V^{1/2}u\bigr| 
+2 \Re\bigg(\int_{\Omega}d^nx\,\Delta u\,V\ol{u}\bigg), \quad u\in C^\infty_0(\Om).
\end{split} 
\end{align}
We note that the last term is of the order
\begin{equation} \label{MM-20U}
O\bigl(\|V\|_{L^\infty(\Om;d^nx)}\|\Delta u\|_{L^2(\Om;d^nx)}
\|u\|_{L^2(\Om;d^nx)}\bigr)
\end{equation} 
and hence, can be dominated by
\begin{equation} \label{MM-21U}
C\|V\|_{L^\infty(\Om;d^nx)}\big[\varepsilon\|u\|^2_{H^2(\Om)}
+(4\varepsilon)^{-1}\|u\|^2_{L^2(\Om;d^nx)}\big],
\end{equation} 
for every $\varepsilon>0$. Thus, based on this and Poincar\'e's inequality,
we eventually obtain, by taking $\varepsilon>0$ sufficiently small, and
$M$ (introduced in the beginning of the proof) sufficiently large, that
\begin{equation} \label{MM-21}
\Re (a_{V,z}(u,u)) \geq C\|u\|^2_{H^2(\Om)},\quad 
u\in C^\infty_0(\Om).
\end{equation} 
Hence,
\begin{equation} \label{MM-22}
\Re (a_{V,z}(u,u)) \geq C\|u\|^2_{H^2(\Om)},\quad 
u\in H^2_0(\Om),
\end{equation} 
by the density of $C^\infty_0(\Om)$ in $H^2_0(\Om)$. Thus, the form
\eqref{MM-20} is coercive and hence, the problem \eqref{MM-17} is well-posed
whenever $z\in\bbC$ has $\Re(z)\leq -M$.

We now wish to extend this type of conclusion to a larger set of
$z$'s. With this in mind, set
\begin{equation} \label{Mi-1}
A_{V,z}:=(-\Delta+V)(-\Delta+V-z I_{\Om})\in
\cB\bigl(H^2_0(\Om),H^{-2}(\Om)\bigr),\quad z\in\bbC.
\end{equation} 
The well-posedness of \eqref{MM-17} is equivalent to the fact that the
above operator is invertible. In this vein, we note that if
we fix $z_0 \in\bbC$ with $\Re(z_0 )\leq -M$, then, from what
we have shown so far,
\begin{equation} \label{Mi-2}
A_{V,z_0 }^{-1}\in\cB\bigl(H^{-2}(\Om),H^2_0(\Om)\bigr)
\end{equation} 
is a well-defined operator. For an arbitrary $z\in\bbC$ we then write
\begin{equation} \label{Mi-3}
A_{V,z}=A_{V,z_0 }[I_{H^2_0(\Om)}+B_{V,z}],
\end{equation} 
where $I_{H^2_0(\Om)}$ is the identity operator on $H^2_0(\Om)$ and we have set
\begin{equation} \label{Mi-4}
B_{V,z}:=A_{V,z_0 }^{-1}(A_{V,z}-A_{V,z_0 })
=(z_0 -z)A_{V,z_0 }^{-1}(-\Delta+V)
\in\cB_\infty\bigl(H^2_0(\Om)\bigr).
\end{equation} 
Since $\bbC\ni z\mapsto B_{V,z}\in\cB\bigl(H^2_0(\Om)\bigr)$
is an analytic, compact operator-valued mapping, which vanishes for
$z=z_0 $, the Analytic Fredholm Theorem yields the existence
of an exceptional, discrete set $\Lambda_{\Om}\subset\bbC$, without
any finite accumulation points such that
\begin{equation} \label{Mi-5}
(I_{H^2_0(\Om)}+B_{V,z})^{-1}\in\cB\bigl(H^2_0(\Om)\bigr),\quad 
z\in\bbC\backslash \Lambda_{\Om}.
\end{equation} 
As a consequence of this, \eqref{Mi-2}, and \eqref{Mi-3}, we therefore have
\begin{equation} \label{Mi-6}
A_{V,z}^{-1}\in\cB\bigl(H^{-2}(\Om),H^2_0(\Om)\bigr),\quad 
z\in\bbC\backslash \Lambda_{\Om}.
\end{equation} 
We now proceed to show that, in fact, $\Lambda_{\Om}\subset(0,\infty)$.
To justify this inclusion, we observe that
\begin{equation} \label{Mi-6X}
\text{$A_{V,z}$ in \eqref{Mi-1} is a Fredholm operator,
with Fredholm index zero, for every $z\in\bbC$},
\end{equation} 
due to \eqref{Mi-2}, \eqref{Mi-3}, and \eqref{Mi-4}. Thus, if for some
$z\in\bbC$ the operator $A_{V,z}$ fails to be invertible, then
there exists $0\not=u\in L^2(\Om;d^nx)$ such that $A_{V,z}u=0$.
In view of \eqref{Mi-1} and Lemma \ref{L-MM-1}, the latter condition
forces $z\in(0,\infty)$. Thus, $\Lambda_{\Om}$ consists of positive
numbers. At this stage, it remains to justify the very last claim in
the statement of the lemma. This, however, readily follows from \eqref{Mi-6X},
completing the proof.
\end{proof}

\begin{theorem}\label{T-MM-2}
Assume Hypothesis \ref{h.VK} and recall the
exceptional set $\Lambda_{\Om}\subset(0,\infty)$ from Lemma \ref{L-MM-2},
which is discrete with only accumulation point at infinity. Then
\begin{equation} \label{Mi-7}
\sigma(H_{K,\Om})=\Lambda_{\Om}\cup\{0\}.
\end{equation} 
Furthermore, for every $0\not=z \in\bbC\backslash \Lambda_{\Om}$, the
action of the resolvent $(H_{K,\Om}-z I_{\Om})^{-1}$
on an arbitrary element $f\in L^2(\Om;d^nx)$ can be described as follows: 
Let $v$ solve
\begin{equation}\label{MM-23} 
\begin{cases}
v\in H^2_0(\Omega),
\\
(-\Delta+V)(-\Delta+V-z)v=(-\Delta+V)f\in H^{-2}(\Omega),
\end{cases} 
\end{equation}
and consider
\begin{equation} \label{MM-24}
w:=z^{-1}[(- \Delta+V-z)v-f]\in L^2(\Om;d^nx).
\end{equation} 
Then
\begin{equation} \label{MM-24X}
(H_{K,\Om}-z I_{\Om})^{-1}f=v+w.
\end{equation} 

Finally, every $z \in \Lambda_{\Om}\cup\{0\}$ is actually an eigenvalue
$($of finite multiplicity, if nonzero$)$ for the perturbed Krein Laplacian,
and the essential spectrum of this operator is given by
\begin{equation} \label{Mi-7S}
\sigma_{ess}(H_{K,\Om})=\{0\}.
\end{equation} 
\end{theorem}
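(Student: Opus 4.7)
The plan is to use Lemma \ref{L-MM-2} (well-posedness of the fourth-order problem away from $\Lambda_\Om$) and Theorem \ref{T-MM-1} (the bijection between nontrivial solutions of \eqref{MM-1} and eigenfunctions of $H_{K,\Om}$) to establish that \eqref{MM-24X} is a genuine resolvent formula for $z \in \bbC \setminus (\Lambda_\Om \cup \{0\})$, which immediately forces $\sigma(H_{K,\Om}) \subseteq \Lambda_\Om \cup \{0\}$. The reverse inclusion then comes from producing an eigenfunction at each $z \in \Lambda_\Om$ via Theorem \ref{T-MM-1}, and the essential spectrum assertion is already contained in \eqref{spec-1} of Theorem \ref{T-Kr}.

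First I would verify that $v+w$ given by \eqref{MM-24} lies in $\dom(H_{K,\Om})$ and that $(H_{K,\Om} - z I_\Om)(v+w) = f$. Given $f \in L^2(\Om;d^nx)$, multiplication by $V \in L^\infty(\Om; d^nx)$ is bounded on $L^2(\Om;d^nx)$ and $-\Delta \in \cB(L^2(\Om;d^nx), H^{-2}(\Om))$, so $(-\Delta+V)f \in H^{-2}(\Om)$, and Lemma \ref{L-MM-2} produces a unique $v \in H^2_0(\Om)$ solving \eqref{MM-23}. Then
\begin{equation}
(-\Delta+V)w = z^{-1}\big[(-\Delta+V)(-\Delta+V-z)v - (-\Delta+V)f\big] = 0,
\end{equation}
using \eqref{MM-23}, so $w \in \ker(H_{max,\Om})$. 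By Lemma \ref{C-DaW}, $w \in \dom(H_{K,\Om})$, and since $v \in H^2_0(\Om) = \dom(H_{min,\Om}) \subseteq \dom(H_{K,\Om})$, we conclude $v+w \in \dom(H_{K,\Om})$. The boundary condition $\tau_{N,V,0}(v+w) = 0$ splits: on $v$ it holds because $\widehat\gamma_D v = \widehat\gamma_N v = 0$, and on $w$ it follows from \eqref{Sim-Gr}. A direct rearrangement using $(-\Delta+V)w = 0$ and the definition of $w$ then yields $(-\Delta+V-z)(v+w) = (-\Delta+V-z)v - zw = f$.

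Next I would prove injectivity of $H_{K,\Om} - z I_\Om$ for $z \in \bbC \setminus (\Lambda_\Om \cup \{0\})$. If $(H_{K,\Om} - z I_\Om) \tilde v = 0$ for some $0 \neq \tilde v \in \dom(H_{K,\Om})$, then by the converse direction of Theorem \ref{T-MM-1}, the function $\tilde u := \tilde v - P_{D,\Om,V}(\widehat\gamma_D \tilde v)$ is a nontrivial element of $H^2_0(\Om)$ solving \eqref{MM-2} with parameter $\lambda = z$. The last assertion of Lemma \ref{L-MM-2} then forces $z \in \Lambda_\Om$. Combined with surjectivity from the previous step and the bound \eqref{MM-18} (which propagates through \eqref{MM-24} to give boundedness of $(H_{K,\Om} - z I_\Om)^{-1}$ on $L^2(\Om;d^nx)$), this proves $\sigma(H_{K,\Om}) \subseteq \Lambda_\Om \cup \{0\}$ and establishes the resolvent formula. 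Conversely, for each $z \in \Lambda_\Om$, Lemma \ref{L-MM-2} furnishes $0 \neq u \in H^2_0(\Om)$ solving \eqref{MM-2}, and the forward direction of Theorem \ref{T-MM-1} delivers the nonzero eigenfunction $\lambda^{-1}(-\Delta+V)u$ of $H_{K,\Om}$ at eigenvalue $z$; finite multiplicity follows from the finite-dimensionality of the solution space of \eqref{MM-2} (Lemma \ref{L-MM-2}) together with the one-to-one correspondence in Theorem \ref{T-MM-1}. That $0$ is an eigenvalue of infinite multiplicity and $\sigma_{\rm ess}(H_{K,\Om}) = \{0\}$ are already recorded in \eqref{spec-1}.

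The principal technical point is the verification that $v + w \in \dom(H_{K,\Om})$ — specifically, that the nonlocal Krein boundary condition $\tau_{N,V,0}(v+w) = 0$ holds. This is where the decomposition \eqref{Kre-Frq1} of $\dom(H_{K,\Om})$ into $H^2_0(\Om) \dotplus \ker(H_{max,\Om})$ plays its decisive role: $v$ supplies the $H^2_0$-part (on which $\tau_{N,V,0}$ trivially vanishes), and $w$ is engineered to lie in $\ker(H_{max,\Om})$ (on which $\tau_{N,V,0}$ vanishes by \eqref{Sim-Gr}). Once this is in place, the remaining steps are purely algebraic manipulations of the identities built into Lemma \ref{L-MM-2} and Theorem \ref{T-MM-1}.
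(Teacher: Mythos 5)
Your proposal is correct and follows essentially the same route as the paper's proof: use Lemma \ref{L-MM-2} to build $v$, check that $w\in\ker(H_{max,\Om})$ so that $v+w$ lies in $\dom(H_{K,\Om})$ (you via the direct-sum description \eqref{Kre-Frq1}, the paper via the equivalent kernel characterization \eqref{3.AKe} of $\tau_{N,V,0}$), verify $(-\Delta+V-z)(v+w)=f$ algebraically, and obtain injectivity and the reverse spectral inclusion from Theorem \ref{T-MM-1} combined with the last part of Lemma \ref{L-MM-2}. No gaps.
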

\begin{proof}
Let $0\not=z \in\bbC\backslash \Lambda_{\Om}$, fix $f\in L^2(\Om;d^nx)$,
and assume that $v,w$ are as in the statement of the theorem. That $v$
(hence also $w$) is well-defined follows from Lemma \ref{L-MM-2}. Set
\begin{align}\label{MM-26}
u :=  v+w \in & H^2_0(\Om)\dot{+}
\big\{\eta\in L^2(\Om;d^nx)\,\big|\,(-\Delta+V)\eta=0\mbox{ in }\Omega\big\}
\nonumber\\
& \quad = \ker\big(\tau_{N,V,0}\big)\hookrightarrow \dom(H_{max,\Om} ),
\end{align}
by \eqref{3.AKe}. Thus, $u\in \dom(H_{max,\Om} )$ and
$\tau_{N,V,0} u=0$ which force $u\in\dom(H_{K,\Om})$.
Furthermore,
\begin{equation} \label{Mi-8}
\|u\|_{L^2(\Om;d^nx)}+\|\Delta u\|_{L^2(\Om;d^nx)}
\leq C\|f\|_{L^2(\Om;d^nx)},
\end{equation} 
for some $C=C(\Om,V,z)>0$, and
\begin{align}\label{MM-27}
& (-\Delta+V-z)u = (-\Delta+V-z)v+(-\Delta+V-z)w
\nonumber\\
& \quad = (-\Delta+V-z)v
+z^{-1}(-\Delta+V-z)[(-\Delta+V-z)v-f]
\nonumber\\
& \quad = (-\Delta+V-z)v+z^{-1}(-\Delta+V)[(-\Delta+V-z)v-f]
-[(-\Delta+V-z)v-f]
\nonumber\\
& \quad = f+z^{-1}[(-\Delta+V)(-\Delta+V-z)v-(-\Delta+V)f]=f,
\end{align}
by \eqref{MM-23}, \eqref{MM-24}. As a consequence of this analysis,
we may conclude that the operator
\begin{equation} \label{Mi-9}
H_{K,\Om}-z I_{\Om}:\dom(H_{K,\Om})\subset L^2(\Om;d^nx)
\to L^2(\Om;d^nx)
\end{equation} 
is onto (with norm control), for every
$z \in\bbC\backslash (\Lambda_{\Om}\cup\{0\})$. When
$z \in\bbC\backslash (\Lambda_{\Om}\cup\{0\})$
the last part in Lemma \ref{L-MM-2}
together with Theorem \ref{T-MM-1} also yield that the operator
\eqref{Mi-9} is injective. Together, these considerations prove that
\begin{equation} \label{Mi-7X}
\sigma(H_{K,\Om})\subseteq\Lambda_{\Om}\cup\{0\}.
\end{equation} 
Since the converse inclusion also follows from the
last part in Lemma \ref{L-MM-2} together with Theorem \ref{T-MM-1},
equality \eqref{Mi-7} follows. Formula \eqref{MM-24X}, along with
the final conclusion in the statement of the theorem, is also implicit
in the above analysis plus the fact that $\ker(H_{K,\Om})$
is infinite-dimensional (cf.\ \eqref{dim} and \cite{MT00}).
\end{proof}

\section{Eigenvalue Estimates for the Perturbed Krein Laplacian}
\label{s11}

The aim of this section is to study in greater detail the nature of the
spectrum of the operator $H_{K,\Om}$. We split the discussion into
two separate cases, dealing with the situation when the potential $V$
is as in Hypothesis \ref{h.V} (Subsection \ref{s11X}), and when $V\equiv 0$
(Subsection \ref{s11Y}).

\subsection{The Perturbed Case}
\label{s11X}

Given a domain $\Omega$ as in Hypothesis \ref{h.Conv} and a potential $V$
as in Hypothesis \ref{h.V}, we recall the exceptional set
$\Lambda_{\Om}\subset(0,\infty)$ associated with $\Omega$ as in
Section \ref{s10}, consisting of numbers
\begin{equation} \label{mam-1}
0<\lambda_{K,\Om,1}\leq\lambda_{K,\Om,2}\leq\cdots\leq\lambda_{K,\Om,j}
\leq\lambda_{K,\Om,j+1}\leq\cdots
\end{equation} 
converging to infinity. Above, we have displayed the $\lambda$'s
according to their (geometric) multiplicity which equals 
the dimension of the kernel of the (Fredholm) operator \eqref{Mi-1}. 

\begin{lemma}\label{T-MAM-1}
Assume Hypothesis \ref{h.VK}.
Then there exists a family of functions $\{u_j\}_{j\in\bbN}$ with the
following properties:
\begin{align}\label{mam-2}
& u_j\in H^2_0(\Om)\, \mbox{ and }\, 
(-\Delta+V)^2u_j=\lambda_{K,\Om,j}(-\Delta+V)u_j,   \quad  j\in\bbN,
\\
& ((-\Delta+V)u_j,(-\Delta+V)u_k)_{L^2(\Om;d^nx)}=\delta_{j,k},  \quad j,k\in\bbN,
\label{mam-3}\\
& u=\sum_{j=1}^\infty ((-\Delta+V)u,(-\Delta+V)u_j)_{L^2(\Om;d^nx)}\,u_j,
\quad u\in H^2_0(\Om),
\label{mam-4}
\end{align}
with convergence in $H^2(\Om)$.
\end{lemma}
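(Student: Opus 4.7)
The plan is to recast the buckling eigenvalue problem $(-\Delta+V)^2u=\lambda(-\Delta+V)u$ on $H^2_0(\Om)$ as a standard eigenvalue problem for a compact, self-adjoint, positive operator on a suitable Hilbert space, and then invoke the spectral theorem.

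First I would endow $H^2_0(\Om)$ with the sesquilinear form
\[
a(u,v):=((-\Delta+V)u,(-\Delta+V)v)_{L^2(\Om;d^nx)},\quad u,v\in H^2_0(\Om),
\]
and denote by $\cW$ the resulting pre-Hilbert space. The key preliminary step is to verify that $a(u,u)^{1/2}$ is equivalent to $\|u\|_{H^2(\Om)}$ on $H^2_0(\Om)$, so that $\cW$ is in fact a Hilbert space. The upper bound is immediate from $V\in L^\infty(\Om;d^nx)$. For the lower bound one combines: $(i)$ the a priori estimate $\|u\|_{H^2(\Om)}\le C\|\Delta u\|_{L^2(\Om;d^nx)}$ valid for $u\in H^2_0(\Om)$ (obtained by extending $u$ by zero to $H^2(\bbR^n)$, using the Fourier characterization to see that $\|D^2u\|_{L^2(\bbR^n)}=\|\Delta u\|_{L^2(\bbR^n)}$, and then invoking Poincar\'e's inequality on $H^1_0(\Om)$); and $(ii)$ the strict positivity of $H_{D,\Om}$ granted by Theorem \ref{t2.5} (which uses $V\ge 0$), which produces $\varepsilon>0$ with $\|u\|_{L^2(\Om;d^nx)}\le\varepsilon^{-1}\|(-\Delta+V)u\|_{L^2(\Om;d^nx)}$ for every $u\in H^2_0(\Om)=\dom(H_{min,\Om})$. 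The chain $\|\Delta u\|_{L^2}\le\|(-\Delta+V)u\|_{L^2}+\|V\|_{L^\infty}\|u\|_{L^2}$ then delivers $\|u\|_{H^2(\Om)}\le C'\|(-\Delta+V)u\|_{L^2(\Om;d^nx)}$.

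Next I would introduce the auxiliary sesquilinear form
\[
b(u,v):=(\nabla u,\nabla v)_{(L^2(\Om;d^nx))^n}+\big(V^{1/2}u,V^{1/2}v\big)_{L^2(\Om;d^nx)},\quad u,v\in H^2_0(\Om),
\]
which, after integration by parts, equals $(u,(-\Delta+V)v)_{L^2(\Om;d^nx)}$. Since $|b(u,v)|\le C\|u\|_{H^1}\|v\|_{H^1}\le C'\|u\|_\cW\|v\|_\cW$, the Riesz representation theorem produces a unique $T\in\cB(\cW)$ satisfying $a(w,Tv)=b(w,v)$ for all $w,v\in\cW$; symmetry and non-negativity of $b$ force $T=T^*\ge 0$. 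Compactness of $T$ follows from the Rellich--Kondrachov theorem: if $w_n\rightharpoonup 0$ in $\cW$, then $\{w_n\}$ is bounded in $H^2(\Om)$, hence converges strongly to $0$ in $H^1(\Om)$, and the estimate $\|Tw_n\|_\cW^2=b(w_n,Tw_n)\le C\|w_n\|_{H^1(\Om)}\|Tw_n\|_\cW$ forces $\|Tw_n\|_\cW\to 0$. Furthermore, $T$ is injective, since $Tu=0$ entails $0=b(u,u)=\|\nabla u\|_{L^2}^2+\|V^{1/2}u\|_{L^2}^2$, and then Poincar\'e's inequality on $H^1_0(\Om)$ forces $u=0$.

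The spectral theorem for compact, self-adjoint, positive operators then delivers an orthonormal basis $\{u_j\}_{j\in\bbN}$ of $\cW$ of eigenfunctions, $Tu_j=\mu_ju_j$, with strictly positive eigenvalues $\mu_j$ accumulating only at $0$. Orthonormality in $\cW$ is precisely \eqref{mam-3}. Setting $\lambda_j:=\mu_j^{-1}$, the relation $a(v,Tu_j)=b(v,u_j)$ for all $v\in\cW$ rewrites as $a(v,u_j)=\lambda_jb(v,u_j)$, that is, as the weak formulation of \eqref{MM-2} with eigenvalue $\lambda_j$. Since by Lemma \ref{L-MM-2} the set $\Lambda_\Om$ of $\lambda>0$ for which \eqref{MM-2} admits nontrivial solutions coincides with the list $\{\lambda_{K,\Om,j}\}_{j\in\bbN}$ in \eqref{mam-1} (counted with geometric multiplicities), the collections $\{\lambda_j\}_{j\in\bbN}$ and $\{\lambda_{K,\Om,j}\}_{j\in\bbN}$ must agree after reordering, which establishes \eqref{mam-2}. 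Finally, \eqref{mam-4} is nothing but the standard orthonormal basis expansion $u=\sum_ja(u,u_j)u_j$ in $\cW$, rewritten using $a(u,u_j)=((-\Delta+V)u,(-\Delta+V)u_j)_{L^2(\Om;d^nx)}$; convergence in $\cW$ is equivalent to convergence in $H^2(\Om)$ by the norm equivalence established at the outset. The main obstacle I anticipate is the norm equivalence in the first step, since this is where both structural assumptions on $V$ (yielding strict positivity of $H_{D,\Om}$) and on $H^2_0(\Om)$ enter crucially; once this is secured, the remaining steps reduce to standard Hilbert space spectral theory.
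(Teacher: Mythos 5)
Your proposal is correct and follows essentially the same route as the paper: equip $H^2_0(\Om)$ with the inner product $((-\Delta+V)u,(-\Delta+V)v)_{L^2(\Om;d^nx)}$, represent the form $b$ by a compact, self-adjoint, injective operator on this Hilbert space, apply the spectral theorem, and identify the resulting eigenvalues with $\{\lambda_{K,\Om,j}\}_{j\in\bbN}$ via Lemma \ref{L-MM-2}. The only (harmless) deviations are in auxiliary steps: you derive the norm equivalence from the strict positivity of $H_{D,\Om}$ plus a Fourier/Poincar\'e argument where the paper uses a closed-range/open-mapping argument, and you obtain compactness from Rellich--Kondrachov and define the operator by Riesz representation, where the paper works with the explicit operator $B=-(-\Delta+V)^{-2}(-\Delta+V)$ factored through the compact embedding $L^2(\Om;d^nx)\hookrightarrow H^{-2}(\Om)$.
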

\begin{proof}
Consider the vector space and inner product
\begin{equation} \label{mam-5}
\cH_V:=H^2_0(\Om),\quad
[u,v]_{\cH_V}:=\int_{\Om}d^nx\,\ol{(-\Delta+V)u}\,(-\Delta+V)v,
\quad u,v\in\cH_V.
\end{equation} 
We claim that $\bigl(\cH_V,[\dott,\dott]_{\cH_V}\bigr)$ is a Hilbert space.
This readily follows as soon as we show that
\begin{equation} \label{mam-6}
\|u\|_{H^2(\Om)}\leq C\|(-\Delta+V)u\|_{L^2(\Om;d^nx)},\quad u\in H^2_0(\Om),
\end{equation} 
for some finite constant $C=C(\Om,V)>0$. To justify this, observe that
for every $u\in C^\infty_0(\Om)$ we have
\begin{align}\label{mam-7}
\int_{\Omega}d^nx\,|u|^2 &\leq  C \sum_{j=1}^n\int_{\Omega}d^nx\,
\Big|\frac{\partial u}{\partial x_j}\Big|^2
\nonumber\\
&\leq  C\sum_{j,k=1}^n\int_{\Omega}d^nx\,\Big|
\frac{\partial^2 u}{\partial x_j\partial x_k}\Big|^2
=\int_{\Omega}d^nx\,|\Delta u|^2,
\end{align}
where we have used Poincar\'e's inequality in the first two steps.
Based on this, the fact that $V$ is bounded, and the density of
$C^\infty_0(\Om)$ in $H^2_0(\Om)$ we therefore have
\begin{equation} \label{mam-6Y}
\|u\|_{H^2(\Om)}\leq C\bigl(\|(-\Delta+V)u\|_{L^2(\Om;d^nx)}
+\|u\|_{L^2(\Om;d^nx)}\bigr),\quad u\in H^2_0(\Om),
\end{equation} 
for some finite constant $C=C(\Om,V)>0$. Hence, the operator
\begin{equation} \label{mam-6YY}
-\Delta+V\in\cB\bigl(H^2_0(\Om),L^2(\Om;d^nx)\bigr)
\end{equation} 
is bounded from below modulo compact operators, since the embedding
$H^2_0(\Om)\hookrightarrow L^2(\Om;d^nx)$ is compact.
Hence, it follows that \eqref{mam-6YY} has closed range.
Since this operator is also one-to-one (as $0\not\in\sigma(H_{D,\Om})$),
estimate \eqref{mam-6} follows from the Open Mapping Theorem.
This shows that
\begin{equation} \label{mam-8}
\cH_V=H^2_0(\Om)\, \mbox{ as Banach spaces, with equivalence of norms}.
\end{equation} 
Next, we recall from the proof of Lemma \ref{L-MM-2}
that the operator \eqref{Mi-1} is invertible for
$\lambda\in\bbC\backslash \Lambda_{\Om}$ (cf.\ \eqref{Mi-6}), and that
$\Lambda_{\Om}\subset(0,\infty)$. Taking $\lambda=0$ this shows that
\begin{equation} \label{mam-9}
(-\Delta+V)^{-2}:=((-\Delta+V)^2)^{-1}\in\cB\bigl(H^{-2}(\Om),H^2_0(\Om)\bigr)
\end{equation} 
is well-defined. Furthermore, this operator is self-adjoint (viewed as
a linear, bounded operator mapping a Banach space into its dual, cf.\ \eqref{B.5}).
Consider now
\begin{equation} \label{mam-10}
B:=-(-\Delta+V)^{-2}(-\Delta+V).
\end{equation} 
Since $B$ admits the factorization
\begin{equation} \label{mam-11}
B:H^2_0(\Om)\stackrel{-\Delta+V}
{-\!\!\!-\!\!\!-\!\!\!\longrightarrow}
L^2(\Om;d^nx)\stackrel{\iota}{\hookrightarrow}
H^{-2}(\Om)\stackrel{-(-\Delta+V)^{-2}}
{-\!\!\!-\!\!\!-\!\!\!-\!\!\!-\!\!\!-\!\!\!\longrightarrow}H^2_0(\Om),
\end{equation} 
where the middle arrow is a compact inclusion, it follows that
\begin{equation} \label{mam-12}
B\in\cB(\cH_V)\, \mbox{ is compact and injective}.
\end{equation} 
In addition, for every $u,v\in C^\infty_0(\Om)$ we have via repeated
integrations by parts
\begin{align}\label{mam-13}
[Bu,v]_{\cH_V}&=
- \big((-\Delta+V)(-\Delta+V)^{-2}(-\Delta+V)u,
(-\Delta+V)v\big)_{L^2(\Om;d^nx)}
\nonumber\\
&= -\big((-\Delta+V)^{-2}(-\Delta+V)u,(-\Delta+V)^2v\big)_{L^2(\Om;d^nx)}
\nonumber\\
&= - \big((-\Delta+V)u,(-\Delta+V)^{-2}(-\Delta+V)^2v\big)_{L^2(\Om;d^nx)}
\nonumber\\
&= -((-\Delta+V)u,v)_{L^2(\Om;d^nx)}
\nonumber\\
&= -(\nabla u,\nabla v)_{(L^2(\Om;d^nx))^n}
- \big(V^{1/2}u, V^{1/2}v\big)_{L^2(\Om;d^nx)}.
\end{align}
Consequently, by symmetry,
$[Bu,v]_{\cH_V}=\ol{[Bv,u]_{\cH_V}}$, $u,v\in C^\infty_0(\Om)$
and hence,
\begin{equation} \label{mam-14}
[Bu,v]_{\cH_V}=\ol{[Bv,u]_{\cH_V}} \quad u,v\in\cH_V,
\end{equation} 
since $C^\infty_0(\Om)\hookrightarrow\cH_V$ densely. Thus,
\begin{equation} \label{mam-15}
B\in\cB_\infty (\cH_V)\, \mbox{ is self-adjoint and injective}.
\end{equation} 
To continue, we recall the operator $A_{V,\lambda}$ from \eqref{Mi-1}
and observe that
\begin{equation} \label{mam-16}
(-\Delta+V)^{-2}A_{V,z}
=I_{\cH_V}- z B, \quad z\in\bbC,
\end{equation} 
as operators in $\cB\bigl(H^2_0(\Om)\bigr)$. Thus, the spectrum
of $B$ consists (including multiplicities) precisely of the reciprocals of
those numbers $z\in\bbC$ for which the operator
$A_{V,z}\in\cB\bigl(H^2_0(\Om),H^{-2}(\Om)\bigr)$ fails to be invertible.
In other words, the spectrum of $B\in\cB(\cH_V)$ is given by
\begin{equation} \label{mam-17}
\sigma(B)=\{(\lambda_{K,\Om,j})^{-1}\}_{j\in\bbN}.
\end{equation} 
Now, from the spectral theory of compact, self-adjoint (injective) operators
on Hilbert spaces (cf., e.g., \cite[Theorem 2.36]{Mc00}), it follows
that there exists a family of functions $\{u_j\}_{j\in\bbN}$ for which
\begin{align}\label{mam-18}
& u_j\in\cH_V\, \mbox{ and }\, 
Bu_j=(\lambda_{K,\Om,j})^{-1}u_j,   \quad j\in\bbN,
\\
& [u_j,u_k]_{\cH_V}=\delta_{j,k},  \quad  j,k\in\bbN,
\label{mam-19}\\
& u=\sum_{j=1}^\infty[u,u_j]_{\cH_V}\,u_j,   \quad u\in\cH_V,
\label{mam-20}
\end{align}
with convergence in $\cH_V$. Unraveling notation,
\eqref{mam-2}--\eqref{mam-4} then readily follow from
\eqref{mam-18}--\eqref{mam-20}.  
\end{proof}

\begin{remark}
We note that Lemma \ref{T-MAM-1} gives the orthogonality of the eigenfunctions $u_j$ in terms of the inner product for $\cH_V$ (cf.\ \eqref{mam-3} and \eqref{mam-5}, or see \eqref{mam-19} immediately above).  Here we remark that the given inner product for $\cH_V$ does not correspond to the inner product that has traditionally been used in treating the buckling problem for a clamped plate, even after specializing to the case $V \equiv 0$.  The traditional inner product in that case is the {\it Dirichlet inner product}, defined by 
\begin{equation} 
D(u,v)=\int_\Omega d^n x \, (\nabla u, \nabla v)_{\bbC^n}, \quad u, v \in H^1_0(\Om), 
\end{equation}
where 
$(\cdot,\cdot)_{\bbC^n}$ denotes the usual inner product for elements of $\bbC^n$, conjugate linear in its first entry, linear in its second.  When the potential $V \ge 0$ is included, the appropriate generalization of $D(u,v)$ is the inner product
\begin{equation} 
D_V (u,v) = D(u,v) + \int_\Omega d^nx \, V \ol{u} \, v, \quad u, v \in H^1_0(\Om)  
\end{equation} 
(we recall that throughout this paper $V$ is assumed nonnegative, and hence that this inner product gives rise to a well-defined norm).  Here we observe that orthogonality of the eigenfunctions of the buckling problem in the sense of $\cH_V$ is entirely equivalent to their orthogonality in the sense of $D_V (\cdot,\cdot)$: Indeed, starting from the orthogonality in \eqref{mam-19}, integrating by parts, and using the eigenvalue equation \eqref{mam-2}, one has, for $j \ne k$, 
\begin{align} 
0& = [u_j,u_k]_{\cH_V}=\int_\Omega d^nx \, \ol{(-\Delta+V)u_j}\,(-\Delta+V)u_k 
= \int_\Omega d^nx \, \ol{u_j}\,(-\Delta+V)^2 u_k   \no \\
& = \lambda_k \int_\Omega d^nx \,\ol{u_j}\,(-\Delta+V) u_k 
=\lambda_k \bigg[D(u_j,u_k)+\int_\Omega d^nx \, V \ol{u_j} \, u_k\bigg]    \no \\ 
& = \lambda_k \, D_V(u_j,u_k), \quad u, v \in H^2_0(\Om),
\end{align} 
where $\lambda_k$ is shorthand for $\lambda_{K,\Omega,k}$ of \eqref{mam-1}, the eigenvalue corresponding to the eigenfunction $u_k$ (cf.\ \eqref{mam-2}, which exhibits the eigenvalue equation for the eigenpair $(u_j,\lambda_j)$).  Since all the $\lambda_j$'s considered here are positive (see \eqref{mam-1}), this shows that the family of eigenfunctions $\{u_j\}_{j \in \bbN}$, orthogonal with respect to $[\cdot,\cdot]_{\cH_V}$, is also orthogonal with respect to the ``generalized Dirichlet inner product", 
$D_V (\cdot,\cdot)$.  Clearly, this argument can also be reversed (since all eigenvalues are positive), and one sees that a family of eigenfunctions of the generalized buckling problem orthogonal in the sense of the Dirichlet inner product $D_V (\cdot,\cdot)$ is also orthogonal with respect to the inner product for $\cH_V$, that is, with respect to 
$[\cdot,\cdot]_{\cH_V}$.  On the other hand, it should be mentioned that the normalization of each of the $u_k$'s changes if one passes from one of these inner products to the other, due to the factor of $\lambda_k$ encountered above (specifically, one has $[u_k,u_k]_{\cH_V}=\lambda_k \, D_V (u_k,u_k)$ for each $k$). 
\end{remark}

Next, we recall the following result (which provides a slight variation
of the case $V\equiv 0$ treated in \cite{GM10}).

\begin{lemma}\label{th-CL}
Assume Hypothesis \ref{h.VK}.
Then the subspace $(-\Delta+V)\,H^2_0(\Om)$ is closed in $L^2(\Omega;d^nx)$ and
\begin{equation} \label{Man-2}
L^2(\Omega;d^nx)=\ker(H_{V,\max,\Om}) \oplus \big[(-\Delta+V)\,H^2_0(\Om)\big],
\end{equation} 
as an orthogonal direct sum.
\end{lemma}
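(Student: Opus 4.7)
My plan is to prove the two assertions in order: closedness of the range $(-\Delta+V)\,H^2_0(\Om)$ first, and then deduce the orthogonal direct sum decomposition as a standard functional-analytic consequence.

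For closedness, the key point is to identify $(-\Delta+V)\,H^2_0(\Om)$ with $\ran(H_{min,\Om})$. Indeed, Theorem \ref{T-DD1} yields $\dom(H_{min,\Om})=H^2_0(\Om)$ (under Hypothesis \ref{h.VK}), so this identification is immediate. I would then invoke the coercivity estimate
\begin{equation*}
\|u\|_{H^2(\Om)} \leq C\|(-\Delta+V)u\|_{L^2(\Om;d^nx)}, \quad u\in H^2_0(\Om),
\end{equation*}
which has essentially already been established in the proof of Lemma \ref{T-MAM-1} (see \eqref{mam-6}). This estimate follows from (a) the elliptic a priori bound
$\|u\|_{H^2(\Om)}\leq C(\|(-\Delta+V)u\|_{L^2(\Om;d^nx)}+\|u\|_{L^2(\Om;d^nx)})$ in \eqref{mam-6Y}, (b) the compactness of the embedding $H^2_0(\Om)\hookrightarrow L^2(\Om;d^nx)$, and (c) the injectivity of $-\Delta+V$ on $H^2_0(\Om)$ (since this is a restriction of the strictly positive operator $H_{D,\Om}$, guaranteed strictly positive by Theorem \ref{t2.5} given $V\geq 0$). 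The standard Fredholm/open mapping argument then upgrades (a) to the coercivity estimate by absorbing the lower-order term. Given coercivity, closedness is immediate: if $\{(-\Delta+V)u_j\}\to f$ in $L^2(\Om;d^nx)$ with $u_j\in H^2_0(\Om)$, the estimate forces $\{u_j\}$ to be Cauchy in $H^2_0(\Om)$, hence $u_j\to u$ in $H^2_0(\Om)$, and by continuity $(-\Delta+V)u=f\in(-\Delta+V)H^2_0(\Om)$.

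The orthogonal decomposition is then a standard result of Hilbert space theory: for any densely defined closed operator $T$ in a Hilbert space $\cH$ with closed range, one has the orthogonal decomposition $\cH = \ran(T) \oplus \ker(T^*)$. I would apply this to $T=H_{min,\Om}$. Theorem \ref{T-DD1} (equation \eqref{Yan-10}) gives $(H_{min,\Om})^*=H_{max,\Om}$, and the first step provides the closedness $\ran(H_{min,\Om}) = (-\Delta+V)H^2_0(\Om)$. Hence
\begin{equation*}
L^2(\Om;d^nx) = \ran(H_{min,\Om}) \oplus \ker(H_{max,\Om})
= \big[(-\Delta+V)H^2_0(\Om)\big]\oplus \ker(H_{max,\Om}),
\end{equation*}
as claimed.

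The main obstacle, such as it is, lies in Step 1: one has to carefully justify the coercivity estimate. However, since the inequality in question is exactly the one derived in the course of proving Lemma \ref{T-MAM-1} (cf.\ \eqref{mam-6}--\eqref{mam-8}), this step amounts to quoting machinery already assembled in the paper. There is no further analytical difficulty beyond this.
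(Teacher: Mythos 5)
Your proof is correct. The paper itself does not spell out an argument for this lemma (it defers to the $V\equiv 0$ case in \cite{GM10}), but your route is exactly the one its own machinery is set up for: the coercivity estimate \eqref{mam-6} established in the proof of Lemma \ref{T-MAM-1} gives closedness of $\ran(H_{min,\Om})=(-\Delta+V)H^2_0(\Om)$, and the adjoint relation $(H_{min,\Om})^*=H_{max,\Om}$ from \eqref{Yan-10} turns the standard decomposition $L^2(\Om;d^nx)=\ran(T)\oplus\ker(T^*)$ for a closed, densely defined $T$ with closed range into \eqref{Man-2}.
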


Our next theorem shows that there exists a countable family of orthonormal
eigenfunctions for the perturbed Krein Laplacian which span the orthogonal
complement of the kernel of this operator: 

\begin{theorem}\label{TH-Mq1}
Assume Hypothesis \ref{h.VK}.
Then there exists a family of functions $\{w_j\}_{j\in\bbN}$ with the
following properties:
\begin{align}\label{mam-21}
& w_j\in\dom(H_{K,\Om})\cap H^{1/2}(\Om) \, \mbox{ and }\, 
H_{K,\Om}w_j=\lambda_{K,\Om,j} w_j,  \;\;  \lambda_{K,\Om,j}>0, \; j\in\bbN,
\\
& (w_j,w_k)_{L^2(\Om;d^nx)}=\delta_{j,k}, \; j,k\in\bbN,
\label{mam-22}\\
& L^2(\Omega;d^nx)=\ker(H_{K,\Om})\,\oplus\,
\ol{{\rm lin. \, span} \{w_j\}_{j\in\bbN}} \;\, \text{ $($orthogonal direct sum$)$.}
\label{mam-23}
\end{align}
\end{theorem}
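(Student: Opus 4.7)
The natural strategy is to construct the $w_j$'s directly from the generalized buckling eigenfunctions $u_j$ furnished by Lemma~\ref{T-MAM-1}, transporting them through the spectral correspondence of Theorem~\ref{T-MM-1}. Concretely, I would set
\[
w_j := (-\Delta+V) u_j, \qquad j\in\bbN.
\]
Since $u_j\in H^2_0(\Om)$ is a nontrivial solution of the buckling problem \eqref{MM-2} at the positive eigenvalue $\lambda_{K,\Om,j}$, Theorem~\ref{T-MM-1} shows that $\lambda_{K,\Om,j}^{-1}w_j$ (and hence $w_j$ itself) belongs to $\dom(H_{K,\Om})$ and satisfies $H_{K,\Om}w_j=\lambda_{K,\Om,j}w_j$, which yields \eqref{mam-21}. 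The orthonormality \eqref{mam-22} is immediate from \eqref{mam-3}, since $(w_j,w_k)_{L^2(\Om;d^nx)}=((-\Delta+V)u_j,(-\Delta+V)u_k)_{L^2(\Om;d^nx)}=\delta_{j,k}$ by construction.

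For the spectral decomposition \eqref{mam-23} I would combine Lemma~\ref{th-CL} with the completeness statement \eqref{mam-4} of Lemma~\ref{T-MAM-1}. By Theorem~\ref{T-Kr}, $\ker(H_{K,\Om})=\ker(H_{max,\Om})$, so Lemma~\ref{th-CL} gives
\[
L^2(\Om;d^nx)=\ker(H_{K,\Om})\oplus\bigl[(-\Delta+V)H^2_0(\Om)\bigr],
\]
and it suffices to show $(-\Delta+V)H^2_0(\Om)=\ol{{\rm lin.\,span}\{w_j\}_{j\in\bbN}}$. The inclusion ``$\supseteq$'' is clear since the range is closed in $L^2(\Om;d^nx)$ by Lemma~\ref{th-CL} and each $w_j$ lies in it. For ``$\subseteq$'', I would exploit that the $\cH_V$-inner product is, by its very definition \eqref{mam-5}, the pullback of $(\cdot,\cdot)_{L^2(\Om;d^nx)}$ under $-\Delta+V$; hence the map $-\Delta+V\colon\cH_V\to L^2(\Om;d^nx)$ is an isometry, and applying it termwise to the expansion $u=\sum_j [u,u_j]_{\cH_V} u_j$ in $\cH_V$ (cf.\ \eqref{mam-4}) produces
\[
(-\Delta+V)u=\sum_{j=1}^{\infty}[u,u_j]_{\cH_V}\,w_j
\]
with convergence in $L^2(\Om;d^nx)$, placing every element of $(-\Delta+V)H^2_0(\Om)$ into the closed linear span of $\{w_j\}_{j\in\bbN}$.

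The additional regularity $w_j\in H^{1/2}(\Om)$ I would read off Proposition~\ref{pHKv}(i): under a mild multiplier hypothesis on $V$ (e.g., $V$ Lipschitz, in line with Theorem~\ref{Th-InM}(ii)), the associated buckling solution satisfies $u_j\in H^{5/2}(\Om)$, whence $w_j=(-\Delta+V)u_j\in H^{1/2}(\Om)$. I do not anticipate a serious obstacle: Lemma~\ref{T-MAM-1} packages the full spectral analysis of the buckling problem, Theorem~\ref{T-MM-1} transports it bijectively (with multiplicities) to the nonzero part of the spectrum of $H_{K,\Om}$, and Lemma~\ref{th-CL} supplies the orthogonal complement of $\ker(H_{K,\Om})$. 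The one point meriting care is the matching of multiplicities between $\lambda_{K,\Om,j}$ viewed as a buckling eigenvalue and as a Krein eigenvalue; this is precisely the content of the one-to-one correspondence \eqref{MM-5}--\eqref{MM-6} in Theorem~\ref{T-MM-1}, so the enumeration of eigenvalues in \eqref{mam-1} transfers unchanged to the $w_j$'s.
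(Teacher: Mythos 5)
Your proof follows essentially the same route as the paper: the authors also set $w_j:=(-\Delta+V)u_j$ and deduce \eqref{mam-21}--\eqref{mam-23} directly from Lemma~\ref{T-MAM-1}, the second part of Theorem~\ref{T-MM-1}, Lemma~\ref{th-CL} together with $\ker(H_{max,\Om})=\ker(H_{K,\Om})$, and Proposition~\ref{pHKv}\,$(i)$ for the $H^{1/2}(\Om)$ membership. Your caveat that the $H^{1/2}$ regularity rests on the multiplier hypothesis \eqref{MUL} (e.g., $V$ Lipschitz) is well taken and consistent with how the claim is qualified in Theorem~\ref{Th-InM}\,$(ii)$.
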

\begin{proof}
That $w_j \in H^{1/2}(\Om)$, $j\in\bbN$, follows from Proposition \ref{pHKv}\,$(i)$. 
The rest is a direct consequence of Lemma \ref{th-CL}, the fact that
\begin{equation} \label{mam-24}
\ker(H_{V,\max,\Om})=\big\{u\in L^2(\Omega;d^nx)\,\big|\,(-\Delta+V)u=0\big\}
=\ker(H_{K,\Om}),
\end{equation} 
the second part of Theorem \ref{T-MM-1}, and Lemma \ref{T-MAM-1} in
which we set $w_j:=(-\Delta+V)u_j$, $j\in\bbN$. 
\end{proof}

Next, we define the following Rayleigh quotient
\begin{equation} \label{mam-25}
R_{K,\Om}[u]:=\frac{\|(-\Delta+V)u\|^2_{L^2(\Om;d^nx)}}
{\|\nabla u\|^2_{(L^2(\Om;d^nx))^n}+\|V^{1/2}u\|^2_{L^2(\Om;d^nx)}},
\quad 0\not=u\in H^2_0(\Om).
\end{equation} 
Then the following min-max principle holds:

\begin{proposition}\label{TH-Mq2}
Assume Hypothesis \ref{h.VK}. Then
\begin{equation} \label{mam-26}
\lambda_{K,\Om,j}
=\min_{\stackrel{W_j\text{ subspace of }H^2_0(\Om)}{\dim (W_j)=j}}
\Big(\max_{0\not=u\in W_j}R_{K,\Om}[u]\Big),\quad j\in\bbN.
\end{equation} 
As a consequence, given two domains $\Omega$, $\widetilde{\Om}$ as in
Hypothesis \ref{h.Conv} for which $\Omega\subseteq\widetilde{\Om}$, and given
a potential $0\leq \widetilde{V}\in L^\infty(\widetilde{\Om})$, one has 
\begin{equation} \label{mam-2S}
0 < \wti \lambda_{K,\widetilde{\Om},j} \leq \lambda_{K,\Om,j}, \quad j\in\bbN, 
\end{equation} 
where $V:=\widetilde{V}|_{\Om}$, and $\lambda_{K,\Om,j}$ and 
$\wti \lambda_{K,\widetilde{\Om},j}$, $j\in\bbN$, are the eigenvalues corresponding 
to the Krein--von Neumann extensions associated with $\Om, V$ and $\wti\Om, \wti V$, 
respectively.
\end{proposition}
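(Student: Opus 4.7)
\textbf{Proof plan for Proposition \ref{TH-Mq2}.} The plan is to convert the statement into a standard Courant--Fischer min--max principle for the generalized eigenvalue problem built from the two sesquilinear forms on $H^2_0(\Om)$,
\begin{align*}
a(u,v) & = ((-\Delta+V)u,(-\Delta+V)v)_{L^2(\Om;d^nx)}, \\
b(u,v) & = (\nabla u, \nabla v)_{(L^2(\Om;d^nx))^n} + \big(V^{1/2}u, V^{1/2}v\big)_{L^2(\Om;d^nx)},
\end{align*}
so that $R_{K,\Om}[u]=a(u,u)/b(u,u)$. Here $a$ is the inner product of $\cH_V$ from Lemma \ref{T-MAM-1} (hence $a(u,u) \gtrsim \|u\|_{H^2(\Om)}^2$ by \eqref{mam-6}), while $b$ is clearly continuous on $H^2_0(\Om)$, and $b(u,u)>0$ for $0\neq u \in H^2_0(\Om)$, since $\widehat\gamma_D u=0$ and $-\Delta u+Vu=0$ force $u\equiv 0$ by Theorem \ref{tH.A}.

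Next I would invoke the orthonormal family $\{u_j\}_{j\in\bbN}$ produced by Lemma \ref{T-MAM-1}, which is complete in $(\cH_V,a(\dott,\dott))$ and satisfies the buckling equation \eqref{mam-2}. Pairing the eigenvalue equation against $u_j$ and integrating by parts (legitimate since $u_j,u_k\in H^2_0(\Om)$) yields
\[
b(u_j,u_k)=(u_j,(-\Delta+V)u_k)_{L^2(\Om;d^nx)}
= \lambda_{K,\Om,k}^{-1} a(u_j,u_k) = \lambda_{K,\Om,k}^{-1}\delta_{j,k}.
\]
Thus for any $0\neq u=\sum_{k\in\bbN} c_k u_k \in H^2_0(\Om)$ (convergence in $H^2$, with $c_k=a(u,u_k)$), the continuity of $a$ and $b$ gives
\[
a(u,u)=\sum_{k\in\bbN}|c_k|^2, \qquad b(u,u)=\sum_{k\in\bbN}|c_k|^2/\lambda_{K,\Om,k}.
\]

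The min--max \eqref{mam-26} then follows by the classical two-sided Courant--Fischer argument applied to these weighted sums. For the upper bound on the right-hand side of \eqref{mam-26}, pick the trial subspace $W_j^{\circ}:=\mathrm{span}\{u_1,\ldots,u_j\}$; since $1/\lambda_{K,\Om,k}\geq 1/\lambda_{K,\Om,j}$ for $k\leq j$, one computes $\max_{0\neq u\in W_j^\circ} R_{K,\Om}[u]=\lambda_{K,\Om,j}$, attained at $u=u_j$. For the reverse inequality, given any $j$-dimensional $W_j\subset H^2_0(\Om)$, use the fact that the $a$-orthogonal complement of $\mathrm{span}\{u_1,\ldots,u_{j-1}\}$ in $\cH_V=H^2_0(\Om)$ has codimension $j-1$, so $W_j$ meets it in a nonzero vector $u=\sum_{k\geq j}c_k u_k$; then $R_{K,\Om}[u] = (\sum_{k\geq j}|c_k|^2)/(\sum_{k\geq j}|c_k|^2/\lambda_{K,\Om,k}) \geq \lambda_{K,\Om,j}$. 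The delicate point here (and what I expect to be the main technical issue) is the codimension/closure argument: one must confirm that $W_j$, being finite-dimensional and hence closed, indeed intersects the closed $a$-orthogonal complement nontrivially, and that the expansion $u=\sum_{k\geq j}c_k u_k$ obeys the series identities above; both facts follow from \eqref{mam-4}, but need to be invoked explicitly.

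Finally, the monotonicity \eqref{mam-2S} is immediate from \eqref{mam-26}: extension by zero furnishes a linear isometry $H^2_0(\Om)\hookrightarrow H^2_0(\widetilde\Om)$ (since $\Om\subseteq\widetilde\Om$), and because $\widetilde V|_\Om=V$ and the extended function vanishes on $\widetilde\Om\setminus\Om$, both numerator and denominator of the Rayleigh quotient are unchanged, i.e., $\widetilde R_{K,\widetilde\Om}[u]=R_{K,\Om}[u]$ for $u\in H^2_0(\Om)$ extended by $0$. Thus any $j$-dimensional $W_j\subset H^2_0(\Om)$ yields an admissible trial subspace in $H^2_0(\widetilde\Om)$ on which the maximum is unchanged; taking the infimum over $W_j\subset H^2_0(\Om)$ gives $\widetilde\lambda_{K,\widetilde\Om,j}\leq\lambda_{K,\Om,j}$. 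Strict positivity $\widetilde\lambda_{K,\widetilde\Om,j}>0$ is already part of the construction of $\Lambda_{\widetilde\Om}\subset(0,\infty)$ in Lemma \ref{L-MM-2}.
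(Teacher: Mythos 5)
Your proof is correct and follows essentially the same route as the paper: both arguments work in the Hilbert space $\cH_V=(H^2_0(\Om),a(\dott,\dott))$, use the $a$-orthonormal buckling eigenfunctions $\{u_j\}_{j\in\bbN}$ from Lemma \ref{T-MAM-1} to compute $a(u,u)=\sum_k|c_k|^2$ and $b(u,u)=\sum_k|c_k|^2/\lambda_{K,\Om,k}$, and then run the standard two-sided Courant--Fischer argument (trial space $\mathrm{span}\{u_1,\dots,u_j\}$ for the upper bound, dimensional intersection of $W_j$ with $V_{j-1}^{\bot}$ for the lower bound). The only difference is cosmetic: you spell out the extension-by-zero isometry for the domain monotonicity \eqref{mam-2S}, which the paper simply declares an obvious consequence of \eqref{mam-26}.
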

\begin{proof}
Obviously, \eqref{mam-2S} is a consequence of \eqref{mam-26}, so we will
concentrate on the latter. We recall the Hilbert space $\cH_V$ from \eqref{mam-5}
and the orthogonal family $\{u_j\}_{j\in\bbN}$ in 
\eqref{mam-18}--\eqref{mam-20}. Next, consider the following
subspaces of $\cH_V$,
\begin{equation} \label{mam-27}
V_0:=\{0\},\quad
V_j:={\rm lin. \, span} \{u_i\,|\,1\leq i\leq j\},\quad j\in\bbN.
\end{equation} 
Finally, set
\begin{equation} \label{mam-28}
V_j^{\bot}
:=\{u\in\cH\,|\,[u,u_i]_{\cH_V}=0,\,1\leq i\leq j\},\quad j\in\bbN.
\end{equation} 
We claim that
\begin{equation} \label{mam-29}
\lambda_{K,\Om,j}=\min_{0\not=u\in V^{\bot}_{j-1}}R_{K,\Om}[u]
=R_{K,\Om}[u_j],\quad j\in\bbN.
\end{equation} 
Indeed, if $j\in\bbN$ and $u=\sum_{k=1}^\infty c_k u_k \in V^{\bot}_{j-1}$,
then $c_k = 0$ whenever $1\leq k \leq j-1$. Consequently,
\begin{equation} \label{mam-30}
\|(-\Delta+V)u\|^2_{L^2(\Om;d^nx)}
=\biggl\|\sum_{k=j}^\infty c_k (-\Delta+V)u_k \biggr\|^2_{L^2(\Om;d^nx)}
=\sum_{k=j}^\infty |c_k|^2
\end{equation} 
by \eqref{mam-3}, so that
\begin{align}\label{mam-31}
& \|\nabla u\|^2_{(L^2(\Om;d^nx))^n}+\|V^{1/2}u\|^2_{L^2(\Om;d^nx)}
=((-\Delta+V)u,u)_{L^2(\Om;d^nx)}
\nonumber\\
&\quad\quad
=\bigg(\sum_{k=j}^\infty
c_k (-\Delta+V)u_k, u\bigg)_{L^2(\Om;d^nx)}  \no \\
& \qquad =\bigg(\sum_{k=j}^\infty (\lambda_{K,\Om,k})^{-1}c_k (-\Delta+V)^2 u_k, 
u\bigg)_{L^2(\Om;d^nx)}
\nonumber\\
&\quad\quad
=\bigg(\sum_{k=j}^\infty (\lambda_{K,\Om,k})^{-1}c_k (-\Delta+V)u_k, 
(-\Delta+V)u\bigg)_{L^2(\Om;d^nx)}
\nonumber\\
&\quad\quad
=\bigg(\sum_{k=j}^\infty (\lambda_{K,\Om,k})^{-1}c_k (-\Delta+V)u_k, 
\sum_{k=j}^\infty c_k (-\Delta+V)u_k \bigg)_{L^2(\Om;d^nx)}
\nonumber\\
&\quad\quad
=\sum_{k=j}^\infty(\lambda_{K,\Om,k})^{-1}|c_k|^2
\leq (\lambda_{K,\Om,j})^{-1}\sum_{k=j}^\infty|c_k|^2
\nonumber\\
&\quad\quad
=(\lambda_{K,\Om,j})^{-1}\|(-\Delta+V)u\|^2_{L^2(\Om;d^nx)},
\end{align}
where in the third step we have relied on \eqref{mam-2}, and the last step
is based on \eqref{mam-30}.  Thus, $R_{K,\Om}[u]\geq\lambda_{K,\Om,j}$ with
equality if $u=u_j$ (cf.\ the calculation leading up to \eqref{MM-4}).
This proves \eqref{mam-29}.  In fact, the same type of argument as the
one just performed also shows that
\begin{equation} \label{mam-32}
\lambda_{K,\Om,j}=\max_{0\not=u\in V_j}R_{K,\Om}[u]=R_{K,\Om}[u_j],
\quad j\in\bbN.
\end{equation} 
Next, we claim that if $W_j$ is an arbitrary subspace of $\cH$
of dimension $j$ then
\begin{equation} \label{mam-33}
\lambda_{K,\Om,j}\leq\max_{0\not=u\in W_j}R_{K,\Om}[u],\quad j\in\bbN.
\end{equation} 
To justify this inequality, observe that
$W_j\cap V^{\bot}_{j-1}\not=\{0\}$ by dimensional considerations.
Hence, if $0\not=v_j\in W_j\cap V^{\bot}_{j-1}$ then
\begin{equation} \label{mam-34}
\lambda_{K,\Om,j}=\min_{0\not
=u\in V^{\bot}_{j-1}}R_{K,\Om}[u]\leq R_{K,\Om}[v_j]
\leq \max_{0\not=u\in W_j}R_{K,\Om}[u],
\end{equation} 
establishing \eqref{mam-33}. Now formula \eqref{mam-26} readily
follows from this and \eqref{mam-32}.
\end{proof}

If $\Omega\subset{\mathbb{R}}^n$ is a bounded Lipschitz domain denote by
\begin{equation} \label{mam-35}
0<\lambda_{D,\Om,1}\leq\lambda_{D,\Om,2}\leq\cdots\leq\lambda_{D,\Om,j}
\leq\lambda_{D,\Om,j+1}\leq\cdots
\end{equation} 
the collection of eigenvalues for the perturbed Dirichlet Laplacian
$H_{D,\Om}$ (again, listed according to their multiplicity).
Then, if $0\leq V\in L^\infty(\Om;d^nx)$, we have the well-known formula
(cf., e.g., \cite{DL90} for the case where $V\equiv 0$)
\begin{equation} \label{mam-37}
\lambda_{D,\Om,j} 
=\min_{\stackrel{W_j\text{ subspace of }H^1_0(\Om)}{\dim (W_j)=j}}
\Big(\max_{0\not=u\in W_j}R_{D,\Om}[u]\Big),\quad j\in\bbN,
\end{equation} 
where $R_{D,\Om}[u]$, the Rayleigh quotient for the perturbed
Dirichlet Laplacian, is given by
\begin{equation} \label{mam-38}
R_{D,\Om}[u]:=\frac{\|\nabla u\|^2_{(L^2(\Om;d^nx))^n}
+\|V^{1/2}u\|^2_{L^2(\Om;d^nx)}}{\|u\|^2_{L^2(\Om;d^nx)}},
\quad 0\not=u\in H^1_0(\Om).
\end{equation} 
From Theorem \ref{AS-thK}, Theorem \ref{t2.5}, and Proposition \ref{L-Fri1}, 
we already know that, granted Hypothesis \ref{h.VK}, the nonzero eigenvalues of 
the perturbed Krein Laplacian are at least as large as the corresponding 
eigenvalues of the perturbed Dirichlet Laplacian.  It is nonetheless of 
interest to provide a direct, analytical proof of this result.  We do so 
in the proposition below.

\begin{proposition}\label{TH-Mq3}
Assume Hypothesis \ref{h.VK}. Then
\begin{equation}\label{mam-39}
0 < \lambda_{D,\Om,j}\leq\lambda_{K,\Om,j},\quad j\in\bbN.
\end{equation} 
\end{proposition}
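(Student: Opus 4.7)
\textbf{Proof plan for Proposition \ref{TH-Mq3}.}
The strict positivity $\lambda_{D,\Om,1}>0$ is immediate from Theorem \ref{t2.5}, which guarantees that $H_{D,\Om}$ is strictly positive when $V\geq 0$ a.e. on $\Om$. The substance of the claim is the inequality $\lambda_{D,\Om,j}\leq \lambda_{K,\Om,j}$, and the plan is to deduce it from the two min-max characterizations \eqref{mam-26} and \eqref{mam-37} by combining the trivial inclusion $H^2_0(\Om)\subseteq H^1_0(\Om)$ with a pointwise comparison of the two Rayleigh quotients on $H^2_0(\Om)$.

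The crucial pointwise inequality is
\begin{equation}\label{RqComp}
R_{D,\Om}[u] \leq R_{K,\Om}[u], \quad 0\neq u\in H^2_0(\Om).
\end{equation}
To obtain \eqref{RqComp}, I would integrate by parts (legitimate since $u\in H^2_0(\Om)\subset H^1_0(\Om)$ and $V\geq 0$ is bounded) to write
\begin{equation}
\|\nabla u\|^2_{(L^2(\Om;d^nx))^n}+\|V^{1/2}u\|^2_{L^2(\Om;d^nx)}
= (u,(-\Delta+V)u)_{L^2(\Om;d^nx)},
\end{equation}
and then apply the Cauchy--Schwarz inequality,
\begin{equation}
(u,(-\Delta+V)u)_{L^2(\Om;d^nx)}^2
\leq \|u\|^2_{L^2(\Om;d^nx)}\,\|(-\Delta+V)u\|^2_{L^2(\Om;d^nx)}.
\end{equation}
Dividing through by
$\|u\|^2_{L^2(\Om;d^nx)}\big(\|\nabla u\|^2_{(L^2(\Om;d^nx))^n}+\|V^{1/2}u\|^2_{L^2(\Om;d^nx)}\big)$, which is strictly positive (an element of $H^2_0(\Om)$ with vanishing $H^1_0$-seminorm and vanishing $V^{1/2}$-norm would force $u$ to be a constant in $H^1_0(\Om)$, hence zero), immediately yields \eqref{RqComp}.

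Having \eqref{RqComp} in hand, the argument closes in two lines. Since every $j$-dimensional subspace $W_j\subset H^2_0(\Om)$ is, a fortiori, a $j$-dimensional subspace of $H^1_0(\Om)$, \eqref{mam-37} and \eqref{RqComp} give
\begin{align}
\lambda_{D,\Om,j}
&=\min_{\stackrel{W_j\subset H^1_0(\Om)}{\dim(W_j)=j}}
\Big(\max_{0\not=u\in W_j}R_{D,\Om}[u]\Big)
\leq \min_{\stackrel{W_j\subset H^2_0(\Om)}{\dim(W_j)=j}}
\Big(\max_{0\not=u\in W_j}R_{D,\Om}[u]\Big)    \no \\
&\leq \min_{\stackrel{W_j\subset H^2_0(\Om)}{\dim(W_j)=j}}
\Big(\max_{0\not=u\in W_j}R_{K,\Om}[u]\Big) = \lambda_{K,\Om,j},
\end{align}
which is precisely \eqref{mam-39}. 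There is no serious obstacle; the only point requiring a moment's care is the harmless verification that the denominator of $R_{K,\Om}[u]$ does not vanish for $0\neq u\in H^2_0(\Om)$, ensuring that \eqref{RqComp} is an inequality between honest finite quantities and that the pointwise bound transfers cleanly to the max over each finite-dimensional test space $W_j$.
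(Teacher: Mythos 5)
Your proof is correct and follows essentially the same route as the paper's: the key step in both is the pointwise comparison $R_{D,\Om}[u]\leq R_{K,\Om}[u]$ obtained from integration by parts and the Cauchy--Schwarz inequality, fed into the two min-max characterizations. The only (harmless) difference is that you compare the test-space families directly via the inclusion $H^2_0(\Om)\subseteq H^1_0(\Om)$, whereas the paper first passes both variational formulas to subspaces of $C^\infty_0(\Om)$ by density.
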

\begin{proof}
By the density of $C^\infty_0(\Om)$ into $H^2_0(\Om)$ and $H^1_0(\Om)$,
respectively, we obtain from \eqref{mam-26} and \eqref{mam-37} that
\begin{align}\label{mam-40}
& \lambda_{K,\Om,j}
=\inf_{\stackrel{W_j\text{ subspace of }C^\infty_0(\Om)}{\dim(W_j)=j}}
\Big(\sup_{0\not=u\in W_j}R_{K,\Om}[u]\Big), 
\\
& \lambda_{D,\Om,j}
=\inf_{\stackrel{W_j\text{ subspace of }C^\infty_0(\Om)}{\dim (W_j)=j}}
\Big(\sup_{0\not=u\in W_j}R_{D,\Om}[u]\Big),
\label{mam-41}
\end{align}
for every $j\in\bbN$. Since, if $u\in C^\infty_0(\Om)$,
\begin{align}\label{mam-42}
& \|\nabla u\|^2_{(L^2(\Om;d^nx))^n}+\|V^{1/2}u\|^2_{L^2(\Om;d^nx)}
= ((-\Delta+V)u,u)_{L^2(\Om;d^nx)}
\nonumber\\[4pt]
& \quad \leq \|(-\Delta+V)u\|_{L^2(\Om;d^nx)}\|u\|_{L^2(\Om;d^nx)},
\end{align}
we deduce that
\begin{equation} \label{mam-43}
R_{D,\Om}[u]\leq R_{K,\Om}[u],
\, \mbox{ whenever }\, 0\not=u\in C^\infty_0(\Om).
\end{equation} 
With this at hand, \eqref{mam-39} follows from \eqref{mam-40}--\eqref{mam-41}.
\end{proof}

\begin{remark}\label{RRR-em}
Another analytical approach to \eqref{mam-39} which highlights the
connection between the perturbed Krein Laplacian
and a fourth-order boundary problem is as follows.
Granted Hypotheses \ref{h2.1} and \ref{h.V}, and given
$\lambda\in\bbC$, consider the following eigenvalue problem
\begin{equation}\label{MM-1H}
\begin{cases}
u\in\dom(-\Delta_{max,\Om}),\quad (-\Delta+V)u\in\dom(-\Delta_{max,\Om}),
\\
(-\Delta+V)^2u=\lambda\,(-\Delta+V)u \,\mbox{ in } \,\Omega,
\\
\widehat\ga_D(u)=0 \,\mbox{ in } \,\Bigl(N^{1/2}(\dOm)\Bigr)^*,
\\
\widehat\ga_D((-\Delta+V)u)=0 \,\mbox{ in } \,\Bigl(N^{1/2}(\dOm)\Bigr)^*.
\end{cases} 
\end{equation}
Associated with it is the sesquilinear form
\begin{equation}\label{MM-19X}
\begin{cases}
\widetilde{a}_{V,\lambda}(\dott,\dott):\widetilde{\cH}\times\widetilde{\cH}
\longrightarrow\bbC,\quad\widetilde{\cH}:=H^2(\Om)\cap H^1_0(\Om),
\\
\widetilde{a}_{V,\lambda}(u,v)
:=((-\Delta+V)u,(-\Delta+V)v)_{L^2(\Om;d^nx)}
+\big( V^{1/2}u,V^{1/2}v\big)_{L^2(\Om;d^nx)}
\\
\hskip 0.77in
-\lambda\,(\nabla u,\nabla v)_{(L^2(\Om;d^nx))^n},\quad
u,v\in\widetilde{\cH},
\end{cases}
\end{equation}
which has the property that
\begin{equation} \label{MM-19Y}
u\in\widetilde{\cH} \,\mbox{ satisfies }\,\widetilde{a}_{V,\lambda}(u,v)=0  
\,\mbox{ for every } \,v\in \widetilde{\cH} \,
\text{ if and only if } \, \mbox{$u$ solves \eqref{MM-1H}}.
\end{equation} 
We note that since the operator
$-\Delta+V:H^2(\Om)\cap H^1_0(\Om)\to L^2(\Om;d^nx)$ is an isomorphism,
it follows that $u\mapsto \|(-\Delta+V)u\|_{L^2(\Om;d^nx)}$ is an equivalent
norm on the Banach space $\widetilde{\cH}$, and the form
$\widetilde{a}_{V,\lambda}(\dott,\dott)$ is coercive if $\lambda<-M$,
where $M=M(\Om,V)>0$ is a sufficiently large constant.
Based on this and proceeding as in Section \ref{s10}, it can then be shown that
the problem \eqref{MM-1H} has nontrivial solutions if and only
if $\lambda$ belongs to an exceptional set
$\widetilde{\Lambda}_{\Om,V}\subset(0,\infty)$ which is discrete and
only accumulates at infinity. Furthermore,
$u$ solves \eqref{MM-1H} if and only if $v:=(-\Delta+V)u$ is an eigenfunction
for $H_{D,\Om}$, corresponding to the eigenvalue $\lambda$ and,
conversely, if $u$ is an eigenfunction for $H_{D,\Om}$
corresponding to the eigenvalue $\lambda$, then $u$ solves \eqref{MM-1H}.
Consequently, the problem \eqref{MM-1H} is spectrally equivalent to
$H_{D,\Om}$. From this, it follows that the eigenvalues
$\{\lambda_{D,\Om,j}\}_{j\in\bbN}$ of $H_{D,\Om}$ can be expressed as
\begin{equation} \label{mam-26X}
\lambda_{D,\Om,j}
=\min_{\stackrel{W_j\text{ subspace of }\widetilde{\cH}}{\dim (W_j)=j}}
\Big(\max_{0\not=u\in W_j}R_{K,\Om}[u]\Big),\quad j\in\bbN,
\end{equation} 
where the Rayleigh quotient $R_{K,\Om}[u]$ is as in \eqref{mam-25}.
The upshot of this representation is that it immediately yields
\eqref{mam-39}, on account of \eqref{mam-26} and the fact that
$H^2_0(\Om)\subset\widetilde{\cH}$.
\end{remark}

Next, let $\Omega$ be as in Hypothesis \ref{h2.1} and
$0\leq V\in L^\infty(\Om;d^nx)$. For $\lambda\in\bbR$ set
\begin{equation} \label{mam-44}
N_{X,\Om}(\lambda)
:=\#\{j\in\bbN\,|\,\lambda_{X,\Om,j}\leq\lambda\},\quad X\in\{D,K\},
\end{equation} 
where $\#S$ denotes the cardinality of the set $S$.

\begin{corollary}\label{TH-Mq4}
Assume Hypothesis \ref{h.VK}. Then
\begin{equation} \label{mam-45}
N_{K,\Om}(\lambda)\leq N_{D,\Om}(\lambda),\quad \lambda\in\bbR.
\end{equation} 
In particular,
\begin{equation}\label{mam-46}
N_{K,\Om}(\lambda)=O(\lambda^{n/2})\, \mbox{ as }\, \lambda\to\infty.
\end{equation} 
\end{corollary}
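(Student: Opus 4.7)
\begin{proof}[Proof proposal]
The first inequality \eqref{mam-45} is an immediate consequence of Proposition \ref{TH-Mq3}. Indeed, from $0 < \lambda_{D,\Om,j} \leq \lambda_{K,\Om,j}$ for every $j\in\bbN$, the inclusion of indexing sets
\begin{equation}
\{j\in\bbN\,|\,\lambda_{K,\Om,j}\leq\lambda\}\subseteq \{j\in\bbN\,|\,\lambda_{D,\Om,j}\leq\lambda\}
\end{equation}
holds for every $\lambda \in\bbR$, and taking cardinalities yields \eqref{mam-45}.

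To establish \eqref{mam-46}, it therefore suffices to prove the corresponding bound $N_{D,\Om}(\lambda) = O(\lambda^{n/2})$ as $\lambda\to\infty$ for the perturbed Dirichlet Laplacian $H_{D,\Om}$. Since $0 \leq V \in L^\infty(\Om;d^nx)$, the min-max principle \eqref{mam-37}--\eqref{mam-38} applied to $H_{D,\Om}$ and to the unperturbed Dirichlet Laplacian $-\Delta_{D,\Om}$ shows that
\begin{equation}
\lambda^{(0)}_{D,\Om,j} \leq \lambda_{D,\Om,j} \leq \lambda^{(0)}_{D,\Om,j} + \|V\|_{L^\infty(\Om;d^nx)}, \quad j\in\bbN,
\end{equation}
where $\lambda^{(0)}_{D,\Om,j}$ denotes the $j$-th Dirichlet eigenvalue of $-\Delta_{D,\Om}$ (counting multiplicities). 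Consequently, the distribution functions satisfy
\begin{equation}
N_{D,\Om}(\lambda) \leq N^{(0)}_{D,\Om}(\lambda),
\end{equation}
where $N^{(0)}_{D,\Om}$ is the counting function for $-\Delta_{D,\Om}$. The classical Weyl asymptotic formula \eqref{Wey-1} of Birman and Solomyak, valid for arbitrary bounded open sets in $\bbR^n$ (and in particular on the quasi-convex $\Om$ of Hypothesis \ref{h.VK}), yields
\begin{equation}
N^{(0)}_{D,\Om}(\lambda) = (2\pi)^{-n} v_n |\Om|\, \lambda^{n/2} + o\big(\lambda^{n/2}\big)\, \mbox{ as } \, \lambda\to\infty,
\end{equation}
so that, a fortiori, $N^{(0)}_{D,\Om}(\lambda) = O(\lambda^{n/2})$. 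Combining this with \eqref{mam-45} produces \eqref{mam-46}.
\end{proof}

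The plan is essentially a two-line argument: reduce to the perturbed Dirichlet counting function via Proposition \ref{TH-Mq3}, then invoke the classical Weyl law for the unperturbed Dirichlet Laplacian on bounded open sets together with a trivial min-max perturbation bound to absorb $V$. There is no real obstacle here; the nontrivial spectral work has already been done in Proposition \ref{TH-Mq3} (which established the eigenvalue monotonicity $\lambda_{D,\Om,j}\leq \lambda_{K,\Om,j}$) and in the cited Birman--Solomyak theorem. The sharper remainder estimate needed for the Weyl asymptotics of $H_{K,\Om}$ itself (Theorem \ref{Th-InM}(vi)) will of course require the buckling-problem machinery of Section \ref{s10} and Kozlov's theorem, but for the mere $O(\lambda^{n/2})$ bound stated here, the crude comparison with $H_{D,\Om}$ suffices.
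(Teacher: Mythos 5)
Your proof is correct and follows essentially the same route as the paper: \eqref{mam-45} is deduced from the eigenvalue monotonicity $\lambda_{D,\Om,j}\leq\lambda_{K,\Om,j}$ of Proposition \ref{TH-Mq3}, and \eqref{mam-46} from the Birman--Solomyak Weyl asymptotics for the Dirichlet Laplacian. The only difference is that you make explicit the min-max comparison between $H_{D,\Om}$ and $-\Delta_{D,\Om}$ needed to absorb the bounded potential $V$, a step the paper leaves implicit by appealing to the generality of the results in \cite{BS70}.
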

\begin{proof}
Estimate \eqref{mam-45} is a trivial consequence of \eqref{mam-39},
whereas \eqref{mam-46} follows from \eqref{mam-35} and Weyl's asymptotic
formula for the Dirichlet Laplacian in a Lipschitz domain
(cf.\ \cite{BS70} and the references therein for very general results of
this nature).
\end{proof}

\subsection{The Unperturbed Case}
\label{s11Y}

What we have proved in Section \ref{s10} and Section \ref{s11X} shows 
that all known eigenvalue estimates for the (standard) buckling problem
\begin{equation} \label{MM-1F}
u\in H^2_0(\Om),\quad
\Delta^2 u=-\lambda\,\Delta u \,\mbox{ in } \,\Omega,
\end{equation} 
valid in the class of domains described in
Hypothesis \ref{h.Conv}, automatically hold, in the same format,
for the Krein Laplacian (corresponding to $V\equiv 0$).
For example, we have the following result with $\lambda^{(0)}_{K,\Om,j}$, 
$j\in\bbN$, denoting the nonzero eigenvalues of the Krein Laplacian 
$-\Delta_{K,\Om}$ and $\lambda^{(0)}_{D,\Om,j}$, $j\in\bbN$, denoting the 
eigenvalues of the Dirichlet Laplacian $-\Delta_{D,\Om}$:

\begin{theorem}\label{TRm-1}
If $\Omega\subset\bbR^n$ is as in Hypothesis \ref{h.Conv}, the nonzero 
eigenvalues of the Krein Laplacian $-\Delta_{K,\Om}$ satisfy
\begin{align}\label{mx-1}
& \lambda^{(0)}_{K,\Om,2}\leq\frac{n^2+8n+20}{(n+2)^2}\lambda^{(0)}_{K,\Om,1},
\\
& \sum_{j=1}^n\lambda^{(0)}_{K,\Om,j+1}< (n+4)\lambda^{(0)}_{K,\Om,1}
-\frac{4}{n+4}(\lambda^{(0)}_{K,\Om,2}-\lambda^{(0)}_{K,\Om,1})
\le (n+4)\lambda^{(0)}_{K,\Om,1},
\label{mx-2}
\\
& \sum_{j=1}^k \big(\lambda^{(0)}_{K,\Om,k+1}-\lambda^{(0)}_{K,\Om,j}\big)^2 
\leq\frac{4(n+2)}{n^2}
\sum_{j=1}^k \big(\lambda^{(0)}_{K,\Om,k+1}-\lambda_{K,0,j}\big)
\lambda^{(0)}_{K,\Om,j},
\quad k\in\bbN,
\label{mx-3}
\end{align}
Furthermore, if $j_{(n-2)/2,1}$ is the first positive zero of the
Bessel function of first kind and order $(n-2)/2$ $($cf.\ \cite[Sect.\ 9.5]{AS72}$)$, 
$v_n$ denotes the volume of the unit ball in $\bbR^n$, and $|\Omega|$ stands 
for the $n$-dimensional Euclidean volume of $\Omega$, then
\begin{equation} \label{mx-4} 
\frac{2^{2/n}j_{(n-2)/2,1}^2v_n^{2/n}}{|\Omega|^{2/n}} < \lambda^{(0)}_{D,\Om,2} 
\leq \lambda^{(0)}_{K,\Om,1}. 
\end{equation} 
\end{theorem}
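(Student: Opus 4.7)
The unifying idea is that, in view of Theorem \ref{T-MM-1} and the variational characterization in Proposition \ref{TH-Mq2} specialized to $V\equiv 0$, the nonzero eigenvalues of $-\Delta_{K,\Om}$ coincide (with multiplicities) with the eigenvalues of the clamped-plate buckling problem \eqref{MM-1F}, and admit the Rayleigh--Ritz description
\[
\lambda^{(0)}_{K,\Om,j}
=\min_{\stackrel{W_j\text{ subspace of }H^2_0(\Om)}{\dim(W_j)=j}}\Big(\max_{0\neq u\in W_j}\frac{\|\Delta u\|^2_{L^2(\Om;d^nx)}}{\|\nabla u\|^2_{(L^2(\Om;d^nx))^n}}\Big),\quad j\in\bbN.
\]
Once this is in place, the estimates \eqref{mx-1}, \eqref{mx-2}, \eqref{mx-3} are simply the well-known universal inequalities for the buckling problem. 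Specifically, \eqref{mx-1} is a Payne--P\'olya--Weinberger-type bound proved in \cite{Pa67} (with the sharp constant $(n^2+8n+20)/(n+2)^2$ obtained in \cite{AL96}); \eqref{mx-2} is a Hile--Yeh-type trace bound (cf.\ \cite{HY84}, sharpened in \cite{As04}, \cite{As09}); and \eqref{mx-3} is the buckling version of the Yang universal inequality from \cite{CY06}. Since the arguments in these references use only the min-max characterization with test functions in $H^2_0(\Om)$ together with standard integration by parts, they transfer verbatim to the quasi-convex setting of Hypothesis \ref{h.Conv}.

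For the second inequality in \eqref{mx-4}, namely $\lambda^{(0)}_{D,\Om,2}\le\lambda^{(0)}_{K,\Om,1}$, the plan is to reproduce Payne's 1955 argument \cite{Pa55}. Let $u$ be a first buckling eigenfunction, so $u\in H^2_0(\Om)$, $\Delta^2 u=-\lambda^{(0)}_{K,\Om,1}\Delta u$. For each $i\in\{1,\ldots,n\}$, the partial derivative $v_i:=\partial_i u$ lies in $H^1_0(\Om)$ (this is exactly where membership in $H^2_0(\Om)$ is used). Letting $\phi_1$ denote a first Dirichlet eigenfunction and setting $\widetilde v_i:=v_i-(v_i,\phi_1)_{L^2(\Om)}\,\phi_1/\|\phi_1\|^2$, the space $\mathrm{span}\{\widetilde v_1,\ldots,\widetilde v_n\}$ is a nontrivial subspace of $\{\phi_1\}^{\perp}\cap H^1_0(\Om)$. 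An integration by parts using $\Delta^2u=-\lambda^{(0)}_{K,\Om,1}\Delta u$ then yields
\[
\sum_{i=1}^n\|\nabla v_i\|^2_{L^2(\Om;d^nx)}
=\sum_{i=1}^n\int_\Om|\nabla\partial_iu|^2\,d^nx
=\|\Delta u\|^2_{L^2(\Om;d^nx)}
=\lambda^{(0)}_{K,\Om,1}\,\|\nabla u\|^2_{L^2(\Om;d^nx)^n},
\]
and an analogous identity for $\sum_i\|v_i\|^2=\|\nabla u\|^2$; the projection onto $\phi_1$ only decreases Rayleigh quotients, so at least one $\widetilde v_i$ has $\|\nabla\widetilde v_i\|^2/\|\widetilde v_i\|^2\le\lambda^{(0)}_{K,\Om,1}$, and the min-max formula \eqref{mam-37} (with $V\equiv 0$, $j=2$) yields the claim.

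For the left-most inequality in \eqref{mx-4}, the strategy is to invoke the classical Krahn--Szeg\"o inequality: among all bounded open sets of prescribed $n$-volume, $\lambda_2$ of the Dirichlet Laplacian is uniquely minimized by the disjoint union of two equal balls. Applying this to $\Om$ with total volume $|\Om|$ gives $\lambda^{(0)}_{D,\Om,2}\ge\lambda_1^{(D)}(B_{|\Om|/2})$, and the Faber--Krahn eigenvalue of a ball of volume $|\Om|/2$ equals $2^{2/n}j_{(n-2)/2,1}^2v_n^{2/n}/|\Om|^{2/n}$. Since a quasi-convex domain is connected (hence not a disjoint union of two balls), the Krahn--Szeg\"o inequality is strict, giving the strict lower bound in \eqref{mx-4}.

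The main obstacle is the passage from the smooth setting, in which classical buckling inequalities were originally proved, to the quasi-convex setting. The key point that must be checked is that nothing beyond membership in $H^2_0(\Om)$ is required of the trial functions: the Rayleigh--Ritz characterization above is valid in the quasi-convex class by Proposition \ref{TH-Mq2}, the integration by parts identities used for \eqref{mx-1}--\eqref{mx-3} involve only $H^2_0$-functions, and Payne's argument for \eqref{mx-4} uses only $\partial_i u\in H^1_0(\Om)$ for $u\in H^2_0(\Om)$. Consequently, no additional regularity of $\partial\Om$ is required beyond what is already encoded in Hypothesis \ref{h.Conv}.
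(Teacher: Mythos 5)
Your overall strategy coincides with the paper's: identify the nonzero eigenvalues of $-\Delta_{K,\Om}$ with the buckling eigenvalues via Theorem \ref{T-MM-1} and the min-max formula of Proposition \ref{TH-Mq2} (with $V\equiv 0$), import \eqref{mx-1}--\eqref{mx-3} from the buckling literature, and obtain \eqref{mx-4} from the Krahn--Szeg{\H o} inequality together with Payne's comparison $\lambda^{(0)}_{D,\Om,2}\leq\lambda^{(0)}_{K,\Om,1}$. The paper's proof is exactly this reduction plus citations to \cite{As99}, \cite{As04}, \cite{As09}, \cite{CY06}, \cite{HY84} for \eqref{mx-1}--\eqref{mx-3} and to \cite{Kra26}, \cite{Pa55} for \eqref{mx-4}, and your observation that the cited arguments use only $H^2_0(\Om)$ trial functions (hence transfer to the quasi-convex class) is the right justification. (Your attributions are slightly off---the ratio bound \eqref{mx-1} and the refined trace bound \eqref{mx-2} come from \cite{HY84}, \cite{CY06}, \cite{As04}, \cite{As09} rather than \cite{Pa67} and \cite{AL96}---but that is bibliographic, not mathematical.)

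There is, however, a genuine error in your attempted reproduction of Payne's argument for $\lambda^{(0)}_{D,\Om,2}\leq\lambda^{(0)}_{K,\Om,1}$. The claim that ``the projection onto $\phi_1$ only decreases Rayleigh quotients'' is backwards: removing the component along the \emph{lowest} Dirichlet eigenfunction can only increase the Rayleigh quotient. Quantitatively, with $v_i=\partial_i u$, $c_i=(\phi_1,v_i)_{L^2(\Om;d^nx)}$ and $\|\phi_1\|_{L^2(\Om;d^nx)}=1$, one computes
\begin{align*}
\|\widetilde v_i\|^2_{L^2(\Om;d^nx)}&=\|v_i\|^2_{L^2(\Om;d^nx)}-|c_i|^2,\\
\|\nabla\widetilde v_i\|^2_{(L^2(\Om;d^nx))^n}&=\|\nabla v_i\|^2_{(L^2(\Om;d^nx))^n}-\lambda^{(0)}_{D,\Om,1}|c_i|^2,
\end{align*}
so that, with $A=\sum_i\|v_i\|^2=\|\nabla u\|^2$, $B=\sum_i|c_i|^2$, and $\sum_i\|\nabla v_i\|^2=\|\Delta u\|^2=\lambda^{(0)}_{K,\Om,1}A$,
\begin{equation*}
\frac{\sum_i\|\nabla\widetilde v_i\|^2}{\sum_i\|\widetilde v_i\|^2}
=\frac{\lambda^{(0)}_{K,\Om,1}A-\lambda^{(0)}_{D,\Om,1}B}{A-B}
=\lambda^{(0)}_{K,\Om,1}+\frac{\big(\lambda^{(0)}_{K,\Om,1}-\lambda^{(0)}_{D,\Om,1}\big)B}{A-B}
\;\geq\;\lambda^{(0)}_{K,\Om,1}.
\end{equation*}
Thus the averaging argument only shows that some $\widetilde v_i$ has Rayleigh quotient at most $\lambda^{(0)}_{K,\Om,1}+\big(\lambda^{(0)}_{K,\Om,1}-\lambda^{(0)}_{D,\Om,1}\big)B/(A-B)$, which is weaker than what you need unless $B=0$, that is, unless every $\partial_i u$ is already orthogonal to $\phi_1$ (as happens for the ball, the equality case). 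Payne's actual proof requires an additional idea to handle this non-orthogonality; since the paper merely cites \cite{Pa55} at this point the theorem itself is unaffected, but your proposed self-contained derivation of the second inequality in \eqref{mx-4} does not close as written. The Krahn--Szeg{\H o} step for the first (strict) inequality in \eqref{mx-4}, including the identification of $2^{2/n}j_{(n-2)/2,1}^2v_n^{2/n}|\Om|^{-2/n}$ with the first Dirichlet eigenvalue of a ball of volume $|\Om|/2$, is correct.
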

\begin{proof}
With the eigenvalues of the buckling plate problem replacing the
corresponding eigenvalues of the Krein Laplacian, estimates 
\eqref{mx-1}--\eqref{mx-3} have been proved in 
\cite{As99}, \cite{As04}, \cite{As09}, \cite{CY06},  
and \cite{HY84} (indeed, further strengthenings 
of \eqref{mx-2} are detailed in \cite{As04}, \cite{As09}), whereas the 
respective parts of \eqref{mx-4} are covered by results in \cite{Kra26} 
and \cite{Pa55} (see also \cite{AL96}, \cite{BP63}).
\end{proof}

\begin{remark}\label{Rm-1}
Given the physical interpretation of the first eigenvalue for \eqref{MM-1F},
it follows that $\lambda^{(0)}_{K,\Om,1}$, the first nonzero eigenvalue for 
the Krein Laplacian $-\Delta_{K,\Om}$, is proportional to the load 
compression at which the plate $\Omega$ (assumed to be as in Hypothesis 
\ref{h.Conv}) buckles.  In this connection, it is worth remembering the 
long-standing conjecture of P\'olya--Szeg{\H o}, to the effect that amongst 
all plates of a given area, the circular one will buckle first (assuming 
all relevant physical parameters being equal).  In \cite{AL96}, the authors 
have given a partial result in this direction which, in terms of the first 
eigenvalue  $\lambda^{(0)}_{K,\Om,1}$ for the Krein Laplacian 
$-\Delta_{K,\Om}$ in a domain $\Om$ as in Hypothesis \ref{h.Conv}, reads
\begin{equation} \label{A-L.1}
\lambda^{(0)}_{K,\Om,1}>\frac{2^{2/n}j_{(n-2)/2,1}^2v_n^{2/n}}{|\Omega|^{2/n}}
=c_n\lambda^{(0)}_{K,\Om^{\#},1}
\end{equation} 
where $\Om^{\#}$ is the $n$-dimensional ball with the same volume as $\Om$, 
and
\begin{equation} \label{A-L.2}
c_n= 2^{2/n}[j_{(n-2)/2,1}/j_{n/2,1}]^2 
=1-(4-{\rm log}\,4)/n+O(n^{-5/3})\to 1 \,\mbox{ as }\, n\to\infty.
\end{equation} 
This result implies an earlier inequality of Bramble and Payne
\cite{BP63} for the two-dimensional case, which reads 
\begin{equation} \label{A-L.3}
\lambda^{(0)}_{K,\Om,1}>\frac{2\pi j_{0,1}^2}{{\rm Area}\,(\Omega)}.
\end{equation} 
\end{remark}

Before stating an interesting universal inequality concerning the ratio 
of the first (nonzero) Dirichlet and Krein Laplacian eigenvalues for a 
bounded domain with boundary of nonnegative Gaussian mean curvature 
(which includes, obviously, the case of a bounded convex domain),
we recall a well-known result due to Babu{\v s}ka 
and V\'yborn\'y \cite{BV65} concerning domain continuity of Dirichlet 
eigenvalues (see also \cite{BL08}, \cite{BLL08}, \cite{Da03}, \cite{Fu99}, 
\cite{St95}, \cite{We84}, and the literature cited therein):

\begin{theorem}\label{tDirichletapprox}
Let $\Om\subset \bbR^n$ be open and bounded, and suppose that $\Om_m\subset 
\Om$, $m\in\bbN$, are open and monotone increasing toward $\Om$, that is, 
\begin{equation}
\Om_m \subset \Om_{m+1} \subset \Om, \; m\in\bbN, \quad 
\bigcup_{m\in\bbN} \Om_m = \Om. 
\end{equation}
In addition, let $-\Delta_{D,\Om_m}$ and $-\Delta_{D,\Om}$ be the Dirichlet 
Laplacians in $L^2(\Om_m;d^n x)$ and $L^2(\Om;d^n x)$ $($cf.\ \eqref{3.QHD}, 
\eqref{3.HDF}$)$, 
and denote their respective spectra by 
\begin{equation}
\sigma(-\Delta_{D,\Om_m}) = \big\{\lambda^{(0)}_{D,\Om_m,j}\big\}_{j\in\bbN}, \; 
m\in\bbN, \, \text{ and } \, 
\sigma(-\Delta_{D,\Om}) = \big\{\lambda^{(0)}_{D,\Om,j}\big\}_{j\in\bbN}. 
\end{equation}
Then, for each $j\in\bbN$,  
\begin{equation} \label{A-P.1a}
\lim_{m\to\infty} \lambda^{(0)}_{D,\Om_m,j} = \lambda^{(0)}_{D,\Om,j}.
\end{equation} 
\end{theorem}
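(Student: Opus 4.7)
The plan is to prove the two matching inequalities $\liminf_{m\to\infty}\lambda^{(0)}_{D,\Om_m,j}\geq\lambda^{(0)}_{D,\Om,j}$ and $\limsup_{m\to\infty}\lambda^{(0)}_{D,\Om_m,j}\leq\lambda^{(0)}_{D,\Om,j}$ via the min--max characterization
\[
\lambda^{(0)}_{D,U,j}=\inf_{\substack{W\subset H^1_0(U)\\\dim W=j}}\,\sup_{0\neq u\in W}\frac{\|\nabla u\|^2_{L^2(U;d^n x)}}{\|u\|^2_{L^2(U;d^n x)}},
\]
valid for every bounded open $U\subset\bbR^n$ (cf.\ \eqref{mam-37} with $V\equiv 0$). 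The key structural fact is that the containment $\Om_m\subset\Om$ yields, via extension by zero, an isometric embedding $H^1_0(\Om_m)\hookrightarrow H^1_0(\Om)$ that preserves both the $L^2$-norm and the gradient norm, so that the Rayleigh quotient is left invariant along the embedding.

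The first (and essentially trivial) step is monotonicity: for fixed $j\in\bbN$, feeding any $j$-dimensional $W\subset H^1_0(\Om_m)$ (extended by zero to $\Om$ or to $\Om_{m+1}$) into the min--max for the larger domain shows that $\lambda^{(0)}_{D,\Om,j}\leq\lambda^{(0)}_{D,\Om_{m+1},j}\leq\lambda^{(0)}_{D,\Om_m,j}$. Consequently, $\{\lambda^{(0)}_{D,\Om_m,j}\}_{m\in\bbN}$ is nonincreasing and bounded below by $\lambda^{(0)}_{D,\Om,j}$, and therefore converges to some $\mu_j\geq\lambda^{(0)}_{D,\Om,j}$.

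The second step -- which carries the content of the theorem -- is the reverse bound $\mu_j\leq\lambda^{(0)}_{D,\Om,j}$. Fix $\varepsilon>0$ and, using density of $C^\infty_0(\Om)$ in $H^1_0(\Om)$, select a $j$-dimensional subspace $W\subset C^\infty_0(\Om)$ whose Rayleigh quotient is bounded above by $\lambda^{(0)}_{D,\Om,j}+\varepsilon$. With $\{\phi_1,\dots,\phi_j\}$ a basis for $W$, the set $K:=\bigcup_{k=1}^{j}\mathrm{supp}(\phi_k)$ is a compact subset of $\Om$; since $\{\Om_m\}_{m\in\bbN}$ is an increasing open cover of $K$ by the hypothesis $\bigcup_m\Om_m=\Om$, compactness produces an index $m_0$ (a single one, by monotonicity) such that $K\subset\Om_{m_0}$. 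Then $W\subset C^\infty_0(\Om_m)\subset H^1_0(\Om_m)$ for every $m\geq m_0$, and inserting $W$ into the min--max for $-\Delta_{D,\Om_m}$ gives $\lambda^{(0)}_{D,\Om_m,j}<\lambda^{(0)}_{D,\Om,j}+\varepsilon$. Letting $m\to\infty$ and then $\varepsilon\downarrow 0$ yields $\mu_j\leq\lambda^{(0)}_{D,\Om,j}$, completing the sandwich.

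The only mildly delicate point -- and the step where I expect some care is needed -- is the selection in the second step of a near-optimal $j$-dimensional subspace that actually sits inside $C^\infty_0(\Om)$ rather than merely in $H^1_0(\Om)$. This is handled by starting with a near-optimal $j$-dimensional $W_0\subset H^1_0(\Om)$, approximating a chosen basis of $W_0$ in $H^1$-norm by elements of $C^\infty_0(\Om)$, and invoking continuity of the Gram determinant to preserve linear independence together with uniform continuity of the Rayleigh quotient on the resulting perturbed finite-dimensional subspace to preserve $\varepsilon$-near-optimality. Notably, no regularity of the exhausting sets $\Om_m$ is required beyond the openness stated in the hypotheses: the argument uses only the containment $\Om_m\subset\Om$, the monotone exhaustion $\bigcup_m\Om_m=\Om$, and the density of test functions in $H^1_0(\Om)$.
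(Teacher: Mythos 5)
Your argument is correct. Note, however, that the paper itself offers no proof of Theorem \ref{tDirichletapprox}: it is quoted as a known result of Babu\v{s}ka and V\'yborn\'y \cite{BV65}, so there is no in-text proof to compare against. What you have written is the standard self-contained argument for the special case of a monotone interior exhaustion: extension by zero gives the isometric embedding $H^1_0(\Om_m)\hookrightarrow H^1_0(\Om_{m+1})\hookrightarrow H^1_0(\Om)$ and hence the monotone upper bound $\lambda^{(0)}_{D,\Om,j}\leq\lambda^{(0)}_{D,\Om_{m+1},j}\leq\lambda^{(0)}_{D,\Om_m,j}$, and the reverse inequality follows from replacing a near-minimizing $j$-dimensional subspace of $H^1_0(\Om)$ by one in $C^\infty_0(\Om)$ (exactly the density step the paper uses in \eqref{mam-40}--\eqref{mam-41}) and absorbing its compact support into some $\Om_{m_0}$. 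The one point you flag as delicate -- preserving dimension and near-optimality under $H^1$-approximation of a basis -- is handled correctly: the supremum of the Rayleigh quotient over $\mathrm{span}\{\phi_1,\dots,\phi_j\}$ is the top eigenvalue of a generalized matrix eigenvalue problem whose Gram matrices depend continuously on the $\phi_k$ in $H^1$, and the $L^2$-Gram matrix stays invertible under small perturbations. Your observation that no boundary regularity of the $\Om_m$ is needed is also accurate (the form definition of the Dirichlet Laplacian and the compactness of its resolvent require only that the sets be open and bounded, as the paper notes in Subsection \ref{s4X}). The full Babu\v{s}ka--V\'yborn\'y theorem treats more general domain perturbations and is usually proved via resolvent convergence; your min--max route is more elementary but suffices for the monotone case actually stated here.
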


\begin{theorem}\label{T-Pay-1}
Assume that $\Om\subset\bbR^n$ is a bounded quasi-convex domain. 
In addition, assume there exists a sequence of $C^\infty$-smooth domains 
$\{\Om_m\}_{m\in\bbN}$ satisfying the following two conditions:
\begin{enumerate}
\item[$(i)$] The sequence $\{\Om_m\}_{m\in\bbN}$ monotonically converges 
to $\Om$ from inside, that is, 
\begin{equation} \label{A-P.2}
\Om_m \subset \Om_{m+1}\subset \Om, \; m\in\bbN,  \quad 
\bigcup_{m\in\bbN} \Om_m = \Om. 
\end{equation} 
\item[$(ii)$] If $\cG_m$ denotes the Gaussian mean curvature of 
$\partial\Om_m$, then 
\begin{equation}
\cG_m\geq 0 \, \text{ for all $m\in\bbN$.}    \lb{A-P.2a}
\end{equation}
\end{enumerate}
Then the first Dirichlet eigenvalue and the first nonzero eigenvalue for 
the Krein Laplacian in $\Om$ satisfy
\begin{equation} \label{A-P.1}
1\leq\frac{\lambda^{(0)}_{K,\Om,1}}{\lambda^{(0)}_{D,\Om,1}}\leq 4.
\end{equation} 
In particular, each bounded convex domain $\Om\subset\bbR^n$ satisfies 
conditions $(i)$ and $(ii)$ and hence \eqref{A-P.1} holds for such domains. 
\end{theorem}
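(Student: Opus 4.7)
\textbf{Proof proposal for Theorem \ref{T-Pay-1}.}

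The lower bound $\lambda^{(0)}_{D,\Om,1}\leq\lambda^{(0)}_{K,\Om,1}$ is immediate: it is the $j=1$ case of the inequality $\lambda_{D,\Om,j}\leq\lambda_{K,\Om,j}$ established in Proposition \ref{TH-Mq3} (here specialized to $V\equiv 0$). So the entire content of the theorem lies in the upper estimate $\lambda^{(0)}_{K,\Om,1}\leq 4\lambda^{(0)}_{D,\Om,1}$.

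My plan is to proceed in three stages. First, on each smooth approximating domain $\Om_m$, I would invoke Theorem \ref{T-MM-1} (applied with $V\equiv 0$) which identifies the nonzero eigenvalues of the Krein Laplacian $-\Delta_{K,\Om_m}$ with those of the buckling problem \eqref{MM-1F} on $\Om_m$. For classical smooth domains whose boundary has nonnegative Gaussian mean curvature $\cG_m\geq 0$, the Payne inequality from \cite{Pa60} (see also \cite{Pa67}, \cite{Pa91}) asserts precisely
\begin{equation}
\lambda^{(0)}_{\text{buckling},\Om_m,1}\leq 4\,\lambda^{(0)}_{D,\Om_m,1}, \no
\end{equation}
and therefore $\lambda^{(0)}_{K,\Om_m,1}\leq 4\,\lambda^{(0)}_{D,\Om_m,1}$ for every $m\in\bbN$. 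Second, I would combine this with the domain-monotonicity of the Krein eigenvalues established in Proposition \ref{TH-Mq2} (with $V\equiv 0$, $\wti V\equiv 0$, and $\wti\Om=\Om\supseteq\Om_m$): it yields $\lambda^{(0)}_{K,\Om,1}\leq\lambda^{(0)}_{K,\Om_m,1}$. Chaining these two inequalities gives
\begin{equation}
\lambda^{(0)}_{K,\Om,1}\leq\lambda^{(0)}_{K,\Om_m,1}\leq 4\,\lambda^{(0)}_{D,\Om_m,1},\quad m\in\bbN. \no
\end{equation}
Third, passing to the limit $m\to\infty$ and using the Babu{\v s}ka--V\'yborn\'y domain-continuity result (Theorem \ref{tDirichletapprox}) for the first Dirichlet eigenvalue, $\lambda^{(0)}_{D,\Om_m,1}\to\lambda^{(0)}_{D,\Om,1}$, I obtain $\lambda^{(0)}_{K,\Om,1}\leq 4\,\lambda^{(0)}_{D,\Om,1}$, as desired. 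Note the convenient feature that only the monotonicity (not continuity) is needed for the Krein side of the argument, which sidesteps the delicate domain-perturbation question for Krein eigenvalues on rough sets.

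Finally, for the last claim regarding convex $\Om$, it remains to verify that bounded convex domains fall within the hypotheses. Convexity implies quasi-convexity by Definition \ref{d.Conv} (see the discussion in \eqref{ewT-1} and the enumeration in the introduction). For the approximating sequence, I would exhibit $\Om_m$ as smoothings of $\Om$, for instance by taking $\Om_m:=\{x\in\Om\,|\,\operatorname{dist}(x,\partial\Om)>1/m\}$ convolved with a standard mollifier and then slightly retracted, or equivalently by considering the level sets $\{\rho_m<0\}$ of a suitable smoothing of the (convex) signed distance function; each such $\Om_m$ is $C^\infty$, convex, relatively compact in $\Om$, and $\Om_m\nearrow\Om$, and by convexity the second fundamental form of $\partial\Om_m$ is nonnegative, hence $\cG_m\geq 0$. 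Thus conditions $(i)$--$(ii)$ hold and the previously established inequality applies.

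The main technical obstacle is the verification (or careful citation) of Payne's inequality $\lambda^{(0)}_{\text{buckling},1}\leq 4\lambda^{(0)}_{D,1}$ on smooth domains with $\cG\geq 0$; the rest of the argument is essentially a combination of the spectral equivalence between the Krein Laplacian and the buckling problem (already at our disposal via Theorem \ref{T-MM-1}), the Krein domain monotonicity from Proposition \ref{TH-Mq2}, and the classical Dirichlet domain continuity from Theorem \ref{tDirichletapprox}. A secondary, comparatively minor, point is the construction of the smooth inner approximation of a general bounded convex $\Om$ preserving convexity, which is standard (e.g., by parallel inner translates of $\partial\Om$ followed by mollification, or via the Minkowski sum $(\Om \ominus B(0,1/m))$ intersected with a smoothed boundary).
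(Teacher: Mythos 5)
Your outer architecture coincides exactly with the paper's: lower bound from Proposition \ref{TH-Mq3}, reduction to the smooth approximating domains via the Krein monotonicity \eqref{mam-2S} and the Babu{\v s}ka--V\'yborn\'y continuity of Theorem \ref{tDirichletapprox}, and then the Payne-type bound $\lambda^{(0)}_{K,\Om_m,1}\leq 4\,\lambda^{(0)}_{D,\Om_m,1}$ on each smooth $\Om_m$ with $\cG_m\geq 0$. The one substantive issue is the step you yourself flag as the ``main technical obstacle'': you dispose of it by citing \cite{Pa60}, but that citation does not literally deliver what you need. Payne's 1960 result is stated for smooth \emph{convex} domains in the \emph{plane}; the extension to all $n\geq 2$ is only remarked upon in \cite{Pa67}, and the observation that the correct hypothesis is nonnegativity of the mean curvature of $\partial\Om_m$ (rather than convexity) is established in this paper precisely by re-running Payne's argument. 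The paper therefore devotes the bulk of the proof to a self-contained derivation of \eqref{A-P.5}: using $u_1^2$ as a trial function in the buckling Rayleigh quotient \eqref{mam-29}, the identities \eqref{A-P.9}--\eqref{A-P.14}, and crucially the boundary computation \eqref{A-P.15}--\eqref{A-P.21a} showing $\partial_\nu\big(|\nabla u_1|^2\big)=-2(n-1)\,\cG\,|\nabla u_1|^2\leq 0$ on $\partial\Om$, which is exactly where the curvature hypothesis enters; this feeds into \eqref{A-P.20}--\eqref{A-P.26} and yields $\theta\leq 3\lambda_D$, hence $\lambda_K\leq 4\lambda_D$. Without supplying this argument (or a reference that genuinely covers smooth domains with $\cG\geq 0$ in all dimensions $n\geq 2$), your proof is incomplete at its central point.

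Two smaller remarks. First, your use of Theorem \ref{T-MM-1} to pass to the buckling problem on $\Om_m$ is fine (smooth bounded domains are quasi-convex), and matches the paper's use of the variational formula \eqref{mam-29}. Second, your sketch of the smooth convex inner approximation for the final claim is adequate; the paper is equally brief on this point, simply recalling that such approximations exist.
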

\begin{proof} 
Of course, the lower bound in \eqref{A-P.1} is contained in \eqref{mam-39},
so we will concentrate on establishing the upper bound.  To this end, we 
recall that it is possible to approximate $\Omega$ with a sequence of 
$C^\infty$-smooth bounded domains satisfying \eqref{A-P.2} and 
\eqref{A-P.2a}.  By Theorem \ref{tDirichletapprox}, the Dirichlet 
eigenvalues are continuous under the domain perturbations described in 
\eqref{A-P.2} and one obtains, in particular, 
\begin{equation} \label{A-P.3}
\lim_{m\to\infty}\lambda^{(0)}_{D,\Om_{m},1} = \lambda^{(0)}_{D,\Om,1}.
\end{equation} 
On the other hand, \eqref{mam-2S} yields that
$\lambda^{(0)}_{K,\Om,1}\leq \lambda^{(0)}_{K,\Om_m,1}$.   
Together with \eqref{A-P.3}, this shows that it suffices to prove that
\begin{equation} \label{A-P.4}
\lambda^{(0)}_{K,\Om_m,1}\leq 4 \lambda^{(0)}_{D,\Om_m,1},\quad m\in\bbN.
\end{equation} 
Summarizing, it suffices to show that
\begin{align} \label{A-P.5}
\begin{split}
& \mbox{$\Om\subset\bbR^n$ a bounded, $C^\infty$-smooth domain, whose Gaussian 
mean}  \\
& \quad \text{curvature $\cG$ of $\partial\Om$ is nonnegative, implies }\, 
\lambda^{(0)}_{K,\Om,1} \leq 4\,\lambda^{(0)}_{D,\Om,1}. 
\end{split}
\end{align} 
Thus, we fix a bounded, $C^\infty$ domain $\Om\subset\bbR^n$ with $\cG\geq 0$ 
on $\partial\Om$ and denote by $u_1$ the (unique, up to normalization) first eigenfunction for the Dirichlet Laplacian in $\Om$. In the sequel, we abbreviate
$\lambda_D:=\lambda^{(0)}_{D,\Om,1}$ and $\lambda_K:=\lambda^{(0)}_{K,\Om,1}$. Then (cf.\ \cite[Theorems\ 8.13 and 8.38]{GT83}),
\begin{equation} \label{A-P.6}
u_1\in C^\infty(\ol{\Om}),\quad u_1|_{\dOm}=0,\quad u_1>0\mbox{ in }\Om,\quad
-\Delta u_1=\lambda_D\,u_1 \, \mbox{ in } \, \Om,
\end{equation} 
and
\begin{equation} \label{A-P.7}
\lambda_D=\frac{\int_{\Om}d^nx\,|\nabla u_1|^2}{\int_{\Om}d^nx\,|u_1|^2}.
\end{equation} 
In addition, \eqref{mam-29} (with $j=1$) and $u_1^2$ as a ``trial function'' yields
\begin{equation} \label{A-P.8}
\lambda_K \leq\frac{\int_{\Om}d^nx\,|\Delta(u_1^2)|^2}
{\int_{\Om}d^nx\,|\nabla(u_1^2)|^2}.
\end{equation} 
Then \eqref{A-P.5} follows as soon as one shows that the right-hand side
of \eqref{A-P.8} is less than or equal to the quadruple of the
right-hand side of \eqref{A-P.7}.  For bounded, smooth, convex domains
in the plane (i.e., for $n=2$), such an estimate was established in 
\cite{Pa60}.  For the convenience of the reader, below we review Payne's 
ingenious proof, primarily to make sure that it continues to hold in much 
the same format for our more general class of domains and in all space 
dimensions (in the process, we also shed more light on some less explicit 
steps in Payne's original proof, including the realization that the key 
hypothesis is not convexity of the domain, but rather nonnegativity of the 
Gaussian mean curvature $\cG$ of its boundary).  To get started, we expand
\begin{equation} \label{A-P.9}
(\Delta(u_1^2))^2=4\big[\lambda_D^2u_1^4-2\lambda_D\,u_1^2|\nabla u_1|^2
+|\nabla u_1|^4\big],\quad |\nabla(u_1^2)|^2=4\,u_1^2|\nabla u_1|^2,
\end{equation} 
and use \eqref{A-P.8} to write
\begin{equation} \label{A-P.10}
\lambda_K \leq\lambda_D^2\left(\frac{\int_{\Om}d^nx\,u_1^4}
{\int_{\Om}d^nx\,u_1^2|\nabla u_1|^2}\right)-2\lambda_D 
+\left(\frac{\int_{\Om}d^nx\,|\nabla u_1|^4}
{\int_{\Om}d^nx\,u_1^2|\nabla u_1|^2}\right).
\end{equation} 
Next, observe that based on \eqref{A-P.6} and the Divergence Theorem
we may write
\begin{align}\label{A-P.11}
\int_{\Om}d^nx\,\big[3u_1^2|\nabla u_1|^2-\lambda_D\,u_1^4\big] &=
\int_{\Om}d^nx\,\big[3u_1^2|\nabla u_1|^2+u_1^3\Delta u_1\big]
=\int_{\Om}d^nx\, {\rm div} \big(u_1^3\nabla u_1\big)
\nonumber\\
&= \int_{\partial\Om}d^{n-1}\omega\,u_1^3\partial_{\nu}u_1=0,
\end{align}
where $\nu$ is the outward unit normal to $\dOm$, and $d^{n-1}\omega$ 
denotes the induced surface measure on $\partial\Omega$.
This shows that the coefficient of $\lambda_D^2$ in \eqref{A-P.10} is
$3\lambda_D^{-1}$, so that
\begin{equation} \label{A-P.12}
\lambda_K \leq\lambda_D +\theta, \, \mbox{ where }\, 
\theta:=\frac{\int_{\Om}d^nx\,|\nabla u_1|^4}{\int_{\Om}d^nx\,u_1^2|\nabla u_1|^2}.
\end{equation} 
We begin to estimate $\theta$ by writing
\begin{align}\label{A-P.13}
\int_{\Om}d^nx\,|\nabla u_1|^4 &=
\int_{\Om}d^nx\,(\nabla u_1)\cdot(|\nabla u_1|^2\nabla u_1)
=-\int_{\Om}d^nx\,u_1\, {\rm div} (|\nabla u_1|^2\nabla u_1)
\nonumber\\
&= -\int_{\Om}d^nx\, \big[(u_1\,\nabla u_1)\cdot(\nabla|\nabla u_1|^2)
-\lambda_D\,u_1^2|\nabla u_1|^2\big],
\end{align}
so that
\begin{equation} \label{A-P.14}
\frac{\int_{\Om}d^nx\,(u_1\,\nabla u_1)\cdot(\nabla|\nabla u_1|^2)}
{\int_{\Om}d^nx\,u_1^2|\nabla u_1|^2}=\lambda_D-\theta.
\end{equation} 
To continue, one observes that because of \eqref{A-P.6} and the classical Hopf lemma 
(cf.\ \cite[Lemma\ 3.4]{GT83}) one has $\partial_{\nu} u_1 > 0$ on $\partial\Om$. 
Thus, $|\nabla u_1| \neq 0$ at points in $\Om$ near $\partial \Om$. This allows one to conclude that
\begin{equation} \label{A-P.15}
\nu=-\frac{\nabla u_1}{|\nabla u_1|}\, \mbox{ near and on }\, \dOm. 
\end{equation}

By a standard result from differential geometry (see, for example, 
\cite[p.\ 142]{Ca92})
\begin{equation} \label{A-P.16}
{\rm div} (\nu)=(n-1)\,\cG\, \mbox{ on }\, \partial\Om,
\end{equation}
where $\cG$ denotes the mean curvature of $\partial\Om$.  

To proceed further, we introduce the following notations for the second
derivative matrix, or {\it Hessian}, of $u_1$ and its norm:
\begin{equation} \label{A-P.18}
{\rm Hess} (u_1):=
\left(\frac{\partial^2 u_1}{\partial x_j\partial x_k}\right)_{1\leq j,k\leq n},
\quad
|{\rm Hess} (u_1)|:= \bigg(\sum_{j,k=1}^n|\partial_j\partial_k u_1|^2\bigg)^{1/2}.
\end{equation}
Relatively brief and straightforward computations (cf.\ \cite[Theorem 2.2.14]{KP99}) then yield
\begin{align} \label{A-P.19}
{\rm div} (\nu) = - \sum_{j=1}^n \partial_j \bigg(\frac{\partial_j u_1}{|\nabla u_1|}\bigg)
&=|\nabla u_1|^{-1}[-\Delta u_1 + \langle \nu, {\rm Hess}(u_1) \nu \rangle]
\nonumber\\
&=|\nabla u_1|^{-1}\langle \nu, {\rm Hess}(u_1) \nu \rangle  \, \mbox{ on } \, \partial\Om
\end{align}
(since $-\Delta u_1 = \lambda u_1 = 0$ on $\partial\Om$),
\begin{align} \label{A-P.20a}
\nu \cdot (\partial_{\nu} \nu) &= - \sum_{j,k=1}^n\nu_j \nu_k \partial_k
\bigg(\frac{\partial_j u_1}{|\nabla u_1|}\bigg) \no \\
& =-|\nabla u_1|^{-1} \langle \nu,{\rm Hess}(u_1) \nu \rangle
+ |\nabla u_1|^{-1}|\nu|^2 \langle \nu, {\rm Hess} (u_1) \nu \rangle
\nonumber\\
&=0,
\end{align}
and finally, by \eqref{A-P.19}, 
\begin{align}\label{A-P.21a}
\partial_\nu(|\nabla u_1|^2) &=\sum_{j,k=1}^n \nu_j \partial_j [(\partial_k u_1)^2] 
=2 \sum_{j,k=1}^n \nu_j (\partial_k u_1)(\partial_j \partial_k u_1)
\nonumber\\
&=-2|\nabla u_1| \langle \nu, {\rm Hess} (u_1) \nu \rangle 
=-2|\nabla u_1|^2 {\rm div} (\nu)
\nonumber\\
&=-2(n-1) \cG |\nabla u_1|^2 \leq 0 \, \mbox{ on } \, \partial\Om, 
\end{align}
given our assumption $\cG\geq 0$. 

Next, we compute
\begin{align}\label{A-P.20}
& \int_{\Om}d^nx\, \big[|\nabla(|\nabla u_1|^2)|^2-2\lambda_D\,|\nabla u_1|^4
+2|\nabla u_1|^2|{\rm Hess} (u_1)|^2\big]  \no \\
& \quad =\int_{\Om}d^nx\, div \big(|\nabla u_1|^2\nabla(|\nabla u_1|^2)\big) 
=\int_{\dOm}d^{n-1}\omega\,\nu\cdot \big(|\nabla u_1|^2\nabla(|\nabla u_1|^2)\big)  \no \\
& \quad =\int_{\dOm}d^{n-1}\omega\,|\nabla u_1|^2\partial_{\nu}\big(|\nabla u_1|^2\big)\leq
0,
\end{align}
since $\partial_{\nu}(|\nabla u_1|^2)\leq 0$ on $\dOm$ by \eqref{A-P.21a}.
As a consequence,
\begin{equation} \label{A-P.21}
2\lambda_D\,\int_{\Om}d^nx\,|\nabla u_1|^4\geq
\int_{\Om}d^nx\,\big[|\nabla(|\nabla u_1|^2)|^2+2|\nabla u_1|^2|{\rm Hess} (u_1)|^2\big].
\end{equation}
Now, simple algebra shows that
$|\nabla(|\nabla u_1|^2)|^2\leq 4\,|\nabla u_1|^2|{\rm Hess} (u_1)|^2$
which, when combined with \eqref{A-P.21}, yields
\begin{equation} \label{A-P.22}
\frac{4\lambda_D}{3}\,\int_{\Om}d^nx\,|\nabla u_1|^4\geq
\int_{\Om}d^nx\,|\nabla(|\nabla u_1|^2)|^2.
\end{equation}
Let us now return to \eqref{A-P.13} and rewrite this equality as
\begin{equation} \label{A-P.23}
\int_{\Om}d^nx\,|\nabla u_1|^4 =
-\int_{\Om}d^nx\,(u_1\,\nabla u_1)\cdot(\nabla|\nabla u_1|^2
-\lambda_D\,u_1\nabla u_1).
\end{equation}
An application of the Cauchy-Schwarz inequality then yields
\begin{equation} \label{A-P.24}
\left(\int_{\Om}d^nx\,|\nabla u_1|^4\right)^2
\leq\left(\int_{\Om}d^nx\,u_1^2\,|\nabla u_1|^2\right)
\left(\int_{\Om}d^nx\,|\nabla|\nabla u_1|^2-\lambda_D\,u_1\nabla u_1|^2\right).
\end{equation}
By expanding the last integrand and recalling the definition of $\theta$
we then arrive at
\begin{equation} \label{A-P.25}
\theta^2\leq\lambda_D^2-2\lambda_D\left(\frac{\int_{\Om}d^nx\,
(u_1\nabla u_1)\cdot(\nabla|\nabla u_1|^2)}{\int_{\Om}d^nx\,u_1^2|\nabla u_1|^2}\right)
+\left(\frac{\int_{\Om}d^nx\,|\nabla(|\nabla u_1|^2)|^2}
{\int_{\Om}d^nx\,u_1^2|\nabla u_1|^2}\right).
\end{equation}
Upon recalling \eqref{A-P.14} and \eqref{A-P.22}, this becomes
\begin{equation} \label{A-P.26}
\theta^2\leq
\lambda_D^2-2\lambda_D(\lambda_D-\theta)+\frac{4\lambda_D}{3}\theta
=-\lambda_D^2+\frac{10\lambda_D}{3}\theta.
\end{equation}
In turn, this forces $\theta\leq 3\lambda_D$ hence, ultimately,
$\lambda_K\leq 4\lambda_D$ due to this estimate and \eqref{A-P.12}.
This establishes \eqref{A-P.5} and completes the proof of the theorem.
\end{proof}

\begin{remark} 
$(i)$ The upper bound in \eqref{A-P.1} for two-dimensional smooth, convex 
$C^{\infty}$ domains $\Om$ is due to Payne \cite{Pa60} in 1960. He notes that the proof carries over without difficulty to dimensions $n\geq 2$ in \cite[p.\ 464]{Pa67}. In addition, one can avoid assuming smoothness in his proof by using smooth approximations $\Om_m$, $m\in\bbN$, of $\Om$ as discussed in our proof.  Of course, Payne did not consider the eigenvalues of the Krein Laplacian 
$-\Delta_{K,\Om}$, instead, he compared the first eigenvalue of the fixed membrane problem and the first eigenvalue of the problem of the buckling of a clamped plate. \\ 
$(ii)$ By thinking of ${\rm Hess} (u_1)$ represented in terms of an 
orthonormal basis for $\mathbb{R}^n$ that contains $\nu$, one sees that 
\eqref{A-P.19} yields
\begin{equation}\label{A-P.22a}
{\rm div} (\nu) = \bigg|\frac{\partial u_1}{\partial \nu}\bigg|^{-1} \,
\frac{\partial^2 u_1}{{\partial \nu}^2}
=-\bigg(\frac{\partial u_1}{\partial \nu}\bigg)^{-1} \frac{\partial^2 u_1}{{\partial \nu}^2}
\end{equation}
(the latter because $\partial u_1/\partial \nu < 0$ on $\partial\Om$ by our convention on the sign of $u_1$ (see \eqref{A-P.6})), and thus
\begin{equation}\label{A-P.23a}
\frac{\partial^2 u_1}{{\partial \nu}^2} = -(n-1) \cG \frac{\partial u_1}{\partial \nu} \,
\mbox{ on } \partial\Om. 
\end{equation}
For a different but related argument leading to this same result, see 
Ashbaugh and Levine \cite[pp.\ I-8, I-9]{AL97}. Aviles \cite{Av86}, Payne \cite{Pa55}, 
\cite{Pa60}, and Levine and Weinberger \cite{LW86} 
all use similar arguments as well. \\
$(iii)$ We note that Payne's basic result here, when done in $n$ dimensions, 
holds for smooth domains having a boundary which is everywhere of nonnegative 
mean curvature.  In addition, Levine and Weinberger \cite{LW86}, in the 
context of a related problem, consider nonsmooth domains for the nonnegative 
mean curvature case and a variety of cases intermediate between that and the 
convex case (including the convex case). \\
$(iv)$ Payne's argument (and the constant 4 in Theorem \ref{T-Pay-1}) would 
appear to be sharp, with any infinite slab in $\mathbb{R}^n$ bounded by 
parallel hyperplanes being a saturating case (in a limiting sense).  We note 
that such a slab is essentially one-dimensional, and that, up to 
normalization, the first Dirichlet eigenfunction $u_1$ for the interval 
$[0,a]$ (with $a>0$) is 
\begin{equation}
u_1(x)=\sin (\pi x/a) \, \text{ with eigenvalue } \, \lambda=\pi^2/a^2, 
\end{equation}
while the corresponding first buckling eigenfunction and eigenvalue are 
\begin{equation}
u_1(x)^2=\sin^2 (\pi x/a)=[1-\cos (2\pi x/a)]/2 \, \text{ and } \, 4 \lambda=4\pi^2/a^2.  
\end{equation}
Thus, Payne's choice of the trial function $u_1^2$, where $u_1$ is the first Dirichlet eigenfunction should be optimal for this limiting case,
implying that the bound 4 is best possible. Payne, too, made observations about the equality case of his inequality, and observed that the infinite strip saturates it in 2 dimensions.  His supporting arguments are via tracing the case of equality through the inequalities in his proof, which also yields interesting insights. 
\end{remark}

\begin{remark}\label{Rm-2}
The eigenvalues corresponding to the buckling of a two-dimensional
{\it square} plate, clamped along its boundary, have been analyzed numerically
by several authors (see, e.g., \cite{AD92}, \cite{AD93}, and \cite{BT99}).
All these results can now be naturally reinterpreted in the context of the
Krein Laplacian $-\Delta_{K,\Om}$ in the case where $\Om=(0,1)^2\subset\bbR^2$. 
Lower bounds for the first $k$ buckling problem eigenvalues were discussed in 
\cite{LP85}. The existence of convex domains $\Om$, for which the first eigenfunction of the problem of a clamped plate and the problem of the buckling of a clamped plate possesses a change of sign, was established in \cite{KKM90}. Relations between an eigenvalue problem governing the behavior of an elastic medium and the buckling problem were studied in \cite{Ho91}. Buckling eigenvalues as a function of the elasticity constant are investigated in \cite{KLV93}. Finally, spectral properties of linear operator pencils $A-\lambda B$ with discrete spectra, and basis properties of the corresponding eigenvectors, applicable to differential operators,  were discussed, for instance, in 
\cite{Pe68}, \cite{Tr00} (see also the references cited therein). 
\end{remark}

Formula \eqref{mam-46} suggests the issue of deriving a Weyl asymptotic
formula for the perturbed Krein Laplacian $H_{K,\Om}$. This is the topic
of our next section.

\section{Weyl Asymptotics for the Perturbed Krein Laplacian in Nonsmooth Domains}
\label{s12}

We begin by recording a very useful result due to V.A.\ Kozlov which,
for the convenience of the reader, we state here in more generality than
is actually required for our purposes. To set the stage,
let $\Omega\subset\bbR^n$, $n\ge2$, be a bounded Lipschitz domain.
In addition, assume that $m>r\geq 0$ are two fixed integers and set
\begin{equation} \label{kko-0}
\eta:=2(m-r)>0.
\end{equation} 
Let $W$ be a closed subspace in $H^m(\Omega)$ such that
$H^m_0(\Omega)\subseteq W$. On $W$, consider the symmetric forms
\begin{equation} \label{kko-1}
a(u,v):=\sum_{0 \leq |\alpha|,|\beta|\leq m}
\int_{\Omega}d^nx\,a_{\alpha,\beta}(x)\ol{(\partial^\beta u)(x)}
(\partial^\alpha v)(x),  \quad u, v \in W, 
\end{equation} 
and
\begin{equation} \label{kko-2}
b(u,v):=\sum_{0 \leq |\alpha|,|\beta|\le r}
\int_{\Omega}d^nx\,b_{\alpha,\beta}(x)\ol{(\partial^\beta u)(x)}
(\partial^\alpha v)(x),   \quad u, v \in W.
\end{equation} 
Suppose that the leading coefficients in $a(\dott,\dott)$ and $b(\dott,\dott)$
are Lipschitz functions, while the coefficients of all lower-order terms are
bounded, measurable functions in $\Omega$. Furthermore,
assume that the following coercivity, nondegeneracy, and nonnegativity
conditions hold: For some $C_0 \in (0,\infty)$, 
\begin{align}\label{kko-3}
& a(u,u)\ge C_0\|u\|^2_{H^m(\Omega)}, \quad u\in W,
\\
& \sum_{|\alpha|=|\beta|=r}b_{\alpha,\beta}(x) \, \xi^{\alpha+\beta}\not=0, 
\quad x\in\ol{\Om}, \; \xi\not=0,
\label{kko-4}
\\
& b(u,u)\ge 0, \quad u\in W. 
\label{kko-5}
\end{align}
Under the above assumptions, $W$ can be regarded as a Hilbert space when
equipped with the inner product $a(\dott,\dott)$. Next, consider the
operator $T\in\cB(W)$ uniquely defined by the requirement that
\begin{equation} \label{kko-6}
a(u,T v)=b(u,v),  \quad u,v\in W.
\end{equation} 
Then the operator $T$ is compact, nonnegative and self-adjoint on $W$
(when the latter is viewed as a Hilbert space). Going further, denote by
\begin{equation} \label{kko-7}
0\leq\cdots\leq\mu_{j+1}(T)\leq\mu_j(T)\leq\cdots\leq\mu_1(T),
\end{equation} 
the eigenvalues of $T$ listed according to their multiplicity, and set
\begin{equation} \label{kko-8}
N(\lambda;W,a,b):=\#\,\{j\in\bbN\,|\,\mu_j(T)\geq \lambda^{-1}\},  \quad  \lambda>0.
\end{equation} 
The following Weyl asymptotic formula is a particular case of a slightly
more general result which can be found in \cite{Ko83}.

\begin{theorem}\label{T-Koz}
Assume Hypothesis \ref{h2.1} and retain the above notation and assumptions on
$a(\dott,\dott)$, $b(\dott,\dott)$, $W$, and $T$. In addition, we recall \eqref{kko-0}.
Then the distribution function of the spectrum of $T$ introduced
in \eqref{kko-8} satisfies the asymptotic formula
\begin{equation} \label{kko-9}
N(\lambda;W,a,b)
=\omega_{a,b,\Omega}\,\lambda^{n/\eta}+O\big(\lambda^{(n-(1/2))/\eta}\big)
\, \mbox{ as }\, \lambda\to\infty,
\end{equation} 
where, with $d\omega_{n-1}$ denoting the surface measure on the
unit sphere $S^{n-1}=\{\xi\in\bbR^n\,|\,|\xi|=1\}$ in $\bbR^n$,
\begin{equation} \label{kko-10}
\omega_{a,b,\Omega}:=\frac{1}{n(2\pi)^n}
\int_{\Omega}d^nx\,\left(\int_{|\xi|=1}d\omega_{n-1}(\xi)\,
\left[\frac{\sum_{|\alpha|=|\beta|=r}b_{\alpha,\beta}(x)\xi^{\alpha+\beta}}
{\sum_{|\alpha|=|\beta|=m}a_{\alpha,\beta}(x)\xi^{\alpha+\beta}}
\right]^{\frac{n}{\eta}}\right).
\end{equation} 
\end{theorem}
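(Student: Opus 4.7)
The plan is to reduce the generalized eigenvalue problem $b(u,v) = \mu\,a(u,v)$ associated with $T$ to a Weyl law for a compact self-adjoint operator, and then establish that law by localization, freezing of top-order coefficients, and Dirichlet--Neumann bracketing against an explicit constant-coefficient model.

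First, since the form $a$ is coercive on $W$ (by \eqref{kko-3}), it turns $W$ into a Hilbert space, and the defining relation \eqref{kko-6} together with the compactness of the embedding $W \hookrightarrow H^r(\Omega)$ (Rellich--Kondrachov, permissible since $m > r$) shows that $T \in \cB(W)$ is nonnegative, self-adjoint, and compact; hence its eigenvalues $\mu_j(T)$ accumulate only at $0$. The min-max principle then yields
\begin{equation*}
\mu_j(T) \,=\, \max_{\substack{V \subseteq W \\ \dim V = j}} \; \min_{0 \neq u \in V} \; \frac{b(u,u)}{a(u,u)},
\end{equation*}
so that $N(\lambda; W, a, b)$ equals the maximal dimension of a subspace $V \subseteq W$ on which $a(u,u) \leq \lambda\,b(u,u)$ holds. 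This reformulation places the problem into the standard variational framework, where monotonicity of $N$ in $a$, $b$, and $W$ can be freely exploited.

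Next, I would localize via a partition of unity subordinate to a covering of $\Omega$ by open cubes $\{Q_k\}$ at a scale intermediate between $1$ and the natural wavelength $\lambda^{-1/\eta}$. A Birman--Solomyak type perturbation argument then permits one to replace the lower-order terms of both $a$ and $b$ (contributing an error of acceptable order), and to freeze the top-order coefficients at the center of each cube, with an error governed precisely by the Lipschitz modulus of those coefficients---this is where the Lipschitz regularity hypothesis on the principal parts enters. For cubes $Q_k$ well inside $\Omega$, the bracketing inequality
\begin{equation*}
N\big(\lambda; H^m_0(Q_k), a_k^0, b_k^0\big) \,\leq\, N_k(\lambda) \,\leq\, N\big(\lambda; H^m(Q_k), a_k^0, b_k^0\big)
\end{equation*}
between pure Dirichlet and pure Neumann counting functions (with $a_k^0, b_k^0$ the frozen top-order principal parts) is standard, and both sides can be computed explicitly by Fourier methods on the cube, yielding
\begin{equation*}
N_k(\lambda) \,=\, \frac{|Q_k|}{n(2\pi)^n} \int_{|\xi|=1} d\omega_{n-1}(\xi) \left[ \frac{\sum_{|\alpha|=|\beta|=r} b_{\alpha,\beta}(x_k)\,\xi^{\alpha+\beta}}{\sum_{|\alpha|=|\beta|=m} a_{\alpha,\beta}(x_k)\,\xi^{\alpha+\beta}} \right]^{n/\eta} \lambda^{n/\eta} + o\big(\lambda^{n/\eta}\big).
\end{equation*}
Summing these contributions over all interior cubes produces a Riemann sum converging to $\omega_{a,b,\Omega}\,\lambda^{n/\eta}$ as in \eqref{kko-10}.

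The main obstacle, and the source of the sharp remainder $O\big(\lambda^{(n-(1/2))/\eta}\big)$ rather than a softer $o$-estimate, lies in the boundary contribution: cubes meeting $\partial\Omega$ fall outside the purely interior analysis, and only the Lipschitz regularity of $\partial\Omega$ is available. My approach would be to introduce a collar of width $\sim \lambda^{-1/\eta}$ along $\partial\Omega$, cover it by a bounded number of Lipschitz-graph charts, and within each chart bracket the local eigenvalue counting function against a half-space model problem (with mixed Dirichlet or Neumann conditions along the flattened boundary). A direct volume estimate for the collar, combined with the half-space analogue of \eqref{kko-9}, shows that this contribution is of order $O\big(\lambda^{(n-(1/2))/\eta}\big)$, matching the half-power remainder seen in the Netrusov--Safarov results cited in the introduction and known to be sharp in the Lipschitz class. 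Combining the interior expansion with this boundary estimate yields \eqref{kko-9}.
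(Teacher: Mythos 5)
You should first be aware that the paper does not prove Theorem \ref{T-Koz} at all: it is quoted verbatim as ``a particular case of a slightly more general result which can be found in \cite{Ko83}'' (Kozlov's 1983 paper on remainder estimates for linear operator pencils). So there is no in-paper proof to compare against; what can be assessed is whether your sketch would actually deliver Kozlov's result. Your overall strategy -- variational reformulation via min-max, localization, freezing of Lipschitz leading coefficients, Dirichlet--Neumann bracketing -- is indeed the right family of techniques, and the half-power remainder does come from the balancing you are implicitly invoking: with cubes of side $\delta$, the coefficient-freezing error is $O(\delta\,\lambda^{n/\eta})$ while the bracketing error is $O(\delta^{-1}\lambda^{(n-1)/\eta})$, and equating these forces $\delta\sim\lambda^{-1/(2\eta)}$ and yields $O(\lambda^{(n-(1/2))/\eta})$. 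But this also exposes an internal inconsistency in your write-up: the relevant scale is $\lambda^{-1/(2\eta)}$, not the ``natural wavelength'' $\lambda^{-1/\eta}$ you use for the boundary collar, and the source of the exponent $(n-\tfrac12)/\eta$ is the freezing/bracketing trade-off in the \emph{interior}, not a collar volume estimate (a collar of width $\lambda^{-1/\eta}$ would contribute only $O(\lambda^{(n-1)/\eta})$, while at the correct scale $\lambda^{-1/(2\eta)}$ the boundary cubes contribute exactly the borderline $O(\lambda^{(n-(1/2))/\eta})$).

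Two steps as written would fail for $m\geq 2$. First, neither side of your bracketing inequality ``can be computed explicitly by Fourier methods on the cube'': for a constant-coefficient pencil of order $2m$ with $m\geq 2$, neither the Dirichlet realization on $H^m_0(Q_k)$ nor the natural (Neumann-type) realization on $H^m(Q_k)$ is diagonalized by trigonometric functions (already $\Delta^2$ on a square with clamped boundary conditions is not); what one actually needs are two-sided estimates for these model counting functions with an error $O((\delta\lambda^{1/\eta})^{n-1}+1)$ per cube, and for the Neumann side this requires Sobolev extension/embedding constants that are uniform under rescaling of the cube. Second, and more seriously, your treatment of boundary cubes by ``flattening'' a Lipschitz graph to a half-space model is not available here: a bi-Lipschitz change of variables preserves $H^1$ but \emph{not} $H^m$ for $m\geq 2$, so the pencil cannot be transplanted to a half-space without leaving the admissible function class. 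The standard repair is to avoid flattening altogether and instead bound the Neumann-type counting function on each Lipschitz piece $Q_k\cap\Omega$ directly, using an $H^m$-extension operator whose norm depends only on the Lipschitz character of $\partial\Omega$ (uniform over the cubes at scale $\delta$), together with the crude bound $N\leq C(\delta\lambda^{1/\eta})^n+C$ per boundary cube. With those two repairs, and the corrected choice $\delta\sim\lambda^{-1/(2\eta)}$, your outline becomes an essentially faithful reconstruction of the Courant--Kozlov variational argument.
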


Various related results can be found in \cite{Ko79}, \cite{Ko84}.
After this preamble, we are in a position to state and prove the main result
of this section:

\begin{theorem}\label{T-KrWe}
Assume Hypothesis \ref{h.VK}. In addition, we recall that
\begin{equation} \label{kko-11}
N_{K,\Om}(\lambda)=\#\{j\in\bbN\,|\,\lambda_{K,\Om,j}\leq\lambda\},
\quad\lambda\in\bbR,
\end{equation} 
where the $($strictly$)$ positive eigenvalues $\{\lambda_{K,\Om,j}\}_{j\in\bbN}$
of the perturbed Krein Laplacian $H_{K,\Om}$ are enumerated as
in \eqref{mam-1} $($according to their multiplicities$)$. Then the following
Weyl asymptotic  formula holds:
\begin{equation} \label{kko-12}
N_{K,\Om}(\lambda)
=(2\pi)^{-n}v_n|\Omega|\,\lambda^{n/2}+O\big(\lambda^{(n-(1/2))/2}\big)
\, \mbox{ as }\, \lambda\to\infty,
\end{equation} 
where, as before, $v_n$ denotes the volume of the unit ball in $\bbR^n$,
and $|\Omega|$ stands for the $n$-dimensional Euclidean volume of $\Omega$.
\end{theorem}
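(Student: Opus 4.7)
The plan is to deduce the Weyl formula \eqref{kko-12} by applying Kozlov's Theorem \ref{T-Koz} to the sesquilinear forms
\begin{align*}
a(u,v) &:= ((-\Delta+V)u,(-\Delta+V)v)_{L^2(\Om;d^nx)}, \\
b(u,v) &:= (\nabla u,\nabla v)_{(L^2(\Om;d^nx))^n} + (V^{1/2}u,V^{1/2}v)_{L^2(\Om;d^nx)}
\end{align*}
on the closed subspace $W := H^2_0(\Omega) \subset H^2(\Omega)$, with parameters $m=2$, $r=1$, and hence $\eta=2(m-r)=2$. The spectral equivalence of Theorem \ref{T-MM-1}, together with the $H^2_0$-reformulation \eqref{MM-2} of the generalized buckling problem, shows that $u \in H^2_0(\Omega)$ satisfies $(-\Delta+V)^2 u = \lambda(-\Delta+V)u$ in $\Om$ if and only if $a(w,u)=\lambda\,b(w,u)$ for every $w\in H^2_0(\Om)$; hence the nonzero eigenvalues of the Kozlov operator $T\in\cB(W)$ defined by $a(w_1,Tw_2)=b(w_1,w_2)$ are precisely $\{\lambda_{K,\Om,j}^{-1}\}_{j\in\bbN}$ with matching geometric multiplicities, so that $N(\lambda;W,a,b)=N_{K,\Om}(\lambda)$.

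Next I would verify the hypotheses of Theorem \ref{T-Koz}. Expanding the integrand of $a$ as $\overline{\Delta u}\,\Delta v - V\overline{\Delta u}\,v - V\bar u\,\Delta v + V^2 \bar u v$ and writing $\overline{\Delta u}\,\Delta v = \sum_{j,k}\overline{\partial_j^2 u}\,\partial_k^2 v$, one reads off that the only top-order coefficients of $a$ (those indexed by $|\alpha|=|\beta|=2$) are the identically-$1$ constants from the first summand, hence trivially Lipschitz; all coefficients involving $V$ occupy strictly lower-order multi-index slots and are bounded measurable under Hypothesis \ref{h.VK}. Similarly $b$ has constant (Lipschitz) top-order coefficients producing the principal symbol $|\xi|^2$, together with the bounded lower-order coefficient $V$. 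The nondegeneracy condition \eqref{kko-4} reduces to $|\xi|^2\ne 0$ for $\xi\ne 0$, the nonnegativity $b(u,u)\geq 0$ is obvious, and the coercivity $a(u,u)\geq C\|u\|_{H^2(\Om)}^2$ on $H^2_0(\Omega)$ has already been established as \eqref{mam-6} via the invertibility of $-\Delta+V$ on $H^2(\Om)\cap H^1_0(\Om)$ together with Poincar\'e's inequality.

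With these verifications in hand, Theorem \ref{T-Koz} delivers \eqref{kko-9} with the required remainder $O\bigl(\lambda^{(n-1/2)/2}\bigr)$, so it only remains to compute the constant \eqref{kko-10}. The principal symbols of $a$ and $b$ being $|\xi|^4$ and $|\xi|^2$ respectively, on the sphere $|\xi|=1$ the bracketed integrand in \eqref{kko-10} reduces to $[|\xi|^2/|\xi|^4]^{n/2}\equiv 1$, which yields
\begin{equation*}
\omega_{a,b,\Om} = \frac{|S^{n-1}|\,|\Omega|}{n(2\pi)^n} = (2\pi)^{-n}\,v_n\,|\Omega|
\end{equation*}
via the identity $|S^{n-1}|=n\,v_n$, matching the leading coefficient in \eqref{kko-12}. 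The most delicate aspect of this plan is the bookkeeping of the cross-terms $V\overline{\Delta u}\,v$ and $V\bar u\,\Delta v$ in $a$: although each involves a second derivative, their multi-indices are of the form $(|\alpha|,|\beta|)=(0,2)$ or $(2,0)$ and therefore lie outside the ``leading stratum'' $|\alpha|=|\beta|=m=2$ of Kozlov's framework, so that mere $L^\infty$-boundedness of $V$ suffices (no Lipschitz regularity being required). Coupled with the multiplicity-preservation in the bijection $v\leftrightarrow u = v-P_{D,\Om,V}(\widehat\gamma_D v)$ of Theorem \ref{T-MM-1}, which ensures that the two counting functions agree term-by-term, this constitutes the only real obstacle to completing the argument.
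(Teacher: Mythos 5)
Your proposal is correct and follows essentially the same route as the paper's proof: apply Kozlov's Theorem \ref{T-Koz} with $m=2$, $r=1$, $\eta=2$ to the forms \eqref{kko-13}--\eqref{kko-13a} on $W=H^2_0(\Om)$, identify the eigenvalues of the Kozlov operator $T$ with the reciprocals of the generalized buckling eigenvalues (the paper does this by showing $T$ coincides with the operator $B$ of \eqref{mam-11} via \eqref{mam-13}, which is equivalent to your weak-formulation argument), and invoke the spectral equivalence of Theorems \ref{T-MM-1}--\ref{T-MM-2}. Your explicit bookkeeping of the lower-order cross-terms involving $V$ and the computation of $\omega_{a,b,\Om}=(2\pi)^{-n}v_n|\Om|$ are details the paper leaves implicit, but they do not change the argument.
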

\begin{proof}
Set $W:=H^2_0(\Om)$ and consider the symmetric forms
\begin{align}\label{kko-13}
& a(u,v):=\int_{\Om}d^nx\,\ol{(-\Delta+V)u}\,(-\Delta+V)v,\quad u,v\in W,
\\
& b(u,v):=\int_{\Om}d^nx\,\ol{\nabla u}\cdot\nabla v
+\int_{\Om}d^nx\,\ol{V^{1/2}u}\,V^{1/2}v,\quad u,v\in W,
\label{kko-13a}
\end{align}
for which conditions \eqref{kko-3}--\eqref{kko-5} (with $m=2$) are
verified (cf.\ \eqref{mam-6}). Next, we recall the operator
$(-\Delta+V)^{-2}:=
((-\Delta+V)^2)^{-1}\in\cB\bigl(H^{-2}(\Om),H^2_0(\Om)\bigr)$
from \eqref{mam-9} along with the operator
\begin{equation} \label{kko-14}
B\in\cB_{\infty}(W),\quad
Bu:=-(-\Delta+V)^{-2}(-\Delta+V)u,\quad u\in W,
\end{equation} 
from \eqref{mam-11}. Then, in the current notation, formula
\eqref{mam-13} reads $a(Bu,v)=b(u,v)$ for every $u,v\in C^\infty_0(\Om)$.
Hence, by density,
\begin{equation} \label{kko-15}
a(Bu,v)=b(u,v),\quad u,v\in W.
\end{equation} 
This shows that actually $B=T$, the operator originally introduced in
\eqref{kko-6}. In particular, $T$ is one-to-one. Consequently, $Tu=\mu\,u$
for $u\in W$ and $0\not=\mu\in\bbC$, if and only if $u\in H^2_0(\Omega)$
satisfies $(-\Delta+V)^{-2}(-\Delta+V)u=\mu\,u$, that is, 
$(-\Delta+V)^2 u=\mu^{-1}(-\Delta+V)u$.
Hence, the eigenvalues of $T$ are precisely the reciprocals of the
eigenvalues of the buckling clamped plate problem \eqref{Yan-14z}.
Having established this, formula \eqref{kko-12} then follows from
Theorem \ref{T-MM-2} and \eqref{kko-9}, upon observing that in our case
$m=2$, $r=1$ (hence $\eta=2$) and $\omega_{a,b,\Omega}=(2\pi)^{-n}v_n|\Om|$. 
\end{proof}

Incidentally, Theorem \ref{T-KrWe} and Theorem \ref{T-MM-2} show that,
granted Hypothesis \ref{h.VK}, a Weyl asymptotic
formula holds in the case of the (perturbed) buckling problem \eqref{MM-1}.
For smoother domains and potentials, this is covered by Grubb's results
in \cite{Gr83}. In the smooth context, a sharpening of the remainder has 
been announced in \cite{Mik94} without proof.

In the case where $\Omega\subset\bbR^2$ is a bounded domain with
a $C^\infty$-boundary and $0\leq V\in C^\infty(\ol{\Om})$,
a more precise form of the error term in \eqref{kko-12}
was obtained in \cite{Gr83} where Grubb has shown that
\begin{equation} \label{kko-13x}
N_{K,\Om}(\lambda)=\frac{|\Omega|}{4\pi}\,\lambda+O\big(\lambda^{2/3}\big)
\, \mbox{ as }\, \lambda\to\infty,
\end{equation} 
In fact, in \cite{Gr83}, Grubb deals with the Weyl asymptotic for the
Krein--von Neumann extension of a general strongly elliptic, formally self-adjoint
differential operator of arbitrary order, provided both its coefficients
as well as the the underlying domain $\Omega\subset\bbR^n$ ($n\geq 2$)
are $C^\infty$-smooth. In the special case where $\Om$ equals the open ball 
$B_n(0;R)$, $R>0$, in $\bbR^n$, and when $V\equiv 0$, it turns out that \eqref{kko-12}, 
\eqref{kko-13x} can be further refined to
\begin{align} \label{NN-y} 
N^{(0)}_{K,B_n(0;R)}(\lambda)&=(2\pi)^{-n}v_n^2 R^n \lambda^{n/2} 
- (2\pi)^{-(n-1)}v_{n-1}[(n/4)v_n + v_{n-1}] R^{n-1} \lambda^{(n-1)/2}  \no \\
& \quad + O\big(\lambda^{(n-2)/2}\big) \mbox{ as }\, \lambda\to\infty,   
\end{align} 
for every $n\geq 2$. This will be the object of the final Section \ref{s1vi} (cf.\ Proposition 
\ref{p10.1}).

\section{A Class of Domains for which the Krein and Dirichlet Laplacians Coincide}
\label{s1v}

Motivated by the special example where $\Omega=\bbR^2\backslash\{0\}$ and 
$S=\ol{-\Delta_{C_0^\infty(\bbR^2\backslash\{0\})}}$, in which case one can show the 
interesting fact that $S_F=S_K$ (cf.\ \cite{AGHKH87}, \cite[Ch.\ I.5]{AGHKH88}, 
\cite{GKMT01}, and Subsections \ref{s10.3} and \ref{s10.4}) and hence the nonnegative self-adjoint extension of $S$ is unique, the aim of this section is to present a class of (nonempty, proper) 
open sets $\Omega=\bbR^n\backslash K$, $K\subset \bbR^n$ compact and subject to a vanishing Bessel capacity condition, with the property that the Friedrichs and
Krein--von Neumann extensions of $-\Delta\big|_{C^\infty_0(\Om)}$ in $L^2(\Om; d^n x)$, coincide. 
To the best of our knowledge, the case where the set $K$ differs from a single point is without 
precedent and so the following results for more general sets $K$ appear to be new.  

We start by making some definitions and discussing some preliminary results,
of independent interest. Given an arbitrary open set $\Om\subset\bbR^n$, $n\geq 2$, 
we consider three realizations of $-\Delta$ as unbounded operators
in $L^2(\Om;d^nx)$, with domains given by (cf.\ Subsection \ref{s4X})
\begin{align}\label{YF-1}
\dom(-\Delta_{max,\Om})&:=\big\{u\in L^2(\Omega;d^nx)\,\big|\,
\Delta u\in L^2(\Omega;d^nx)\big\},
\\
\dom(-\Delta_{D,\Om})&:=\big\{u\in H^1_0(\Omega)\,\big|\,
\Delta u\in L^2(\Omega;d^nx)\big\},
\label{YF-2}
\\
\dom(-\Delta_{c,\Om})&:=C^\infty_0(\Omega).
\label{YF-3}
\end{align}

\begin{lemma}\label{L-ea}
For any open, nonempty subset $\Om\subseteq \bbR^n$, $n\geq 2$, the following statements hold:
\begin{enumerate}
\item[$(i)$] One has
\begin{equation} \label{Fga-2}
(-\Delta_{c,\Om})^*=-\Delta_{max,\Om}.
\end{equation} 
\item[$(ii)$] The Friedrichs extension of $-\Delta_{c,\Om}$ is given by 
\begin{equation} \label{Fga-3}
(-\Delta_{c,\Om})_F=-\Delta_{D,\Om}.
\end{equation} 
\item[$(iii)$] The Krein--von Neumann extension of $-\Delta_{c,\Om}$ has the domain
\begin{align} 
& \dom((-\Delta_{c,\Om})_K)=
\big\{u\in\dom(-\Delta_{\max,\Om})\,\big|\,\mbox{there exists }
\{u_j\}_{j\in\bbN} \in C^\infty_0(\Om)     \label{Gkj-1} \\
& \quad \mbox{with } 
\lim_{j\to\infty}\|\Delta u_j\ - \Delta u\|_{L^2(\Om;d^nx)} = 0   
 \mbox{ and $\{\nabla u_j\}_{j\in\bbN}$ Cauchy
in $L^2(\Om;d^nx)^n$}\big\}.    \no 
\end{align}
\item[$(iv)$] One has
\begin{equation} \label{F-2Lb}
\ker((-\Delta_{c,\Om})_{K})
= \big\{u\in L^2(\Om;d^nx)\,\big|\,\Delta\,u=0 \mbox{ in } \Om\big\},
\end{equation} 
and
\begin{equation} \label{F-2La}
\ker((-\Delta_{c,\Om})_{F})=\{0\}.
\end{equation} 
\end{enumerate}
\end{lemma}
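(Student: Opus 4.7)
For part (i), I would observe that $u \in \dom((-\Delta_{c,\Om})^*)$ with $(-\Delta_{c,\Om})^* u = f$ means precisely $(-\Delta\phi, u)_{L^2(\Om;d^nx)} = (\phi, f)_{L^2(\Om;d^nx)}$ for every $\phi \in C^\infty_0(\Om)$, which is the very definition of $-\Delta u = f$ in $\cD^\prime(\Om)$ with $\Delta u \in L^2(\Om;d^nx)$, establishing \eqref{Fga-2}. For part (ii), the symmetric form generated by $-\Delta_{c,\Om}$ is $q_{c,\Om}(\phi) = (\phi, -\Delta\phi)_{L^2(\Om;d^nx)} = \|\nabla\phi\|^2_{(L^2(\Om;d^nx))^n}$ on $C^\infty_0(\Om)$, whose closure has form domain $H^1_0(\Om)$ by the very definition \eqref{hGi-3} of $H^1_0(\Om)$. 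The first representation theorem for forms (cf.\ \eqref{Fr-Q}) then identifies $(-\Delta_{c,\Om})_F$ as the distributional Laplacian acting on $\{u \in H^1_0(\Om) \,|\, \Delta u \in L^2(\Om;d^nx)\} = \dom(-\Delta_{D,\Om})$, yielding \eqref{Fga-3}.

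For part (iii), I would apply the Ando--Nishio characterization \eqref{Fr-2X} to the closure $\bar S := \overline{-\Delta_{c,\Om}}$, which is legitimate because $-\Delta_{c,\Om}$ and $\bar S$ have the same nonnegative self-adjoint extensions, so $(-\Delta_{c,\Om})_K = (\bar S)_K$. By part (i), $\bar S^* = (-\Delta_{c,\Om})^* = -\Delta_{max,\Om}$. For sequences in $C^\infty_0(\Om) \subseteq \dom(\bar S)$, integration by parts converts the Cauchy condition $((v_j - v_k), \bar S(v_j - v_k))_{L^2(\Om;d^nx)} \to 0$ into $\|\nabla(v_j - v_k)\|^2_{(L^2(\Om;d^nx))^n} \to 0$, while the other condition $\|\bar S v_j - (-\Delta_{max,\Om}) u\|_{L^2(\Om;d^nx)} \to 0$ becomes $\|\Delta v_j - \Delta u\|_{L^2(\Om;d^nx)} \to 0$. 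Since $C^\infty_0(\Om)$ is a graph-norm core for $\bar S$, and the elementary estimate $\|\nabla(\phi - \psi)\|^2_{L^2} \leq \|\phi - \psi\|_{L^2}\|\Delta(\phi - \psi)\|_{L^2}$ on $C^\infty_0(\Om)$ ensures that a generic $\dom(\bar S)$-valued sequence meeting the Ando--Nishio criteria can be replaced, via a diagonal argument, by a $C^\infty_0(\Om)$-valued sequence still meeting them, one obtains exactly \eqref{Gkj-1}.

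For part (iv), formula \eqref{F-2Lb} is immediate from \eqref{Fr-4Tf} combined with part (i): $\ker((-\Delta_{c,\Om})_K) = \ker(\bar S^*) = \ker(-\Delta_{max,\Om}) = \{u \in L^2(\Om;d^nx) \,|\, \Delta u = 0 \text{ in } \Om\}$. For \eqref{F-2La}, fix $u \in \ker((-\Delta_{c,\Om})_F) \subseteq H^1_0(\Om)$; part (ii) combined with the representation identity \eqref{Fr-Q} gives $\|\nabla u\|^2_{(L^2(\Om;d^nx))^n} = (u, (-\Delta_{c,\Om})_F u)_{L^2(\Om;d^nx)} = 0$. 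Extending $u$ by zero to $\wti u \in H^1(\bbR^n)$ (permissible since $u \in H^1_0(\Om)$) one gets $\nabla \wti u = 0$ a.e., so $\wti u$ equals some constant $c$ a.e.\ on $\bbR^n$. But $\wti u \in L^2(\bbR^n;d^nx)$ together with the infinite Lebesgue measure of $\bbR^n$ forces $c = 0$, hence $u = 0$.

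The only delicate step in the plan is the sequence-swapping in part (iii); the integration-by-parts estimate recorded there is precisely what makes the replacement legitimate and preserves the form-type Cauchy condition, which is strictly weaker than graph-norm Cauchy-ness. The remaining parts are direct applications of the abstract machinery recalled in Section \ref{s2} (especially \eqref{Fr-4Tf}, \eqref{Fr-Q}, and \eqref{Fr-2X}) together with elementary distributional and form-theoretic manipulations.
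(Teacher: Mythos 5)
Your proposal is correct and follows essentially the same route as the paper's own (very terse) proof: unraveling the adjoint for $(i)$, the form characterization \eqref{Fr-Q} for $(ii)$, the Ando--Nishio formula \eqref{Fr-2X} for $(iii)$, and \eqref{Fr-4Tf} plus the extension-by-zero argument for $(iv)$. The details you supply that the paper leaves implicit — notably the core-approximation step justified by $\|\nabla w\|^2_{L^2}\leq\|w\|_{L^2}\|\Delta w\|_{L^2}$ in part $(iii)$ — are exactly the right ones.
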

\begin{proof}
Formula \eqref{Fga-2} follows in a straightforward fashion, by unraveling
definitions, whereas \eqref{Fga-3} is a direct consequence of \eqref{Fr-2} 
or \eqref{Fr-Q}  
(compare also with Proposition \ref{L-Fri1}). Next, \eqref{Gkj-1} is
readily implied by \eqref{Fr-2X} and \eqref{Fga-2}. In addition,
\eqref{F-2Lb} is easily derived from \eqref{Fr-4Tf}, \eqref{Fga-2} and
\eqref{YF-1}. Finally, consider \eqref{F-2La}. In a first stage,
\eqref{Fga-3} and \eqref{YF-2} yield that
\begin{equation} \label{F-2Lc}
\ker ((-\Delta_{c,\Om})_{F}) 
= \big\{u\in H^1_0(\Om)\,\big|\,\Delta\,u=0 \mbox{ in } \Om\big\},
\end{equation} 
so the goal is to show that the latter space is trivial. To this end,
pick a function $u\in H^1_0(\Om)$ which is harmonic in $\Om$, and observe
that this forces $\nabla u=0$ in $\Om$. Now, with tilde
denoting the extension by zero outside $\Om$, we have
$\widetilde{u}\in H^1(\bbR^n)$ and
$\nabla(\widetilde{u})=\widetilde{\nabla u}$.
In turn, this entails that $\widetilde{u}$ is a constant function
in $L^2(\bbR;d^nx)$ and hence $u\equiv 0$ in $\Om$, establishing 
\eqref{F-2La}.  
\end{proof}

Next, we record some useful capacity results. For an authoritative
extensive discussion on this topic see the monographs \cite{AH96},
\cite{Ma85}, \cite{Ta95}, and \cite{Zi89}. We denote by $B_{\alpha,2}(E)$ the Bessel
capacity of order $\alpha>0$ of a set $E\subset\bbR^n$. When 
$K\subset \bbR^n$ is a compact set, this is defined by
\begin{equation} \label{cap-1}
B_{\alpha,2}(K):=\inf\,\bigl\{\|f\|^2_{L^2(\bbR^n;d^nx)}\,\big|\,
g_\alpha\ast f\geq 1\mbox{ on }K,\,f\geq 0\bigr\},
\end{equation} 
where the Bessel kernel $g_\alpha$ is defined as the function whose
Fourier transform is given by
\begin{equation} \label{cap-2}
\widehat{g_\alpha}(\xi)=(2\pi)^{-n/2}(1+|\xi|^2)^{-\alpha/2},
\quad\xi\in\bbR^n.
\end{equation} 
When $\cO\subseteq\bbR^n$ is open, we define
\begin{equation} \label{cap-1X}
B_{\alpha,2}(\cO):=
\sup\,\{B_{\alpha,2}(K)\,|\,K\subset\cO,\,K\mbox{ compact}\,\},
\end{equation} 
and, finally, when $E\subseteq\bbR^n$ is an arbitrary set,
\begin{equation} \label{cap-1Y}
B_{\alpha,2}(E):=
\inf\,\{B_{\alpha,2}(\cO)\,|\,\cO\supset E,\,\cO\mbox{ open}\,\}.
\end{equation} 
In addition, denote by $\cH^k$ the $k$-dimensional Hausdorff measure on $\bbR^n$,
$0\leq k\leq n$. Finally, a compact subset $K\subset\bbR^n$ is said to
be {\it $L^2$-removable for the Laplacian} provided every
bounded, open neighborhood $\cO$ of $K$ has the property that
\begin{align} \label{Fga-2L.2}
& u\in L^2 (\cO\backslash  K;d^nx)\mbox{ with }\Delta u=0
\mbox{ in }\cO\backslash  K   
\, \text{ imply } 
\begin{cases}
\mbox{there exists $\widetilde{u}\in L^2 (\cO;d^nx)$ so that}
\\
\mbox{$\widetilde{u}\Bigl|_{\cO\backslash  K}=u$ and
$\Delta\widetilde{u}=0$ in $\cO$}.
\end{cases}
\end{align}

\begin{proposition}\label{R-Ca.1}
For $\alpha>0$, $k\in\bbN$, $n\geq 2$ and $E\subset\bbR^n$, the following
properties are valid:
\begin{enumerate}
\item[$(i)$] A compact set $K\subset\bbR^n$ is $L^2$-removable for
the Laplacian if and only if $B_{2,2}(K)=0$.
\item[$(ii)$] Assume that $\Om\subset\bbR^n$ is an open set and that
$K\subset\Omega$ is a closed set. Then the space $C^\infty_0(\Om\backslash  K)$
is dense in $H^k(\Om)$ $($i.e., one has the natural identification
$H^k_0(\Om)\equiv H^k_0(\Om\backslash  K)$$)$, if and only if $B_{k,2}(K)=0$. 
\item[$(iii)$] If $2\alpha\leq n$ and $\cH^{n-2\alpha}(E)<+\infty$
then $B_{\alpha,2}(E)=0$. Conversely, if $2\alpha\leq n$ and
$B_{\alpha,2}(E)=0$ then $\cH^{n-2\alpha+\varepsilon}(E)=0$ for
every $\varepsilon>0$. 
\item[$(iv)$] Whenever $2\alpha>n$ then there exists $C=C(\alpha,n)>0$
such that $B_{\alpha,2}(E)\geq C$ provided $E\not=\emptyset$.
\end{enumerate}
\end{proposition}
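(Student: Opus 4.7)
The plan is to prove each part largely by appealing to the classical theory of Bessel capacities, so the bulk of the work consists of identifying the appropriate results in the literature (principally Adams--Hedberg \cite{AH96}, Maz'ya \cite{Ma85}, and Ziemer \cite{Zi89}) and explaining how to assemble them. Since parts $(i)$--$(ii)$ are the ones most directly used in what follows, I would treat them in detail; parts $(iii)$--$(iv)$ are pure capacity estimates and require essentially no adaptation.

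For $(i)$, I would first show that $L^2$-removability of $K$ is equivalent to the density statement $C^\infty_0(\cO\backslash K)\hookrightarrow H^2_0(\cO)$ densely, for every bounded open neighborhood $\cO$ of $K$. In one direction, given such a density and $u\in L^2(\cO\backslash K;d^nx)$ harmonic on $\cO\backslash K$, extending by zero yields $\widetilde u \in L^2(\cO;d^nx)$; for any $\phi\in C^\infty_0(\cO)$ approximated in $H^2$ by $\phi_j\in C^\infty_0(\cO\backslash K)$, two integrations by parts give $(\widetilde u,\Delta\phi)_{L^2(\cO;d^nx)}=\lim_j(u,\Delta\phi_j)_{L^2(\cO\backslash K;d^nx)}=0$, so $\Delta \widetilde u=0$ in $\cO$ by Weyl's lemma. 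For the converse, one uses that if density fails then a nonzero bounded linear functional on $H^2(\cO)$ supported on $K$ exists; such a functional, via the fundamental solution of $\Delta$, produces a non-removable harmonic $L^2$-function on $\cO\backslash K$. This reduces $(i)$ to the case $k=2$ of $(ii)$.

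For $(ii)$, the key tool is the Bessel-capacity characterization of the space $H^k_0$: a function $u\in H^k(\bbR^n)$ belongs to $H^k_0(\Om\backslash K)$ precisely when its quasi-continuous representative, together with all its derivatives up to order $k-1$ understood in the appropriate trace sense, vanishes $B_{k,2}$-quasi-everywhere on $K\cup(\bbR^n\backslash\Om)$; this is the content of \cite[Thms.\ 9.1.3, 9.1.11]{AH96} (see also \cite[Ch.\ 10]{Ma85}). When $B_{k,2}(K)=0$, the set $K$ is $H^k$-negligible, so for any $\phi\in C^\infty_0(\Om)$ one constructs an approximating sequence by multiplying $\phi$ by suitable cutoffs $1-\chi_\delta$ which vanish near $K$ and tend to $1$ in $H^k$; the capacity-zero assumption is exactly what allows $\|\chi_\delta\phi\|_{H^k}\to 0$. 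Conversely, if $B_{k,2}(K)>0$, one invokes the existence of a nontrivial distribution in $H^{-k}(\bbR^n)$ supported on $K$ (the capacitary measure of $K$), which furnishes a bounded linear functional on $H^k$ vanishing on $C^\infty_0(\Om\backslash K)$ but not on all of $C^\infty_0(\Om)$, obstructing density. The main technical obstacle is the handling of the quasi-continuous representatives, for which I would cite \cite{AH96} rather than reprove.

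Finally, $(iii)$ is precisely the Frostman-type comparison between Bessel capacity and Hausdorff measure proved in \cite[Thms.\ 5.1.9, 5.1.13]{AH96}, and I would state it as a direct quotation. Part $(iv)$ is an immediate consequence of the Sobolev embedding: when $2\alpha>n$, the kernel $g_\alpha$ is bounded and continuous (indeed $g_\alpha(0)<\infty$ since $\widehat{g_\alpha}\in L^1(\bbR^n;d^n\xi)$ in this regime), and the evaluation functional at any point $x_0\in\bbR^n$ extends to a bounded functional on the Bessel-potential space, so the single-point capacity $B_{\alpha,2}(\{x_0\})$ is bounded below by a constant depending only on $\alpha$ and $n$; monotonicity of capacity then yields the claim for any nonempty $E$.
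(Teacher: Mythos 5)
The paper offers no proof of this proposition at all---it is dispatched with three citations (to \cite[Corollary 3.3.4]{AH96}, \cite[Theorem 3]{Ma85}, and \cite[Theorem 2.6.16 and Remark 2.6.15]{Zi89})---so your proposal, which likewise leans on the literature but supplies the connecting arguments, is if anything more detailed than what appears in print. Its architecture (removability reduced to $(2,2)$-negligibility, negligibility via quasi-continuous representatives and capacitary distributions, the Frostman-type comparison quoted verbatim, and the point-capacity bound from the Sobolev embedding) is the standard one and is sound.

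Two small corrections. First, in part $(iv)$ your stated justification is off: $\widehat{g_\alpha}\in L^1(\bbR^n;d^n\xi)$ requires $\alpha>n$, not $2\alpha>n$, and in fact $g_\alpha(x)\sim c\,|x|^{\alpha-n}$ is unbounded near the origin whenever $\alpha<n$ (e.g., $\alpha=2$, $n=3$). What you actually need---and what holds precisely when $2\alpha>n$---is $g_\alpha\in L^2(\bbR^n;d^nx)$; then $(g_\alpha\ast f)(x_0)\geq 1$ with $f\geq 0$ forces $\|f\|_{L^2(\bbR^n;d^nx)}\geq\|g_\alpha\|_{L^2(\bbR^n;d^nx)}^{-1}$ by Cauchy--Schwarz, so $B_{\alpha,2}(\{x_0\})\geq\|g_\alpha\|_{L^2(\bbR^n;d^nx)}^{-2}$, and monotonicity of the capacity finishes the argument. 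Second, in part $(ii)$ the cutoff step is glossed over: for $k\geq 2$ one cannot simply compose a capacitary potential with a smooth truncation and expect the $H^k$-norm to remain small, since truncation is not a bounded operation on higher-order Sobolev spaces (second derivatives produce terms of the form $\Phi''(u)\,\partial u\,\partial u$, which are not controlled by $\|u\|_{H^k}$). This is exactly the technical content of the theorems you cite from \cite{AH96}, so your decision to quote them rather than reprove them is the right one---just be aware that the ``exactly what allows'' phrasing understates the difficulty.
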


See, \cite[Corollary 3.3.4]{AH96}, \cite[Theorem 3]{Ma85}, 
\cite[Theorem 2.6.16 and Remark 2.6.15]{Zi89}, respectively.
For other useful removability criteria the interested reader may
wish to consult \cite{Ca67}, \cite{Ma65}, \cite{RS08}, and \cite{Tr08}.

The first main result of this section is then the following:

\begin{theorem}\label{L-ea-5}
Assume that $K\subset\bbR^n$, $n\geq 3$, is a compact set
with the property that
\begin{equation} \label{Ha-z}
B_{2,2}(K)=0.
\end{equation} 
Define $\Omega:=\bbR^n\backslash  K$. Then, in the domain $\Om$,
the Friedrichs and Krein--von Neumann extensions of $-\Delta$, initially
considered on $C^\infty_0(\Om)$, coincide, that is, 
\begin{equation} \label{exa-13}
(-\Delta_{c,\Om})_F = (-\Delta_{c,\Om})_K.
\end{equation} 
As a consequence, $-\Delta|_{C^\infty_0(\Om)}$ has a unique nonnegative self-adjoint extension in $L^2(\Om;d^nx)$.
\end{theorem}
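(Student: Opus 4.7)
The plan is to prove the considerably stronger statement that $-\Delta_{c,\Om}$ is essentially self-adjoint in $L^2(\Om;d^nx)$; both \eqref{exa-13} and the uniqueness of nonnegative self-adjoint extension will follow at once, since then $(-\Delta_{c,\Om})_F$ and $(-\Delta_{c,\Om})_K$ must each coincide with the unique self-adjoint closure of $-\Delta_{c,\Om}$. I will extract two consequences from the capacity hypothesis $B_{2,2}(K)=0$: first, that $K$ carries zero $n$-dimensional Lebesgue measure---for $n\geq 4$ this follows from Proposition \ref{R-Ca.1}$(iii)$, which yields $\cH^{n-4+\varepsilon}(K)=0$ for every $\varepsilon>0$, so the choice $\varepsilon=4$ gives $\cH^n(K)=0$, while for $n=3$ one has $4>n$ and Proposition \ref{R-Ca.1}$(iv)$ actually forces $K=\emptyset$, making the claim trivial; second, that $C^\infty_0(\Om)=C^\infty_0(\bbR^n\backslash K)$ is dense in $H^2(\bbR^n)$, by Proposition \ref{R-Ca.1}$(ii)$ applied with the open set there taken to be $\bbR^n$ and $k=2$. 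The first point lets me identify $L^2(\Om;d^nx)$ with $L^2(\bbR^n;d^nx)$ as Hilbert spaces via extension by zero across the null set $K$.

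Working in this common Hilbert space, let $T$ denote the classical nonnegative self-adjoint realization of $-\Delta$ on $\bbR^n$, with $\dom(T)=H^2(\bbR^n)$. Since $C^\infty_0(\Om)\subset H^2(\bbR^n)=\dom(T)$ and $Tu=-\Delta u$ for every $u\in C^\infty_0(\Om)$, one has $-\Delta_{c,\Om}\subseteq T$, and hence $\ol{-\Delta_{c,\Om}}\subseteq T$. For the reverse inclusion I will invoke the density statement above: given any $u\in H^2(\bbR^n)$, choose approximants $u_j\in C^\infty_0(\Om)$ with $u_j\to u$ in $H^2(\bbR^n)$. Boundedness of $-\Delta\colon H^2(\bbR^n)\to L^2(\bbR^n;d^nx)$ then delivers $u_j\to u$ and $-\Delta u_j\to -\Delta u=Tu$ in $L^2(\bbR^n;d^nx)$, which places $u\in\dom\bigl(\ol{-\Delta_{c,\Om}}\bigr)$ with $\ol{-\Delta_{c,\Om}}\,u=Tu$. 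This yields $T\subseteq\ol{-\Delta_{c,\Om}}$, whence $\ol{-\Delta_{c,\Om}}=T$.

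Since the closure $\ol{-\Delta_{c,\Om}}=T$ is self-adjoint, $-\Delta_{c,\Om}$ is essentially self-adjoint and therefore admits a unique self-adjoint extension, namely $T$. In particular, both nonnegative self-adjoint extensions $(-\Delta_{c,\Om})_F$ and $(-\Delta_{c,\Om})_K$ must equal $T$, which simultaneously yields \eqref{exa-13} and the concluding uniqueness assertion of the theorem. I do not anticipate a serious obstacle here, as the argument simply exploits the two-fold payoff of Proposition \ref{R-Ca.1}. The only point requiring minor care is that $-\Delta_{c,\Om}$ has lower bound merely $0$ rather than a strictly positive constant, so the domain formulas \eqref{SF}--\eqref{S*} cannot be applied directly; routing the proof through essential self-adjointness sidesteps that small difficulty entirely.
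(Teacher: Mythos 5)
Your proof is correct, and it takes a genuinely different---and in fact stronger---route than the paper's. You apply Proposition \ref{R-Ca.1}\,$(ii)$ at order $k=2$ (with the ambient open set equal to $\bbR^n$) to get density of $C^\infty_0(\bbR^n\backslash K)$ in $H^2(\bbR^n)$, identify $L^2(\Om;d^nx)$ with $L^2(\bbR^n;d^nx)$ via $|K|=0$, and conclude that the graph closure of $-\Delta_{c,\Om}$ is the free Laplacian on $H^2(\bbR^n)$; essential self-adjointness then forces $(-\Delta_{c,\Om})_F=(-\Delta_{c,\Om})_K$ at once. The paper instead proves only the inclusion $\dom((-\Delta_{c,\Om})_K)\subseteq\dom((-\Delta_{c,\Om})_F)$: starting from the Ando--Nishio-type description \eqref{Gkj-1} of the Krein domain, it uses the Sobolev inequality \eqref{exa-15} to extract a limit $w$ with $\nabla u_j\to\nabla w$ in $L^2$, shows $w-u$ is harmonic across $K$ by the $L^2$-removability criterion of Proposition \ref{R-Ca.1}\,$(i)$, kills it with Liouville's theorem, and then invokes Proposition \ref{R-Ca.1}\,$(ii)$ only at order $k=1$ to place $u$ in $H^1_0(\Om)$; self-adjointness of both extensions upgrades the inclusion to equality. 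Your argument is shorter and yields the sharper conclusion of essential self-adjointness (which, in view of the converse Theorem \ref{L-ea-5C}, characterizes $B_{2,2}(K)=0$ for $n>4$ and Lebesgue-null $K$), at the cost of leaning entirely on the order-two density assertion of Proposition \ref{R-Ca.1}\,$(ii)$, whereas the paper needs density only at order one together with removability at order two. Your treatment of the side issues is also sound: Proposition \ref{R-Ca.1}\,$(iv)$ does force $K=\emptyset$ when $n=3$, and $\cH^{n-4+\varepsilon}(K)=0$ with $\varepsilon=4$ gives $|K|=0$ for $n\geq 4$, so the Hilbert-space identification underlying the whole argument is legitimate.
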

\begin{proof}
We note that \eqref{Ha-z} implies that $K$ has zero $n$-dimensional
Lebesgue measure, so that $L^2(\Om;d^nx)\equiv L^2(\bbR^n;d^nx)$.
In addition, by $(iii)$ in Proposition \ref{R-Ca.1}, we also have $B_{1,2}(K)=0$.
Now, if $u\in\dom(-\Delta_{c,\Om})_K$, \eqref{Gkj-1}
entails that $u\in L^2(\Om;d^nx)$, $\Delta u\in L^2(\Om;d^nx)$,
and that there exists a sequence $u_j\in C^\infty_0(\Om)$, $j\in\bbN$,
for which
\begin{equation} \label{exa-14}
\Delta u_j\to\Delta u \,\mbox{ in } \,L^2(\Om;d^nx)
\,\mbox{ as } \, j\to\infty,
\mbox{ and $\{\nabla u_j\}_{j\in\bbN}$ is Cauchy in $L^2(\Om;d^nx)$}.
\end{equation} 
In view of the well-known estimate (cf.\ the Corollary on p.\ 56 of \cite{Ma85}),
\begin{equation} \label{exa-15}
\|v\|_{L^{2^*}(\bbR^n;d^nx)}\leq C_n\|\nabla v\|_{L^2(\bbR^n;d^nx)},\quad 
v\in C^\infty_0(\bbR^n),
\end{equation} 
where $2^*:=(2n)/(n-2)$, the last condition in \eqref{exa-14} implies
that there exists $w\in L^{2^*}(\bbR^n;d^nx)$ with the property that
\begin{equation} \label{exa-16}
u_j\to w \,\mbox{ in } \,L^{2^*}(\bbR^n;d^nx)\, \mbox{ and }\, 
\nabla u_j\to\nabla w \,\mbox{ in } \,L^2(\bbR^n;d^nx) 
\,\mbox{ as } \,j\to\infty.
\end{equation} 
Furthermore, by the first convergence in \eqref{exa-14}, we also have that
$\Delta w=\Delta u$ in the sense of distributions in $\Om$.
In particular, the function
\begin{equation} \label{exa-17}
f:=w-u\in L^{2^*}(\bbR^n;d^nx)+L^2(\bbR^n;d^nx)
\hookrightarrow L^2_{\loc}(\bbR^n;d^nx)
\end{equation} 
satisfies $\Delta f=0$ in $\Om=\bbR^n\backslash  K$.
Granted \eqref{Ha-z}, Proposition \ref{R-Ca.1} yields that $K$ is
$L^2$-removable for the Laplacian, so we may conclude that
$\Delta f=0$ in $\bbR^n$. With this at hand, Liouville's theorem then ensures
that $f\equiv 0$ in $\bbR^n$. This forces $u=w$ as distributions in $\Om$
and hence, $\nabla u=\nabla w$ distributionally in $\Om$. In view of the
last condition in \eqref{exa-16} we may therefore conclude that
$u\in H^1(\bbR^n)=H^1_0(\bbR^n)$. With this at hand, Proposition \ref{R-Ca.1}
yields that $u\in H^1_0(\Om)$. This proves that
$\dom(-\Delta_{c,\Om})_K \subseteq\dom(-\Delta_{c,\Om})_F$ and hence,
$(-\Delta_{c,\Om})_K \subseteq (-\Delta_{c,\Om})_F$. Since both operators
in question are self-adjoint, \eqref{exa-13} follows.
\end{proof}

We emphasize that equality of the Friedrichs and Krein Laplacians necessarily 
requires that fact that 
$\inf (\sigma((-\Delta_{c,\Om})_F)) = \inf(\sigma((-\Delta_{c,\Om})_K)) = 0$, 
and hence rules out the case of bounded domains $\Omega \subset \bbR^n$, 
$n \in \bbN$ (for which $\inf (\sigma((-\Delta_{c,\Om})_F)) > 0$). 

\begin{corollary}\label{C-capF}
Assume that $K\subset\bbR^n$, $n\geq 4$, is a compact set
with finite $(n-4)$-dimensional Hausdorff measure, that is, 
\begin{equation} \label{Ha-zX}
\cH^{n-4}(K)<+\infty.
\end{equation} 
Then, with $\Omega:=\bbR^n\backslash  K$, one has
$(-\Delta_{c,\Om})_F = (-\Delta_{c,\Om})_K$, and hence, $-\Delta|_{C^\infty_0(\Om)}$ has a unique nonnegative self-adjoint extension in $L^2(\Om;d^nx)$.
\end{corollary}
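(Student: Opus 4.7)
The plan is essentially to reduce the corollary to Theorem \ref{L-ea-5} by verifying the Bessel capacity hypothesis \eqref{Ha-z} directly from the Hausdorff-measure condition \eqref{Ha-zX}. The key tool is part $(iii)$ of Proposition \ref{R-Ca.1}, which relates Hausdorff measures to Bessel capacities.

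First, I would apply Proposition \ref{R-Ca.1}$(iii)$ with $\alpha = 2$. The condition $2\alpha \leq n$ becomes $4 \leq n$, which is exactly the standing dimensional assumption of the corollary. The condition $\cH^{n-2\alpha}(E) < +\infty$ becomes $\cH^{n-4}(K) < +\infty$, which is precisely the hypothesis \eqref{Ha-zX}. Therefore, Proposition \ref{R-Ca.1}$(iii)$ yields $B_{2,2}(K) = 0$, so that $K$ satisfies the hypothesis \eqref{Ha-z} of Theorem \ref{L-ea-5}.

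Next, since $K \subset \bbR^n$ is compact (and $n \geq 4 \geq 3$), Theorem \ref{L-ea-5} applies directly to $\Omega := \bbR^n \backslash K$ and gives $(-\Delta_{c,\Om})_F = (-\Delta_{c,\Om})_K$. By Theorem \ref{T-kkrr}, every nonnegative self-adjoint extension $\widetilde{S}$ of $-\Delta|_{C_0^\infty(\Om)}$ satisfies $(-\Delta_{c,\Om})_K \leq \widetilde{S} \leq (-\Delta_{c,\Om})_F$ in the sense of quadratic forms; the equality of the two extremal extensions then forces $\widetilde{S} = (-\Delta_{c,\Om})_F = (-\Delta_{c,\Om})_K$, proving uniqueness of the nonnegative self-adjoint extension.

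There is essentially no obstacle here: the proof is a one-line capacity estimate followed by citation of Theorem \ref{L-ea-5}. The only delicate point worth a brief verification is the dimensional restriction $n \geq 4$ (needed for the inequality $2\alpha \leq n$ when $\alpha = 2$); for $n = 3$ one would instead have to argue that $2\alpha > n$ is incompatible with $B_{2,2}(K) = 0$ unless $K = \emptyset$ (cf.\ Proposition \ref{R-Ca.1}$(iv)$), which is why the corollary is stated only for $n \geq 4$. All the substantive work has already been carried out in Theorem \ref{L-ea-5} (the $L^2$-removability argument combined with Liouville's theorem and the Sobolev embedding on $\bbR^n$) and in the capacity-theoretic results collected in Proposition \ref{R-Ca.1}.
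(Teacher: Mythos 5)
Your proposal is correct and is exactly the paper's argument: the paper proves the corollary by citing Proposition \ref{R-Ca.1} (part $(iii)$ with $\alpha=2$, where $2\alpha\le n$ is the reason for the restriction $n\ge 4$) together with Theorem \ref{L-ea-5}, and you have simply written out the same two steps explicitly. The final remark on uniqueness via Theorem \ref{T-kkrr} is likewise the standard deduction the paper intends.
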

\begin{proof}
This is a direct consequence of Proposition \ref{R-Ca.1}
and Theorem \ref{L-ea-5}.
\end{proof}

In closing, we wish to remark that, as a trivial particular case of the
above corollary, formula \eqref{exa-13} holds for the punctured space
\begin{equation} \label{puct-u1}
\Omega:=\bbR^n\backslash \{0\},\quad n\geq 4,
\end{equation} 
however, this fact is also clear from the well-known fact that 
$-\Delta|_{C_0^\infty (\bbR^n\backslash \{0\})}$ is essentially self-adjoint 
in $L^2(\bbR^n; d^nx)$ if (and only if) $n\geq 4$ (cf., e.g., \cite[p.\ 161]{RS75}, and our 
discussion concerning the Bessel operator \eqref{10.86}). 
In \cite[Example 4.9]{GKMT01} (see also our discussion in Subsection\ 10.3), it has been shown (by using different methods) that \eqref{exa-13} continues to hold
for the choice \eqref{puct-u1} when $n=2$, but that the Friedrichs and
Krein--von Neumann extensions of $-\Delta$, initially considered on
$C^\infty_0(\Om)$ with $\Om$ as in \eqref{puct-u1}, are different when $n=3$. 

In light of Theorem \ref{L-ea-5}, a natural question is whether the
coincidence of the Friedrichs and Krein--von Neumann extensions of $-\Delta$,
initially defined on $C^\infty_0(\Om)$ for some open set $\Om\subset\bbR^n$,
actually implies that the complement of $\Om$ has zero Bessel capacity of
order two. Below, under some mild background assumptions on the domain
in question, we shall establish this type of converse result.
Specifically, we now prove the following fact:

\begin{theorem}\label{L-ea-5C}
Assume that $K\subset\bbR^n$, $n>4$, is a compact set of zero
$n$-dimensional Lebesgue measure, and set $\Omega:=\bbR^n\backslash  K$. Then
\begin{equation} \label{exa-13H}
(-\Delta_{c,\Om})_F =(-\Delta_{c,\Om})_K \,\text{ implies } \, B_{2,2}(K)=0.
\end{equation} 
\end{theorem}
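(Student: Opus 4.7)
The plan is to prove the contrapositive: assuming $B_{2,2}(K) > 0$, I will construct a nonzero element of $\ker((-\Delta_{c,\Om})_K)$. Since by Lemma \ref{L-ea}\,$(iv)$ one has $\ker((-\Delta_{c,\Om})_F) = \{0\}$ while $\ker((-\Delta_{c,\Om})_K) = \{u\in L^2(\Om;d^nx)\,|\,\Delta u = 0 \text{ in } \Om\}$, producing such a $u$ shows that the two nonnegative self-adjoint extensions have different kernels and hence cannot coincide. Note that since $K$ has zero $n$-dimensional Lebesgue measure, $L^2(\Om;d^nx)$ and $L^2(\bbR^n;d^nx)$ are naturally identified.

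The first step is a dual reformulation of the capacity condition. By Proposition \ref{R-Ca.1}\,$(ii)$, $B_{2,2}(K)=0$ is equivalent to $C^\infty_0(\Om)$ being dense in $H^{2}(\bbR^n)$. Hence the assumption $B_{2,2}(K)>0$, combined with the Hahn--Banach theorem and the duality $\bigl(H^{2}(\bbR^n)\bigr)^* = H^{-2}(\bbR^n)$, yields the existence of a nonzero distribution $T\in H^{-2}(\bbR^n)$ that annihilates every test function supported in $\Om$, that is, $\mathrm{supp}(T)\subseteq K$. In particular, $T$ is compactly supported and tempered, so $\widehat{T}$ is a real-analytic function of polynomial growth, and the membership $T\in H^{-2}(\bbR^n)$ translates via Plancherel into
\begin{equation}
\int_{\bbR^n} (1+|\xi|^2)^{-2}\,|\widehat{T}(\xi)|^2\,d^n\xi <\infty.
\end{equation}

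The second (and main) step is to define $u:= G\ast T$, where $G(x)=c_n|x|^{-(n-2)}$ is the Newtonian fundamental solution of $-\Delta$ on $\bbR^n$, and check that $u$ belongs to $L^2(\bbR^n;d^nx)$. Formally, $\widehat{u}(\xi) = C_n|\xi|^{-2}\widehat{T}(\xi)$, so
\begin{equation}
\|u\|_{L^2(\bbR^n;d^nx)}^2 = C_n^2 \int_{\bbR^n}|\xi|^{-4}\,|\widehat{T}(\xi)|^2\,d^n\xi.
\end{equation}
I would split the integral at $|\xi|=1$. For $|\xi|\geq 1$ the estimate $|\xi|^{-4}\leq 4(1+|\xi|^2)^{-2}$ reduces the bound to $4\|T\|_{H^{-2}(\bbR^n)}^2<\infty$. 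For $|\xi|\leq 1$, the analyticity of $\widehat{T}$ provides a constant bound $|\widehat{T}(\xi)|\leq M$, and passing to polar coordinates,
\begin{equation}
\int_{|\xi|\leq 1}|\xi|^{-4}\,|\widehat{T}(\xi)|^2\,d^n\xi
\leq M^2\,\omega_{n-1}\int_0^1 r^{n-5}\,dr <\infty,
\end{equation}
which converges precisely because $n>4$; this is the only step where the dimensional restriction is used.

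In the final step, I verify the remaining properties of $u$. The identity $-\Delta u = T$ holds in $\cS'(\bbR^n)$ by construction, and since $\mathrm{supp}(T)\subseteq K$ one gets $\Delta u = 0$ in $\cD'(\Om)$; interior elliptic regularity then shows that $u$ is classically harmonic on $\Om$. Moreover, $u\neq 0$: indeed $\widehat{u}(\xi) = C_n|\xi|^{-2}\widehat{T}(\xi)$ together with the real-analyticity of $\widehat{T}$ (which is nonzero, since $T\neq 0$) forces $\widehat{u}\not\equiv 0$. Consequently, $u$ is a nonzero element of $L^2(\Om;d^nx)$ lying in $\ker((-\Delta_{c,\Om})_K)$, contradicting $(-\Delta_{c,\Om})_F=(-\Delta_{c,\Om})_K$. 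The only delicate point in the argument is verifying the integrability near $\xi=0$, which is where the hypothesis $n>4$ is essential—in lower dimensions one would need to impose additional vanishing moment conditions on $T$, which is not generally possible (for instance, for a single point $K=\{x_0\}$ when $n=3$).
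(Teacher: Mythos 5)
Your proof is correct, but it follows a genuinely different route from the paper's. The paper first converts the hypothesis $(-\Delta_{c,\Om})_F=(-\Delta_{c,\Om})_K$ into the statement that there are no nontrivial $L^2$ harmonic functions on $\bbR^n\backslash K$ (via \eqref{F-2Lb}--\eqref{F-2La}), then invokes a removability theorem of Harvey and Polking \cite{HP72} (this is where $n>4$ enters there) to upgrade this to the absence of nontrivial $L^2_{\loc}$ harmonic functions \eqref{F-5Ld}, and finally derives a contradiction from the standard capacitary-measure characterization of $B_{2,2}(K)>0$: a positive measure $\mu$ supported in $K$ with $g_2\ast\mu\in L^2$ yields the nonzero, locally square-integrable harmonic function $E_n\ast\mu$ on $\Om$, with nonvanishing read off from the decay $|x|^{n-2}(E_n\ast\mu)(x)\to c_n\mu(K)>0$. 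You instead dualize the density statement of Proposition \ref{R-Ca.1}\,$(ii)$ via Hahn--Banach to produce a nonzero $T\in H^{-2}(\bbR^n)$ supported in $K$, and show by a direct Plancherel computation that its Newtonian potential lies globally in $L^2(\bbR^n;d^nx)$ — the high-frequency part controlled by $\|T\|_{H^{-2}}$, the low-frequency part by $\int_0^1 r^{n-5}\,dr<\infty$, which is exactly where $n>4$ is used — with nonvanishing coming from the real-analyticity of $\widehat T$. What your route buys is self-containedness (no appeal to \cite{HP72} and no need for the intermediate local-to-global upgrade \eqref{F-5Ld}), a completely transparent localization of the dimensional restriction, and a witness that lies in $L^2$ globally rather than merely locally; it also does not require the annihilating functional to be a positive measure. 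What the paper's route buys is the stronger intermediate removability statement \eqref{F-5Ld}, which is of independent interest. Your closing remark about the need for vanishing moments in low dimensions is consistent with the known failure of the conclusion for $K=\{0\}$, $n=3$.
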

\begin{proof}
Let $K$ be as in the statement of the theorem.
In particular, $L^2(\Om;d^nx)\equiv L^2(\bbR^n;d^nx)$.
Hence, granted that
$(-\Delta_{c,\Om})_K=(-\Delta_{c,\Om})_F$, in view of
\eqref{F-2Lb}, \eqref{F-2La} this yields
\begin{equation} \label{F-2Ld}
\big\{u\in L^2(\bbR^n;d^nx) \,\big|\, \Delta\,u=0 \mbox{ in } \bbR^n\backslash K\big\}=\{0\}.
\end{equation} 
It is useful to think of \eqref{F-2Ld} as a capacitary condition.
More precisely, \eqref{F-2Ld} implies that ${\rm Cap} (K)=0$, where
\begin{equation} \label{F-3Ld}
{\rm Cap} (K):=\sup\,\bigl\{
\bigl|{}_{\cE'(\bbR^n)}\langle\Delta u,1\rangle_{\cE(\bbR^n)}\bigr|\,\big|\,
\|u\|_{L^2(\bbR^n;d^nx)}\leq 1\mbox{ and }{\rm supp} (\Delta u)\subseteq K
\bigr\}.
\end{equation} 
Above, $\cE(\bbR^n)$ is the space of smooth functions in $\bbR^n$ equipped
with the usual Frech\'et topology, which ensures that its dual,
$\cE'(\bbR^n)$, is the space of compactly supported distributions in
$\bbR^n$. At this stage, we recall the fundamental solution for the
Laplacian in $\bbR^n$, $n\geq 3$, that is, 
\begin{equation} \label{Fr-Ta1}
E_n(x):=\frac{\Gamma(n/2)}{2(2-n)\pi^{n/2}|x|^{n-2}},\quad
x\in\bbR^n\backslash \{0\}
\end{equation} 
($\Gamma(\cdot)$ the classical Gamma function \cite[Sect.\ 6.1]{AS72}), and introduce a related capacity, namely
\begin{align} \label{F-4Ld}
& {\rm Cap}_{\ast} (K):=\sup\,\bigl\{
\big|{}_{\cE'(\bbR^n)}\langle f,1\rangle_{\cE(\bbR^n)}\bigr| \,\big|\,
f\in \cE'(\bbR^n),\,\,{\rm supp} (f) \subseteq K,  
\|E_n\ast f\|_{L^2(\bbR^n;d^nx)}\leq 1\big\}.
\end{align} 
Then
\begin{equation} \label{Fr-Ta2}
0\leq{\rm Cap}_{\ast} (K)\leq {\rm Cap} (K)=0
\end{equation} 
so that ${\rm Cap}_{\ast} (K)=0$. With this at hand,
\cite[Theorem 1.5\,(a)]{HP72} (here we make use of the
fact that $n>4$) then allows us to
strengthen \eqref{F-2Ld} to
\begin{equation} \label{F-5Ld}
\big\{u\in L^2_{\loc}(\bbR^n;d^nx) \,\big|\, \Delta\,u=0 \mbox{ in } 
\bbR^n\backslash  K\big\}=\{0\}.
\end{equation} 
Next, we follow the argument used in the proof of \cite[Lemma 5.5]{MH73} and 
\cite[Theorem 2.7.4]{AH96}.
Reasoning by contradiction, assume that $B_{2,2}(K)>0$. Then there exists
a nonzero, positive measure $\mu$ supported in $K$ such that
$g_2\ast\mu\in L^2(\bbR^n)$.
Since $g_2(x)=c_n\,E_n (x)+o(|x|^{2-n})$ as $|x|\to 0$ (cf.\ the discussion in
Section 1.2.4 of \cite{AH96}) this further implies that
$E_n\ast\mu\in L^2_{\loc}(\bbR^n;d^nx)$. However, $E_n\ast\mu$ is a
harmonic function in $\bbR^n\backslash  K$, which is not identically zero since
\begin{equation} \label{Niz}
\lim_{x\to\infty}|x|^{n-2}(E_n\ast\mu)(x)=c_n\mu(K)>0,
\end{equation} 
so this contradicts \eqref{F-5Ld}. This shows that $B_{2,2}(K)=0$.
\end{proof}

Theorems \ref{L-ea-5}--\ref{L-ea-5C} readily generalize to other types of
elliptic operators (including higher-order systems). For example, using the
polyharmonic operator $(-\Delta)^\ell$, $\ell\in\bbN$, as a prototype,
we have the following result:

\begin{theorem}\label{L-ea-8}
Fix $\ell\in\bbN$, $n\geq 2\ell+1$, and assume that $K\subset\bbR^n$
is a compact set of zero $n$-dimensional Lebesgue measure.
Define $\Omega:=\bbR^n\backslash  K$. Then, in the domain $\Om$,
the Friedrichs and Krein--von Neumann extensions of the polyharmonic operator
$(-\Delta)^\ell$, initially considered on $C^\infty_0(\Om)$, coincide
if and only if $B_{2\ell,2}(K)=0$.
\end{theorem}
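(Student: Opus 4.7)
The plan is to generalize the proofs of Theorems \ref{L-ea-5} and \ref{L-ea-5C} in tandem, replacing the harmonic-function machinery by its polyharmonic analog. The key analytical ingredients to be invoked are: the Sobolev embedding $\|v\|_{L^{2^*_\ell}(\bbR^n)} \leq C\,\|(-\Delta)^{\ell/2} v\|_{L^2(\bbR^n)}$ for $v \in C_0^\infty(\bbR^n)$ with $2^*_\ell := 2n/(n-2\ell)$ (available since $n>2\ell$); the $L^2$-removability of $K$ for $(-\Delta)^\ell$ when $B_{2\ell,2}(K)=0$ (the polyharmonic counterpart of Proposition \ref{R-Ca.1}$(i)$, also encoded in \cite{HP72}); an $L^p$-Liouville theorem for polyharmonic functions on $\bbR^n$ (using the Almansi-type decomposition $u=\sum_{j=0}^{\ell-1}|x|^{2j}h_j$ with $h_j$ harmonic, reducing to the classical $L^p$-Liouville result); and the monotonicity of Bessel capacity in its order, giving $B_{k,2}(K)=0$ for $0\leq k\leq 2\ell$, so in particular $H^\ell_0(\Omega)=H^\ell_0(\bbR^n)$ via Proposition \ref{R-Ca.1}$(ii)$.

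For the sufficiency direction (assuming $B_{2\ell,2}(K)=0$), I would pick $u\in\dom\big((-\Delta_{c,\Om})_K\big)$ and use \eqref{Fr-2X} to produce $u_j\in C^\infty_0(\Om)$ with $(-\Delta)^\ell u_j\to (-\Delta)^\ell u$ in $L^2$ and with $\big((u_j-u_k),(-\Delta)^\ell(u_j-u_k)\big)_{L^2}\to 0$. Integration by parts on test functions rewrites the latter as $\big\|(-\Delta)^{\ell/2}(u_j-u_k)\big\|_{L^2(\bbR^n)}^2\to 0$ (in the Fourier sense), and the Sobolev estimate above forces $\{u_j\}$ Cauchy in $L^{2^*_\ell}(\bbR^n)$, say with limit $w$. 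Then $(-\Delta)^\ell w=(-\Delta)^\ell u$ distributionally on $\Om$, so $f:=w-u\in L^{2^*_\ell}+L^2$ satisfies $(-\Delta)^\ell f=0$ on $\bbR^n\setminus K$. The $L^2$-removability of $K$ extends $f$ to a polyharmonic function on all of $\bbR^n$; the polyharmonic Liouville theorem then forces $f\equiv 0$, hence $u=w\in H^\ell(\bbR^n)$. Combined with $H^\ell_0(\Om)=H^\ell_0(\bbR^n)$, we conclude $u\in H^\ell_0(\Om)$, placing $u$ in the domain of the Friedrichs extension. Since we already have the reverse inclusion $(-\Delta_{c,\Om})_K\leq (-\Delta_{c,\Om})_F$ from \eqref{Fr-Sa}, self-adjointness of both operators gives equality.

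For the necessity direction, assume the two extensions agree. By \eqref{F-2Lb} and the analog of \eqref{F-2La} for $(-\Delta)^\ell$ (the only $H^\ell_0(\bbR^n)$ polyharmonic function is zero, by the same extension-by-zero argument used in the proof of Lemma \ref{L-ea}), we obtain
\begin{equation*}
\big\{u\in L^2(\bbR^n;d^nx)\,\big|\,(-\Delta)^\ell u=0\text{ in }\bbR^n\setminus K\big\}=\{0\}.
\end{equation*}
Following the pattern of Theorem \ref{L-ea-5C}, this translates into the vanishing of a certain capacity, which via \cite[Theorem 1.5]{HP72} (applied to the polyharmonic operator) upgrades the triviality statement to $L^2_{\loc}(\bbR^n)$. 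If one had $B_{2\ell,2}(K)>0$, pick a nonzero positive Radon measure $\mu$ supported on $K$ with $g_{2\ell}*\mu\in L^2(\bbR^n)$. Since the fundamental solution $E_n^{(\ell)}(x)=c_{n,\ell}|x|^{2\ell-n}$ of $(-\Delta)^\ell$ in $\bbR^n$ has the same leading-order singularity as $g_{2\ell}$ (cf.\ \S 1.2.4 of \cite{AH96}), the convolution $E_n^{(\ell)}*\mu$ belongs to $L^2_{\loc}(\bbR^n)$, is polyharmonic off $K$, and satisfies $|x|^{n-2\ell}(E_n^{(\ell)}*\mu)(x)\to c_{n,\ell}\mu(K)\neq 0$ as $|x|\to\infty$, contradicting the $L^2_{\loc}$ triviality. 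Hence $B_{2\ell,2}(K)=0$.

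The main obstacle will be justifying the $L^2\to L^2_{\loc}$ upgrade for $(-\Delta)^\ell$: the proof of Theorem \ref{L-ea-5C} invoked \cite[Theorem 1.5(a)]{HP72} under the dimension restriction $n>4$ in the case $\ell=1$, and its polyharmonic counterpart will plausibly require a dimension constraint stronger than $n\geq 2\ell+1$ (something like $n>4\ell$) for the full equivalence; under the bare hypothesis $n\geq 2\ell+1$, only the sufficiency direction is expected to go through cleanly, while the necessity direction parallels Theorem \ref{L-ea-5C} exactly and inherits its dimension requirement. A secondary technical point is the careful formulation of $L^2$-removability for higher-order polyharmonic operators, which nevertheless follows from the same Harvey--Polking framework used for $\ell=1$.
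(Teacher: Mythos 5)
Your proposal is essentially the paper's intended argument: the paper offers no separate proof of Theorem \ref{L-ea-8}, asserting only that Theorems \ref{L-ea-5} and \ref{L-ea-5C} "readily generalize," and your elaboration (Sobolev embedding with exponent $2n/(n-2\ell)$, polyharmonic removability via the Harvey--Polking framework, an Almansi-based Liouville theorem, and capacity monotonicity to identify $H^\ell_0(\Om)$ with $H^\ell_0(\bbR^n)$) is exactly that generalization, correctly executed. Your closing caveat about the necessity direction is well taken and in fact exposes a gap in the paper's own formulation: Theorem \ref{L-ea-5C} requires $n>4$ (for the appeal to \cite[Theorem 1.5(a)]{HP72}), so the "only if" half of Theorem \ref{L-ea-8} under the bare hypothesis $n\geq 2\ell+1$ is not covered by the cited prototype even when $\ell=1$, and the expected polyharmonic analogue $n>4\ell$ is the honest hypothesis for that direction.
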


For some related results in the punctured space
$\Om:=\bbR^n\backslash \{0\}$, see also the recent article \cite{Ad07}. Moreover, we mention 
that in the case of the Bessel operator $h_\nu = (-d^2/dr^2) + (\nu^2 - (1/4))r^{-2}$ defined on 
$C_0^\infty((0,\infty))$, equality of the Friedrichs and Krein extension of $h_\nu$ in 
$L^2((0,\infty); dr)$ if and only if $\nu = 0$ has been established in \cite{MT07}. (The sufficiency 
of the condition $\nu = 0$ was established earlier in \cite{GKMT01}.)

While this section focused on differential operators, we conclude with a very brief 
remark on half-line Jacobi, that is, tridiagonal (and hence, second-order finite difference) 
operators: As discussed in depth by Simon \cite{Si98}, the Friedrichs and Krein--von Neumann extensions of a minimally defined symmetric half-line Jacobi operator (cf.\ also 
\cite{BC05}) coincide, if and only if the associated Stieltjes moment problem is determinate 
(i.e., has a unique solution) while the corresponding Hamburger moment problem is 
indeterminate (and hence has uncountably many solutions).

\section{Examples}
\label{s1vi}

\subsection{The Case of a Bounded Interval $(a,b)$, $-\infty < a < b < \infty$, $V=0$.}

We briefly recall the essence of the one-dimensional example $\Om=(a,b)$, 
$-\infty < a < b < \infty$, and $V=0$. This was first discussed in detail by \cite{AS80} 
and \cite[Sect.\ 2.3]{Fu80} (see also \cite[Sect.\ 3.3]{FOT94}). 

Consider the minimal operator $-\Delta_{min,(a,b)}$ 
in $L^2((a,b);dx)$, given by
\begin{align}
& -\Delta_{min,(a,b)} u = -u'',  \no \\
& \;u \in \dom(-\Delta_{min,(a,b)}) 
=\big\{v \in L^2((a,b);dx) \,\big|\, v, v' \in AC([a,b]);    \lb{10.1} \\ 
& \hspace*{1.7cm} v(a)=v'(a)=v(b)=v'(b)=0; \, 
v'' \in L^2((a,b);dx)\big\},     \no
\end{align}
where $AC([a,b])$ denotes the set of absolutely continuous functions on $[a,b]$. Evidently, 
\begin{equation}
-\Delta_{min,(a,b)} = \ol{- \f{d^2}{dx^2}\bigg|_{C_0^\infty((a,b))}} \, , 
\end{equation}
and one can show that 
\begin{equation}
-\Delta_{min,(a,b)} \geq [\pi/(b-a)]^2 I_{L^2((a,b);dx)}. 
\end{equation}
In addition, one infers that 
\begin{equation}
(-\Delta_{min,(a,b)})^* = -\Delta_{max,(a,b)},
\end{equation}
where 
\begin{align} \lb{10.5}
& -\Delta_{max,(a,b)} u = -u'',   \no \\ 
& \; u \in \dom(-\Delta_{max,(a,b)}) = \big\{v \in L^2((a,b);dx) \,\big|\, v, v' \in AC([a,b]);  \, 
v'' \in L^2((a,b);dx)\big\}.   
\end{align}
In particular,
\begin{equation}
{\rm def} (-\Delta_{min,(a,b)}) = (2,2) \, \text{ and } \, 
\ker ((-\Delta_{min,(a,b)})^*) = {\rm lin. \, span}\{1, x\}. 
\end{equation}

The Friedrichs (equivalently, the Dirichlet) extension $-\Delta_{D,(a,b)}$ of 
$-\Delta_{min,(a,b)}$ is then given by 
\begin{align}
& -\Delta_{D,(a,b)} u = -u'', \no \\
& \; u \in \dom(-\Delta_{D,(a,b)}) 
=\big\{v \in L^2((a,b);dx) \,\big|\, v, v' \in AC([a,b]);    \lb{10.7} \\ 
& \hspace*{3.85cm} v(a)=v(b)=0; \, v'' \in L^2((a,b);dx)\big\}.    \no 
\end{align}
In addition,
\begin{equation}
\sigma(-\Delta_{D,(a,b)}) = \{j^2 \pi^2 (b-a)^{-2}\}_{j\in\bbN}, 
\end{equation}
and
\begin{align} 
\dom \big((-\Delta_{D,(a,b)})^{1/2}\big) 
= \big\{v \in L^2((a,b);dx) \,\big|\, v \in AC([a,b]); \,  
v(a)=v(b)=0;  \,   
v' \in L^2((a,b);dx)\big\}.
\end{align}
By \eqref{SK}, 
\begin{equation}
\dom(-\Delta_{K,(a,b)}) = \dom(-\Delta_{min,(a,b)}) \dotplus \ker((-\Delta_{min,(a,b)})^*),
\end{equation} 
and hence any $u \in \dom(-\Delta_{K,(a,b)})$ is of the type 
\begin{align}
& u = f + \eta, \quad f \in \dom(-\Delta_{min,(a,b)}), 
\quad \eta (x) = u(a) + [u(b)-u(a)] \bigg(\f{x-a}{b-a}\bigg),   
\; x \in (a,b), 
\end{align}
in particular, $f(a)=f'(a)=f(b)=f'(b)=0$. Thus, the Krein--von Neumann extension 
$-\Delta_{K,(a,b)}$ of $-\Delta_{min,(a,b)}$ is given by 
\begin{align}
& -\Delta_{K,(a,b)} u = -u'', \no \\
& \; u \in \dom(-\Delta_{K,(a,b)}) 
=\big\{v \in L^2((a,b);dx) \,\big|\, v, v' \in AC([a,b]);    \lb{10.9} \\ 
& \hspace*{8mm} v'(a)=v'(b)=[v(b)-v(a)]/(b-a); \, v'' \in L^2((a,b);dx)\big\}.  \no 
\end{align}
Using the characterization of all self-adjoint extensions of general Sturm--Liouville operators 
in \cite[Theorem 13.14]{We03}, one can also directly verify that 
$-\Delta_{K,(a,b)}$ as given by \eqref{10.9} is a self-adjoint extension of 
$-\Delta_{min,(a,b)}$. 

In connection with \eqref{10.1}, \eqref{10.5}, \eqref{10.7}, and \eqref{10.9},  we also 
note that the well-known fact that 
\begin{equation}
v, v'' \in  L^2((a,b);dx) \, \text{ implies } \, v' \in  L^2((a,b);dx).   \lb{10.10} 
\end{equation}

Utilizing \eqref{10.10}, we briefly consider the quadratic form associated with  
the Krein Laplacian $-\Delta_{K,(a,b)}$. By \eqref{SKform1} and \eqref{SKform2}, 
one infers,
\begin{align}
&\dom\big((-\Delta_{K,(a,b)})^{1/2}\big) = \dom\big((-\Delta_{D,(a,b)})^{1/2}\big) 
\dotplus \ker ((-\Delta_{min,(a,b)})^*),   \lb{SKformab1}  \\
&\big\|(-\Delta_{K,(a,b)})^{1/2}(u+g)\big\|_{L^2((a,b);dx)}^2 
=\big\|(-\Delta_{D,(a,b)})^{1/2} u\big\|_{L^2((a,b);dx)}^2  \no \\
& \quad = ((u+g)',(u+g)')_{L^2((a,b);dx)} - [\ol{g(b)}g'(b) - \ol{g(a)} g'(a)]   \no \\
& \quad = ((u+g)',(u+g)')_{L^2((a,b);dx)} - |[u(b) + g(b)] - [u(a) + g(a)]|^2/(b-a),  \no \\
& \hspace*{3cm} u \in \dom\big((-\Delta_{D,(a,b)})^{1/2}\big), \; 
g \in \ker ((-\Delta_{min,(a,b)})^*).    \lb{SKformab2}
\end{align}

Finally, we turn to the spectrum of $-\Delta_{K,(a,b)}$. The boundary conditions in 
\eqref{10.9} lead to two kinds of (nonnormalized) eigenfunctions and eigenvalue equations 
\begin{align}
\begin{split}
& \psi(k,x) = \cos(k(x-[(a+b)/2])), \quad 
k \sin(k(b-a)/2) = 0,  \\  
& k_{K,(a,b),j} = (j+1)\pi/(b-a), \;  j=-1, 1, 3, 5, \dots,  
\end{split}
\end{align}
and
\begin{align}
& \phi(k,x) = \sin(k(x-[(a+b)/2])), \quad 
k(b-a)/2 = \tan(k(b-a)/2) , \no \\ 
& k_{K,(a,b),0} =0, \;  j\pi < k_{K,(a,b),j} < (j+1)\pi, \; j=2, 4, 6, 8, \dots,  \\ 
& \lim_{\ell\to\infty} [k_{K,(a,b),2\ell} - ((2\ell +1) \pi/(b-a))] =0.  \no 
\end{align}
The associated eigenvalues of $-\Delta_{K,(a,b)}$ are thus given by 
\begin{equation}
\sigma(-\Delta_{K,(a,b)}) = \{0\} \cup \{k_{K,(a,b),j}^2\}_{j\in\bbN},  
\end{equation}
where the eigenvalue $0$ of $-\Delta_{K,(a,b)}$ is of multiplicity two, but the remaining nonzero eigenvalues of $-\Delta_{K,(a,b)}$ are all simple. 

\subsection{The Case of the Ball $B_n(0;R)$, $R>0$, in $\bbR^n$, $n\geq 2$, $V=0$.}

In this subsection, we consider in great detail the scenario when
the domain $\Omega$ equals a ball of radius $R>0$ (for convenience, centered at the origin) in $\bbR^n$,  
\begin{equation}
\Om=B_n(0;R)\subset\bbR^n, \quad R>0, \, n\ge 2.    \lb{10.19}
\end{equation}
Since both the domain $B_n(0;R)$ in \eqref{10.19}, as well as the Laplacian $-\Delta$ are invariant under rotations in $\bbR^n$ centered at the origin, we will employ the (angular momentum)  decomposition of $L^2(B_n(0;R); d^nx)$ into the direct sum of tensor products 
\begin{align}
& L^2(B_n(0;R); d^nx) = L^2((0,R); r^{n-1}dr) \otimes L^2(S^{n-1}; d\omega_{n-1}) 
= \bigoplus_{\ell\in\bbN_0} \cH_{n,\ell,(0,R)},   \lb{10.20} \\
& \cH_{n,\ell,(0,R)} = L^2((0,R); r^{n-1}dr) \otimes \cK_{n,\ell}, 
\quad \ell \in \bbN_0, \; n\geq 2,   \lb{10.21}
\end{align}
where $S^{n-1}= \partial B_n(0;1)=\{x\in\bbR^n\,|\, |x|=1\}$ denotes the $(n-1)$-dimensional unit sphere in $\bbR^n$, $d\omega_{n-1}$ represents the surface measure on $S^{n-1}$, $n\geq 2$, and $\cK_{n,\ell}$ denoting the eigenspace of the Laplace--Beltrami operator 
$-\Delta_{S^{n-1}}$ in $L^2(S^{n-1}; d\omega_{n-1})$ corresponding to the $\ell$th eigenvalue $\kappa_{n,\ell}$ of $-\Delta_{S^{n-1}}$ counting multiplicity,
\begin{align}
\begin{split} 
& \kappa_{n,\ell} = \ell(\ell + n-2), \\ 
& \dim(\cK_{n,\ell}) 
= \f{(2\ell+n-2)\Gamma(\ell+n-2)}{\Gamma(\ell+1)\Gamma(n-1)} : =d_{n,\ell}, 
\quad \ell\in\bbN_0, \; n\geq 2   \lb{10.22}
\end{split} 
\end{align}
(cf.\ \cite[p.\ 4]{Mu66}). In other words, $\cK_{n,\ell}$ is spanned by the $n$-dimensional spherical harmonics of degree $\ell\in\bbN_0$. For more details in this connection we refer to \cite[App.\ to Sect.\ X.1]{RS75} and \cite[Ch.\ 18]{We03}.

As a result, the minimal Laplacian in $L^2(B_n(0;R);d^n x)$ can be decomposed as follows  
\begin{align}
\begin{split} 
& -\Delta_{min,B_n(0;R)}= \ol{-\Delta|_{C_0^\infty(B_n(0;R))}} 
= \bigoplus_{\ell\in\bbN_0} H_{n,\ell,min}^{(0)} \otimes I_{\cK_{n,\ell}},  \\ 
& \; \dom (-\Delta_{min,B_n(0;R)}) = H^2_0(B_n(0;R)),    \lb{10.23}
\end{split}
\end{align}
where $H_{n,\ell,min}^{(0)}$ in $L^2((0,R); r^{n-1}dr)$ are given by
\begin{equation}
H_{n,\ell,min}^{(0)} = \ol{\bigg(-\f{d^2}{dr^2} - \f{n-1}{r}\f{d}{dr} 
+ \f{\kappa_{n,\ell}}{r^2}\bigg)_{C_0^\infty((0,R))}} \, , \quad \ell \in \bbN_0.
\end{equation}
Using the unitary operator $U_n$ defined by 
\begin{equation}
U_n \colon \begin{cases}   
L^2((0,R); r^{n-1}dr) \to L^2((0,R); dr),  \\
\hspace*{2.55cm} \phi  \mapsto (U_n \phi)(r) = r^{(n-1)/2} \phi(r), 
\end{cases}
\end{equation} 
it will also be convenient to consider the unitary transformation of $H_{n,\ell,min}^{(0)}$ given by 
\begin{equation}
h_{n,\ell,min}^{(0)} = U_n H_{n,\ell,min}^{(0)} U_n^{-1}, \quad \ell \in \bbN_0, 
\end{equation} 
where 
\begin{align}
& h_{n,0,min}^{(0)} = -\f{d^2}{dr^2} + \f{(n-1)(n-3)}{4 r^2}, \quad 0<r<R,  \no \\
& \dom\big(h_{n,0,min}^{(0)}\big) = \big\{f \in L^2((0,R); dr) \,\big|\, f, f' \in AC([\varepsilon,R]) \, 
\text{for all $\varepsilon>0$};   \no \\  
& \hspace*{6.1cm} f(R_-)=f'(R_-)=0, \,  f_0=0;    \\
& \hspace*{3.1cm}   (-f'' +[(n-1)(n-3)/4]r^{-2}f) \in L^2((0,R); dr)\big\}  \, 
\text{ for $n=2,3$},   \no \\
& h_{n,\ell,min}^{(0)} = -\f{d^2}{dr^2} + \f{4 \kappa_{n,\ell} + (n-1)(n-3)}{4 r^2},  
\quad 0<r<R, \no \\
& \dom\big(h_{n,\ell,min}^{(0)}\big) 
= \big\{f \in L^2((0,R); dr) \,|\, f, f' \in AC([\varepsilon,R]) \, 
\text{for all $\varepsilon>0$};   \no \\ 
& \hspace*{7.3cm} f(R_-)=f'(R_-)=0;    \\ 
& \hspace*{1.9cm} (-f'' +[\kappa_{n,\ell} + ((n-1)(n-3)/4)]r^{-2}f) \in L^2((0,R); dr)\big\}  \no \\
& \hspace*{5.4cm} \text{ for $\ell\in\bbN$, $n\geq 2$ and $\ell=0$, $n\geq 4$.}  \no
\end{align}
In particular, for $\ell\in\bbN$, $n\geq 2$, and $\ell=0$, $n\geq 4$, one obtains 
\begin{align}
\begin{split}
& h_{n,\ell,min}^{(0)} = \ol{\bigg(-\f{d^2}{dr^2} + \f{4 \kappa_{n,\ell} 
+ (n-1)(n-3)}{4 r^2}\bigg)\bigg|_{C_0^\infty((0,R))}} \\ 
& \hspace*{3.2cm} \text{ for $\ell \in \bbN$, $n\geq 2$, and $\ell=0$, $n\geq 4$.}  
\end{split} 
\end{align}
On the other hand, for $n=2,3$, the domain of the closure of 
$h_{n,0,min}^{(0)}\big|_{C_0^\infty((0,R))}$ is strictly contained in that of 
$\dom\big(h_{n,0,min}^{(0)}\big)$, and in this case one obtains for 
\begin{equation}
\hatt h_{n,0, min}^{(0)} = \ol{\bigg(-\f{d^2}{dr^2} + \f{(n-1)(n-3)}{4 r^2}\bigg)\bigg|_{C_0^\infty((0,R))}}, 
\quad n=2,3, 
\end{equation}
that
\begin{align}
& \hatt h_{n,0, min}^{(0)} = -\f{d^2}{dr^2} + \f{(n-1)(n-3)}{4 r^2}, \quad 0<r<R, \no \\
& \dom\big(\hatt h_{n,0, min}^{(0)}\big) = 
\big\{f\in L^2((0,R); dr) \,\big|\, f, f' \in AC([\varepsilon,R]) \, \text{for all $\varepsilon>0$}; 
\no \\ 
& \hspace*{5.35cm} f(R_-)=f'(R_-)=0, \, f_0=f'_0=0; \\ 
& \hspace*{3.3cm} (-f'' +[(n-1)(n-3)/4]r^{-2}f) \in L^2((0,R); dr)\big\}.   \no 
\end{align}
Here we used the abbreviations (cf.\ \cite{BG85} for details)
\begin{align}
\begin{split} 
& f_0 = \begin{cases} \lim_{r\downarrow 0} [-r^{1/2}\ln(r)]^{-1} f(r), & n=2, \\
f(0_+), & n=3, \end{cases} \\
& f'_0 = \begin{cases} \lim_{r\downarrow 0} r^{-1/2} [f(r) + f_0 r^{1/2}\ln(r)], & n=2, \\
f'(0_+), & n=3.  \end{cases}   \lb{10.33}
\end{split} 
\end{align}
We also recall the adjoints of $h_{n,\ell, min}^{(0)}$ which are given by
\begin{align}
& \big(h_{n,0,min}^{(0)}\big)^* = -\f{d^2}{dr^2} + \f{(n-1)(n-3)}{4 r^2}, 
\quad 0<r<R, \no \\
& \dom\big(\big(h_{n,0,min}^{(0)}\big)^*\big) = \big\{f \in L^2((0,R); dr) \,\big|\, f, f' \in 
AC([\varepsilon,R]) \, \text{for all $\varepsilon>0$};  \lb{10.34}  \\
& \hspace*{.4cm}   f_0=0; \, (-f'' +[(n-1)(n-3)/4]r^{-2}f) \in L^2((0,R); dr)\big\}  \, 
\text{ for $n=2,3$},  \no \\
& \big(h_{n,\ell,min}^{(0)}\big)^* = -\f{d^2}{dr^2} + \f{4 \kappa_{n,\ell} + (n-1)(n-3)}{4 r^2},   \quad 0<r<R,   \no \\
& \dom\big(\big(h_{n,\ell,min}^{(0)}\big)^*\big) = \big\{f \in L^2((0,R); dr) \,\big|\, f, f' \in 
AC([\varepsilon,R]) \, 
\text{for all $\varepsilon>0$};     \\ 
& \hspace*{2.35cm} (-f'' +[\kappa_{n,\ell} + ((n-1)(n-3)/4)]r^{-2}f) \in L^2((0,R); dr)\big\}   
 \no \\
& \hspace*{5.85cm} \text{ for $\ell\in\bbN$, $n\geq 2$ and $\ell=0$, $n\geq 4$.}  \no
\end{align}
In particular,
\begin{equation}
h_{n,\ell,max}^{(0)} = \big(h_{n,\ell,min}^{(0)}\big)^*, \quad \ell\in\bbN_0, \; n\geq 2. 
\end{equation}
All self-adjoint extensions of $h_{n,\ell, min}^{(0)}$ are given by the following 
one-parameter families $h_{n,\ell, \alpha_{n,\ell}}^{(0)}$, 
$\alpha_{n,\ell} \in\bbR\cup\{\infty\}$,
\begin{align}
& h_{n,0,\alpha_{n,0}}^{(0)} = -\f{d^2}{dr^2} + \f{(n-1)(n-3)}{4 r^2}, \quad 0<r<R, \no \\
& \dom\big(h_{n,0,\alpha_{n,0}}^{(0)}\big) = \big\{f \in L^2((0,R); dr) \,\big|\, f, f' \in AC([\varepsilon,R]) \, \text{for all $\varepsilon>0$};   \no \\ 
& \hspace*{5.6cm}  f'(R_-)+\alpha_{n,0}f(R_-)=0, \,  f_0=0;    \lb{10.37} \\
& \hspace*{3cm}  (-f'' +[(n-1)(n-3)/4]r^{-2}f) \in L^2((0,R); dr)\big\}  \, 
\text{ for $n=2,3$},   \no \\
& h_{n,\ell,\alpha_{n,\ell}}^{(0)} = -\f{d^2}{dr^2} + \f{4 \kappa_{n,\ell} + (n-1)(n-3)}{4 r^2},   
\quad 0<r<R,  \no \\
& \dom\big(h_{n,\ell,\alpha_{n,\ell}}^{(0)}\big) = \big\{f \in L^2((0,R); dr) \,\big|\, f, f' \in 
AC([\varepsilon,R]) \, \text{for all $\varepsilon>0$};   \no \\
& \hspace*{6.73cm} f'(R_-)+\alpha_{n,\ell}f(R_-)=0;   \lb{10.38}  \\ 
& \hspace*{1.95cm} (-f'' +[\kappa_{n,\ell} + ((n-1)(n-3)/4)]r^{-2}f) \in L^2((0,R); dr)\big\} 
\no \\
& \hspace*{5.5cm} \text{ for $\ell\in\bbN$, $n\geq 2$ and $\ell=0$, $n\geq 4$.}  \no
\end{align}
Here, in obvious notation, the boundary condition for $\alpha_{n,\ell}=\infty$ simply represents the Dirichlet boundary condition $f(R_-)=0$. In particular, the Friedrichs or Dirichlet extension $h_{n,\ell, D}^{(0)}$ of $h_{n,\ell, min}^{(0)}$ is given by 
$h_{n,\ell, \infty}^{(0)}$, that is, by
\begin{align}
& h_{n,0,D}^{(0)} = -\f{d^2}{dr^2} + \f{(n-1)(n-3)}{4 r^2}, \quad 0<r<R,  \no \\
& \dom\big(h_{n,0,D}^{(0)}\big) = \big\{f \in L^2((0,R); dr) \,\big|\, f, f' \in AC([\varepsilon,R]) \, \text{for all $\varepsilon>0$}; \, f(R_-)=0,   \no \\
& \hspace*{.4cm}   f_0=0; \, (-f'' +[(n-1)(n-3)/4]r^{-2}f) \in L^2((0,R); dr)\big\}  \, 
\text{ for $n=2,3$},   \lb{10.39} \\
& h_{n,\ell,D}^{(0)} = -\f{d^2}{dr^2} + \f{4 \kappa_{n,\ell} + (n-1)(n-3)}{4 r^2},   
\quad 0<r<R,  \no \\
& \dom\big(h_{n,\ell,D}^{(0)}\big) = \big\{f \in L^2((0,R); dr) \,\big|\, f, f' \in 
AC([\varepsilon,R]) \, \text{for all $\varepsilon>0$}; \, f(R_-)=0;   \no   \\ 
& \hspace*{3.5cm} (-f'' +[\kappa_{n,\ell} + ((n-1)(n-3)/4)]r^{-2}f) \in L^2((0,R); dr)\big\}  
\no \\
& \hspace*{7.1cm} \text{ for $\ell\in\bbN$, $n\geq 2$ and $\ell=0$, $n\geq 4$.}   \lb{10.40}
\end{align}
To find the boundary condition for the Krein--von Neumann extension $h_{n,\ell,K}^{(0)}$ of $h_{n,\ell,min}^{(0)}$, that is, to find the corresponding boundary condition parameter 
$\alpha_{n,\ell,K}$ in \eqref{10.37}, \eqref{10.38}, we recall \eqref{SK}, that is,
\begin{equation}
\dom\big(h_{n,\ell,K}^{(0)}\big) = \dom\big(h_{n,\ell,min}^{(0)}\big) \dotplus 
\ker\big(\big(h_{n,\ell,min}^{(0)}\big)^*\big). 
\end{equation}
By inspection, the general solution of 
\begin{equation}
\bigg(-\f{d^2}{dr^2} + \f{4 \kappa_{n,\ell} + (n-1)(n-3)}{4 r^2}\bigg) \psi(r) = 0,  
\quad r \in (0,R),   \lb{10.41}
\end{equation}
is given by
\begin{equation}
\psi(r) = A r^{\ell +[(n-1)/2]} + B r^{-\ell - [(n-3)/2]}, \quad A, B \in \bbC, \; r \in (0,R).   
\lb{10.42}
\end{equation}
However, for $\ell\geq 1$, $n \geq 2$ and for $\ell=0$, $n\geq 4$, the requirement 
$\psi \in L^2((0,R); dr)$ requires $B=0$ in \eqref{10.42}. Similarly, also the requirement 
$\psi_0=0$ (cf.\ \eqref{10.34}) for $\ell=0$, $n=2,3$, enforces $B=0$ in \eqref{10.42}.  

Hence, any $u \in \dom\big(h_{n,\ell,K}^{(0)}\big)$ is of the type 
\begin{equation}
u = f + \eta, \quad f \in \dom\big(h_{n,\ell,min}^{(0)}\big), 
\quad \eta (r) = u(R_-)r^{\ell +[(n-1)/2]}, \; r \in [0,R), 
\end{equation}
in particular, $f(R_-)=f'(R_-)=0$. Denoting by $\alpha_{n,\ell,K}$ the boundary condition parameter for $h_{n,\ell,K}^{(0)}$ one thus computes
\begin{equation}
-\alpha_{n,\ell,K} = \f{u'(R_-)}{u(R_-)} = \f{\eta'(R_-)}{\eta(R_-)} = [\ell + ((n-1)/2)]/R.
\end{equation}
Thus, the Krein--von Neumann extension $h_{n,\ell,K}^{(0)}$ of $h_{n,\ell,min}^{(0)}$ is given by 
\begin{align}
& h_{n,0,K}^{(0)} = -\f{d^2}{dr^2} + \f{(n-1)(n-3)}{4 r^2},  \quad 0<r<R,  \no \\
& \dom\big(h_{n,0,K}^{(0)}\big) = \big\{f \in L^2((0,R); dr) \,\big|\, f, f' \in AC([\varepsilon,R]) \, \text{for all $\varepsilon>0$}; \no \\ 
& \hspace*{3.6cm}  f'(R_-)- [(n-1)/2]R^{-1} f(R_-)=0, \,  f_0=0;    \lb{10.46} \\
& \hspace*{3cm}   (-f'' +[(n-1)(n-3)/4]r^{-2}f) \in L^2((0,R); dr)\big\}  \, 
\text{ for $n=2,3$},   \no \\
& h_{n,\ell,K}^{(0)} = -\f{d^2}{dr^2} + \f{4 \kappa_{n,\ell} + (n-1)(n-3)}{4 r^2},   
\quad 0<r<R,  \no \\
& \dom\big(h_{n,\ell,K}^{(0)}\big) = \big\{f \in L^2((0,R); dr) \,\big|\, f, f' \in 
AC([\varepsilon,R]) \, \text{for all $\varepsilon>0$};   \no \\ 
& \hspace*{3.94cm}  f'(R_-)-[\ell +((n-1)/2)] R^{-1} f(R_-)=0;    \lb{10.47}  \\ 
& \hspace*{1.65cm} (-f'' +[\kappa_{n,\ell} + ((n-1)(n-3)/4)]r^{-2}f) \in L^2((0,R); dr)\big\} 
\no  \\
& \hspace*{5.15cm} \text{ for $\ell\in\bbN$, $n\geq 2$ and $\ell=0$, $n\geq 4$.}  \no
\end{align}

Next we  briefly turn to the eigenvalues of $h_{n,\ell,D}^{(0)}$ and $h_{n,\ell,K}^{(0)}$. 
In analogy to \eqref{10.41}, the solution $\psi$ of 
\begin{equation}
\bigg(-\f{d^2}{dr^2} + \f{4 \kappa_{n,\ell} + (n-1)(n-3)}{4 r^2} -z \bigg) \psi(r,z) = 0,  
\quad r \in (0,R), 
\lb{10.48}
\end{equation}
satisfying the condition $\psi(\cdot,z) \in L^2((0,R); dr)$ for $\ell=0$, $n\geq 4$ and 
$\psi_0(z)=0$ (cf.\ \eqref{10.34}) for $\ell=0$, $n=2,3$, yields
\begin{equation}
\psi(r,z) = A r^{1/2} J_{l+[(n-2)/2]}(z^{1/2} r), \quad A \in \bbC, \; r \in (0,R),   
\lb{10.49}
\end{equation}
Here $J_{\nu}(\cdot)$ denotes the Bessel function of the first kind and order $\nu$ 
(cf.\ \cite[Sect.\ 9.1]{AS72}).
Thus, by the boundary condition $f(R_-)=0$ in \eqref{10.39}, \eqref{10.40}, the eigenvalues of the Dirichlet extension $h_{n,\ell,D}^{(0)}$ are determined by the equation 
$\psi(R_-,z)=0$, and hence by 
\begin{equation}
J_{l+[(n-2)/2]}(z^{1/2}R) = 0. 
\end{equation}
Following \cite[Sect.\ 9.5]{AS72}, we denote the zeros of $J_{\nu}(\cdot)$ by $j_{\nu,k}$, 
$k\in\bbN$, and hence obtain for the spectrum of $h_{n,\ell,F}^{(0)}$,
\begin{equation}
\sigma\big(h_{n,\ell,D}^{(0)}\big) 
= \big\{\lambda^{(0)}_{n,\ell,D,k}\big\}_{k\in\bbN}  
= \big\{ j_{\ell+[(n-2)/2],k}^2R^{-2}\big\}_{k\in\bbN}, \quad \ell \in \bbN_0, 
\; n\geq 2.   \lb{10.51}
\end{equation}
Each eigenvalue of of $h_{n,\ell,D}^{(0)}$ is simple.

Similarly, by the boundary condition $f'(R_-) - [\ell +((n-1)/2)]R^{-1}f(R_-)=0$ in 
\eqref{10.46}, \eqref{10.47}, the eigenvalues of the Krein--von Neumann extension 
$h_{n,\ell,K}^{(0)}$ are determined by the equation
\begin{equation}
\psi'(R,z) - [\ell+ ((n-1)/2)] \psi(R,z) = - A z^{1/2} R^{1/2} J_{\ell+(n/2)}(z^{1/2}R)=0
\end{equation}
(cf.\ \cite[eq.\ (9.1.27)]{AS72}), and hence by 
\begin{equation}
z^{1/2} J_{\ell+(n/2)}(z^{1/2}R) = 0. 
\end{equation}
Thus, one obtains for the spectrum of $h_{n,\ell,K}^{(0)}$,
\begin{equation}
\sigma\big(h_{n,\ell,K}^{(0)}\big) 
= \{0\} \cup \big\{\lambda^{(0)}_{n,\ell,K,k}\big\}_{k\in\bbN}  
= \{0\} \cup \big\{ j_{\ell+(n/2),k}^2R^{-2}\big\}_{k\in\bbN}, \quad \ell \in \bbN_0, \; n\geq 2. 
\lb{10.54}
\end{equation}
Again, each eigenvalue of $h_{n,\ell,K}^{(0)}$ is simple, and 
$\eta (r) = Cr^{\ell +[(n-1)/2]}$, $C\in\bbC$, represents the (unnormalized) eigenfunction 
of $h_{n,\ell,K}^{(0)}$ corresponding to the eigenvalue $0$.

Combining Propositions \ref{p2.2a}--\ref{p2.4}, one then obtains 
\begin{align}
& -\Delta_{max,B_n(0;R)} = (-\Delta_{min,B_n(0;R)})^* 
= \bigoplus_{\ell\in\bbN_0} \big(H_{n,\ell,min}^{(0)}\big)^* \otimes I_{\cK_{n,\ell}}, \\
& -\Delta_{D,B_n(0;R)}  
= \bigoplus_{\ell\in\bbN_0} H_{n,\ell,D}^{(0)} \otimes I_{\cK_{n,\ell}}, \\
& -\Delta_{K,B_n(0;R)}  
= \bigoplus_{\ell\in\bbN_0} H_{n,\ell,K}^{(0)} \otimes I_{\cK_{n,\ell}}, 
\end{align}
where (cf.\ \eqref{10.23})
\begin{align}
& H_{n,\ell,max}^{(0)} = \big(H_{n,\ell,min}^{(0)}\big)^* 
= U_n^{-1} \big(h_{n,\ell,min}^{(0)}\big)^* U_n, \quad \ell \in \bbN_0, \\
& H_{n,\ell,D}^{(0)} = U_n^{-1} h_{n,\ell,D}^{(0)} U_n, \quad \ell \in \bbN_0, \\ 
& H_{n,\ell,K}^{(0)} = U_n^{-1} h_{n,\ell,K}^{(0)} U_n, \quad \ell \in \bbN_0.
\end{align}
Consequently,
\begin{align}
& \sigma( -\Delta_{D,B_n(0;R)}) 
= \big\{\lambda^{(0)}_{n,\ell,D,k}\big\}_{\ell\in\bbN_0, k\in\bbN}  
= \big\{ j_{\ell+[(n-2)/2],k}^2R^{-2}\big\}_{\ell\in\bbN_0, k\in\bbN},  \\
& \sigma_{\rm ess}( -\Delta_{D,B_n(0;R)}) = \emptyset,  \\
& \sigma( -\Delta_{K,B_n(0;R)}) 
= \{0\} \cup \big\{\lambda^{(0)}_{n,\ell,K,k}\big\}_{\ell\in\bbN_0, k\in\bbN}  
= \{0\} \cup \big\{  j_{\ell+(n/2),k}^2R^{-2}\big\}_{\ell\in\bbN_0, k\in\bbN},  \\ 
& \dim(\ker( -\Delta_{K,B_n(0;R)})) = \infty, \quad 
\sigma_{\rm ess}( -\Delta_{K,B_n(0;R)}) = \{0\}.  
\end{align}
By \eqref{10.22}, each eigenvalue $\lambda^{(0)}_{n,\ell,D,k}$, $k\in\bbN$, of 
$-\Delta_{D,B_n(0;R)}$ has multiplicity $d_{n,\ell}$ and similarly, again by \eqref{10.22}, each eigenvalue $\lambda^{(0)}_{n,\ell,K,k}$, $k\in\bbN$, of $ -\Delta_{K,B_n(0;R)}$ has multiplicity $d_{n,\ell}$. 

Finally, we briefly turn to the Weyl asymptotics for the eigenvalue counting function  
\eqref{mam-44} associated with the Krein Laplacian $-\Delta_{K,B_n(0;R)}$ for the ball 
$B_n(0;R)$, $R>0$, in $\bbR^n$, $n\geq 2$. We will discuss a direct approach to the Weyl asymptotics that is independent of the general treatment presented in Section \ref{s12}. Due to the smooth nature of the ball, we will obtain an improvement in the remainder term of the Weyl asymptotics of the Krein Laplacian. 

First we recall the well-known fact that in the case of the Dirichlet Laplacian associated with the ball $B_n(0;R)$,  
\begin{align}\label{10.65} 
N^{(0)}_{D,B_n(0;R)}(\lambda) &= (2\pi)^{-n}v_n^2 R^n\lambda^{n/2} 
- (2\pi)^{-(n-1)} v_{n-1} (n/4) v_n R^{n-1} \lambda^{(n-1)/2}   \no \\
& \quad + O\big(\lambda^{(n-2)/2}\big) \, \mbox{ as }\, \lambda\to\infty,  
\end{align}
with $v_n=\pi^{n/2}/ \Gamma((n/2)+1)$ the volume of the unit ball in $\bbR^n$ (and 
$n v_n$ representing the surface area of the unit ball in $\bbR^n$). 

\begin{proposition}\label{p10.1}
The strictly positive eigenvalues of the Krein Laplacian associated with the ball of radius $R>0$, 
$B_n(0;R)\subset \bbR^n$, $R>0$, $n\geq 2$, satisfy the following Weyl-type 
eigenvalue asymptotics,
\begin{align}\label{10.66}
N^{(0)}_{K,B_n(0;R)}(\lambda)&=(2\pi)^{-n}v_n^2 R^n \lambda^{n/2} 
- (2\pi)^{-(n-1)} v_{n-1} [(n/4) v_n + v_{n-1}] R^{n-1} \lambda^{(n-1)/2}  \no \\
& \quad + O\big(\lambda^{(n-2)/2}\big) \, \mbox{ as }\, \lambda\to\infty.   
\end{align}
\end{proposition}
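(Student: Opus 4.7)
The plan is to compute $N^{(0)}_{K,B_n(0;R)}(\lambda)$ explicitly from the spectral decomposition \eqref{10.54}, reduce it by combinatorial reindexing to an auxiliary Bessel-zero counting sum with multiplicities coming from spherical harmonics in dimension $n-1$, and then identify the leading term of that auxiliary sum with the leading Weyl term of the Dirichlet Laplacian on the lower-dimensional ball $B_{n-1}(0;R)$. Concretely, the angular-momentum decomposition together with \eqref{10.51} and \eqref{10.54} gives the unified representation
\[
N^{(0)}_{X,B_n(0;R)}(\lambda) = \sum_{\ell=0}^\infty d_{n,\ell}\,\#\{k\in\bbN \,|\, j_{\ell+\nu_X,k}\leq R\lambda^{1/2}\}, \quad X \in \{D,K\},
\]
with $\nu_D = (n-2)/2$ and $\nu_K = n/2$.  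Because $\nu_K - \nu_D = 1$, I would shift the summation index $\ell\mapsto \ell-1$ in the Krein sum (with the convention $d_{n,-1}:=0$) and then invoke the elementary identity $d_{n,\ell} - d_{n,\ell-1} = d_{n-1,\ell}$, which follows directly from Pascal's rule applied to the closed form $d_{n,\ell} = \binom{\ell+n-1}{n-1} - \binom{\ell+n-3}{n-1}$.  This yields
\[
\cS(\lambda) := N^{(0)}_{D,B_n(0;R)}(\lambda) - N^{(0)}_{K,B_n(0;R)}(\lambda) = \sum_{\ell=0}^\infty d_{n-1,\ell}\,\#\{k\in\bbN \,|\, j_{\ell+(n-2)/2,k} \leq R\lambda^{1/2}\}.
\]

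The next step is to compare $\cS(\lambda)$ with the Dirichlet counting function for the lower-dimensional ball,
\[
N^{(0)}_{D,B_{n-1}(0;R)}(\lambda) = \sum_{\ell=0}^\infty d_{n-1,\ell}\,\#\{k\in\bbN \,|\, j_{\ell+(n-3)/2,k} \leq R\lambda^{1/2}\},
\]
which differs from $\cS(\lambda)$ only by a uniform half-integer shift of the Bessel index.  The strict monotonicity of $\nu\mapsto j_{\nu,k}$ together with the classical interlacing $j_{\nu,k} < j_{\nu+1/2,k} < j_{\nu+1,k} < j_{\nu,k+1}$ immediately implies the per-$\ell$ bound
\[
\big| \#\{k : j_{\ell+(n-2)/2,k} \leq R\lambda^{1/2}\} - \#\{k : j_{\ell+(n-3)/2,k} \leq R\lambda^{1/2}\} \big| \leq 1.
\]
Only indices $\ell$ with $j_{\ell+(n-3)/2,1} \leq R\lambda^{1/2}$ contribute; the lower bound $j_{\nu,1} \geq c\nu$ for large $\nu$ confines the effective summation range to $\ell = O(\lambda^{1/2})$, and combined with the polynomial bound $d_{n-1,\ell} = O(\ell^{n-3})$ valid for $n\geq 3$ (the case $n=2$ reduces to a finite sum since $d_{1,\ell}=0$ for $\ell\geq 2$) one obtains
\[
\cS(\lambda) = N^{(0)}_{D,B_{n-1}(0;R)}(\lambda) + O\big(\lambda^{(n-2)/2}\big).
\]

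Applying \eqref{10.65} in dimension $n-1$ yields $N^{(0)}_{D,B_{n-1}(0;R)}(\lambda) = v_{n-1}^2(2\pi)^{-(n-1)}R^{n-1}\lambda^{(n-1)/2} + O(\lambda^{(n-2)/2})$, and subtracting from \eqref{10.65} in dimension $n$ produces \eqref{10.66} after algebraic consolidation of the $\lambda^{(n-1)/2}$ coefficients into $-(2\pi)^{-(n-1)} v_{n-1}[(n/4)v_n + v_{n-1}] R^{n-1}$.  The main obstacle is the two-term Weyl formula \eqref{10.65} itself, with its sharp remainder $O(\lambda^{(n-2)/2})$:  the ball's degenerate billiard dynamics (every trajectory is closed) preclude the use of general Ivrii-type theorems, so this remainder must instead be justified by a direct analysis of Bessel-zero asymptotics, for instance via Olver's uniform expansion of $J_\nu(z)$ across the transition point $z=\nu$.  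Once \eqref{10.65} is granted, the rest of the argument is purely combinatorial and introduces no further analytic complications; it is precisely the sharpness of \eqref{10.65} that effects the improvement from the generic remainder $O(\lambda^{(n-(1/2))/2})$ of Theorem \ref{T-KrWe} to the sharper $O(\lambda^{(n-2)/2})$ appearing in \eqref{10.66}.
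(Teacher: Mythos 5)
Your proof is correct, and in all essentials it follows the paper's own strategy: the angular--momentum decomposition into Bessel-zero counting functions $\cN_\nu$, the recursion $d_{n,\ell}=d_{n-1,\ell}+d_{n,\ell-1}$, the reduction to the Dirichlet counting function of the lower-dimensional ball, and the two-term formula \eqref{10.65} quoted as the analytic input. The one step you execute differently is the comparison of $N^{(0)}_{D,B_n(0;R)}-N^{(0)}_{K,B_n(0;R)}$ with $N^{(0)}_{D,B_{n-1}(0;R)}$. The paper works only with inequalities: monotonicity of $j_{\nu,k}$ in $\nu$ yields the sandwich \eqref{10.77A}, and a second application of the upper estimate in dimension $n-1$ traps $N^{(0)}_{K,B_n(0;R)}(\lambda)$ within $N^{(0)}_{D,B_{n-2}(0;R)}(\lambda)=O\big(\lambda^{(n-2)/2}\big)$ of $N^{(0)}_{D,B_{n}(0;R)}(\lambda)-N^{(0)}_{D,B_{n-1}(0;R)}(\lambda)$. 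You instead record the exact telescoping identity
\[
N^{(0)}_{D,B_n(0;R)}(\lambda)-N^{(0)}_{K,B_n(0;R)}(\lambda)
=\sum_{\ell\in\bbN_0} d_{n-1,\ell}\,\cN_{\ell+(n-2)/2}(\lambda),
\]
which is strictly stronger than \eqref{10.77A} (it implies it at once, since $\cN_{\ell+(n-1)/2}\leq\cN_{\ell+(n-2)/2}\leq\cN_{\ell+(n-3)/2}$), and then estimate its deviation from $N^{(0)}_{D,B_{n-1}(0;R)}(\lambda)=\sum_{\ell}d_{n-1,\ell}\,\cN_{\ell+(n-3)/2}(\lambda)$ via the interlacing $j_{\nu,k}<j_{\nu+1/2,k}<j_{\nu,k+1}$ (per-$\ell$ discrepancy at most one) combined with the restriction $\ell=O\big(\lambda^{1/2}\big)$ coming from $j_{\nu,1}>\nu$ and the bound $d_{n-1,\ell}=O\big(\ell^{n-3}\big)$. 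What your route buys is an equality in place of one-sided bounds, and it avoids invoking the degenerate object $N^{(0)}_{D,B_{n-2}(0;R)}$ when $n=2$; what it costs is the extra (elementary) interlacing and lattice-counting input that the paper's purely order-theoretic bootstrap does not need. Both arguments deliver the same remainder $O\big(\lambda^{(n-2)/2}\big)$, and both inherit whatever caveats attach to the quoted formula \eqref{10.65}.
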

\begin{proof}
From the outset one observes that 
\begin{equation}
\lambda^{(0)}_{n,\ell,D,k} \le \lambda^{(0)}_{n,\ell,K,k} \le \lambda^{(0)}_{n,\ell,D,k+1}, \quad \ell\in\bbN_0, \;
k\in\bbN,
\end{equation}
implying 
\begin{equation}\label{10.67}
N^{(0)}_{K,B_n(0;R)}(\lambda) \le N^{(0)}_{D,B_n(0;R)}(\lambda), \quad \lambda\in\bbR.
\end{equation}
Next, introducing 
\begin{equation}
\cN_\nu (\lambda) :=\begin{cases} \text{the largest $k\in\bbN$ such that 
$j_{\nu,k}^2 R^{-2} \leq \lambda$}, \\
0, \text{ if no such $k\geq 1$ exists,} \end{cases} \quad \lambda \in\bbR,  
\end{equation}
we note the well-known monotonicity of $j_{\nu,k}$ with respect to $\nu$ 
(cf.\ \cite[Sect.\ 15.6, p.\ 508]{Wa96}), implying that for each $\lambda \in \bbR$ 
(and fixed $R>0$), 
\begin{equation}
\cN_{\nu'} (\lambda) \leq \cN_{\nu} (\lambda) \, \text{ for } \, \nu' \geq \nu \geq 0.
\end{equation}
Then one infers 
\begin{equation}
N^{(0)}_{D,B_n(0;R)}(\lambda) = \sum_{\ell\in\bbN_0} d_{n,\ell} \, \cN_{(n/2)-1+\ell} (\lambda), \quad 
N^{(0)}_{K,B_n(0;R)}(\lambda) = \sum_{\ell\in\bbN_0} d_{n,\ell} \, 
\cN_{(n/2)+\ell} (\lambda).
\end{equation}
Hence, using the fact that 
\begin{equation}
d_{n,\ell} = d_{n-1,\ell} + d_{n,\ell-1} 
\end{equation}
(cf.\ \eqref{10.22}), setting $d_{n, -1} =0$, $n\geq 2$, one computes 
\begin{align}
N^{(0)}_{D,B_n(0;R)}(\lambda) &= \sum_{\ell\in\bbN} d_{n,\ell-1} \, \cN_{(n/2)-1+\ell} (\lambda) 
+ \sum_{\ell\in\bbN_0} d_{n-1,\ell} \, \cN_{(n/2)-1+\ell} (\lambda)  \no \\
& \leq \sum_{\ell\in\bbN_0} d_{n,\ell} \, \cN_{(n/2)+\ell} (\lambda) 
+ \sum_{\ell\in\bbN_0} d_{n-1,\ell} \, \cN_{((n-1)/2)-1+\ell} (\lambda)  \no \\
& = N^{(0)}_{K,B_n(0;R)}(\lambda) + N^{(0)}_{D,B_{n-1}(0;R)}(\lambda),
\end{align}
that is,
\begin{equation}
N^{(0)}_{D,B_n(0;R)}(\lambda) \leq N^{(0)}_{K,B_n(0;R)}(\lambda) 
+ N^{(0)}_{D,B_{n-1}(0;R)}(\lambda).   \lb{10.74a}
\end{equation}
Similarly,
\begin{align}
N^{(0)}_{D,B_n(0;R)}(\lambda) &= \sum_{\ell\in\bbN} d_{n,\ell-1} \, 
\cN_{(n/2)-1+\ell} (\lambda) 
+ \sum_{\ell\in\bbN_0} d_{n-1,\ell} \, \cN_{(n/2)-1+\ell} (\lambda)  \no \\
& \geq \sum_{\ell\in\bbN_0} d_{n,\ell} \, \cN_{(n/2)+\ell} (\lambda) 
+ \sum_{\ell\in\bbN_0} d_{n-1,\ell} \, \cN_{((n-1)/2)+\ell} (\lambda)  \no \\
& = N^{(0)}_{K,B_n(0;R)}(\lambda) + N^{(0)}_{K,B_{n-1}(0;R)}(\lambda), 
\end{align}
that is,
\begin{equation}
N^{(0)}_{D,B_n(0;R)}(\lambda) \geq N^{(0)}_{K,B_n(0;R)}(\lambda) 
+ N^{(0)}_{K,B_{n-1}(0;R)}(\lambda),   \lb{10.76a}
\end{equation}
and hence,
\begin{equation}
N^{(0)}_{K,B_{n-1}(0;R)}(\lambda) \leq \big[N^{(0)}_{D,B_n(0;R)}(\lambda) 
- N^{(0)}_{K,B_n(0;R)}(\lambda)\big] \leq N^{(0)}_{D,B_{n-1}(0;R)}(\lambda).   \lb{10.77A}
\end{equation}
Thus, using 
\begin{equation}
0 \leq \big[N^{(0)}_{D,B_n(0;R)}(\lambda) - N^{(0)}_{K,B_n(0;R)}(\lambda)\big] 
\leq N^{(0)}_{D,B_{n-1}(0;R)}(\lambda) = \Oh\big(\lambda^{(n-1)/2}\big) \, 
\text{ as $\lambda\to\infty$,}
\end{equation}
one first concludes that 
$\big[N^{(0)}_{D,B_n(0;R)}(\lambda) - N^{(0)}_{K,B_n(0;R)}(\lambda)\big] 
= \Oh\big(\lambda^{(n-1)/2}\big)$ as $\lambda\to\infty$, and hence using \eqref{10.65}, 
\begin{equation}
N^{(0)}_{K,B_n(0;R)}(\lambda) 
= (2\pi)^{-n}v_n^2 R^n \lambda^{n/2} + \Oh\big(\lambda^{(n-1)/2}\big)
\, \mbox{ as }\, \lambda\to\infty.    
\end{equation}
This type of reasoning actually yields a bit more: Dividing \eqref{10.77A} by $\lambda^{(n-1)/2}$, and using that both, 
$N^{(0)}_{D,B_{n-1}(0;R)}(\lambda)$ and $N^{(0)}_{K,B_{n-1}(0;R)}(\lambda)$ have the same leading asymptotics $(2\pi)^{-(n-1)}v_{n-1}^2 R^{n-1}\lambda^{(n-1)/2}$ as 
$\lambda \to\infty$, one infers, using \eqref{10.65} again, 
\begin{align}
N^{(0)}_{K,B_n(0;R)}(\lambda) &= N^{(0)}_{D,B_n(0;R)}(\lambda)  
- \big[N^{(0)}_{D,B_n(0;R)}(\lambda) - N^{(0)}_{K,B_n(0;R)}(\lambda)\big]  \no \\
&= N^{(0)}_{D,B_n(0;R)}(\lambda) - (2\pi)^{-(n-1)} v_{n-1}^2 R^{n-1} \lambda^{(n-1)/2} 
+ \oh\big(\lambda^{(n-1)/2}\big)   \no \\
&=(2\pi)^{-n}v_n^2 R^n \lambda^{n/2} 
- (2\pi)^{-(n-1)} v_{n-1} [(n/4) v_n + v_{n-1}] R^{n-1} \lambda^{(n-1)/2}   \no \\
& \quad + \oh\big(\lambda^{(n-1)/2}\big) \, \mbox{ as }\, \lambda\to\infty.  
 \lb{10.79A}
\end{align}
Finally, it is possible to improve the remainder term in \eqref{10.79A} from 
$\oh\big(\lambda^{(n-1)/2}\big)$ to $O\big(\lambda^{(n-2)/2}\big)$ as follows: Replacing 
$n$ by $n-1$ in \eqref{10.74a} yields
\begin{equation}
N^{(0)}_{D,B_{n-1}(0;R)}(\lambda) \leq N^{(0)}_{K,B_{n-1}(0;R)}(\lambda) 
+ N^{(0)}_{D,B_{n-2}(0;R)}(\lambda).   \lb{10.80A}
\end{equation}
Insertion of \eqref{10.80A} into \eqref{10.76a} permits one to eliminate 
$N_{K,B_{n-1}(0;R)}^{(0)}$ as follows:  
\begin{equation}
N^{(0)}_{D,B_n(0;R)}(\lambda) \geq N^{(0)}_{K,B_n(0;R)}(\lambda) 
+ N^{(0)}_{D,B_{n-1}(0;R)}(\lambda) - N^{(0)}_{D,B_{n-2}(0;R)}(\lambda),   \lb{10.81A}
\end{equation}
implies  
\begin{align}
\begin{split}
& \big[N^{(0)}_{D,B_{n}(0;R)}(\lambda) - N^{(0)}_{D,B_{n-1}(0;R)}(\lambda)\big]  \leq 
N^{(0)}_{K,B_n(0;R)}(\lambda)   \\
& \quad \leq \big[N^{(0)}_{D,B_{n}(0;R)}(\lambda) - N^{(0)}_{D,B_{n-1}(0;R)}(\lambda)\big]+ N^{(0)}_{D,B_{n-2}(0;R)}(\lambda), 
\end{split} 
\end{align}
and hence,
\begin{equation}
0 \leq N^{(0)}_{K,B_n(0;R)}(\lambda) - 
\big[N^{(0)}_{D,B_{n}(0;R)}(\lambda) - N^{(0)}_{D,B_{n-1}(0;R)}(\lambda)\big]  \leq
N^{(0)}_{D,B_{n-2}(0;R)}(\lambda). 
\end{equation}
Thus, $N^{(0)}_{K,B_n(0;R)}(\lambda) - \big[N^{(0)}_{D,B_{n}(0;R)}(\lambda) 
- N^{(0)}_{D,B_{n-1}(0;R)}(\lambda)\big] = \Oh\big(\lambda^{(n-2)/2}\big)$ as 
$\lambda\to \infty$, proving \eqref{10.66}.  
\end{proof}

Due to the smoothness of the domain $B_n(0;R)$, the remainder terms in \eqref{10.66} represent a marked improvement over the general result \eqref{kko-12} for domains 
$\Om$ satisfying Hypothesis \ref{h.VK}. A comparison of the second term in the asymptotic relations \eqref{10.65} and \eqref{10.66} exhibits the difference between Dirichlet and Krein--von Neumann eigenvalues.

\subsection{The Case $\Om=\bbR^n\backslash\{0\}$, $n=2,3$, $V=0$.}   \lb{s10.3}

In this subsection we consider the following minimal operator 
$-\Delta_{min, \bbR^n\backslash\{0\}}$ in $L^2(\bbR^n;d^nx)$, $n=2,3$,
\begin{equation}
-\Delta_{min, \bbR^n\backslash\{0\}}  
=\ol{-\Delta\big|_{C^\infty_0 (\bbR^n\backslash\{0\})}}\geq 0, \quad n=2,3. 
\lb{10.74}
\end{equation}
Then
\begin{align}
\begin{split} 
& H_{F,\bbR^2\backslash\{0\}}=H_{K,\bbR^2\backslash\{0\}} 
=-\Delta,   \\
& \dom(H_{F,\bbR^2\backslash\{0\}})
= \dom(H_{K,\bbR^2\backslash\{0\}})=H^{2}(\bbR^2) \, \text{ if } \, n=2 \lb{10.75}
\end{split} 
\end{align}
is the unique nonnegative self-adjoint  
extension of $-\Delta_{min, \bbR^2\backslash\{0\}}$ in $L^2(\bbR^2;d^2x)$ and 
\begin{align}
\begin{split}
& H_{F,\bbR^3\backslash\{0\}}=H_{D,\bbR^3\backslash\{0\}} =-\Delta,    \\ 
& \dom(H_{F,\bbR^3\backslash\{0\}}) = \dom(H_{D,\bbR^3\backslash\{0\}})
=H^{2}(\bbR^3) \, \text{ if } \, n=3, \lb{10.76} 
\end{split} \\
&H_{K,\bbR^3\backslash\{0\}} = H_{N,\bbR^3\backslash\{0\}}
=U^{-1}h_{0,N,\bbR_+}^{(0)}U \oplus\bigoplus_{\ell\in\bbN} 
U^{-1}h_{\ell,\bbR_+}^{(0)} U \, \text{ if } \, n=3,  \lb{10.77}
\end{align} 
where $H_{D,\bbR^3\backslash\{0\}}$ and $H_{N,\bbR^3\backslash\{0\}}$ denote the Dirichlet and Neumann\footnote{The Neumann extension $H_{N,\bbR^3\backslash\{0\}}$ of 
$-\Delta_{min, \bbR^n\backslash\{0\}}$, associated with a Neumann boundary condition, 
in honor of Carl Gottfried Neumann, should of course not be confused with the Krein--von Neumann 
extension $H_{K,\bbR^3\backslash\{0\}}$ of $-\Delta_{min, \bbR^n\backslash\{0\}}$.} extension of 
$-\Delta_{min, \bbR^n\backslash\{0\}}$ in $L^2(\bbR^3; d^3 x)$, respectively. 
Here we used the angular momentum decomposition (cf.\ also \eqref{10.20}, \eqref{10.21}), 
\begin{align}
& L^2(\bbR^n; d^nx) = L^2((0,\infty); r^{n-1}dr) \otimes L^2(S^{n-1}; d\omega_{n-1}) 
= \bigoplus_{\ell\in\bbN_0} \cH_{n,\ell,(0,\infty)},   \lb{10.77a} \\
& \cH_{n,\ell,(0,\infty)} = L^2((0,\infty); r^{n-1}dr) \otimes \cK_{n,\ell}, 
\quad \ell \in \bbN_0, \; n=2,3.   \lb{10.77b}
\end{align}
Moreover, we abbreviated $\bbR_+=(0,\infty)$ and introduced 
\begin{align}
&h_{0,N,\bbR_+}^{(0)}=-\f{d^2}{dr^2}, \quad r>0,   \no \\
&\dom\big(h_{0,N,\bbR_+}^{(0)}\big)= \big\{f\in L^2((0,\infty);dr)\,\big|\, 
f,f'\in AC([0,R]) \text{ for all } R>0;    \lb{10.78} \\ 
& \hspace*{5.95cm} f'(0_+)=0; \,  f''\in L^2((0,\infty);dr)\big\}, \no \\
&h_{\ell,\bbR_+}^{(0)}=-\f{d^2}{dr^2}+\f{\ell(\ell +1)}{r^2}, \quad r>0,   \no \\
&\dom\big(h_{\ell,\bbR_+}^{(0)}\big)= \big\{f\in L^2((0,\infty);dr)\,\big|\, 
f,f'\in AC([0,R]) \text{ for all } R>0;    \lb{10.79} \\
&\hspace*{4.65cm} -f''+\ell(\ell +1)r^{-2}f\in 
L^2((0,\infty);dr)\big\}, \quad \, \ell\in\bbN.  \no 
\end{align}
The operators $h_{\ell,\bbR_+}^{(0)}|_{C_0^{\infty}((0,\infty))}$, $\ell\in\bbN$, are 
essentially self-adjoint in $L^2((0,\infty);dr)$ (but we note that 
$f\in \dom\big(h_{\ell,\bbR_+}^{(0)}\big)$ implies that $f(0_+)=0$).  
In addition, $U$ in \eqref{10.77} denotes the unitary operator,
\begin{equation}
U:\begin{cases} L^2((0,\infty); r^2dr)\to L^2((0,\infty);dr), \\
\hspace*{1.81cm} f(r)\mapsto (Uf)(r)= r f(r). \end{cases}   \lb{10.80}
\end{equation}
As discussed in detail in \cite[Sects.\ 4, 5]{GKMT01}, equations 
\eqref{10.75}--\eqref{10.77} follow from Corollary\ 4.8 in \cite{GKMT01} and the facts that 
\begin{equation} \lb{10.81}
(u_+,M_{H_{F,\bbR^n\backslash\{0\}},\cN_+}(z)u_+)_{L^2(\bbR^n;d^nx)} = 
\begin{cases} 
-(2/\pi) \ln(z) +2i, & n=2, \\
i(2z)^{1/2} +1, & n=3,
\end{cases}
\end{equation}
and
\begin{equation}
(u_+,M_{H_{K,\bbR^3\backslash\{0\}},\cN_+}(z)u_+)_{L^2(\bbR^3;d^3x)} = i(2/z)^{1/2} - 1. \lb{10.82}
\end{equation}
Here   
\begin{align}
\begin{split} 
& \cN_+= {\rm lin. \, span}\{u_+\}, \\
&  u_+ (x) = G_0(i,x,0)/\|G_0(i,\cdot,0)\|_{L^2(\bbR^n;d^nx)}, 
\; x\in\bbR^n\backslash\{0\}, \; n=2,3,  \lb{10.83} 
\end{split} 
\end{align}
and 
\begin{equation} \lb{10.84}
G_0(z,x,y) = 
\begin{cases}
\f{i}{4}H_0^{(1)}(z^{1/2}|x-y|), &x\neq y, \, n=2, \\
e^{iz^{1/2}|x-y|}/(4\pi |x-y|), &x\neq y, \, n=3
\end{cases}
\end{equation}
denotes the Green's function of $-\Delta$ defined on 
$H^{2}(\bbR^n),$ $n=2,3$ (i.e., the 
integral kernel of the resolvent $(-\Delta -z)^{-1}$),  
and $H_0^{(1)}(\cdot)$ abbreviates the Hankel function 
of the first kind and order zero (cf., \cite[Sect.\ 9.1]{AS72}). Here the Donoghue-type 
Weyl--Titchmarsh operators (cf.\ \cite{Do65} in the case 
where $\dim (\cN_+) =1$ and \cite{GKMT01}, \cite{GMT98}, and \cite{GT00} in the general abstract 
case where $\dim (\cN_+) \in \bbN \cup \{\infty\}$) $M_{H_{F,\bbR^n\backslash\{0\}},\cN_+}$ and 
$M_{H_{K,\bbR^n\backslash\{0\}},\cN_+}$ are defined according to equation (4.8) 
in \cite{GKMT01}: More precisely, given a self-adjoint extension $\wti S$ of the densely defined closed symmetric operator $S$ in a complex separable Hilbert space $\cH$, and a closed linear 
subspace $\cN$ of $\cN_+ = \ker({S}^* - i I_{\cH})$, 
$\cN\subseteq \cN_+$, the Donoghue-type Weyl--Titchmarsh operator $M_{\wti S,\cN}(z)
\in\cB(\cN)$ associated with the pair $(\wti S,\cN)$  is defined by
\begin{align}
\begin{split}
M_{\wti S,\cN}(z)&=P_\cN (z\wti S+I_\cH)(\wti S-z I_{\cH})^{-1} P_\cN\big\vert_\cN   \\
&=zI_\cN+(1+z^2)P_\cN(\wti S-z I_{\cH})^{-1} P_\cN\big\vert_\cN\,, \quad  
z\in \bbC\backslash \bbR, \lb{10.84a} 
\end{split}
\end{align}
with $I_\cN$ the identity operator in $\cN$ and $P_\cN$ the orthogonal projection in 
$\cH$ onto $\cN$.

Equation \eqref{10.81} then immediately follows from 
repeated use of the 
identity (the first resolvent equation),
\begin{align}
&\int_{\bbR^n} d^nx' G_0(z_1,x,x')G_0(z_2,x',0) = 
(z_1-z_2)^{-1}[G_0(z_1,x,0)-G_0(z_2,x,0)], \no \\
&\hspace*{7.7cm} x\neq 0, \, z_1\neq z_2, \, n=2,3, 
\lb{10.85}
\end{align}
and its limiting case as $x\to 0$. 

Finally, \eqref{10.82} 
follows from the following arguments: First one notices that 
\begin{equation} 
\big[-(d^2/dr^2)+ \alpha \, r^{-2}\big]\big|_{C_0^\infty((0,\infty))}     \lb{10.86}
\end{equation} 
is essentially self-adjoint in $L^2(\bbR_+; dr)$ if and only if $\alpha \geq 3/4$. 
Hence it suffices to consider the restriction of 
$H_{min, \bbR^3\backslash\{0\}}$ to 
the centrally symmetric subspace $\cH_{3,0,(0,\infty)}$ of $L^2(\bbR^3;d^3x)$ 
corresponding to angular momentum $\ell=0$ in \eqref{10.77a}, \eqref{10.77b}. 
But then it is a well-known fact (cf.\ \cite[Sects.\ 4,5]{GKMT01}) that the Donoghue-type 
Dirichlet $m$-function 
$(u_+,M_{H_{D,\bbR^3\backslash\{0\}},\cN_+}(z)u_+)_{L^2(\bbR^3;d^3 x)}$, satisfies
\begin{align} 
(u_+,M_{H_{D,\bbR^3\backslash\{0\}},\cN_+}(z)u_+)_{L^2(\bbR^3;d^3 x)} &= 
(u_{0,+},M_{h_{0,D,\bbR_+}^{(0)},\cN_{0,+}}(z)u_{0,+})_{L^2(\bbR_+; dr)},   \no \\
& =  i (2z)^{1/2} + 1,   \lb{10.89}
\end{align}
where
\begin{equation}
\cN_{0,+}= {\rm lin. \, span} \{u_{0,+}\}, \quad u_{0,+} (r)=
e^{iz^{1/2}r}/[2 \Im(z^{1/2}]^{1/2},  \; r>0,
\end{equation}
and $M_{h_{0,D,\bbR_+}^{(0)},\cN_{0,+}}(z)$ denotes the Donoghue-type Dirichlet  
$m$-function corresponding to the operator 
\begin{align}
&h_{0,D,\bbR_+}^{(0)}=-\f{d^2}{dr^2}, \quad r>0,  \no \\
&\dom\big(h_{0,D,\bbR_+}^{(0)}\big)= \big\{f\in L^2((0,\infty);dr)\,\big|\, 
f,f'\in AC([0,R]) \text{ for all } R>0;  \lb{10.91}  \\ 
& \hspace*{6.05cm} \, f(0_+)=0; \, f''\in L^2((0,\infty);dr)\big\}, \no
\end{align}
Next, turning to the Donoghue-type Neumann $m$-function given by 
$(u_+,M_{H_{N,\bbR^3\backslash\{0\}},\cN_+}(z)u_+)_{L^2(\bbR^3;d^3 x)}$ one obtains 
analogously to \eqref{10.89} that 
\begin{equation}
(u_+,M_{H_{N,\bbR^3\backslash\{0\}},\cN_+}(z)u_+)_{L^2(\bbR^3;d^3 x)} = 
(u_{0,+},M_{h_{0,N,\bbR_+}^{(0)},\cN_{0,+}}(z)u_{0,+})_{L^2(\bbR_+; dr)}, 
\end{equation}
where $M_{h_{0,N,\bbR_+}^{(0)},\cN_{0,+}}(z)$ denotes the Donoghue-type Neumann  
$m$-function corresponding to the operator $h_{0,N,\bbR_+}^{(0)}$ in \eqref{10.78}. The well-known linear fractional transformation relating the operators 
$M_{h_{0,D,\bbR_+}^{(0)},\cN_{0,+}}(z)$ and $M_{h_{0,N,\bbR_+}^{(0)},\cN_{0,+}}(z)$ 
(cf.\ \cite[Lemmas 5.3, 5.4, Theorem 5.5, and Corollary 5.6]{GKMT01}) then yields
\begin{equation}
(u_{0,+},M_{h_{0,N,\bbR_+}^{(0)},\cN_{0,+}}(z)u_{0,+})_{L^2(\bbR_+; dr)} = 
i (2/z)^{1/2} - 1,
\end{equation}
verifying \eqref{10.82}. 

The fact that the operator $T=-\Delta$, $\dom(T)=H^{2}(\bbR^2)$ 
is the unique nonnegative self-adjoint extension of 
$-\Delta_{min, \bbR^2\backslash\{0\}}$ in $L^2(\bbR^2;d^2x)$, has been shown 
in \cite{AGHKH87} (see also \cite[Ch.\ I.5]{AGHKH88}).

\subsection{The Case $\Om=\bbR^n\backslash\{0\}$,  
$V=-[(n-2)^2/4]|x|^{-2}$, $n\geq 2$.}   \lb{s10.4}

In our final subsection we briefly consider the following minimal operator 
$H_{min, \bbR^n\backslash\{0\}}$ in $L^2(\bbR^n;d^nx)$, $n\geq 2$,
\begin{equation}
H_{min, \bbR^n\backslash\{0\}}
=\ol{(-\Delta - ((n-2)^2/4)|x|^{-2})\big|_{C^\infty_0 (\bbR^n\backslash\{0\})}}\geq 0, 
\quad n\geq 2. 
\lb{10.94}
\end{equation}
Then, using again the angular momentum decomposition (cf.\ also \eqref{10.20}, 
\eqref{10.21}), 
\begin{align}
& L^2(\bbR^n; d^nx) = L^2((0,\infty); r^{n-1}dr) \otimes L^2(S^{n-1}; d\omega_{n-1}) 
= \bigoplus_{\ell\in\bbN_0} \cH_{n,\ell,(0,\infty)},   \lb{10.95} \\
& \cH_{n,\ell,(0,\infty)} = L^2((0,\infty); r^{n-1}dr) \otimes \cK_{n,\ell}, 
\quad \ell \in \bbN_0, \; n\geq 2,    \lb{10.96}
\end{align}
one finally obtains that 
\begin{equation}
H_{F,\bbR^n\backslash\{0\}}=H_{K,\bbR^n\backslash\{0\}} 
= U^{-1}h_{0,\bbR_+} U \oplus\bigoplus_{\ell\in\bbN} 
U^{-1}h_{n,\ell,\bbR_+} U,  \; n\geq 2,  \lb{10.97}
\end{equation}
is the unique nonnegative self-adjoint  
extension of $H_{min, \bbR^n\backslash\{0\}}$ in $L^2(\bbR^n;d^n x)$, where  
\begin{align}
& h_{0,\bbR_+} = -\f{d^2}{dr^2} - \f{1}{4 r^2}, \quad r>0,  \no \\
& \dom(h_{0,\bbR_+}) = \big\{f \in L^2((0,\infty); dr) \,\big|\, f, f' \in 
AC([\varepsilon,R]) \, \text{for all $0<\varepsilon<R$};     \lb{10.98} \\
& \hspace*{4.25cm}   f_0=0; \, (-f'' -(1/4)r^{-2}f) \in L^2((0,\infty); dr)\big\},  \no \\
& h_{n,\ell,\bbR_+} = -\f{d^2}{dr^2} + \f{4 \kappa_{n,\ell} - 1}{4 r^2},   
\quad r>0,  \no \\
& \dom(h_{n,\ell,\bbR_+}) = \big\{f \in L^2((0,\infty); dr) \,\big|\, f, f' \in 
AC([\varepsilon,R]) \, \text{for all $0<\varepsilon<R$};   \no  \\ 
& \hspace*{1.2cm} (-f'' +[\kappa_{n,\ell} - (1/4)]r^{-2}f) \in L^2((0,\infty); dr)\big\}, 
\quad \ell\in\bbN, \; n\geq 2.   \lb{10.99}
\end{align}
Here $f_0$ in \eqref{10.98} is defined by (cf.\ also \eqref{10.33})
\begin{equation}
f_0 = \lim_{r\downarrow 0} [-r^{1/2}\ln(r)]^{-1} f(r). 
\end{equation}
As in the previous subsection, $h_{n,\ell,\bbR_+}|_{C_0^{\infty}((0,\infty))}$, $\ell\in\bbN$,  $n\geq 2$, are essentially self-adjoint in $L^2((0,\infty);dr)$. In addition, $h_{0,\bbR_+}$ is the unique nonnegative self-adjoint extension of $h_{0,\bbR_+}|_{C_0^{\infty}((0,\infty))}$ in $L^2((0,\infty);dr)$. We omit further details. 

\medskip

\noindent {\bf Acknowledgments.}
We are indebted to Yury Arlinskii, Gerd Grubb, John Lewis, Konstantin Makarov, 
Mark Malamud, Vladimir Maz'ya, Michael Pang, Larry Payne, Barry Simon, 
Nikolai Tarkhanov, Hans Triebel, and Eduard Tsekanovskii for many helpful discussions and very valuable correspondence on various topics of this paper.

One of us (F.G.)\ gratefully acknowledges the extraordinary hospitality 
of the Faculty of Mathematics of the University of Vienna, Austria, 
during his three month visit in the first half of 2008.


\end{document}